\crefname{diagram}{Diagram}{Diagram}
\numberwithin{equation}{section}
\theoremstyle{plain}
\newtheorem{theorem}{Theorem}[section]
\newtheorem{corollary}[theorem]{Corollary}
\newtheorem{lemma}[theorem]{Lemma}
\newtheorem{claim}[theorem]{Claim}
\newtheorem{proposition}[theorem]{Proposition}
\newtheorem*{thm*}{Theorem}
\newtheorem{thmx}{Theorem}
\theoremstyle{definition}
\newtheorem{remark}[theorem]{Remark}
\newtheorem{definition}[theorem]{Definition}
\newtheorem{example}[theorem]{Example}
\newtheorem{fact}[theorem]{Fact}
\newtheorem{warn}[theorem]{Warning}
\newenvironment{notation}[1][Notation]
{\begin{trivlist} \item[\hskip \labelsep {\bfseries #1}]}
{\end{trivlist}}
\title{Variation of Stratifications From Toric GIT}
\author{Chi-yu Cheng}
\date{}
\DeclareMathOperator{\Relint}{Relint}
\DeclareMathOperator{\Proj}{Proj}
\DeclareMathOperator{\Spec}{Spec}
\DeclareMathOperator{\Hom}{Hom}
\DeclareMathOperator{\Amp}{Amp}
\DeclareMathOperator{\Pic}{Pic}
\DeclareMathOperator{\Nef}{Nef}
\DeclareMathOperator{\Cl}{Cl}
\DeclareMathOperator{\CDiv}{CDiv}
\DeclareMathOperator{\Sp}{Sp}
\DeclareMathOperator{\dist}{dist}
\begin{document}
\begin{abstract}
    When a reductive group acts on an algebraic variety, a linearized ample line bundle induces a stratification on the variety where the strata are ordered by the degrees of instability. In this paper, we study variation of stratifications coming from the group actions in the GIT quotient construction for projective toric varieties. Cox showed that each projective toric variety is a GIT quotient of an affine space by a diagonalizable group with respect to linearizations that come from ample divisors on the toric variety. We provide a sufficient conditions for two ample divisors to induce the same stratification and formulate two types of walls in the ample cone that completely capture two kinds of variations. We also prove that the variation is intrinsic to the primitive collections and the relations among ray generators of the fans.  
\end{abstract}
\maketitle
\tableofcontents
\section{Introduction}
Geometric Invariant Theory (abbreviated as GIT) was developed in \cite{MR1304906} by Mumford to construct quotients for group actions on algebraic varieties. While there are standardized quotient constructions in various mathematical categories including the category of differentiable manifolds, forming quotients within the category of algebraic varieties is less immediate. 

The local pieces of an algebraic variety are affine varieties. An affine variety is the spectrum of a ring, consisting of algebraic functions defined on the affine variety. Constructing varieties that parametrize orbits is not immediate even for group actions on affine varieties. While it is natural to consider the spectrum of the subring of functions that are constant on the orbits, namely, the invariant functions, there might not be enough invariants to separate the orbits.

Mumford realized some orbits are so exceptional, that they must be left out of the quotient. The notion of stability was then introduced by Mumford to specify an invariant open subset, known as the semistable locus that allows for a \emph{good quotient} \cite{MR309940}. Since then GIT has led to fruitful results in algebraic geometry, especially constructions of various moduli spaces. Examples include  sequence of linear subspaces \cite{MR0175899}, representations of quivers \cite{MR1315461}, vector bundles on a curve \cite{MR0175899}, and more generally coherent sheaves on a projective variety \cite{MR233371}.

GIT stability however, depends on the choice of a linearized line bundle. Variation of Geometric Invariant Theory (abbreviated as VGIT) quotients coming from different choices of linearized line bundles were well studied by \cite{MR1659282} and \cite{MR1333296} in the 90's. The main result is that when a reductive group $G$ acts on a normal projective variety over the field of complex numbers, the space of $G$-linearized ample line bundles has a finite wall and chamber decomposition such that 
\begin{enumerate}
    \item stability and the GIT quotient do not change inside each chamber, 
    \item the variation of GIT quotients after wall crossing is described by a flip.
\end{enumerate}

It is worthwhile pointing out that VGIT has applications to birational geometry. It not only provides examples of flips, but also realizes Mori theory as an instance of VGIT. In
\cite{MR1786494} it was shown that if a projective variety is a Mori dream space, it is a GIT quotient of an
affine variety by a torus. In this case the Mori chambers correspond to VGIT chambers. 

Our work takes on the theme of variations. We study the variation of stratifications in invariant theory (abbreviated as VSIT) that occurs in the GIT quotient construction of projective toric varieties. This is where instability in invariant theory (abbreviated as IIT) and toric varieties meet. 

The field of toric varieties is famous for being a good testing ground in algebraic geometry and toric varieties can be built from very concrete objects that are known as fans. Moreover, there is a well understood dictionary translating combinatorial properties of a fan to the geometric properties of its associated toric variety, giving this field a rich amount of computability. The main driving force behind this paper is the generous number of examples toric varieties offer (\Cref{Examples_and_counter_examples}) and the interesting connections many results in this paper have to the combinatorics of the fans (\Cref{Relations to the structure of the fan}).

IIT on the other hand, studies the points that are not semistable, namely the unstable points. Although the unstable locus is discarded when one takes a GIT quotient, it possesses interesting and useful properties. For instance, let $X$ be an affine variety  equipped with an action by a reductive group $G$ over the field $\mathbf{C}$ of complex numbers. A character $\chi:G\rightarrow\mathbf{C}^{\times}$ then gives a linearization of the trivial line bundle $\mathscr{O}_{X}.$ The pioneering IIT work \cite{MR506989} due to Kempf states that every $\chi$-unstable point is maximally destabilized by a one parameter subgroup of $G$ in a numerical sense that comes from the Hilbert-Mumford criterion (\cite{MR1304906},\cite{MR1315461}). In addition, such a one parameter subgroup when taken to be indivisible, is unique up to conjugacy by some parabolic subgroup of $G$. 

Therefore, a character $\chi$ of $G$ induces the following decomposition \begin{equation}\label{Kempf decompotision}
X=X^{\text{ss}}(\chi)\cup \big(\bigcup_{\lambda}S^{\chi}_{[\lambda]}\big)
\end{equation} where $X^{\text{ss}}(\chi)$ is the semistable locus with respect to $\chi$ and $S^{\chi}_{[\lambda]}$ consists of points in $X$ that are maximally destabilized by some element in the conjugacy class $[\lambda]$ of the indivisible one parameter subgroup $\lambda$ in $G$. This decomposition actually induces a stratification of $X$ which we now define.

\begin{definition}Let $Y$ be a topological space. A finite collection of locally closed subspaces  $\{Y_{a}\mid a\in\mathscr{A}\}$ forms a \emph{stratification of }$Y$ if $Y$ is a disjoint union of the strata $Y_{a}$ and there is a strict partial order on the index set $\mathscr{A}$ such that 
$\partial Y_{a^{\prime}}\cap Y_{a}\neq\emptyset$ only if $a>a^{\prime}$.\end{definition}

In \cite{MR553706}, Hesselink prescribes to each piece in (\ref{Kempf decompotision}) a degree of instability and showed that the collection of pieces, when strictly partially ordered by degrees of instability, forms a stratification of $X$. We shall refer to this stratification as \emph{the stratification of} $X$ \emph{induced by }$\chi$.

We would like to point out that the stratification just introduced is not solely aesthetic but has actual applications to obtaining certain topological invariants of the quotients. For instance, in \cite{MR766741} Kirwan used Hesselink's stratification to produce an inductive formula for the equivariant Betti numbers of the semistable loci, which in \emph{good cases} will be the Betti numbers of the quotients of nonsingular complex projective varieties.

The stratifications that we are concerned with in this paper are those that occur from the GIT quotient construction of projective toric varieties. To get a picture of the stratifications under this setting, let $\Sigma$ be a fan. We let $X_{\Sigma}$ be the toric variety built from the fan, $\Cl(X_{\Sigma})$ be its divisor class group, and $\Sigma(1)$ be the collection of rays in $\Sigma$. We also let $$\mathbf{C}^{\Sigma(1)}=\Spec \mathbf{C}[x_{\rho}\mid\rho\in \Sigma(1)]$$ be the affine space whose coordinates are indexed by rays in $\Sigma$.  In \cite{MR1299003} and \cite{MR2810322}, Cox showed that every projective toric variety $X_{\Sigma}$ is a GIT quotient of $\mathbf{C}^{\Sigma(1)}$ by the diagonalizable group $$G:=\Hom_{\mathbf{Z}}(\Cl(X_{\Sigma}),\mathbf{C}^{\times})$$ with respect to characters $\chi_{D}$ that come from ample divisors $D$ on $X_{\Sigma}$. Moreover, the $\chi_{D}$-unstable locus $(\mathbf{C}^{\Sigma(1)})^{\text{us}}(\chi_{D})$ can be described by the vanishing loci of certain collections $x_{\rho}$'s, known as the primitive collections of the fan $\Sigma$ (See \Cref{primitive_collection_def}). Specifically, \begin{equation}\label{amp_bad_locus_intro}
(\mathbf{C}^{\Sigma(1)})^{\text{us}}(\chi_{D})=\bigcup_{C}V(\{X_{\rho}\mid\rho\in C \})
\end{equation}where the union is taken over primtive collections $C$ of the fan.

With these data we can think about stratifications of $\mathbf{C}^{\Sigma(1)}$ and the variation induced by different ample divisors on $X_{\Sigma}$. Here is how we decide if the stratifications induced by two ample divisors are equivalent. 

\begin{definition}We say two stratifications $\{Y_{a}\mid a\in\mathscr{A}\}$ and $\{Y_{b}\mid b\in\mathscr{B}\}$ of a topological space $Y$ are equivalent if there is a bijection $\Phi:\mathscr{A}\rightarrow\mathscr{B}$ such that 
\begin{enumerate}
    \item $\Phi$ preserves strata. That is, $Y_{\Phi(a)}=Y_{a}$ for all $a\in\mathscr{A}$, and  
    \item $\Phi$ preserves order. That is, $\Phi(a)>\Phi(a^{\prime})$ if and only if $a>a^{\prime}$ for all $a,a^{\prime}\in\mathscr{A}$.
\end{enumerate} \end{definition}

With this notion we then say there is a \emph{type one variation} between the stratifications of $\mathbf{C}^{\Sigma(1)}$ induced by ample divisors $D$ and $D^{\prime}$ if there is no bijection that satisfies condition (1). We say there is a \emph{type two variation} between the stratifications of $\mathbf{C}^{\Sigma(1)}$ induced by ample divisors $D$ and $D^{\prime}$ if there is a bijection that satisfies condition (1), but not condition (2). Namely, there is a change of order among the strata.

It is important to point out that the variation studied in this paper is finer than VGIT in the following sense. Equality (\ref{amp_bad_locus_intro}) indicates that the semistable locus stays constant among all ample divisors. Hence by choosing different ample divisors, variation of stratifications only occurs in the fixed unstable locus. We are now ready to state the main results of this paper.

\subsection{The main results}
The major task of this paper is to nail down at which ample divisors the stratification undergoes variations, and provide a sufficient condition for two ample divisors to induce equivalent stratifications. 

\begin{thmx}[\Cref{struct_walls}, \Cref{finite_wall_semi-chamber}, \Cref{Toric_VSIT_decomposition_ample_cone}, \Cref{type_one_wall_captures_type_one_variation}, and  \Cref{type_two_variation_crosses_type_two_walls}]\label{thmA}
Let $X_{\Sigma}$ be a projective toric variety. There are two types of walls in the cone of ample divisors, called type one walls and type two walls respectively. The following properties hold for walls:
\begin{enumerate}
    \item There are finitely many walls. 
    \item A type one (resp. type two) wall is a rational hyperplane (resp. homogeneous quadratic hypersurface).
    \item Type one walls capture type one variations in the following sense: Let $D,D^{\prime}$ be two ample divisors on $X_{\Sigma}$. Then the stratifications induced by $D$ and $D^{\prime}$ undergo a type one variation only if the line segment $\overline{DD^{\prime}}$ intersects a type one wall properly in the ample cone. Namely, $\overline{DD^{\prime}}$ crosses a type one wall.
    \item Type two walls capture type two variations in the following sense:  Let $D,D^{\prime}$ be two ample divisors on $X_{\Sigma}$. If the stratifications induced by $D$ and $D^{\prime}$ undergo a type two variation and if the line segment $\overline{DD^{\prime}}$ intersects no type one walls properly, then $\overline{DD^{\prime}}$ intersects a type two wall properly in the ample cone. Namely, $\overline{DD^{\prime}}$ crosses a type two wall.\end{enumerate}
Away from the walls, the ample cone is decomposed into semi-chambers such that the following properties hold:
\begin{enumerate}
    \item There are finitely many semi-chambers.
    \item A semi-chamber is a cone, possibly not convex.
    \item The stratification stays constant in a semi-chamber in the following sense: If two ample divisors are in the same semi-chamber, then they induce equivalent stratifications of $\mathbf{C}^{\Sigma(1)}$.
\end{enumerate}
\end{thmx}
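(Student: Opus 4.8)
The plan is to reduce everything to the convex geometry of metric projection onto a finite family of rational cones. Fix once and for all a \emph{rational} inner product on $\Cl(X_\Sigma)_{\mathbf{R}}$; this supplies the norm on one parameter subgroups of $G$ that the Hesselink stratification requires, and identifies $\Cl(X_\Sigma)_{\mathbf{R}}$ with the space of one parameter subgroups of $G$ up to finite index. For $I\subseteq\Sigma(1)$ set $\sigma_I:=\mathrm{Cone}([D_\rho]\mid\rho\in I)\subseteq\Cl(X_\Sigma)_{\mathbf{R}}$, a rational polyhedral cone, and for $\chi\in\Cl(X_\Sigma)_{\mathbf{R}}$ let $\pi_I(\chi)$ be its metric projection onto $\sigma_I$ and $\beta_I(\chi):=\chi-\pi_I(\chi)$. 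The first step is to prove, by combining Cox's description \eqref{amp_bad_locus_intro} of the unstable locus with the Hilbert--Mumford numerical criterion for the diagonalizable group $G$ and Kempf's uniqueness theorem for the optimal destabilizing one parameter subgroup, the following dictionary for $\chi_D$: a point $x$ with support $I_x=\{\rho\mid x_\rho\neq 0\}$ is $\chi_D$-semistable iff $\chi_D\in\sigma_{I_x}$; otherwise $\beta_{I_x}(\chi_D)$ is the optimal one parameter subgroup of $x$, the Hesselink stratum through $x$ is exactly $\{x'\mid\beta_{I_{x'}}(\chi_D)=\beta_{I_x}(\chi_D)\}$, and its degree of instability is $\dist(\chi_D,\sigma_{I_x})=\|\beta_{I_x}(\chi_D)\|$. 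Thus the stratification induced by $D$ is encoded by the map $x\mapsto\beta_{I_x}(\chi_D)$, whose fibres are the strata, and by the order $\beta>\beta'$ iff $\|\beta\|<\|\beta'\|$; in particular, as announced in the introduction, only the unstable locus moves, since the fibre $\beta=0$ (the semistable locus) is the same for every ample $D$.

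Next I would study the partition and the order separately as $\chi_D$ ranges over $\Amp(X_\Sigma)$. For the partition: for each unstable support $I$ the metric projection onto $\sigma_I$ induces the polyhedral decomposition of $\Cl(X_\Sigma)_{\mathbf{R}}$ whose closed cones are $F+N_{\sigma_I}(F)$, one for each face $F\preceq\sigma_I$ (Moreau's decomposition; the normal cone $N_{\sigma_I}(F)$ is rational because the inner product is), and on the relative interior of such a cone $\pi_I$ is the orthogonal projection onto the rational subspace $\mathrm{span}(F)$. Let $\mathcal{N}$ be the common refinement of these finitely many decompositions and let the \emph{type one walls} be the codimension one cones of $\mathcal{N}$ meeting $\Amp(X_\Sigma)$, together with the finitely many rational hyperplanes along which two of the (now linear) labels $\beta_I,\beta_J$ become equal on a cell of $\mathcal{N}$. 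These are finitely many rational hyperplanes. On each cell of $\mathcal{N}$ the nearest face $F_I$ of $\sigma_I$ to $\chi_D$, hence each linear map $\beta_I$, is fixed, and the coincidences $\beta_I=\beta_J$ among strata labels are fixed; so the partition of $\mathbf{C}^{\Sigma(1)}$ into strata, with its labelling, is constant on a cell. This gives item (3): if $D,D'$ induce a type one variation, then $\overline{DD'}$ cannot lie in one cell of $\mathcal{N}$, hence it crosses a type one wall properly.

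For the order I would work inside a fixed cell $C$ of $\mathcal{N}$: there each $\pi_I$ is orthogonal projection onto a fixed subspace, so $\dist(\chi,\sigma_I)^2=\langle(\mathrm{id}-\pi_I)\chi,\chi\rangle=:q_I(\chi)$ is a rational homogeneous quadratic form, and for any two unstable supports $I,J$ labelling distinct strata on $C$ the comparison locus $\{q_I=q_J\}$ is a homogeneous quadratic hypersurface. Define the \emph{type two walls} to be these hypersurfaces, over all cells $C$ and such pairs, intersected with $\Amp(X_\Sigma)$: finitely many, each a homogeneous quadratic hypersurface. Away from them the sign of each relevant $q_I-q_J$ is constant, so the order of the strata does not change; this yields item (4): if $D,D'$ induce a type two variation and $\overline{DD'}$ meets no type one wall properly, then $\overline{DD'}$ lies in the closure of one cell $C$, the strata and their labels agree at $D$ and $D'$, while some $q_I-q_J$ takes opposite signs near the two endpoints and hence vanishes in the interior without the segment being contained in its zero set, a proper crossing of a type two wall. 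Finally, declare a \emph{semi-chamber} to be a connected component of $\Amp(X_\Sigma)$ with all type one and type two walls removed: since every wall is invariant under positive rescaling (type one walls are linear, type two walls are homogeneous of degree two) and $\Amp(X_\Sigma)$ is a cone, each component is a cone, not convex in general because a region cut out by the indefinite quadrics $q_I-q_J$ need not be; there are finitely many; and by the previous two paragraphs the partition, its labelling and the resulting order are all constant on a semi-chamber, so two ample divisors in a common semi-chamber induce equivalent stratifications of $\mathbf{C}^{\Sigma(1)}$.

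The step I expect to be the real work is the dictionary of the first paragraph: identifying Kempf's optimal one parameter subgroup of an unstable $x$ with $\chi_D-\pi_{\sigma_{I_x}}(\chi_D)$ --- for which the characterization of metric projection onto a cone by separating functionals is the main tool --- and, more delicately, showing that the \emph{entire} Hesselink stratum of $x$, in particular its degree of instability and hence its place in the order, depends on $x$ only through $\beta_{I_x}(\chi_D)$, so that distinct supports producing the same projected vector produce the same stratum. The remaining, lower-stakes obstacle is the related bookkeeping for item (3): because distinct supports may label a single stratum, the number of strata can change, and I would need to verify that within the ample cone the loci where such mergers and splittings happen are swept out by rational hyperplanes (if necessary, after restricting to ample divisors in general position relative to the walls), so that type one variations are genuinely detected by hyperplane walls.
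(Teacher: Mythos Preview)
Your approach is essentially the paper's, repackaged. The paper also reduces everything to the convex geometry of projections: for each support $S\in\mathcal{L}$ it works with the cone $\sigma_S\subset\pmb{\Gamma}(G)_{\mathbf{R}}$ dual to your $\sigma_I$, and its Kempf vector $\lambda^D_S$ is precisely (the negative of) your $\beta_I(\chi_D)$ via Moreau's identity $\beta_I(\chi_D)=\pi_{\sigma_I^{\circ}}(\chi_D)=-\pi_{\sigma_S}(-\chi_D^{\ast})$. Where you invoke Moreau and the normal fan of each $\sigma_I$, the paper instead enumerates projections onto the subspaces $W_Z=\mathrm{span}(F)$ for faces $F\preceq\sigma_S$ and picks the longest one (its Corollary~2.2); your refinement $\mathcal{N}$ is the implicit organizing object behind the paper's Lemma~5.13. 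Two packaging differences are worth noting. First, the paper defines its type one walls directly as the hyperplanes $\{D:\Proj_{W_Z}\chi_D^{\ast}=\Proj_{W_{Z\cup\{\rho\}}}\chi_D^{\ast}\}$ rather than as codimension-one cells of a fan; this lets it prove statement~(2) literally (each wall \emph{is} a rational hyperplane intersected with the ample cone, not a facet). Second, the paper's semi-chambers are sign regions $\{\pm F_i>0\text{ for all }i\}$, not connected components; this makes finiteness immediate and, more importantly, makes the constancy proof (Lemma~5.13) a pure sign argument that never appeals to connectedness.

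One point to tighten: your argument for items (3) and (4) assumes implicitly that $D,D'$ sit in open cells of $\mathcal{N}$, but the theorem allows $D$ or $D'$ to lie on a wall. The paper handles this by working throughout with the hypothesis ``$\overline{DD'}$ meets no type one wall \emph{properly}'' and running sign-constancy arguments (Lemmas~5.17--5.18) that are valid even when the segment is contained in a wall; in particular Lemma~5.18 is the device that forces a proper type-one crossing whenever the optimal face for a fixed $S$ genuinely switches along the segment. Your sketch ``cannot lie in one cell of $\mathcal{N}$, hence crosses a codimension-one cell'' needs this kind of care to cover the boundary cases.
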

Next, we provide two results that have connections to the combinatorics of the fans. The first one is related to primitive relations for simplicial complete toric varieties (\Cref{primitive_relation}). The significance of primitive relations is that for simplicial projective toric varieties, its Mori cone can be generated by primitive relations (see \cite{MR1133869}).

The notion of primitive relations occurred to us in the context of Cox's quotient construction for projective toric varieties. Equality (\ref{amp_bad_locus_intro}) that describes the GIT unstable locus was established in \cite{MR2810322} by considering the invariants with respect to characters induced by ample divisors. We supply an alternative proof of (\ref{amp_bad_locus_intro}) in \Cref{bad_locus_ample}, using the Hilbert-Mumford criterion, \Cref{HilbMumAffine}.

Below is the theorem describing the connections between primitive relations and the destabilizing one parameter subgroups we used in \Cref{bad_locus_ample}. One fact to have in mind is that the one parameter subgroups of the group in Cox's quotient construction correspond to relations among the ray generators of the fan (\Cref{lemma5.1.1}).    
\begin{thmx}\label{thmB}
Let $X_{\Sigma}$ be a projective toric variety. For each primitive collection $C$ of the fan, there is a one parameter subgroup $\lambda(C)$ of the group $G=\Hom_{\mathbf{Z}}(\Cl(X_{\Sigma}),\mathbf{C}^{\times})$ depending only on $C$, that fails the Hilbert-Mumford criterion for all points in $V(\{x_{\rho}\mid\rho\in C\})$ and for all ample divisors. Moreover, if $\Sigma$ is simplicial, $-\lambda(C)$ is an integral positive multiple of the primitive relation of $C$.
\end{thmx}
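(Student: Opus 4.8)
The plan is to exhibit $\lambda(C)$ explicitly from the primitive relation of $C$ and then verify the Hilbert--Mumford condition by a short computation with support functions. First recall the two inputs. By \Cref{lemma5.1.1} a one parameter subgroup of $G=\Hom_{\mathbf{Z}}(\Cl(X_\Sigma),\mathbf{C}^\times)$ is the same datum as an integral relation $(c_\rho)_{\rho\in\Sigma(1)}$ among the ray generators, i.e.\ a tuple of integers with $\sum_\rho c_\rho u_\rho=0$, acting on $\mathbf{C}^{\Sigma(1)}$ by $\lambda(t)\cdot(x_\rho)_\rho=(t^{c_\rho}x_\rho)_\rho$; in particular $\lim_{t\to 0}\lambda(t)\cdot x$ exists iff $c_\rho\ge 0$ whenever $x_\rho\ne 0$. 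For an ample divisor $D=\sum_\rho a_\rho D_\rho$ with support function $\varphi_D$ (with $a_\rho=-\varphi_D(u_\rho)$), \Cref{HilbMumAffine} says that $x$ is $\chi_D$-unstable with $(c_\rho)$ destabilizing precisely when this limit exists and $\sum_\rho a_\rho c_\rho<0$. Thus the theorem reduces to producing a relation $(c_\rho)$, depending only on $C$, with $c_\rho\ge 0$ for $\rho\notin C$ and $\sum_\rho a_\rho c_\rho<0$ for every ample $D$.

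To define $\lambda(C)$: since $X_\Sigma$ is projective, $\Sigma$ is complete, so the vector $w_C:=\sum_{\rho\in C}u_\rho$ lies in the relative interior of a unique cone $\sigma_C\in\Sigma$. I would choose $b_\rho>0$ for $\rho\in\sigma_C(1)$ with $w_C=\sum_{\rho\in\sigma_C(1)}b_\rho u_\rho$ (possible for any point of the relative interior of a cone, and unique when $\sigma_C$ is simplicial), together with a positive integer $k$ clearing all denominators, and then let $\lambda(C)$ be the one parameter subgroup corresponding to the integral relation
\begin{equation*}
-k\,w_C+\sum_{\rho\in\sigma_C(1)}(k\,b_\rho)\,u_\rho=0;
\end{equation*}
equivalently, the relation whose $\rho$-th coefficient is $-k$ for $\rho\in C$, $k\,b_\rho$ for $\rho\in\sigma_C(1)$, and $0$ otherwise (the first two values being added when $\rho\in C\cap\sigma_C(1)$, which changes nothing below). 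This uses only $C$ and $\Sigma$, not $x$ or $D$.

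The two properties are then checked directly. If $x\in V(\{x_\rho\mid\rho\in C\})$, then $x_\rho=0$ for $\rho\in C$ while $c_\rho\ge 0$ for every $\rho\notin C$, so $\lim_{t\to 0}\lambda(C)(t)\cdot x$ exists. For the weight, using $a_\rho=-\varphi_D(u_\rho)$ and the linearity of $\varphi_D$ on the cone $\sigma_C$,
\begin{equation*}
\sum_\rho a_\rho c_\rho=-k\sum_{\rho\in C}a_\rho+k\sum_{\rho\in\sigma_C(1)}b_\rho a_\rho=k\Big(\sum_{\rho\in C}\varphi_D(u_\rho)-\varphi_D(w_C)\Big).
\end{equation*}
Since $D$ is ample, $\varphi_D$ is strictly convex, hence $\varphi_D(w_C)=\varphi_D\big(\sum_{\rho\in C}u_\rho\big)\ge\sum_{\rho\in C}\varphi_D(u_\rho)$ with equality if and only if the rays of $C$ all lie in a single cone of $\Sigma$ --- which they do not, because $C$ is a primitive collection (\Cref{primitive_collection_def}). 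Hence $\sum_\rho a_\rho c_\rho<0$ for every ample $D$, so $\lambda(C)$ fails the Hilbert--Mumford criterion for every $x\in V(\{x_\rho\mid\rho\in C\})$ and every ample $D$, cf.\ \Cref{bad_locus_ample}. Finally, when $\Sigma$ is simplicial, $\sigma_C$ is simplicial, the $b_\rho$ are the unique positive rationals expressing $w_C$, and the relation defining $\lambda(C)$ is literally $-k$ times $w_C-\sum_{\rho\in\sigma_C(1)}b_\rho u_\rho=0$, i.e.\ the primitive relation of $C$ in the sense of \Cref{primitive_relation}; translating back through \Cref{lemma5.1.1}, $-\lambda(C)$ is $k$ times the primitive relation.

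The step I expect to be the real obstacle is the weight computation combined with strict convexity: one must be certain that this single, combinatorially defined $\lambda(C)$ yields a strictly negative weight against \emph{every} ample class at once. That is precisely where the geometric fact ``ample divisor $\Leftrightarrow$ strictly convex support function'' meets the combinatorial fact ``primitive collection $\Leftrightarrow$ spans no cone of $\Sigma$'', and it is the same mechanism behind the alternative proof of (\ref{amp_bad_locus_intro}) in \Cref{bad_locus_ample}. Everything else --- the dictionary between one parameter subgroups and relations, and the identification with the primitive relation in the simplicial case --- is formal, modulo care with the sign convention in \Cref{HilbMumAffine} and with torsion in $\Cl(X_\Sigma)$.
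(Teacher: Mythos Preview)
Your proof is correct and follows essentially the same route as the paper. The paper constructs the identical one parameter subgroup in \eqref{non_simplicial_primitive_relation} within the proof of \Cref{bad_locus_ample}: it places $w_C=\sum_{\rho\in C}u_\rho$ in a maximal cone $\sigma$, writes $w_C=\sum_{\rho\in\sigma(1)}b_\rho u_\rho$ with $b_\rho\ge 0$, and clears denominators. The only cosmetic difference is that the paper carries out the weight inequality using the Cartier data $m_\sigma$ from \Cref{numampleness} directly (so that $\sum_\rho a_\rho c_\rho<-\langle m_\sigma,\sum_\rho c_\rho u_\rho\rangle=0$), whereas you package the same computation in terms of the support function $\varphi_D$ and its strict convexity; these are two names for the same inequality. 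Your identification of $-\lambda(C)$ with a positive integral multiple of the primitive relation in the simplicial case is likewise exactly what the paper observes after \eqref{non_simplicial_primitive_relation} and \Cref{primitive_relation}.
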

As an important application, the above theorem allows us to consider stratifications induced by $\mathbf{R}$-ample divisors as well in \Cref{Extensions_to_the_ample_cone} .

Finally, we proved that the variation of stratifications is intrinsic to the relations among the ray generators and the primitive collections of the fan. More precisely, we consider two complete fans $\Sigma_{1}$ and $\Sigma_{2}$ of the same dimension that admit a bijection $$\Psi:\Sigma_{1}(1)\rightarrow\Sigma_{2}(1)$$ between the rays such that the following two properties hold:
\begin{enumerate}
    \item $\Psi(C)$ is a primitive collection for $\Sigma_{2}$ if and only if $C$ is a primitive collection for $\Sigma_{1}$, and 
    \item for any $\{a_{\rho}\mid{\rho\in\Sigma_{1}(1)}\}\subset \mathbf{Z}$, $\sum_{\rho}a_{\rho}u_{\rho}=0\Leftrightarrow\sum_{\rho}a_{\rho}u_{\Psi(\rho)}=0$ where $u_{\rho}$ (resp. $u_{\Psi(\rho)}$) stands for the ray generator of $\rho$ (resp. $\Psi(\rho)$).
\end{enumerate}
In \Cref{A VSIT adjunction}, we say such two complete fans $\Sigma_{1}$ and $\Sigma_{2}$ are \emph{amply equivalent}.

\begin{warn}
We note that ample equivalence between two fans does not result in an isomorphism between the toric varieties associated to them. A quick example is to consider the complete fan $\Sigma_{1}\subset\mathbf{R}^{2}$ with ray generators $(1,0),(0,1)$, and $(-1,-1)$ and the complete fan $\Sigma_{2}\subset\mathbf{R}^{2}$ with ray generators $(2,3),(1,-1)$, and $(-3,-2)$. Then $\Sigma_{1}$ and $\Sigma_{2}$ are amply equivalent. However, $\Sigma_{1}$ is a smooth fan but $\Sigma_{2}$ is not.
\end{warn}
If $\Sigma_{1}$ and $\Sigma_{2}$ are amply equivalent, then by Cox's quotient construction and the fact that $\Sigma_{1}(1)\simeq\Sigma_{2}(1)$, the affine spaces for the quotient constructions of $X_{\Sigma_{1}}$ and $X_{\Sigma_{2}}$ can be identified. Write $\mathbf{C}^{\Sigma(1)}$ as the common affine space. Based on the description (\ref{amp_bad_locus_intro}) using primitive collections and the assumption that $\Sigma_{1}$ and $\Sigma_{2}$ are amply equivalent, the unstable loci discarded in the quotient constructions for both $X_{\Sigma_{1}}$ and $X_{\Sigma_{2}}$ can be identified in $\mathbf{C}^{\Sigma(1)}$. Write $Z(\Sigma)\subset\mathbf{C}^{\Sigma(1)}$ as the common unstable locus.

For $i=1,2$, let $G_{i}$ be the group $\Hom_{\mathbf{Z}}(\Cl(X_{\Sigma_{i}}),\mathbf{C}^{\times})$ that realizes $X_{\Sigma_{i}}$ as the quotient of $\mathbf{C}^{\Sigma(1)}$ and $\pmb{\Gamma}(G_{i})$ be the set of one parameter subgroups of $G_{i}$. Since each $G_{i}$ is abelian, each $\pmb{\Gamma}(G_{i})$ has a natural abelian group structure. We then prove in \Cref{StructCompFan} and \Cref{why_amply_equivalent} that there is a $\mathbf{Q}$-linear isomorphism $\varphi:\pmb{\Gamma}(G_{1})_{\mathbf{Q}}\simeq \pmb{\Gamma}(G_{2})_{\mathbf{Q}}$ whose dual $\varphi^{\ast}:\Cl(X_{\Sigma_{2}})_{\mathbf{Q}}\rightarrow\Cl(X_{\Sigma_{1}})_{\mathbf{Q}}$ between the $\mathbf{Q}$-divisor class groups restricts to a bijection between their $\mathbf{Q}$-ample divisors. The stratifications induced by $\mathbf{Q}$-ample divisors on $X_{\Sigma_{1}}$ and $X_{\Sigma_{2}}$ are related by the following adjunction:
\begin{thmx}[\Cref{ToricVSIT}]\label{thmC}
For every $\mathbf{Q}$-ample divisor $D$ on $X_{\Sigma_{2}}$, the assignment of the strata \begin{equation*}\begin{tikzcd}\text{\huge $S$}^{\varphi^{\ast}(\chi_{D})}_{\lambda}\arrow[r,mapsto,shift left=1]&\text{\huge $S$}^{\chi_{D}}_{\varphi(\lambda)}\end{tikzcd}\end{equation*} defined for all $\lambda\in\pmb{\Gamma}(G_{1})_{\mathbf{Q}}$ is an equivalence of stratifications of $Z(\Sigma)$ induced by $D$ and $\varphi^{\ast}(D)$. The adjunction also holds for $\mathbf{R}$-ample divisors $D$ on $X_{\Sigma_{2}}$ and $\lambda\in\pmb{\Gamma}(G_{1})_{\mathbf{R}}$.
\end{thmx}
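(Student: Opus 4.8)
The plan is to show that, once the combinatorial dictionary of \Cref{lemma5.1.1}, \Cref{StructCompFan} and \Cref{why_amply_equivalent} is unwound, the two stratifications are computed from literally the same data on $\mathbf{C}^{\Sigma(1)}$, so that the displayed assignment is nothing but the transport of the Kempf--Hesselink stratification along $\varphi$.

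First I would make the identifications explicit. By \Cref{lemma5.1.1}, $\pmb{\Gamma}(G_{i})_{\mathbf{Q}}$ is canonically the space of rational relations $R_{i}:=\{(a_{\rho})\in\mathbf{Q}^{\Sigma(1)}:\sum_{\rho}a_{\rho}u^{(i)}_{\rho}=0\}$, the one-parameter subgroup $\lambda\leftrightarrow(a_{\rho})$ acting on $\mathbf{C}^{\Sigma(1)}$ by $\lambda(t)\cdot(x_{\rho})=(t^{a_{\rho}}x_{\rho})$. Using the bijection $\Psi$ to identify $\Sigma_{1}(1)\simeq\Sigma_{2}(1)$ and passing to $\mathbf{Q}$-spans of integral relations, ample-equivalence condition (2) gives $R_{1}=R_{2}=:R$ inside $\mathbf{Q}^{\Sigma(1)}$; dually $M_{1,\mathbf{Q}}=R^{\perp}=M_{2,\mathbf{Q}}$, hence $\Cl(X_{\Sigma_{1}})_{\mathbf{Q}}=\mathbf{Q}^{\Sigma(1)}/R^{\perp}=\Cl(X_{\Sigma_{2}})_{\mathbf{Q}}$, and under these identifications the maps $\varphi$ of \Cref{StructCompFan} and $\varphi^{\ast}$ of \Cref{why_amply_equivalent} are the identity. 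In particular $\langle\varphi^{\ast}(\chi_{D}),\lambda\rangle=\langle\chi_{D},\varphi(\lambda)\rangle=\sum_{\rho}d_{\rho}a_{\rho}$ for $D=\sum_{\rho}d_{\rho}D_{\rho}$ and $\lambda=(a_{\rho})\in R$; the one input I borrow from \Cref{why_amply_equivalent} is that this functional is represented by an honest ample divisor $\varphi^{\ast}(D)$ on $X_{\Sigma_{1}}$, so that both induced decompositions are instances of the Kempf--Hesselink stratification recalled in the introduction. I also fix on each $\pmb{\Gamma}(G_{i})_{\mathbf{R}}$ the norm obtained by restricting the standard Euclidean norm of $\mathbf{R}^{\Sigma(1)}$, which $\varphi$ manifestly preserves.

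With this I would verify that every ingredient of the stratification of $Z(\Sigma)$ depends only on the triple $\big(\mathbf{C}^{\Sigma(1)}\text{ with its coordinate hyperplanes},\ R\text{ with its norm},\ \langle\chi_{D},\cdot\rangle\big)$ and not on whether one works with $G_{1}$ or $G_{2}$. Indeed, $Z(\Sigma)=\bigcup_{C}V(x_{\rho}:\rho\in C)$ by \eqref{amp_bad_locus_intro} with the primitive collections $C$ matched by ample-equivalence condition (1); for $x\in\mathbf{C}^{\Sigma(1)}$, $\lim_{t\to0}\lambda(t)\cdot x$ exists iff $a_{\rho}\ge0$ for all $\rho\in\operatorname{supp}(x)$; and the Hilbert--Mumford function $\mu^{\chi_{D}}(x,\lambda)$ of \Cref{HilbMumAffine}, the Kempf-optimal destabilizing one-parameter subgroup attached to each $x$ (the maximiser of $\langle\chi_{D},\cdot\rangle/\|\cdot\|$ over the admissible cone of $x$, \cite{MR506989}), the Bialynicki--Birula limit loci and the residual semistability conditions cutting out the strata, and Hesselink's degree of instability of each stratum (\cite{MR553706}) are therefore the same functions of $(x,\lambda)\in Z(\Sigma)\times R$ for the two setups, as is the partial order on strata they determine.

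It then follows formally that for each $\lambda\in\pmb{\Gamma}(G_{1})_{\mathbf{Q}}$ the strata $S^{\varphi^{\ast}(\chi_{D})}_{\lambda}$ and $S^{\chi_{D}}_{\varphi(\lambda)}$ coincide as locally closed subsets of $Z(\Sigma)$, that $\lambda\mapsto\varphi(\lambda)$ is a bijection of the two index sets, and that it matches degrees of instability and hence preserves the order in both directions --- that is, the displayed assignment is an equivalence of stratifications. For $\mathbf{R}$-ample $D$ the identical argument applies with $\mathbf{Q}$ replaced by $\mathbf{R}$ (condition (2) for integral relations still forces $R_{1,\mathbf{R}}=R_{2,\mathbf{R}}$), using the extension of Cox's description to $\mathbf{R}$-ample divisors in \Cref{Extensions_to_the_ample_cone}. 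The genuinely delicate point is the claim underlying the previous paragraph --- that the Kempf--Hesselink stratification and its order see nothing beyond that triple. Two things need care: the norm must be $\varphi$-equivariant, which is why I insist on the restricted Euclidean norm rather than one pulled back through a chosen basis of $\pmb{\Gamma}(G_{i})$; and $\pmb{\Gamma}(G_{1})$ and $\pmb{\Gamma}(G_{2})$ can differ integrally by torsion (as in the Warning, where $\Cl$ acquires a $\mathbf{Z}/5$), so one must know the stratification and its partial order are insensitive to this --- which is precisely why the statement, and this proof, are phrased over $\mathbf{Q}$ and $\mathbf{R}$, where $\pmb{\Gamma}(G_{1})_{\mathbf{Q}}=\pmb{\Gamma}(G_{2})_{\mathbf{Q}}$ on the nose.
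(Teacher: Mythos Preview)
Your proposal is correct and follows essentially the same route as the paper: both arguments observe that under the common embedding $\pmb{\Gamma}(G_i)_{\mathbf{Q}}\hookrightarrow\mathbf{Q}^{\Sigma(1)}$ the map $\varphi$ is norm-preserving, that the adjunction $\langle\varphi^{\ast}(\chi_D),\lambda\rangle=\langle\chi_D,\varphi(\lambda)\rangle$ holds, and hence that the normalized Hilbert--Mumford functions and their optimizers on each cone $\sigma_x$ coincide under $\varphi$, giving the strata-preserving, order-preserving bijection. The only cosmetic difference is that you make the identifications explicit and treat $\varphi$ as the identity on $R\subset\mathbf{Q}^{\Sigma(1)}$, whereas the paper keeps $\varphi$ abstract and records the key equality as $\frac{\langle\varphi^{\ast}(\chi_D),\lambda\rangle}{\|\lambda\|_1}=\frac{\langle\chi_D,\varphi(\lambda)\rangle}{\|\varphi(\lambda)\|_2}$; one minor slip is that the Kempf one-parameter subgroup is the \emph{minimizer} of $\langle\chi_D,\cdot\rangle/\|\cdot\|$, not the maximizer.
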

\subsection{VSIT vs. VGIT}
Suppose $X$ is a variety endowed with an action by a reductive group $G$ over $\mathbf{C}$. It is defined in VGIT that two $G$-linearized ample line bundles are \emph{GIT-equivalent} if their semistable loci in $X$ are the same. We can think about a finer equivalence than GIT-equivalence. We define two $G$-linearized ample line bundles to be \emph{SIT-equivalent} if they induce equivalent stratifications of $X$. Since the stratifications induced by two SIT-equivalent line bundles must have a common stratum of the least instability, which is the semistable locus, SIT-equivalence is finer than GIT-equivalence among $G$-linearized ample line bundles.

While GIT-equivalence classes are well-understood by VGIT, very little is known about the geometry of SIT-equivalence classes. For example, it is known that GIT-equivalence classes on $X$ corresponds to relative interiors of a collection of convex rational polyhedral cones that is named by \cite{MR1772211} as a \emph{GIT fan for the action of }$G$ \emph{on} $X$. In \Cref{blow_up_2_pts}, we will see that a semi-chamber does have to be convex and its closure may not be a polyhedral cone. Moreover, in VGIT different chambers correspond to different GIT-equivalence classes. We will  see an example in \Cref{Blow-up of the Hirzebruch surface at a point} where two different semi-chambers are contained in a single SIT-equivalence class. This phenomenon also shows that wall crossing is not sufficient for variations of stratifications but only necessary (\Cref{thmA}). 

\subsection{Outline of the paper}
This paper is notation heavy. In view of this, we start \Cref{Some linear algebra}, \Cref{Instability in invariant theory}, \Cref{Recap on toric varieties}, and \Cref{Toric_VSIT_Section} with conventions and notations. If at any point of a section the reader forgets what certain symbols stand for, the reader may consult the beginning of the section.

Most of the analysis and computations in this paper come down to basic linear algebra, especially linear programming. We therefore introduce results from linear programming that will be used later at the very beginning, \Cref{Some linear algebra}. 

In \Cref{Instability in invariant theory}, we then recall various important IIT theorems, including the Hilbert Mumford criterion (\Cref{HilbMumAffine}), Kempf's result (\Cref{finitenesstheorem}) about maximally destablizing one parameter subgroups, and Hesselink's stratification induced by a character (\Cref{HesAffine}). 

In \Cref{Recap on toric varieties} we recall Cox's GIT quotient construction of projective toric varieties, a numerical criterion for a divisor to be ample (\Cref{numampleness}), and the the description of the unstable locus using primitive collections (\Cref{bad_locus_ample}). 

\Cref{Toric_VSIT_Section} is the heart of the paper. We first extend the notions of stability and stratfications induced by ample divisors to those induced by $\mathbf{R}$-ample divisors in \Cref{Extensions_to_the_ample_cone}, allowing us to analyze VSIT in the entire ample cone. We then introduce the notion of walls (\Cref{walls_def}) and semi-chambers (\Cref{semi-chamber_def}) in the ample cone at \Cref{wall and semi-chamber} and prove the main result \Cref{thmA} at \Cref{The main results}. In \Cref{Variation of stratification-an_elementary_example} we supply an example where two types of walls and two types of variations are present in the ample cone and where walls and semi-chambers corresponds to SIT-equivalence classes. We then spend the rest of \Cref{Toric_VSIT_Section} exploring the connections of VSIT to the combinatorics of the fan in \Cref{Relations to the structure of the fan} where another main result \Cref{thmC} is established.

Due to the computability offered by toric varieties, we wrote a computer program to generate a number of examples. \Cref{The computer program} explains the functions and logic of the program and in \Cref{Examples_and_counter_examples}, we present some examples computed by the program to address certain questions we have.
\subsection{Acknowledgements}
This paper is a part of my PhD thesis. I would like to thank my PhD advisor Jarod Alper for many insightful directions and his patience for guiding me through the revision of my PhD thesis. Many results in this paper were also inspired by discussions with Dan Edidin, Matthew Satriano, and encouragements from Zinovy Reichstein and Jack Hall. 
\section{Some linear algebra}\label{Some linear algebra}
This section records all results from linear programming and linear algebra necessary for later discussions. \Cref{Distance to linear subspaces} will be used when we discuss type two walls at \Cref{wall and semi-chamber}.
\subsection{Notations}\label{Notations}
Let $V$ be a finite dimensional real vector space endowed with an inner product $(-,-):V\times V\rightarrow\mathbf{R}.$ Let
$||-||:V\rightarrow\mathbf{R}$ be the induced norm on $V$. The space $V$ and
any subspace $W\subset V$ have the induced metric topology. Obviously the Pythagorean law holds: If $(v,w)=0$,
then \begin{equation}\label{Pythagorean}||v+w||^{2}=||v||^{2}+||w||^{2}.\end{equation} 

For any vector $v\in V$ and any subspace $W\subset V$, we set
$\Proj_{W}v$ to be \emph{the projection of $v$ onto $W$ along the orthogonal complement}
$W^{\perp}$. Namely, $$\Proj_{W}v\in W \text{ and }(v-\Proj_{W}v,w)=0\text{
  for all }w\in W.$$

If $f:V\rightarrow\mathbf{R}$ is a linear
functional, we let \begin{itemize}
    \item $f^{\ast}\in V$ be the unique vector in $V$ such that  
\begin{equation}\label{normal_vector}f(v)=(v,f^{\ast})\text{ for all }v\in V.\end{equation}
    \item $H_{f}$ be the hyperplane $\{v\in V\vert f(v)=0\}$, and 
    \item $H_{f}^{+}$ be the half-space $\{v\in V|f(v)\geq 0\}$.\end{itemize}

\emph{A polyhedral cone} in $V$ is an intersection of finitely many half-spaces.  Let $\sigma$ be a polyhedral cone. The hyperplane $H_{f}$ is a \emph{supporting hyperplane for} $\sigma$ if
$\sigma\subset H_{f}^{+}$. A \emph{face} $F$ of $\sigma$ is
$H_{f}\cap\sigma$ for some supporting hyperplane $H_{f}$ of
$\sigma$. Notation-wise we write $$F\preceq\sigma.$$ Obviously any
face of a polyhedral cone is a polyhedral cone. The \emph{relative interior}
of a polyhedral cone $\sigma$ is the interior of $\sigma$ in the
subspace spanned by $\sigma$ and is written as $$\Relint(\sigma).$$ We note without proof
that for a polyhedral cone $\sigma$, its collection of faces is finite
and \begin{equation}\sigma=\bigsqcup_{F\preceq\sigma}\Relint(F).\end{equation}\label{facialdecomposition}

Finally, for a linear functional $f:V\rightarrow\mathbf{R}$ and a polyhedral cone $\sigma\subset V$, we define 
the following finite set of vectors $$\Lambda^{f}_{\sigma}=\{-\Proj_{\Sp(F)}f^{\ast}\bigm|-\Proj_{\Sp(F)}f^{\ast}\in\sigma,F\preceq\sigma\}$$ where $\Sp(F)$ is the linear subspace spanned by $F$.

\subsection{Linear programming}\label{Linear programming}
This section presents results from linear programming that will be used extensively throughout the paper, especially \Cref{compute_cone_min}. Specifically Kempf's theorem \Cref{finitenesstheorem}, the structure of strata described by \Cref{strata_struct}, and the computation of stratifications, the formulation of walls later at \Cref{Toric_VSIT_Section} all depend on results from this section. 

The story begins with the following theorem whose proof is a direct modification of the proof presented in \cite{MR506989} and is omitted.
\begin{theorem}\label{unique_cone_min}
Let $S\subset V$ be the unit sphere and $\sigma\subset V$ be a polyhedral cone. 
Suppose $f:V\rightarrow\mathbf{R}$ is a linear functional that assumes a negative value at some point $v\in \sigma$. Then 
$f$ attains the relative minimum on $S\cap \sigma$ at a unique point $s$. In this case, if $F\preceq\sigma$ is the face of $\sigma$ such that $s\in\Relint(F)$, then $$s=-\frac{\Proj_{\Sp(F)}f^{\ast}}{||\Proj_{\Sp(F)}f^{\ast}||}.$$ 
\end{theorem}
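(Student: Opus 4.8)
The plan is to handle existence and uniqueness of the minimizer by soft arguments --- compactness and strict convexity of the Euclidean ball --- and then to pin down its value by localizing to the face $F$ whose relative interior contains it, where the problem becomes that of minimizing a linear functional on the unit sphere of $\Sp(F)$.

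For existence, $S\cap\sigma$ is closed and bounded, hence compact, so the continuous functional $f$ attains a minimum on it; and because $\sigma$ is a cone on which $f$ is negative somewhere, $f$ is negative at a unit vector of $\sigma$, so the minimum value $m$ is negative. For uniqueness I would argue by contradiction: if $f(s_1)=f(s_2)=m$ with $s_1\neq s_2$ in $S\cap\sigma$, then the midpoint $w=\tfrac12(s_1+s_2)$ lies in the convex set $\sigma$, still satisfies $f(w)=m$, but has $\|w\|<1$ by strict convexity of the unit ball (and $w\neq 0$, since $s_2=-s_1$ would force $f(s_2)=-m\neq m$). Rescaling $w$ to the sphere and using $m<0$ gives a point of $S\cap\sigma$ with $f$-value $m/\|w\|<m$, contradicting minimality.

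For the formula, let $F\preceq\sigma$ be the face with $s\in\Relint(F)$, set $W=\Sp(F)$ and $g=f|_W$. I would first observe that $g\neq 0$ (indeed $g(s)=f(s)=m<0$) and that the vector of $W$ representing $g$ is exactly $p:=\Proj_W f^*$, because $f(w)=(w,f^*)=(w,\Proj_W f^*)$ for all $w\in W$; in particular $p\neq 0$. The key point is that, since $\Relint(F)$ is relatively open in $W$ and contained in $\sigma$, a full $W$-neighborhood of $s$ meets $S$ inside $S\cap\sigma$, so that $s$ is a \emph{local} minimizer of $g$ on the unit sphere $S\cap W$ of $W$. Minimizing the linear functional $w\mapsto(w,p)$ on $S\cap W$ has, by Cauchy--Schwarz, the unique global minimizer $-p/\|p\|$, and a first-variation computation along great circles shows that no other point of $S\cap W$ is even a local minimizer (the gradient condition forces a candidate to be $\pm p/\|p\|$, and $p/\|p\|$ is a strict local maximizer). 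Therefore $s=-p/\|p\|=-\Proj_W f^*/\|\Proj_W f^*\|$.

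I expect the only genuine obstacle to be the characterization step, and within it the need to use \emph{relative} interior honestly: one must know that a neighborhood of $s$ in all of $W$ --- not merely a one-sided cone --- lies in $\sigma$, which is precisely what upgrades global minimality over $S\cap\sigma$ to local (hence, for a linear functional on a sphere, global) minimality over $S\cap W$. Everything else reduces to compactness, strict convexity of the ball, and Cauchy--Schwarz with a second-derivative check on the sphere.
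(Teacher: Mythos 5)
Your proof is correct. Note that the paper itself omits the argument, stating only that it is ``a direct modification of the proof presented in \cite{MR506989},'' so there is no in-text proof to compare against; your route---compactness on $S\cap\sigma$ for existence, strict convexity of the unit ball (via the midpoint and $m<0$) for uniqueness, and then localization to the face $F$ with $s\in\Relint(F)$---is the standard one and is almost certainly the intended modification of Kempf's argument. The key localization step is clean under the paper's conventions: $\Relint(F)$ is by definition the interior of $F$ in $\Sp(F)$, so $s$ is automatically an interior minimizer of $f|_{\Sp(F)}$ on the sphere $S\cap\Sp(F)$, and your Cauchy--Schwarz/critical-point analysis of the linear functional $w\mapsto(w,\Proj_{\Sp(F)}f^{\ast})$ then forces $s=-\Proj_{\Sp(F)}f^{\ast}/\|\Proj_{\Sp(F)}f^{\ast}\|$ as claimed.
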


\begin{corollary}\label{compute_cone_min}
Let $S\subset V$ be the unit sphere and $\sigma\subset V$ be a polyhedral cone. 
Suppose $f:V\rightarrow\mathbf{R}$ is a linear functional that attains a negative value at some point $v\in \sigma$. Let $s\in S\cap \sigma$ be the unique point where $f$ achieves the relative minimum on $S\cap \sigma$. Then there is a unique vector $v\in\Lambda^{f}_{\sigma}$ such that 
$$s=\frac{v}{||v||}.$$ In this case 
     $||v||\geq||v^{\prime}||$ for all $v^{\prime}\in\Lambda^{f}_{\sigma}$ and $$f(s)=-||v||.$$
\end{corollary}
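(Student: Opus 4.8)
The plan is to derive everything from \Cref{unique_cone_min} together with one elementary identity about orthogonal projections. The identity is: if $F'\preceq\sigma$ and $w:=-\Proj_{\Sp(F')}f^{\ast}\neq 0$, then the unit vector $s':=w/\|w\|$ lies in $S\cap\sigma$ and $f(s')=-\|w\|$. Indeed $w\in\sigma$ (this is part of what it means for $w$ to lie in $\Lambda^{f}_{\sigma}$) and $\sigma$ is a cone, so $s'\in S\cap\sigma$; and by (\ref{normal_vector}), $f(s')=(s',f^{\ast})=\|w\|^{-1}(w,f^{\ast})=-\|w\|^{-1}(\Proj_{\Sp(F')}f^{\ast},f^{\ast})$. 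Writing $f^{\ast}=\Proj_{\Sp(F')}f^{\ast}+(f^{\ast}-\Proj_{\Sp(F')}f^{\ast})$, with the second summand orthogonal to $\Sp(F')$ and hence to $\Proj_{\Sp(F')}f^{\ast}\in\Sp(F')$, gives $(\Proj_{\Sp(F')}f^{\ast},f^{\ast})=\|\Proj_{\Sp(F')}f^{\ast}\|^{2}=\|w\|^{2}$, so $f(s')=-\|w\|$. Thus the identity turns each nonzero element of $\Lambda^{f}_{\sigma}$ into a point of $S\cap\sigma$ whose $f$-value is the negative of its norm.

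Next I would handle existence and the equality $f(s)=-\|v\|$. Let $F\preceq\sigma$ be the face with $s\in\Relint(F)$ furnished by \Cref{unique_cone_min}, and put $v:=-\Proj_{\Sp(F)}f^{\ast}$. The theorem asserts $s=v/\|v\|$; in particular $v\neq 0$ (its normalization is the unit vector $s$), and since $v=\|v\|\,s$ with $s\in\sigma$ and $\sigma$ a cone, $v\in\sigma$, so $v\in\Lambda^{f}_{\sigma}$. This already exhibits a $v\in\Lambda^{f}_{\sigma}$ with $s=v/\|v\|$, and feeding $v$ into the identity above yields $f(s)=-\|v\|$, the last claimed equality.

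For the remaining two assertions I would compare $f$-values on $S\cap\sigma$. Let $v'\in\Lambda^{f}_{\sigma}$ be arbitrary. If $v'=0$ then $\|v'\|=0\leq\|v\|$. If $v'\neq 0$, the identity produces $s':=v'/\|v'\|\in S\cap\sigma$ with $f(s')=-\|v'\|$, and since $s$ is the point where $f$ attains its relative minimum on $S\cap\sigma$ we obtain $-\|v\|=f(s)\leq f(s')=-\|v'\|$, i.e.\ $\|v'\|\leq\|v\|$. For uniqueness, suppose $v''\in\Lambda^{f}_{\sigma}$ also satisfies $s=v''/\|v''\|$; then $v''\neq 0$, so the identity gives $f(s)=-\|v''\|$, which combined with $f(s)=-\|v\|$ forces $\|v''\|=\|v\|$ and hence $v''=\|v''\|\,s=\|v\|\,s=v$.

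I do not anticipate a genuine obstacle here: once \Cref{unique_cone_min} is available, the argument is bookkeeping. The only subtlety is the normalization step — one needs that the projection $\Proj_{\Sp(F)}f^{\ast}$ attached to the minimizing face $F$ is nonzero, which is implicit in the statement of \Cref{unique_cone_min} (otherwise its formula for $s$ would read $0/0$), and one must isolate the possible element $0\in\Lambda^{f}_{\sigma}$ in the norm inequality, exactly as done above.
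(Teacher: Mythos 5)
Your proof is correct and follows essentially the same route as the paper: the central step in both is the computation $f\bigl(-\Proj_{\Sp(F)}f^{\ast}/\|\Proj_{\Sp(F)}f^{\ast}\|\bigr)=-\|\Proj_{\Sp(F)}f^{\ast}\|$, combined with \Cref{unique_cone_min}. You merely spell out a few bookkeeping points the paper compresses (that the minimizer's projection vector lies in $\sigma$ and hence in $\Lambda^{f}_{\sigma}$, the $v'=0$ case, and the derivation of uniqueness from equality of norms), which is harmless and if anything slightly more careful than the original.
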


\begin{proof}
For any $F\preceq \sigma$, we have \begin{equation*}\begin{split}
    f\bigg(\frac{-\Proj_{\Sp(F)}f^{\ast}}{||\Proj_{\Sp(F)}f^{\ast}||}\bigg)= &\frac{(f^{\ast},-\Proj_{\Sp(F)}f^{\ast})}{||\Proj_{\Sp(F)} f^{\ast}||}=\frac{(\Proj_{\Sp(F)}f^{\ast},-\Proj_{\Sp(F)}f^{\ast})}{||\Proj_{\Sp(F)}f^{\ast}||}\\
    =&-\frac{||\Proj_{\Sp(F)}f^{\ast}||^{2}}{||\Proj_{\Sp(F)}f^{\ast}||}=-||\Proj_{\Sp(F)}f^{\ast}||.
\end{split}\end{equation*}
Hence any $v\in\Lambda^{f}_{\sigma}$ that has the longest length will achieve the relative minimum of $f$ on $\sigma$ by \Cref{unique_cone_min}. The statement that there is a unique longest $v$ in $\Lambda^{f}_{\sigma}$ follows from the uniqueness of $s$. 
\end{proof}

Let $f:V\rightarrow\mathbf{R}$ be a linear functional that takes a negative value on $\sigma$. We see that \Cref{compute_cone_min} reduces the search range for $s\in\sigma\cap S$ where $f$ attains the relative minimum to the finite set $\Lambda^{f}_{\sigma}$. We will use \Cref{compute_cone_min} repeatedly in the context of toric GIT later at \Cref{Toric_VSIT_Section}.

We now specialize \Cref{unique_cone_min} with some rationality assumptions. \Cref{unique_cone_min_rational} below is the basis of Kempf's theorem \Cref{finitenesstheorem}. For this we fix a lattice $M\subset V$ such that $M\otimes_{\mathbf{Z}}\mathbf{R}=V$. A lattice point $m\in M$ is \emph{indivisible} if $m=N\cdot m^{\prime}$ for some $N\in\mathbf{N}$ and $m^{\prime}\in M$ implies $N=1$. \emph{A rational polyhedral cone (with respect to M)} is a polyhedral cone whose supporting hyperplanes are define by linear functionals that take integral values on $M$. 
\begin{corollary}\label{unique_cone_min_rational}
$M\subset V$ be a lattice with $M\otimes_{\mathbf{Z}}\mathbf{R}=V$, $\sigma\subset V$ be a
rational polyhedral cone and $S\subset V$ be the unit sphere. Suppose the inner
product $(-,-)$ takes integral values on $M\times M$ and $f:V\rightarrow\mathbf{R}$ is a
linear functional that takes integral values on $M$. If $f$ assumes a negative value on
$\sigma$ and $f$ attains the relative minimum at $s$ on $S\cap \sigma$, then the ray $\mathbf{R}_{>0}\cdot s$ contains a unique nonzero indivisible lattice point in $M$. 
\end{corollary}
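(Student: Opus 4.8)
The plan is to reduce everything to showing that $\Proj_{\Sp(F)}f^{\ast}$ is a \emph{nonzero} vector lying in $M_{\mathbf{Q}}:=M\otimes_{\mathbf{Z}}\mathbf{Q}$, where $F\preceq\sigma$ is the face with $s\in\Relint(F)$ supplied by \Cref{unique_cone_min}. Indeed, by that theorem $s=-\Proj_{\Sp(F)}f^{\ast}/\lVert\Proj_{\Sp(F)}f^{\ast}\rVert$, so the ray $\mathbf{R}_{>0}\cdot s$ equals $\mathbf{R}_{>0}\cdot(-\Proj_{\Sp(F)}f^{\ast})$; if the latter vector is a nonzero element of $M_{\mathbf{Q}}$ then $\mathbf{R}_{>0}\cdot s$ is a rational ray, and the statement follows from the elementary fact that a rational ray contains a unique nonzero indivisible lattice point. (For uniqueness: two indivisible lattice points $m,m'$ on a common ray satisfy $m'=(p/q)m$ with $p,q\in\mathbf{N}$ coprime, hence $qm'=pm$, and indivisibility of $m'$ and of $m$ forces $q=p=1$.)

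First I would check that $f^{\ast}\in M_{\mathbf{Q}}$. Fix a $\mathbf{Z}$-basis $e_{1},\dots,e_{n}$ of $M$. By the defining relation $(\ref{normal_vector})$, the coordinate vector $c$ of $f^{\ast}$ in this basis solves the linear system $Gc=b$, where $G=\big((e_{i},e_{j})\big)_{i,j}$ is the Gram matrix and $b=\big(f(e_{j})\big)_{j}$. By hypothesis $G$ and $b$ have integer entries, and $G$ is invertible since $(-,-)$ is a (positive definite) inner product; hence $c=G^{-1}b\in\mathbf{Q}^{n}$ and $f^{\ast}\in M_{\mathbf{Q}}$. Next I would record that $\Sp(F)$ is a rational subspace: as $\sigma$ is a rational polyhedral cone its face $F$ is again a rational polyhedral cone, hence generated by finitely many vectors of $M_{\mathbf{Q}}$, so $\Sp(F)$ admits a basis $w_{1},\dots,w_{k}\in M_{\mathbf{Q}}$.

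The core step is that the orthogonal projection of a vector of $M_{\mathbf{Q}}$ onto a rational subspace again lies in $M_{\mathbf{Q}}$. Writing $\Proj_{\Sp(F)}f^{\ast}=\sum_{i}a_{i}w_{i}$, the defining property of the projection gives $\sum_{i}a_{i}(w_{i},w_{j})=(f^{\ast},w_{j})$ for every $j$. The matrix $\big((w_{i},w_{j})\big)_{i,j}$ has rational entries, because the inner product takes rational values on $M_{\mathbf{Q}}\times M_{\mathbf{Q}}$ (it is integral on $M\times M$), and it is invertible, being the Gram matrix of a basis; the right-hand side $(f^{\ast},w_{j})$ is rational since $f^{\ast},w_{j}\in M_{\mathbf{Q}}$. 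Hence each $a_{i}\in\mathbf{Q}$ and $\Proj_{\Sp(F)}f^{\ast}\in M_{\mathbf{Q}}$. Moreover this vector is nonzero, since it appears in the denominator of the formula for $s$ in \Cref{unique_cone_min}. Clearing denominators then yields $N\in\mathbf{N}$ with $-N\cdot\Proj_{\Sp(F)}f^{\ast}\in M\setminus\{0\}$ lying on $\mathbf{R}_{>0}\cdot s$, which together with the first paragraph completes the proof.

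Since the argument is just a chain of standard rationality facts, I do not anticipate a genuine obstacle; the only points that require a little care are confirming that $\Sp(F)$ is a rational subspace (faces of rational cones are rational cones, hence spanned by vectors of $M_{\mathbf{Q}}$) and that the Gram matrix of a rational basis is rational and invertible, so that both linear systems above admit rational solutions.
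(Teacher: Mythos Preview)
Your proof is correct and follows exactly the approach of the paper: use the description of $s$ from \Cref{unique_cone_min} and verify that the rationality hypotheses force $-\Proj_{\Sp(F)}f^{\ast}$ to lie in $M_{\mathbf{Q}}\setminus\{0\}$. The paper's proof is a two-line sketch that declares this verification ``easy to check''; you have simply supplied the details (rationality of $f^{\ast}$ via the integral Gram matrix, rationality of $\Sp(F)$ for faces of rational cones, and rationality of the projection via solving a rational linear system), together with the routine uniqueness argument for indivisible lattice points on a rational ray.
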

\begin{proof}
It is easy to check that the rationality assumptions imposed here imply that the ray $\mathbf{R}_{>0}\cdot(-\Proj_{\Sp(F)}f^{\ast})$ intersects $M$ away from $0$ for each face $F\preceq\sigma$ with $\Proj_{\Sp(F)}f^{\ast}\neq 0$. Now apply the description of $s$ in \Cref{unique_cone_min}.
\end{proof}
Below is an elementary observation that will be used later in \Cref{Extensions_to_the_ample_cone} to formulate stratifications induced by $\mathbf{R}$-ample divisors on a projective toric variety.
\begin{lemma}\label{lem_for_indexing_projections}
Let $v\in V$ and $W_{1},W_{2}$ be two subspaces of $V$. Then the following two conditions for $\Proj_{W_{1}}v$ and $\Proj_{W_{2}}v$ are equivalent:
\begin{enumerate}
    \item $\Proj_{W_{1}}v$ and $\Proj_{W_{2}}v$ are linearly dependent.
    \item $\Proj_{W_{1}}v=\Proj_{W_{2}}v$,
\end{enumerate}
In this case, we actually have $\Proj_{W_{1}}v=\Proj_{W_{1}\cap W_{2}}v=\Proj_{W_{2}}v$
\end{lemma}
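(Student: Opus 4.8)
The plan is to derive everything from the defining property of the orthogonal projection — $\Proj_{W}v$ is the unique $p\in W$ with $v-p\perp W$ — and from the identity it yields, namely $(v,\Proj_{W}v)=\|\Proj_{W}v\|^{2}$, which holds because $(v,p)=(v-p,p)+(p,p)=\|p\|^{2}$ whenever $p=\Proj_{W}v$.

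For the equivalence, $(2)\Rightarrow(1)$ is immediate, so the work is in $(1)\Rightarrow(2)$. Writing $p_{i}=\Proj_{W_{i}}v$, I would first dispose of the trivial case $p_{1}=p_{2}=0$; in the remaining case both projections are nonzero (which is the only case that occurs in the later applications, where these vectors index strata). Then linear dependence forces $p_{2}=\lambda p_{1}$ for some $\lambda\neq0$, and I would compute $\|p_{2}\|^{2}$ in two ways: directly, $\|p_{2}\|^{2}=\lambda^{2}\|p_{1}\|^{2}$; and through the identity above, applied to $W_{2}$ and then to $W_{1}$,
\[
\|p_{2}\|^{2}=(v,p_{2})=\lambda\,(v,p_{1})=\lambda\,\|p_{1}\|^{2}.
\]
Since $p_{1}\neq0$, cancelling $\|p_{1}\|^{2}$ gives $\lambda^{2}=\lambda$, hence $\lambda=1$ and $p_{1}=p_{2}$.

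For the final assertion, once $p:=p_{1}=p_{2}$ is known, I would note that $p\in W_{1}$ and $p\in W_{2}$, so $p\in W_{1}\cap W_{2}$; and since $W_{1}\cap W_{2}\subseteq W_{1}$ while $v-p\perp W_{1}$, also $v-p\perp W_{1}\cap W_{2}$. Uniqueness of the orthogonal projection onto $W_{1}\cap W_{2}$ then forces $p=\Proj_{W_{1}\cap W_{2}}v$, which yields the chain $\Proj_{W_{1}}v=\Proj_{W_{1}\cap W_{2}}v=\Proj_{W_{2}}v$.

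The argument is short and I expect no genuine obstacle; the only points needing care are the case split according to whether the projections vanish (the substance lies in the nonzero case) and the observation that linear dependence is useless on its own — one must feed in the orthogonality/minimality of the projection through the identity $(v,\Proj_{W}v)=\|\Proj_{W}v\|^{2}$.
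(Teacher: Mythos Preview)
Your proof is correct and takes a genuinely different route from the paper's. The paper first observes that linear dependence forces both $p_1$ and $p_2$ into $W_1\cap W_2$, then uses the tower property $\Proj_{W_1\cap W_2}(\Proj_{W_1}v)=\Proj_{W_1\cap W_2}v$ together with $\Proj_{W_1\cap W_2}(p_1)=p_1$ (since $p_1\in W_1\cap W_2$) to get $p_1=\Proj_{W_1\cap W_2}v$, and symmetrically $p_2=\Proj_{W_1\cap W_2}v$; equality of $p_1$ and $p_2$ drops out as a consequence. You instead establish $p_1=p_2$ directly via the scalar identity $(v,\Proj_{W}v)=\|\Proj_{W}v\|^2$, deducing $\lambda^2=\lambda$ from two computations of $\|p_2\|^2$, and only afterward identify the common value with $\Proj_{W_1\cap W_2}v$. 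The paper's route proves the equivalence and the refinement in one stroke and foregrounds the role of $W_1\cap W_2$; yours is a clean computational shortcut that separates the two conclusions and avoids invoking the tower property.

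One small point: your case split is not quite complete. Negating $p_1=p_2=0$ yields ``at least one nonzero'', not ``both nonzero''. In the case where exactly one projection vanishes the vectors are linearly dependent but unequal, so the lemma as literally stated fails there; the paper's own proof tacitly excludes this case as well (its assertion that both projections lie in $W_1\cap W_2$ already presumes it), and as you note it does not arise in the applications.
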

\begin{proof}
Condition (2) obviously implies condition (1). Let us show (1) implies (2). For this, condition (1) implies both $\Proj_{W_{1}}v$ and $\Proj_{W_{2}}v$ are in $W_{1}\cap W_{2}$. Since $\Proj_{W_{1}}v\in W_{1}\cap W_{2}$, we have 
$$\Proj_{W_{1}}v=\Proj_{W_{1}\cap W_{2}}(\Proj_{W_{1}}v)=\Proj_{W_{1}\cap W_{2}}v.$$ Similarly, we have $\Proj_{W_{2}}v=\Proj_{W_{1}\cap W_{2}}v$. The lemma is proved. 
\end{proof}

\subsection{Distance to linear subspaces}\label{Distance to linear subspaces}
This section is intended to understand the structure of type two walls described by \Cref{struct_walls}. However, readers do not need the knowledge from this section to understand the definition of type two walls given at \Cref{wall and semi-chamber}. Accordingly readers may return to this section if needed.

It turns out that type two walls defined in \Cref{wall and semi-chamber} corresponds to collections of vectors that solve an equi-distance problem which we describe in this section. The main result \Cref{type_two_wall_is_codim_1} of this section will be translated into \Cref{struct_walls} to describe the structure of type two walls for simplicial projective toric varieties. 

Let $v$ be a vector in $V$ and $W\subset V$ be a subspace. We define the distance from $v$ to $W$ to be 
$$\dist(v,W)=||v-\Proj_{W}v||.$$ Our interest is to describe the set of vectors 
$$\{v\in V\vert \dist(v,W_{1})=\dist(v,W_{2})\}$$ that are equi-distant to a pair of subspaces $W_{1},W_{2}$. First note that the definition of $\dist(v,W)$ makes sense for if $w\in W$ is a vector, then 
\begin{equation*}
    \begin{split}
        ||v-w||&=||v-\Proj_{W}v+\Proj_{W}v-w||\\
        &=\sqrt{||v-\Proj_{W}v||^{2}+||\Proj_{W}v-w||^{2}}\geq ||v-\Proj_{W}v||
    \end{split}
\end{equation*}
by the Pythagorean law \Cref{Pythagorean}.
Namely, $\Proj_{W}v$ is closest to $v$ among all vectors in $W$.

There is an explicit way to compute $\dist(v,W)$. Let
$\mathscr{B}=\{f_{1}^{\ast},\ldots,f_{\epsilon}^{\ast}\}$ be an orthonormal basis
for $W^{\perp}.$ Equivalently,
$W$ is cut out by the linear functionals $\{f_{1},\ldots, f_{\epsilon}\}$
corresponding to $\mathscr{B}$. Then $$v-\Proj_{W}v=\Proj_{W^{\perp}}v=\sum_{i=1}^{\epsilon}f_{i}(v)\cdot
f^{\ast}_{i}.$$ Therefore,
\begin{equation*}
\dist(v,W)=\sqrt{\sum_{i=1}^{\epsilon}f_{i}(v)^{2}}.
\end{equation*}
Now if $W_{1},W_{2}$ are two subspaces of $V$, let 
\begin{itemize} 
\item $\{f_{1}^{\ast},\ldots,f^{\ast}_{\epsilon}\}$ be a basis of $W_{1}^{\perp}\cap W_{2}^{\perp}$,
\item $\mathscr{A}=\{f_{1}^{\ast},\ldots,f_{\epsilon}^{\ast},g_{1}^{\ast},\ldots, g_{m}^{\ast}\}$ be a basis of $W_{1}^{\perp}$, and
\item $\mathscr{B}=\{f_{1}^{\ast},\ldots, f_{\epsilon}^{\ast},h_{1}^{\ast},\ldots, h_{n}^{\ast}\}$ be a basis of $W_{2}^{\perp}$.
\end{itemize}
Applying the Gram Schmidt process, we may assume both $\mathscr{A},\mathscr{B}$ are orthonormal. With these it follows that 
\begin{equation}\label{equi_distance_equation}\dist(v,W_{1})=\dist(v,W_{2})\Leftrightarrow\sum_{i=1}^{m}g_{i}(v)^{2}=\sum_{j=1}^{n}h_{j}(v)^{2}\end{equation} Moreover, it follows from the construction that $\mathscr{A}\cup\mathscr{B}$ is a basis of $W_{1}^{\perp}+W_{2}^{\perp}$ so it can be extended to a full basis $\mathscr{C}$ of $V$. With the coordinate induced by the basis $\mathscr{C}$, \Cref{equi_distance_equation} looks like 
\begin{equation}\label{equi-distant_simplified}x_{1}^{2}+\cdots+x_{m}^{2}=y_{1}^{2}+\cdots+y_{n}^{2}\end{equation} where $m$ and $n$ are codimensions of $W_{1}$ and $W_{2}$ in $W_{1}+W_{2}$ respectively.

We now introduce some terminologies from differential geometry to describe the structure of the set of equi-distant vectors. Let $F:N\rightarrow M$ be a $C^{\infty}$ map between real
manifolds. We say $c\in M$ is a \emph{regular value of F} if either
$c$ is not in the image of $F$ or at every point $p\in F^{-1}(c)$, the
differential $(\text{d}F)_{p}:T_{p}N\rightarrow T_{F(p)}M$ is
surjective. The inverse image $F^{-1}(c)$ of a regular value $c$ is
called a \emph{regular level set}.
We now recall the regular level set theorem from differential geometry:
\begin{theorem}[Regular level set theorem]\label{regularlevelset}
Let $F:N\rightarrow M$ be a $C^{\infty}$ map of manifolds, with $\dim
N=n$ and $\dim M = m$. Then a nonempty regular level set $F^{-1}(c)$,
where $c\in M$, is a regular submanifold of $N$ of dimension equal to $n-m$.
\end{theorem}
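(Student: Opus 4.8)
The plan is to reduce the statement to a purely local one and deduce it from the Implicit Function Theorem (equivalently, the constant-rank / submersion theorem). Fix a point $p\in F^{-1}(c)$. It suffices to produce a chart of $N$ centered at $p$ in which $F^{-1}(c)$ appears as a coordinate slice of dimension $n-m$: letting $p$ range over $F^{-1}(c)$, the collection of such charts exhibits $F^{-1}(c)$ as a regular (embedded) submanifold of dimension $n-m$, and along the way surjectivity of $(\mathrm dF)_{p}$ forces $n\geq m$, so $n-m\geq 0$.

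First I would pass to local coordinates: pick a chart $(V,\psi)$ of $M$ centered at $c$ and a chart $(U,\phi)$ of $N$ centered at $p$ with $F(U)\subseteq V$, and form the local representative $\widehat F=\psi\circ F\circ\phi^{-1}$, a smooth map from a neighborhood of $0$ in $\mathbf R^{n}$ to $\mathbf R^{m}$ whose differential at $0$ is surjective (the chart differentials being linear isomorphisms). Writing $\mathbf R^{n}=\mathbf R^{m}\times\mathbf R^{n-m}$ and permuting coordinates, I may assume the $m\times m$ block $\partial\widehat F/\partial(x_{1},\dots,x_{m})$ is invertible at $0$.

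Next I would apply the Implicit Function Theorem not to $\widehat F$ directly but to the auxiliary map $G(x_{1},\dots,x_{n})=(\widehat F(x),x_{m+1},\dots,x_{n})$, whose Jacobian at $0$ is block triangular with invertible diagonal blocks; hence $G$ is a diffeomorphism from a neighborhood of $0$ onto a neighborhood of $0$. Replacing $\phi$ by $G\circ\phi$ produces a chart of $N$ around $p$ in which $F$ is the standard projection $(x_{1},\dots,x_{n})\mapsto(x_{1},\dots,x_{m})$, so that $F^{-1}(c)$ is cut out by $x_{1}=\dots=x_{m}=0$, precisely a slice of dimension $n-m$. This is the required submanifold chart, completing the argument.

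There is no deep obstacle here — this is the textbook submersion argument — but the step deserving care is the passage from ``$\widehat F$ has surjective differential'' to an honest slice chart: one must apply the Implicit Function Theorem to the enlarged map $G$ (rather than to $\widehat F$) precisely so that the new coordinates on $N$ are simultaneously adapted to $F$ and restrict correctly to $F^{-1}(c)$, which is what guarantees the level set is regular (embedded, with the subspace topology) rather than merely immersed. The remaining verifications — smoothness of transition maps, and consistency of the slice dimension across overlapping charts — are routine.
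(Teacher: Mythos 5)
Your proof is correct and is the standard textbook submersion argument via the inverse/implicit function theorem. Note, however, that the paper does not actually prove this statement: it is recalled as a known fact from differential geometry (a standard reference would be, e.g., Tu's \emph{An Introduction to Manifolds} or Lee's \emph{Introduction to Smooth Manifolds}), so there is no proof in the paper to compare against. One small terminological remark: the key step you carry out — showing $G(x)=(\widehat F(x),x_{m+1},\dots,x_n)$ is a local diffeomorphism near $0$ because $DG(0)$ is block upper-triangular with invertible diagonal blocks — is an application of the \emph{inverse} function theorem rather than the implicit function theorem, though the two are of course equivalent; the rest of the argument (replacing the chart $\phi$ by $G\circ\phi$ so that $F^{-1}(c)$ becomes the slice $x_1=\dots=x_m=0$) is exactly right and establishes the embedded submanifold structure of dimension $n-m$.
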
 

We may now describe the equi-distant collection in $V$.

\begin{proposition}\label{type_two_wall_is_codim_1}
Let $W_{1}$ and $W_{2}$ be two sub-spaces of $V$ and let $Z$ be the
collection of points $v\in V$ that are equi-distant to $W_{1}$ and
$W_{2}$. If there is a containment, say $W_{1}\subset W_{2}$, then $Z$ is the linear subspace $W_{1}$. If there is no containment between $W_{1}$ and
  $W_{2}$, then $Z$ is a regular submanifold of $V$ of codimension 1 away from a subspace of codimension at least 2.
\end{proposition}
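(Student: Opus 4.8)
The plan is to push everything through the reduction already prepared in this subsection. Keep the orthonormal bases $\mathscr{A}=\{f_{1}^{\ast},\dots,f_{\epsilon}^{\ast},g_{1}^{\ast},\dots,g_{m}^{\ast}\}$ of $W_{1}^{\perp}$ and $\mathscr{B}=\{f_{1}^{\ast},\dots,f_{\epsilon}^{\ast},h_{1}^{\ast},\dots,h_{n}^{\ast}\}$ of $W_{2}^{\perp}$, with $\{f_{i}^{\ast}\}$ an orthonormal basis of $W_{1}^{\perp}\cap W_{2}^{\perp}$, so that by \Cref{equi_distance_equation} the set $Z$ is exactly the zero locus of the quadratic function
\begin{equation*}
Q\colon V\longrightarrow\mathbf{R},\qquad Q(v)=\sum_{i=1}^{m}g_{i}(v)^{2}-\sum_{j=1}^{n}h_{j}(v)^{2}.
\end{equation*}
Two elementary facts I would record first: (i) $m=0$ if and only if $W_{2}\subseteq W_{1}$, and $n=0$ if and only if $W_{1}\subseteq W_{2}$ (immediate, since $m=\dim W_{1}^{\perp}-\dim(W_{1}^{\perp}\cap W_{2}^{\perp})$ and symmetrically for $n$); and (ii) the $m+n$ functionals $g_{1},\dots,g_{m},h_{1},\dots,h_{n}$ are linearly independent, because $\{g_{1}^{\ast},\dots,g_{m}^{\ast},h_{1}^{\ast},\dots,h_{n}^{\ast}\}\subseteq\mathscr{A}\cup\mathscr{B}$ and the latter is a basis of $W_{1}^{\perp}+W_{2}^{\perp}$. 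Thus ``a containment holds'' means $mn=0$ and ``no containment holds'' means $m\geq 1$ and $n\geq 1$.

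\emph{Containment case.} Say $W_{1}\subseteq W_{2}$, so $n=0$ and $Q(v)=\sum_{i=1}^{m}g_{i}(v)^{2}$. Then $v\in Z$ if and only if $g_{1}(v)=\dots=g_{m}(v)=0$, so $Z=\bigcap_{i=1}^{m}\ker g_{i}$ is a linear subspace of $V$; a short computation unwinding the construction of the $g_{i}^{\ast}$ identifies this subspace, which settles this case. The case $W_{2}\subseteq W_{1}$ is symmetric.

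\emph{No-containment case.} Now $m\geq 1$ and $n\geq 1$. Put $L=\bigcap_{i=1}^{m}\ker g_{i}\cap\bigcap_{j=1}^{n}\ker h_{j}$; by fact (ii) this is a linear subspace of codimension $m+n\geq 2$, and clearly $L\subseteq Z$ since $Q$ vanishes on $L$. The differential of $Q$ at $v$ is the functional $\mathrm{d}Q_{v}=2\sum_{i=1}^{m}g_{i}(v)\,g_{i}-2\sum_{j=1}^{n}h_{j}(v)\,h_{j}$, which by fact (ii) is the zero functional precisely when $g_{i}(v)=0$ for all $i$ and $h_{j}(v)=0$ for all $j$, i.e.\ precisely when $v\in L$. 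Hence $0$ is a regular value of the restriction $Q|_{V\setminus L}\colon V\setminus L\to\mathbf{R}$. Moreover $Z\setminus L$ is nonempty: since $(g_{1},\dots,g_{m},h_{1},\dots,h_{n})\colon V\to\mathbf{R}^{m+n}$ is surjective, there is a $v$ with $g_{1}(v)=h_{1}(v)=1$ and all remaining coordinates $0$, and such a $v$ lies in $Z\setminus L$. Applying the regular level set theorem (\Cref{regularlevelset}) to $Q|_{V\setminus L}$, with $N=V\setminus L$ and $M=\mathbf{R}$, shows that $Z\setminus L=(Q|_{V\setminus L})^{-1}(0)$ is a regular submanifold of the open set $V\setminus L\subseteq V$ of dimension $\dim V-1$, hence a regular submanifold of $V$ of codimension $1$. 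As $\operatorname{codim}L\geq 2$, this is exactly the assertion.

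I expect no genuine obstacle here: the conceptual content — that, off the common kernel $L$ of the differentials, $Q$ is an indefinite quadratic form of type $(m,n)$ with $m,n\geq 1$, so that $0$ is a regular value — is already packaged by \Cref{equi_distance_equation} and \Cref{regularlevelset}. The only points needing care are bookkeeping ones: that $m$ and $n$ are codimensions inside $W_{1}+W_{2}$ rather than inside $V$ (so fact (i) is correct), that the functionals in fact (ii) really are independent, and that $Z\setminus L$ is nonempty so that \Cref{regularlevelset} applies and does not yield a vacuous statement.
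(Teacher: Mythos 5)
Your argument in the no-containment case is the paper's own argument, written out in more detail: you reduce to the quadratic form of \Cref{equi_distance_equation}/\Cref{equi-distant_simplified}, identify the critical locus (your $L$ is the paper's $V'=V(x_{1},\dots,x_{m},y_{1},\dots,y_{n})$), and apply the regular level set theorem \Cref{regularlevelset} to the restriction away from it. What you add that the paper elides: the explicit computation of $\mathrm{d}Q_{v}$ and the check that its vanishing locus is exactly $L$ (using independence of the $g_{i},h_{j}$), and the verification that $Z\setminus L$ is nonempty so that the level set is not vacuous. Those are exactly the two places the paper writes ``obviously.'' So for the substantive part of the proposition — the part \Cref{struct_walls} actually uses — you are correct and marginally more careful than the source.

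The containment case is where you (and the paper) gloss over a step that in fact fails, because the proposition's claim there is wrong. You correctly get $Z=\bigcap_{i=1}^{m}\ker g_{i}$ when $n=0$, but the ``short computation unwinding the construction of the $g_{i}^{\ast}$'' does \emph{not} yield $W_{1}$. Since $\{g_{1}^{\ast},\dots,g_{m}^{\ast}\}$ spans the orthogonal complement of $W_{2}^{\perp}$ inside $W_{1}^{\perp}$, namely $W_{1}^{\perp}\cap W_{2}$, you obtain $Z=(W_{1}^{\perp}\cap W_{2})^{\perp}=W_{1}\oplus W_{2}^{\perp}$, which strictly contains $W_{1}$ unless $W_{2}=V$. (Equivalently: for $W_{1}\subset W_{2}$, $v$ is equidistant iff $\Proj_{W_{2}}v\in W_{1}$, which holds for every $v\in W_{2}^{\perp}$.) Concretely, take $V=\mathbf{R}^{2}$, $W_{1}=\{0\}$, $W_{2}=\mathbf{R}\times\{0\}$: then $Z=\{0\}\times\mathbf{R}=W_{2}^{\perp}$, not $\{0\}$. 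The paper dismisses this case as ``clear'' and so carries the same error; luckily it is never used, since \Cref{struct_walls} only invokes this proposition for type two walls, which by \Cref{walls_def} require no containment between $W_{Z_{1}}$ and $W_{Z_{2}}$. You should state the containment conclusion as $Z=W_{1}\oplus W_{2}^{\perp}$ rather than trust the proposition's $Z=W_{1}$.
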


\begin{proof}
The case that there is a containment between $W_{1}$ and $W_{2}$ is clear. If there are no containments between $W_{1}$ and $W_{2}$, then $W_{1}+W_{2}$ properly contains $W_{1}$ and $W_{2}$. By \Cref{equi-distant_simplified}, $Z$
is defined by $x_{1}^{2}+\cdots +
x_{m}^{2}-y_{1}^{2}-\cdots-y_{n}^{2}=0$ with both $m,n>0$. Let $F:V\rightarrow
\mathbf{R}$ be the $C^{\infty}$ function $x_{1}^{2}+\cdots +
x_{m}^{2}-y_{1}^{2}-\cdots-y_{n}^{2}$. Let $V^{\prime}\subset V$ be the
subspace $V(x_{1},\cdots,x_{m},y_{1},\cdots,y_{n})$. Then
$0\in\mathbf{R}$ is a regular value for the restriction
$F:V-V^{\prime}\rightarrow\mathbf{R}$ and $F^{-1}(0)$ is obviously
a nonempty regular level set. By the regular level set theorem, 
\Cref{regularlevelset}, $Z-V^{\prime}$ is a regular submanifold of
codimension 1.
\end{proof}

\section{Instability in invariant theory}\label{Instability in invariant theory}
In this section we recall some fundamental results from IIT. This field is relatively old and has a well-established literature. A list of standard references to this subject may include \cite{MR506989}, \cite{MR555709}, \cite{MR553706}, and \cite{MR766741}.

IIT originates from Mumford's numerical criterion presented in \cite{MR1304906} that reduces testing stability for arbitrary reductive group actions to one dimensional torus actions given by one parameter subgroups. Mumford's conjecture on the existence of the \emph{maximally destabilizing} one parameter subgroups that fail the numerical criterion was proved by Kempf in his famous paper \cite{MR506989}. Beyond existence Kempf actually established the uniqueness of maximally destabilizing one parameter subgroups up to conjugacy by some parabolic subgroup. Hesselink then showed in \cite{MR553706} that the unstable locus is stratified by the conjugacy classes of Kempf's maximally destabilizing one parameter subgroups.

We will first recall Mumford's definition of stability (\Cref{Stability_def}) and his numerical criterion (\Cref{HilbMumAffine}) in \Cref{Definition and a numerical criterion for instability}. We then make precise the meaning of maxiamally destabilizing one parameter subgroups (\Cref{worst_one_PS_def}), and present Kempf's theorem (\Cref{finitenesstheorem}) at \Cref{Numerical analysis of instability}. With these we would be able to recall Hesselink's stratification induced by a character (\Cref{HesAffine}) and define variations of stratfications induced by different characters (\Cref{strat_def}, \Cref{two_types_of_variation_def}). Finally, we describe the structure of a stratum (\Cref{strata_struct}) at \Cref{The structure of a stratum}.

Instead of supplying IIT results in full generality, we will focus on finite dimensional representations of a diagonalizable group $G$ as this is the setting for toric GIT. In this case the set of one parameter subgroups of $G$ has a group structure of a lattice and because $G$ is abelian, the conjugacy class of a one parameter subgroup $\lambda$ is just the singleton $\{\lambda\}$. Hence there would be no ambiguity given by conjugacy about the uniqueness of the maximally destabilizing one parameter subgroups.
\subsection{Notations and conventions}\label{Notations and conventions}
Throughout the section $G$ is a diagonalizable algebraic group over the field $\mathbf{C}$ of complex numbers and $X=\mathbf{A}^{n}_{\mathbf{C}}$ is a representation of $G$. We write $\mathbf{C}[G]$ and $\mathbf{C}[X]$ as the coordinate rings of $G$ and $X$ respectively.

Letting $\mathbf{G}_{m}=\Spec\mathbf{C}[t]_{t}$ be the one dimensional torus over $\mathbf{C}$,  we set $$\pmb{\chi}(G):=\{\chi:G\rightarrow\mathbf{G}_{m}\}$$ to be the group of
characters of $G$ and $$\pmb{\Gamma}(G):=\{\lambda:\mathbf{G}_{m}\rightarrow G\}$$ to be the set of one parameter subgroups of
$G$. Since $G$ is abelian, the set $\pmb{\Gamma}(G)$ has a natural abelian group structure. We set for $\pmb{\chi}(G)$ the vector spaces $$\pmb{\chi}(G)_{\mathbf{Q}}:=\pmb{\chi}(G)\otimes_{\mathbf{Z}}\mathbf{Q},\text{ }\pmb{\chi}(G)_{\mathbf{R}}:=\pmb{\chi}(G)\otimes_{\mathbf{Z}}\mathbf{R}$$ and similarly for $\pmb{\Gamma}(G)$.

Let 
$$\langle-,-\rangle:\pmb{\chi}(G)\times\pmb{\Gamma}(G)\rightarrow\mathbf{Z}$$
be the pairing defined by the
formula $$\chi(\lambda(t))=t^{\langle\chi,\lambda\rangle}\text{ for
  all }t\in\mathbf{G}_{m}.$$
\begin{remark}\label{OPSforTori}
If $G=(\mathbf{G}_{m})^{r}=\{(t_{1},\ldots,t_{r})|t_{i}\in\mathbf{G}_{m}\}$ is a
torus, $\pmb{\Gamma}(G)$ can be identified as $\mathbf{Z}^{r}$ where each 
$(b_{1},\ldots,b_{r})\in\mathbf{Z}^{r}$ induces a one parameter
subgroup $\lambda:\mathbf{G}_{m}\rightarrow G$ given by  
$$t\mapsto (t^{b_{1}},\ldots, t^{b_{r}}).$$ Likewise $\pmb{\chi}(G)$ is identified as $\mathbf{Z}^{r}$ where each
$(a_{1},\ldots,a_{r})\in\mathbf{Z}^{r}$ defines a character $\chi:G\rightarrow\mathbf{G}_{m}$ given by 
$$(t_{1},\ldots,t_{r})\mapsto t_{1}^{a_{1}}\cdots t_{r}^{a_{r}}.$$  One easily computes that 
$\langle\chi,\lambda\rangle=\sum_{i}a_{i}b_{i}.$
Hence the natural pairing $\langle-,-\rangle$ is a perfect pairing.

If $G$ is diagonalizable, then $$G\simeq (\mathbf{G}_{m})^{r}\times F$$ for some finite group $F$. In this case $\pmb{\chi}(G)\simeq \mathbf{Z}^{r}\oplus P$ where $P$ is a finite abelian group and $\pmb{\Gamma}(G)\simeq\mathbf{Z}^{r}$. The pairing $\langle-,-\rangle$ when extended over $\mathbf{Q}$ or $\mathbf{R}$, is a perfect pairing.
\end{remark}

Since $G$ is diagonalizable, we assume the action of $G$ on $X$ is diagonalized. Namely, we fix $n$ characters $\chi_{1},\ldots,\chi_{n}$ of $G$ so that for every $g\in G$ and $x=(x_{1},\ldots,x_{n})\in X$, we have 
$$g\cdot x=(\chi_{1}(g)\cdot x_{1},\ldots,\chi_{n}(g)\cdot x_{n}).$$

Letting $[n]=\{1,\ldots,n\}$, we define for any subset $S\subset[n]$ the following polyhedral cone in $\pmb{\Gamma}(G)_{\mathbf{R}}$ that is rational with respect to the lattice $\pmb{\Gamma}(G)$:
$$\sigma_{S}=\{v\in\pmb{\Gamma}(G)_{\mathbf{R}}\vert \langle\chi_{i},v\rangle\geq 0\text{ for all }i\in S\}.$$

For $x=(x_{1},\ldots,x_{n})\in X$, we define $$S_{x}=\{i\in[n]\vert x_{i}\neq 0\}$$ as the \emph{states of }$x$ and the following polyhedral cone  
$$\sigma_{x}:=\sigma_{S_{x}}.$$

Given a one parameter subgroup $\lambda\in\pmb{\Gamma}(G)$, we define the subset $$X_{\lambda}=\{x\in X\vert \lim\limits_{t\rightarrow 0}\lambda(t)\cdot x\text{ exists}\},$$ which is the following linear subspace of $X$: 
$$\{(x_{1},\ldots, x_{n})\in X\mid x_{i}=0\text{ if }\langle\chi_{i},\lambda\rangle<0\}.$$

Finally, a \emph{norm} on $\pmb{\Gamma}(G)$ is a real valued function $||-||:\pmb{\Gamma}(G)\rightarrow\mathbf{R}$ such that there is an inner product $(-,-)$ on $\pmb{\Gamma}(G)_{\mathbf{R}}\times\pmb{\Gamma}(G)_\mathbf{R}\rightarrow\mathbf{R}$ that takes integral values on $\pmb{\Gamma}(G)\times\pmb{\Gamma}(G)$, and for any $\lambda\in\pmb{\Gamma}(G)$, we have $$||\lambda||=(\lambda,\lambda)^{1/2}.$$ Note that a norm  always exists. One can either choose an isomorphism $\pmb{\Gamma}(G)\simeq\mathbf{Z}^{r}$ and take the standard norm, or choose an embedding $G\hookrightarrow (\mathbf{C}^{\times})^{s}$, and restrict the standard norm to $\pmb{\Gamma}(G)$ via the inclusion $\pmb{\Gamma}(G)\hookrightarrow\pmb{\Gamma}((\mathbf{C}^{\times})^{s})$.

We fix a norm $||-||$ on $\pmb{\Gamma}(G)$. Moreover, abusing the notations, we also write $\langle-,-\rangle$ and $||-||$ for their extensions over $\mathbf{Q}$ or $\mathbf{R}$. 

\subsection{Definition and a numerical criterion for instability}\label{Definition and a numerical criterion for instability}
Mumford's stability condition depends on the choice of a linearized line bundle. In the case of group action on an affine space, linearizations of the trivial line bundle correspond to characters of the group. 
\begin{definition}\label{Stability_def}
Let $\chi:G\rightarrow\mathbf{G}_{m}$ be a character. Let 
\begin{enumerate}
\item $\hat{\sigma}:\mathbf{C}[X]\rightarrow
\mathbf{C}[G]\otimes_{\mathbf{C}} \mathbf{C}[X]$ be the co-action, and 
\item $\chi^{\sharp}:\mathbf{C}[t]_{t}\rightarrow \mathbf{C}[G]$ be the
  map that corresponds to $\chi$.
\end{enumerate}
An element $f\in \mathbf{C}[X]$ is $\chi$-\emph{invariant of weight d} if $\hat{\sigma}(f)=\chi^{\sharp}(t)^{d}\otimes f$. We say a point $x\in X$
is $\chi$-\emph{semistable} if there is a $\chi$-invariant $f$ of positive
weight such that $f(x)\neq 0$. We say a point $x\in X$ is $\chi$-\emph{unstable} if $x$ is not $\chi$-semistable.
We write $X^{\text{ss}}(\chi)$ as the set of $\chi$-semistable points and $X^{\text{us}}(\chi)$ as the complement $X-X^{\text{ss}}(\chi).$
\end{definition}

It follows from the definition that $X^{\text{ss}}(\chi)$ is a $G$-invariant open subvariety and that $X^{\text{us}}(\chi)$ is a $G$-invariant closed subvariety. Moreover, $X^{\text{ss}}(\chi)=X^{\text{ss}}(\chi^{d})$ for any $d>0$. 

Here is the GIT quotient construction of $X$ by the group $G$ with respect to $\chi$: Let $\mathbf{C}[X]_{\chi,d}$ be the space of $\chi$-invariant elements of weight $d$. The space $\oplus_{d\geq 0}\mathbf{C}[X]_{\chi,d}$ has a natural graded ring structure. We define  $$X/\!\!/_{\chi}G:=\Proj(\oplus_{d\geq 0}\mathbf{C}[X]_{\chi,d}).$$
Then there is a map $X^{\text{ss}}(\chi)\rightarrow X/\!\!/_{\chi}G$ that is constant on $G$-orbits, submersive and induces a bijection between  points in $X/\!\!/_{\chi}G$ and closed orbits in $X^{\text{ss}}(\chi)$ (see \cite{MR1304906} for more detail). Moreover, $X/\!\!/_{\chi}G$ is a quasi-projective variety that is known as \emph{the GIT quotient of X by G with respect to }$\chi$. 

An alternative way to test stability is to restrict the group action to one parameter subgroups. Let $\lambda$ be a one parameter subgroup of $G$. We say $\lim\limits_{t\rightarrow 0}\lambda(t)\cdot x$ exists if the domain of the map $\lambda_{x}:\mathbf{G}_{m}\rightarrow X$ defined by $t\mapsto \lambda(t)\cdot x$ can be extended to $\mathbf{A}^{1}_{\mathbf{C}}.$
\begin{theorem}(Hilbert-Mumford criterion. \cite{MR1304906}, \cite{MR1315461})\label{HilbMumAffine}
A point $x\in X$ is $\chi$-semistable if and only if for each one 
parameter subgroup $\lambda:\mathbf{G}_{m}\rightarrow G$ such that the
limit $\lim\limits_{t\rightarrow 0}\lambda(t)\cdot x$ exists,
we have $\langle\chi,\lambda\rangle\geq 0$.
\end{theorem}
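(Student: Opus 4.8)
The plan is to translate both sides of the equivalence into statements about the cone $\sigma_{x}$ and its dual, and then to close the loop with the biduality of rational polyhedral cones (Farkas' lemma). I would first dispose of the easy implication, which does not even use diagonalizability: suppose $x$ is $\chi$-semistable, and pick $f\in\mathbf{C}[X]$ that is $\chi$-invariant of some weight $d>0$ with $f(x)\neq 0$. Unwinding the definition of $\chi$-invariance of weight $d$ in terms of the co-action gives $f(g\cdot x)=\chi(g)^{d}f(x)$ for all $g\in G$, hence for any $\lambda\in\pmb{\Gamma}(G)$ with $\lim_{t\to 0}\lambda(t)\cdot x$ existing we get $f(\lambda(t)\cdot x)=t^{d\langle\chi,\lambda\rangle}f(x)$. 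The left-hand side extends continuously to $t=0$, while $f(x)\neq 0$, so $d\langle\chi,\lambda\rangle\geq 0$, i.e. $\langle\chi,\lambda\rangle\geq 0$.

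For the converse I would make the weight decomposition of $\mathbf{C}[X]$ explicit. Since the action is diagonalized by $\chi_{1},\dots,\chi_{n}$, the monomial $x^{a}=x_{1}^{a_{1}}\cdots x_{n}^{a_{n}}$ transforms by the character $\sum_{i}a_{i}\chi_{i}$, so the space of $\chi$-invariants of weight $d$ has a basis consisting of those monomials $x^{a}$ with $\sum_{i}a_{i}\chi_{i}=d\chi$; and such a monomial is nonzero at $x$ exactly when $\operatorname{supp}(a)\subseteq S_{x}$. Therefore $x$ is $\chi$-semistable if and only if there are $d\in\mathbf{Z}_{>0}$ and $a\in\mathbf{Z}_{\geq 0}^{n}$ with $\operatorname{supp}(a)\subseteq S_{x}$ and $\sum_{i}a_{i}\chi_{i}=d\chi$, which after clearing denominators is precisely the statement that $\chi$ lies in the rational cone $\tau_{x}:=\mathbf{Q}_{\geq 0}\langle\chi_{i}\mid i\in S_{x}\rangle$. (Any torsion in $\pmb{\chi}(G)$ is invisible to the pairing, so one works in $\pmb{\chi}(G)_{\mathbf{Q}}$ throughout.) On the other side, $\lim_{t\to 0}\lambda(t)\cdot x$ exists precisely when $\langle\chi_{i},\lambda\rangle\geq 0$ for all $i\in S_{x}$, i.e. when $\lambda\in\sigma_{x}$, so the right-hand side of the theorem reads: $\langle\chi,\lambda\rangle\geq 0$ for every $\lambda\in\sigma_{x}\cap\pmb{\Gamma}(G)$. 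Now $\sigma_{x}$ is by construction the dual cone of $\tau_{x}$, and since $\tau_{x}$ is finitely generated, biduality of polyhedral cones gives $\tau_{x}=(\sigma_{x})^{\vee}$, so $\chi\in\tau_{x}$ iff $\langle\chi,v\rangle\geq 0$ for all $v\in\sigma_{x}$; finally, since $\sigma_{x}$ is rational polyhedral its lattice points span it over $\mathbf{R}_{\geq 0}$, which lets me replace ``all $v\in\sigma_{x}$'' by ``all $\lambda\in\sigma_{x}\cap\pmb{\Gamma}(G)$''. Chaining these equivalences proves the theorem.

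I do not expect a genuine obstacle here: in sharp contrast to the general reductive case, where this very implication is Mumford's conjecture settled by Kempf, the diagonalizable hypothesis collapses everything to the duality between a finitely generated cone and its dual. The only thing demanding care is the lattice/rationality bookkeeping: moving between honest monomials (nonnegative integer exponents, a positive integer weight $d$) and $\mathbf{Q}_{\geq 0}$-combinations, disregarding torsion in $\pmb{\chi}(G)$, and passing between real points and lattice points of $\sigma_{x}$.
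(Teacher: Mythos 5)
The paper states \Cref{HilbMumAffine} as a citation (to Mumford and King) and supplies no proof, so there is no internal proof to compare against; what you have written fills that gap with the standard self-contained argument for the diagonalizable, affine case. Your argument is correct. The forward implication is the usual limit computation $f(\lambda(t)\cdot x)=t^{d\langle\chi,\lambda\rangle}f(x)$ plus $f(x)\neq 0$; the converse reduces to the weight decomposition into monomials and then becomes biduality of rational polyhedral cones, $\chi\in\tau_x \Leftrightarrow \chi\in(\tau_x^{\vee})^{\vee}=\sigma_x^{\vee}$, followed by the observation (essentially \Cref{torusLP}, which the paper does prove) that the lattice points $C_x=\sigma_x\cap\pmb{\Gamma}(G)$ generate $\sigma_x$ over $\mathbf{R}_{\geq 0}$. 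Two small steps are worth making explicit since you compress them: first, passing from a general $f\in\mathbf{C}[X]_{\chi,d}$ with $f(x)\neq 0$ to a single monomial $x^{a}$ of the same weight with $x^{a}(x)\neq 0$ uses that the monomials are a weight basis and that a nonzero value of a sum forces a nonzero value of some summand; second, promoting $\chi\in\tau_x\subset\pmb{\chi}(G)_{\mathbf{Q}}$ to an honest identity $d\chi=\sum_{i\in S_x}a_i\chi_i$ in $\pmb{\chi}(G)$ requires clearing denominators and then multiplying once more by the order of the residual torsion element (since $\pmb{\chi}(G)$ need not be free) — your parenthetical about torsion gestures at this but for a different reason (the pairing) than the one actually needed here (producing a genuine monomial invariant). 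Both are routine; once stated, the proof is complete.
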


\begin{example}
Let the one dimensional torus $\mathbf{G}_{m}$ acts on $\mathbf{A}^{2}_{\mathbf{C}}$ by $$t\cdot(x,y)=(tx,t^{-1}y)\text{ for all }t\in\mathbf{G}_{m}\text{ and for all }(x,y)\in\mathbf{A}^{2}_{\mathbf{C}}.$$ Let $\chi$ be the identity $\text{id}:\mathbf{G}_{m}\rightarrow\mathbf{G}_{m}$. We will check $\chi$-semistability with $\chi$-invariants and then with the numerical criterion \Cref{HilbMumAffine}.

To begin with, an element $f\in\mathbf{C}[x,y]=\mathbf{C}[\mathbf{A}^{2}_{\mathbf{C}}]$ is $\chi$-invariant of weight $d$ if and only if $f(tx,t^{-1}y)=t^{d}\cdot f(x,y).$ This is equivalent to saying that $f\in x^{d}\cdot\mathbf{C}[xy]$. In particular, $f$ is divisible by $x^{d}$. This shows that $(\mathbf{A}^{2}_{\mathbf{C}})^{\text{ss}}(\chi)\subset D(x)$. Conversely, $x\in\mathbf{C}[x,y]$ is $\chi$-invariant of weight 1. It follows that $(\mathbf{A}^{2}_{\mathbf{C}})^{\text{ss}}(\chi)\supset D(x)$. In conclusion, we get $(\mathbf{A}^{2}_{\mathbf{C}})^{\text{ss}}(\chi)= D(x)$.

To test stability with \Cref{HilbMumAffine}, let $\lambda:\mathbf{G}_{m}\rightarrow\mathbf{G}_{m}$ be a one parameter subgroup given by $t\mapsto t^{a}$ and $p=(x,y)$ be a point in $\mathbf{A}^{2}_{\mathbf{C}}$. Suppose $\lim\limits_{t\rightarrow 0}\lambda(t)\cdot p=(t^{a}x,t^{-a}y)$ exists. If $x\neq 0$, we have $a=\langle\chi,\lambda\rangle\geq 0$. We see that $D(x)\subset(\mathbf{A}^{2}_{\mathbf{C}})^{\text{ss}}(\chi)$. Conversely, if $x=0$, then the one parameter subgroup given by $a=-1$ has limits at $p$ but $\langle\chi,\lambda\rangle=-1<0$. This shows that $V(x)\subset(\mathbf{A}^{2}_{\mathbf{C}})^{\text{us}}(\chi)$. In conclusion, we have $(\mathbf{A}^{2}_{\mathbf{C}})^{\text{ss}}(\chi)= D(x)$ as was shown earlier.

One also easily checks that $(\mathbf{A}^{2}_{\mathbf{C}})^{\text{ss}}(-\chi)=D(y)$, indicating the dependence of stability of a point on the choice of linearizations.
\end{example}

\subsection{Numerical analysis of instability}\label{Numerical analysis of instability}
In this section we are going to make precise in \Cref{worst_one_PS_def} on what it means for a one parameter subgroup to be a maximally destabilizing one, then recall Kempf's main theorem \Cref{finitenesstheorem}.

For a point $x\in X$, set $$C_{x}=\{\lambda\in\pmb{\Gamma}(G)|\lim_{t\rightarrow
  0}\lambda(t)\cdot x\text { exits}\}.$$
According to the numerical criterion, \Cref{HilbMumAffine}, $x\in X^{\text{us}}(\chi)$ if and only if there is a one parameter subgroup $\lambda$ such that 
\begin{enumerate}
    \item $\lambda\in C_{x}$, and 
    \item $\langle\chi,\lambda\rangle<0$.
\end{enumerate}
Therefore, for an $x\in X^{\text{us}}(\chi)$, it is natural to ask if there is a one parameter subgroup that contributes to the highest instability measured by the negative quantities $\langle\chi,\lambda\rangle$ among all $\lambda\in C_{x}$. An immediate problem is that $\langle\chi,\lambda^{N}\rangle=N\cdot\langle\chi,\lambda\rangle$ for any $\lambda\in\pmb{\Gamma}(G)$ and $N\in\mathbf{N}$. To get rid of the dependency on multiples of one parameter subgroups, we divide the function $\langle\chi,-\rangle:\pmb{\Gamma}(G)\rightarrow\mathbf{Z}$ by the norm $||-||$ on $\pmb{\Gamma}(G)$. For $x\in X^{\text{us}}(\chi)$, we set $$M^{\chi}(x)=
  \inf_{\lambda\in C_{x}\backslash\{0\}}\frac{\langle\chi,\lambda\rangle
}{||\lambda||}.$$ 
\begin{definition}\label{worst_one_PS_def} 
We say a one parameter subgroup $\lambda$ is $\chi$-\emph{adapted to }$x$ if
$\frac{\langle\chi,\lambda\rangle}{||\lambda||}= M^{\chi}(x)$. Moreover, we say $\lambda$ is $\chi$-\emph{adapted to a subset }$S\subset X^{\text{us}}(\chi)$ if $\lambda$ is $\chi$-adapted to every point in $S.$
\end{definition}
Note that it is not immediate that $M^{\chi}(x)$ is finite. Even so it is not immediate that the infimum is attained by a one parameter subgroup 
from  $C_{x}$. The following \Cref{finitenesstheorem}, a special case of a theorem due to Kempf, resolves these issues.

\begin{theorem}[Kempf, \cite{MR506989}]\label{finitenesstheorem}
Let $\chi:G\rightarrow\mathbf{G}_{m}$ be a character. For an $x\in X^{\text{us}}(\chi)$, we have  
\begin{enumerate}
\item the value $M^{\chi}(x)$ is finite,
\item the function
$M^{\chi}(-):X^{\text{us}}(\chi)\rightarrow\mathbf{R}$ assumes finitely many values,
\item there is a unique indivisible one parameter subgroup $\lambda_{\chi,x}$ that is $\chi$-adapted to $x$,
\item for any $g\in G$, $\lambda_{\chi,g\cdot x}=\lambda_{\chi,x}$, and in particular, 
\item $M^{\chi}(-)$ is constant
on $G$-orbits. Namely, for each $g\in G$ and $x\in X^{\text{us}}(\chi)$, we have 
$$M^{\chi}(x)=M^{\chi}(g\cdot x).$$
\end{enumerate}
\end{theorem}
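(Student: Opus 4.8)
The plan is to reduce everything to the linear programming results of \Cref{Some linear algebra}, specifically \Cref{unique_cone_min}, \Cref{compute_cone_min}, and \Cref{unique_cone_min_rational}. The key observation is that for a point $x\in X$, the cone $C_x=\{\lambda\in\pmb{\Gamma}(G)\mid \lim_{t\to 0}\lambda(t)\cdot x\text{ exists}\}$ is exactly the set of integral points of the rational polyhedral cone $\sigma_x=\sigma_{S_x}\subset\pmb{\Gamma}(G)_{\mathbf{R}}$, since the limit exists if and only if $\langle\chi_i,\lambda\rangle\geq 0$ for every $i\in S_x$. Moreover, $\langle\chi,-\rangle:\pmb{\Gamma}(G)_{\mathbf{R}}\to\mathbf{R}$ is a linear functional, and since $x$ is $\chi$-unstable, by the Hilbert-Mumford criterion \Cref{HilbMumAffine} this functional assumes a negative value somewhere on $\sigma_x$. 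Thus the quantity $M^\chi(x)=\inf_{\lambda\in C_x\setminus\{0\}}\frac{\langle\chi,\lambda\rangle}{\|\lambda\|}$ is precisely the relative minimum of $\langle\chi,-\rangle$ on $S\cap\sigma_x$ (the infimum over the lattice points of the cone equals the infimum over all of $\sigma_x$ by scaling and continuity, since $\mathbf{R}_{>0}\cdot C_x$ is dense in $\sigma_x$).

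First I would make the identification $C_x\leftrightarrow\sigma_x$ precise and note $X_\lambda\supset\{x\}$ iff $\lambda\in\sigma_x$, matching the excerpt's definitions. Then parts (1) and (3) follow directly: by \Cref{unique_cone_min}, $\langle\chi,-\rangle$ attains its relative minimum on $S\cap\sigma_x$ at a unique point $s$, so $M^\chi(x)=\langle\chi,s\rangle$ is finite (part (1)); and if $F\preceq\sigma_x$ is the face with $s\in\Relint(F)$, then $s=-\Proj_{\Sp(F)}(\langle\chi,-\rangle)^\ast/\|\cdots\|$. Applying \Cref{unique_cone_min_rational} with the lattice $\pmb{\Gamma}(G)$ (using that the fixed norm comes from an integral inner product and $\langle\chi,-\rangle$ is integral on $\pmb{\Gamma}(G)$), the ray $\mathbf{R}_{>0}\cdot s$ contains a unique indivisible lattice point $\lambda_{\chi,x}$; this is the unique indivisible $\chi$-adapted one parameter subgroup, since any $\chi$-adapted $\lambda$ must have $\lambda/\|\lambda\|=s$ by uniqueness of the minimizer, forcing $\lambda\in\mathbf{R}_{>0}\cdot s$.

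For part (2), the finiteness of the values of $M^\chi(-)$, I would observe that $\sigma_x$ depends only on the state $S_x\in 2^{[n]}$, so there are at most $2^n$ distinct cones $\sigma_S$, hence at most $2^n$ values of $M^\chi(x)$; alternatively, invoke \Cref{compute_cone_min} which shows $M^\chi(x)=-\|v\|$ for the longest $v$ in the finite set $\Lambda^{\langle\chi,-\rangle}_{\sigma_x}$, and there are finitely many such sets. For parts (4) and (5), the crucial point is that $S_{g\cdot x}=S_x$ for every $g\in G$: since the action is diagonalized and each $\chi_i(g)\in\mathbf{C}^\times$, the coordinate $(g\cdot x)_i=\chi_i(g)x_i$ is nonzero iff $x_i$ is. Therefore $\sigma_{g\cdot x}=\sigma_x$ and $C_{g\cdot x}=C_x$, so $M^\chi(g\cdot x)=M^\chi(x)$ and $\lambda_{\chi,g\cdot x}=\lambda_{\chi,x}$.

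The steps here are all routine once the translation to linear programming is set up; the only mild subtlety—which I would spell out carefully—is the passage from the infimum over lattice points $C_x\setminus\{0\}$ to the relative minimum over the full cone $\sigma_x$, i.e.\ checking that restricting to rational (hence, by scaling, lattice) directions does not change the infimum of the homogeneous-degree-zero function $\lambda\mapsto\langle\chi,\lambda\rangle/\|\lambda\|$. This follows from density of $\mathbf{Q}$-points in $\sigma_x$ together with continuity of the function on the compact set $S\cap\sigma_x$, but it is the one place where the discreteness of $\pmb{\Gamma}(G)$ interacts with the real-analytic statement and deserves an explicit sentence.
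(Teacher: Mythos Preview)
Your proposal is correct and follows essentially the same route as the paper: the paper invokes \Cref{torusLP} to identify $C_x$ with the lattice points of $\sigma_x$, then applies \Cref{unique_cone_min_rational} for (1) and (3), notes the finiteness of the cones $\sigma_x$ for (2), and uses $\sigma_{g\cdot x}=\sigma_x$ for (4) and (5). If anything you are more careful than the paper, which silently replaces the infimum over $C_x\setminus\{0\}$ by the infimum over $\sigma_x\setminus\{0\}$ without the density/continuity sentence you rightly flag.
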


\Cref{finitenesstheorem} is a consequence of \Cref{unique_cone_min_rational}. The connection is made by the 
\begin{lemma}\label{torusLP}
For each $x\in X$, $C_{x}\otimes_{\mathbf{Z}}\mathbf{R}\subset\pmb{\Gamma}(G)_{\mathbf{R}}$ is the rational polyhedral cone $\sigma_{x}$. Equivalently, the set of lattice points of $\sigma_{x}$ is exactly $C_{x}$.
\end{lemma}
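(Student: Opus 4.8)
The goal is to identify the cone $C_x \otimes_{\mathbf{Z}} \mathbf{R}$ inside $\pmb{\Gamma}(G)_{\mathbf{R}}$ with the rational polyhedral cone $\sigma_x = \sigma_{S_x}$, and conversely to show $C_x$ is exactly the lattice points of $\sigma_x$. I would start from the concrete description, already recorded in the Notations subsection, of when a limit exists: for a one parameter subgroup $\lambda \in \pmb{\Gamma}(G)$ and a point $x = (x_1,\dots,x_n) \in X$, the diagonalized action gives $\lambda(t)\cdot x = (t^{\langle\chi_1,\lambda\rangle}x_1,\dots,t^{\langle\chi_n,\lambda\rangle}x_n)$. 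The map $t \mapsto \lambda(t)\cdot x$ extends over $\mathbf{A}^1_{\mathbf{C}}$ precisely when every coordinate function $t \mapsto t^{\langle\chi_i,\lambda\rangle}x_i$ extends, i.e. when $\langle\chi_i,\lambda\rangle \geq 0$ for every $i$ with $x_i \neq 0$. By definition of the states $S_x = \{i \in [n] \mid x_i \neq 0\}$, this says exactly $\lambda \in C_x$ if and only if $\langle\chi_i,\lambda\rangle \geq 0$ for all $i \in S_x$, which is the defining condition of $\sigma_{S_x} = \sigma_x$ restricted to the lattice $\pmb{\Gamma}(G)$.

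This immediately gives the ``equivalently'' half of the statement: $C_x = \sigma_x \cap \pmb{\Gamma}(G)$, i.e. the set of lattice points of $\sigma_x$ is exactly $C_x$. For the first assertion, that $C_x \otimes_{\mathbf{Z}} \mathbf{R} = \sigma_x$, I would invoke the standard fact that a rational polyhedral cone is the $\mathbf{R}_{\geq 0}$-span (equivalently the $\mathbf{R}$-linear closure under the cone operations) of its lattice points — concretely, $\sigma_x$ is defined by finitely many inequalities $\langle\chi_i, - \rangle \geq 0$ with $\chi_i$ integral on $\pmb{\Gamma}(G)$, so it is rational with respect to the lattice $\pmb{\Gamma}(G)$, and any rational polyhedral cone equals the real span of the lattice points it contains. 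Then $C_x \otimes_{\mathbf{Z}} \mathbf{R}$, being the subgroup $C_x = \sigma_x \cap \pmb{\Gamma}(G)$ tensored up to $\mathbf{R}$, spans the same cone $\sigma_x$. One should note that $C_x$ is indeed closed under addition and under multiplication by positive integers (clear from the inequality description), so $C_x \otimes_{\mathbf{Z}} \mathbf{R}$ makes sense as the cone it generates.

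The only genuinely nontrivial ingredient is the extension criterion for the morphism $\lambda_x : \mathbf{G}_m \to X$, namely that a regular map $\mathbf{G}_m \to \mathbf{A}^n_{\mathbf{C}}$, $t \mapsto (c_1 t^{a_1},\dots,c_n t^{a_n})$ with $c_i \in \mathbf{C}$ and $a_i \in \mathbf{Z}$, extends to a morphism from $\mathbf{A}^1_{\mathbf{C}}$ if and only if $a_i \geq 0$ whenever $c_i \neq 0$; this is elementary (a Laurent monomial $t^{a}$ extends over $t=0$ iff $a \geq 0$, and a tuple of maps extends iff each coordinate does), so there is no real obstacle — the lemma is essentially a bookkeeping translation between the analytic ``limit exists'' condition and the combinatorial cone $\sigma_x$. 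If anything requires care, it is being precise that ``$\lim_{t\to 0}\lambda(t)\cdot x$ exists'' is by definition the extendability of $\lambda_x$ over $\mathbf{A}^1_{\mathbf{C}}$, as stipulated just before \Cref{HilbMumAffine}, and not some weaker topological limit.
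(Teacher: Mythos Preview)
Your proposal is correct and follows essentially the same approach as the paper: both compute $\lambda(t)\cdot x = (t^{\langle\chi_1,\lambda\rangle}x_1,\dots,t^{\langle\chi_n,\lambda\rangle}x_n)$ from the diagonalized action and read off that the limit exists iff $\langle\chi_i,\lambda\rangle\geq 0$ for all $i\in S_x$, which is the defining condition for $\sigma_x$. You are slightly more explicit than the paper about why $C_x = \sigma_x\cap\pmb{\Gamma}(G)$ implies $C_x\otimes_{\mathbf{Z}}\mathbf{R}=\sigma_x$ via rationality of the cone, but this is a harmless elaboration rather than a different route.
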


\begin{proof}
Recall that the action is given by $$g\cdot (x_{1},\ldots,x_{n})=(\chi_{1}(g)\cdot x_{1},\ldots,\chi_{n}(g)\cdot x_{n})$$ for all $g\in G$ and $(x_{1},\ldots,x_{n})\in X$. 
If $\lambda:\mathbf{G}_{m}\rightarrow G$ is a one parameter subgroup and
if $t\in \mathbf{G}_{m}$, we get 
$$\lambda(t)\cdot(x_{1},\ldots,x_{n})=(t^{\langle
  \chi_{1},\lambda\rangle}x_{1},\ldots,t^{\langle\chi_{n},\lambda\rangle}x_{n}).$$
Hence $$\lim\limits_{t\rightarrow 0}\lambda(t)\cdot x\text{ exists }\Leftrightarrow\langle\chi_{i},\lambda\rangle\geq 0\text{ for all }i\text{ with }x_{i}\neq 0\Leftrightarrow \lambda\in \sigma_{x}.$$
\end{proof}

We now demonstrate how \Cref{finitenesstheorem} can be proved.
\begin{proof}Let $$f(-):=\langle\chi,-\rangle:\pmb{\Gamma}(G)_{\mathbf{R}}\rightarrow\mathbf{R}$$ be the linear functional. The assumption that $x$ is $\chi$-unstable implies that $f$ takes a negative value on $C_{x}$ and therefore on $\sigma_{x}$ by \Cref{torusLP}. \Cref{unique_cone_min_rational} then implies that
$$M^{\chi}(x)=\inf_{\lambda\in\sigma_{x}\backslash\{0\}}\frac{f(\lambda)}{||\lambda||},$$ and that $M^{\chi}(\chi)$ is attained at a unique indivisible one parameter subgroup $\lambda_{\chi,x}$. This proves statement (1) and (3). Statement (2) follows from the fact that there are only finitely many $\sigma_{x}$ as $x$ runs through $X^{\text{us}}(\chi)$. For statements (4), simply note that $\sigma_{x}=\sigma_{g\cdot x}$ for all $g\in G$.\end{proof}

\subsection{Stratification induced by a character}\label{Stratification of the null cone}
In this section we recall Hesselink's theorem \Cref{HesAffine} and define stratifications (\Cref{strat_def}), the equivalences and variations among them on a topological space (\Cref{two_types_of_variation_def}) in preparation of VSIT at \Cref{Toric_VSIT_Section}.

Let $\chi\in\pmb{\chi}(G)$ and $\lambda\in\pmb{\Gamma}(G)$. We define the following subset of $X$
$$S_{\lambda}^{\chi}=\{x\in X^{\text{us}}(\chi)\vert \lambda_{\chi,x}=\lambda\}$$ and the subset of $\pmb{\Gamma}(G)$
$$\Lambda^{\chi}=\{\lambda_{\chi,x}\vert x\in X^{\text{us}}(\chi)\}.$$ We point out that $\Lambda^{\chi}$ is a finite set. This follows from the discussion we had at the end of the previous section that each $\lambda_{\chi,x}$ comes from minimizing the function $\frac{\langle\chi,-\rangle}{||-||}:\pmb{\Gamma}(G)_{\mathbf{R}}\rightarrow\mathbf{R}$ on $\sigma_{x}$ and that there are only finitely many $\sigma_{x}$ as $x$ runs through $X^{\text{us}}(\chi).$ With these notations and \Cref{finitenesstheorem}, we have a decomposition 
$$X=X^{\text{ss}}(\chi)\cup(\bigcup_{\lambda\in\Lambda^{\chi}}S_{\lambda}^{\chi}).$$ Let us define a strict partial ordering on the set $\Lambda^{\chi}$ by setting $$\lambda>\lambda^{\prime}\text{ if }\frac{\langle\chi,\lambda\rangle}{||\lambda||}<\frac{\langle\chi,\lambda^{\prime}\rangle}{||\lambda^{\prime}||}.$$ For the convenience of this paper, we also define the same strict partial ordering on the collection $\{S_{\lambda}^{\chi}\}_{\lambda\in\Lambda^{\chi}}$ by $$S_{\lambda}^{\chi}>S_{\lambda^{\prime}}^{\chi}\Leftrightarrow\lambda>\lambda^{\prime}.$$ 
\begin{remark}
The reason why there is a flip of inequalities defining the strict partial ordering on $\Lambda^{\chi}$ is that $\frac{\langle\chi,\lambda\rangle}{||\lambda||}<\frac{\langle\chi,\lambda^{\prime}\rangle}{||\lambda^{\prime}||}$ implies the left hand side is more negative. Namely, the stratum $S^{\chi}_{\lambda}$ contains points that are higher in instability measured by the function $\frac{\langle\chi,-\rangle}{||-||}.$
\end{remark} The following theorem due to Hesselink states:

\begin{theorem}(Hesselink, \cite{MR553706})\label{HesAffine}
Let $\chi$ be a character of $G.$ Then 
$$X=X^{\text{ss}}(\chi)\cup(\bigcup_{\lambda}S_{\lambda}^{\chi})$$ is a finite disjoint union of $G$-invariant, locally closed subvarieties of $X$. Moreover, $$S^{\chi}_{\lambda}\cap \partial S_{\lambda^{\prime}}^{\chi}\neq\emptyset \text{ only if } \lambda>\lambda^{\prime}.$$
\end{theorem}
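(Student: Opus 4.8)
The plan is to realize the decomposition $X = X^{\text{ss}}(\chi) \cup \bigcup_\lambda S_\lambda^\chi$ as a stratification by producing explicit algebraic descriptions of each $S_\lambda^\chi$ and its closure. First I would fix $\chi$ and, for each $\lambda \in \Lambda^\chi$, identify which states $S \subset [n]$ can occur as $S_x$ for a point $x$ with $\lambda_{\chi,x} = \lambda$. By \Cref{torusLP} and the proof of \Cref{finitenesstheorem}, $\lambda_{\chi,x}$ depends on $x$ only through $\sigma_x = \sigma_{S_x}$: it is (the indivisible generator of the ray through) the point of $\sigma_{S_x}\cap S$ minimizing $\langle\chi,-\rangle/\|-\|$. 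So there is a finite collection $\mathscr{S}_\lambda$ of subsets of $[n]$ — those $S$ for which \Cref{unique_cone_min_rational} applied to $\sigma_S$ returns $\lambda$ — and $S_\lambda^\chi = \{x \in X : S_x \in \mathscr{S}_\lambda\}$. Each condition "$S_x = S$" carves out a locally closed subset of $X$ (the coordinates in $S$ nonzero, those outside $S$ zero), so $S_\lambda^\chi$ is a finite union of locally closed sets; to see it is itself locally closed one notes that $S_\lambda^\chi = X_\lambda^{\text{us-part}} \setminus (\text{larger strata})$, or more directly that $\mathscr{S}_\lambda$ is closed under passing to states that remain in the same minimizing stratum, giving $S_\lambda^\chi = U \cap Z$ for an open $U$ and closed $Z$. $G$-invariance is immediate from \Cref{finitenesstheorem}(4), since $\sigma_x = \sigma_{g\cdot x}$.

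The crux is the boundary condition: $S_\lambda^\chi \cap \partial S_{\lambda'}^\chi \neq \emptyset$ only if $\lambda > \lambda'$, i.e. $\frac{\langle\chi,\lambda\rangle}{\|\lambda\|} < \frac{\langle\chi,\lambda'\rangle}{\|\lambda'\|}$. Suppose $x \in S_\lambda^\chi$ lies in $\overline{S_{\lambda'}^\chi}$, so there is a net (or, since everything is a variety over $\mathbf{C}$, a one-parameter algebraic family) $x_t \to x$ with $x_t \in S_{\lambda'}^\chi$. The key observation is a semicontinuity of states: the limit point has fewer nonzero coordinates, $S_x \subset S_{x_t}$ for $t$ near the limit, hence $\sigma_{x_t} = \sigma_{S_{x_t}} \subset \sigma_{S_x} = \sigma_x$. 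Now $\lambda' = \lambda_{\chi,x_t}$ is $\chi$-adapted to $x_t$, meaning $\lambda' \in \sigma_{x_t} \subset \sigma_x$ and $\frac{\langle\chi,\lambda'\rangle}{\|\lambda'\|} = M^\chi(x_t)$. Since $\lambda' \in \sigma_x = C_x \otimes \mathbf{R}$ and $\lambda = \lambda_{\chi,x}$ is the $\chi$-adapted one parameter subgroup for $x$ achieving $M^\chi(x) = \min_{\mu\in\sigma_x\setminus 0}\frac{\langle\chi,\mu\rangle}{\|\mu\|}$, we get $\frac{\langle\chi,\lambda\rangle}{\|\lambda\|} = M^\chi(x) \leq \frac{\langle\chi,\lambda'\rangle}{\|\lambda'\|}$. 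To upgrade $\leq$ to the strict inequality $<$ (equivalently $\lambda \neq \lambda'$), I would argue that if $\lambda = \lambda'$ then in fact $x$ and the nearby $x_t$ lie in the same stratum, contradicting $x \in \partial S_{\lambda'}^\chi$ — more precisely, $\lambda$ being adapted to $x$ with $\sigma_{x_t}\subset\sigma_x$ and $\lambda\in\sigma_{x_t}$ forces, via the uniqueness in \Cref{unique_cone_min_rational} applied on the smaller cone $\sigma_{x_t}$, that $\lambda_{\chi,x_t} = \lambda$, so $x_t \in S_\lambda^\chi$, not $S_{\lambda'}^\chi$; hence equality $S_\lambda^\chi=S_{\lambda'}^\chi$ and there is nothing to prove, or $\lambda\neq\lambda'$ and the inequality is strict.

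I expect the main obstacle to be the state-semicontinuity step and making the family argument rigorous: one must justify that a limit of points with state-set $T$ has state-set contained in $T$ (elementary: vanishing of a coordinate is a closed condition) and, conversely, that the minimizing ray on a sub-cone relates correctly to that on the big cone — this is exactly where \Cref{unique_cone_min} and \Cref{unique_cone_min_rational} do the work, since the adapted $\lambda'$ for $x_t$ must lie in $\sigma_{x_t}$, a face-subcone of $\sigma_x$, and the minimum of $\langle\chi,-\rangle/\|-\|$ over a sub-cone is at least the minimum over the bigger cone. A secondary subtlety is that $\Lambda^\chi$ must be seen to be finite (already noted in the text, from finiteness of $\{\sigma_x\}$) so that the collection is a genuine finite stratification, and that the strict partial order on $\Lambda^\chi$ is well-defined — but these are immediate from \Cref{finitenesstheorem}. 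The locally-closedness of individual strata, while needing a short combinatorial argument about which state-sets share a minimizer, is routine once the minimizer is understood purely in terms of $\sigma_S$.
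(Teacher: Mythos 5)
Your argument is correct and takes essentially the same route as the paper: in both, the crux is to show that $\lambda'\in\sigma_z$ for any $z\in S^{\chi}_{\lambda}\cap\partial S^{\chi}_{\lambda'}$, after which uniqueness of the minimizer of $\langle\chi,-\rangle/\|{-}\|$ on $\sigma_z$ (from \Cref{unique_cone_min_rational}) delivers the strict inequality. The one place you diverge is the mechanism for that containment. You pass to a convergent family $x_t\to z$ with $x_t\in S^{\chi}_{\lambda'}$, invoke semicontinuity of the state sets $S_z\subset S_{x_t}$, and conclude $\lambda'\in\sigma_{x_t}\subset\sigma_z$. The paper avoids limits entirely: $S^{\chi}_{\lambda'}\subset X_{\lambda'}$ and $X_{\lambda'}$ is a closed linear subspace, so $z\in\overline{S^{\chi}_{\lambda'}}\subset X_{\lambda'}$, i.e.\ $\lambda'\in C_z\subset\sigma_z$ at once. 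The paper's step is cleaner and sidesteps the need to justify existence of an algebraic arc through $z$ inside $\overline{S^{\chi}_{\lambda'}}$; yours is slightly more general in spirit (pure point-set plus the semicontinuity of coordinate vanishing) but carries a bit of extra baggage. Two small further remarks: the paper derives local-closedness of strata directly from the boundary inequality via $S^{\chi}_{\lambda}=\overline{S^{\chi}_{\lambda}}\setminus\bigcup_{\lambda''>\lambda}S^{\chi}_{\lambda''}$, rather than from a combinatorial description of which state sets share a minimizer as you sketch; and your case-split to rule out $\lambda=\lambda'$ is more than is needed, since in that case $z$ would lie in $S^{\chi}_{\lambda}\cap(\overline{S^{\chi}_{\lambda}}\setminus S^{\chi}_{\lambda})=\emptyset$, so one may assume $\lambda\neq\lambda'$ from the start and appeal to uniqueness once.
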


\begin{proof}
That each $S_{\lambda}^{\chi}$ is $G$-invariant follows from statement (4) of \Cref{finitenesstheorem}. Note that the last statement of the theorem we are proving implies for any $\lambda\in\Lambda^{\chi}$, we have 
   $$S^{\chi}_{\lambda}=\overline{S^{\chi}_{\lambda}}\backslash\bigcup_{\lambda^{\prime\prime}>\lambda}S^{\chi}_{\lambda^{\prime\prime}},$$ giving us the locally closedness for each stratum. Hence it remains to show the last statement. To do this, let $z\in S^{\chi}_{\lambda}\cap\partial S^{\chi}_{\lambda^{\prime}}\subset \overline{S^{\chi}_{\lambda^{\prime}}}$. Since $X_{\lambda^{\prime}}$ is closed and contains $S^{\chi}_{\lambda^{\prime}}$, we have $z\in X_{\lambda^{\prime}}$. Namely, $\lim\limits_{t\rightarrow 0}\lambda^{\prime}(t)\cdot z$ exists. Since $\lambda\neq\lambda^{\prime}$, it must be the case that $\frac{\langle\chi,\lambda\rangle}{||\lambda||}<\frac{\langle\chi,\lambda^{\prime}\rangle}{||\lambda^{\prime}||}.$
\end{proof}

The decomposition in \Cref{HesAffine} induces a stratification of $X$, which we now define. 
\begin{definition}\label{strat_def}
Let $Y$ be a topological space. A finite collection of locally closed subspaces  $\{Y_{a}\mid a\in\mathscr{A}\}$ forms a \emph{stratification of }$Y$ if $Y$ is a disjoint union of the strata $Y_{a}$ and there is a strict partial order on the index set $\mathscr{A}$ such that 
$\partial Y_{a^{\prime}}\cap Y_{a}\neq\emptyset$ only if $a>a^{\prime}$.
\end{definition}

For each character $\chi$, we index the semistable locus $X^{\text{ss}}(\chi)$ by the trivial one parameter subgroup $\mathbf{e}$ and assign $\mathbf{e}$ the new lowest order $0$ (as in zero instability) in the set $\Lambda^{\chi}\sqcup\{\mathbf{e}\}$. Doing so makes the decomposition of $X$ in \Cref{HesAffine}  a stratification. We refer to the stratification as \emph{the stratification induced by }$\chi$ and each stratum as a $\chi$-\emph{stratum}. 
Obviously the stratification depends on the choice of a character. To compare stratifications, we make the following  

\begin{definition}\label{two_types_of_variation_def}
We say two stratifications $\{Y_{a}\mid a\in\mathscr{A}\}$ and $\{Y_{b}\mid b\in\mathscr{B}\}$ of a topological space $Y$ are equivalent if there is a bijection $\Phi:\mathscr{A}\rightarrow\mathscr{B}$ such that 
\begin{enumerate}
    \item $\Phi$ preserves strata. That is, $Y_{\Phi(a)}=Y_{a}$ for all $a\in\mathscr{A}$, and  
    \item $\Phi$ preserves order. That is, $\Phi(a)>\Phi(a^{\prime})$ if and only if $a>a^{\prime}$ for all $a,a^{\prime}\in\mathscr{A}$.
\end{enumerate} 
Otherwise, we say
\begin{enumerate}
    \item there is a \emph{type one variation} between $\{Y_{a}\mid a\in\mathscr{A}\}$ and $\{Y_{b}\mid b\in\mathscr{B}\}$ if there is no bijection between $\mathscr{A}$ and $\mathscr{B}$ that satisfies condition (1), or
    \item there is a \emph{type two variation} between $\{Y_{a}\mid a\in\mathscr{A}\}$ and $\{Y_{b}\mid b\in\mathscr{B}\}$ if there is a bijection between $\mathscr{A}$ and $\mathscr{B}$ that satisfies condition (1), but not condition (2).
\end{enumerate}
\end{definition}

\begin{definition}\label{IIT_equi_class}
We say two characters $\chi_{1}$ and $\chi_{2}$ of $G$ are \emph{SIT}-\emph{equivalent} with respect to the action of $G$ on $X$ if
$\chi_{1}$ and $\chi_{2}$ induce equivalent stratifications of $X$.
\end{definition}

\subsection{The structure of a stratum}\label{The structure of a stratum}
In this section, we prove in \Cref{strata_struct} that for a character $\chi\in\pmb{\chi}(G)$ and for each $\lambda\in\Lambda^{\chi}$, the closure 
$\overline{S^{\chi}_{\lambda}}$ of the stratum $S^{\chi}_{\lambda}$ is the subspace $X_{\lambda}$, and 
$S^{\chi}_{\lambda}$ is obtained by removing a union of subspaces from $\overline{S^{\chi}_{\lambda}}$. This implies each stratum is irreducible and smooth. \Cref{strata_struct} is also going to be applied to \Cref{toric_quotient_strata} for the description of strata in the toric GIT setting. We first introduce some notations. 
\begin{itemize}
    \item If $S\subset [n]$, we let 
    \begin{itemize}
        \item $V(S)=V(\{x_{i}\vert i\in S\})\subset X$;
        \item $D(S)=D(\prod_{i\in S}x_{i})\subset X$;
        \item $L(S)=D(S)\cap V([n]-S)$;
        \end{itemize}
    \end{itemize} and if $A\subset C\subset[n]$, we set 
\begin{itemize}[resume]\item $L(A;C)=\cup_{B}L(B)$ where the union is over all $B$ with
  $A\subset B\subset C$.
\end{itemize}
Note that $L(A;C)=V([n]-C)\cap D(A)$ so all subsets of $X$ listed above are irreducible locally closed $G$-invariant subvarieties. 
With these notations, we have the immediate observation:

\begin{proposition}\label{stratum_structure_prelim}
Let $\chi\in\pmb{\chi}(G)$ be a character and $\lambda\in\Lambda^{\chi}$. The following statements for a point $x\in X$ are equivalent:
\begin{enumerate}
    \item $x\in S^{\chi}_{\lambda}$,
    \item the function $\frac{\langle\chi,-\rangle}{||-||}$ attains negative relative minimum on $\sigma_{x}$ at $\lambda$, and 
    \item $L(S_{x})\subset S^{\chi}_{\lambda}$.
\end{enumerate}
\end{proposition}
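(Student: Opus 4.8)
The plan is to establish the cycle of implications by first proving $(1)\Leftrightarrow(2)$ and then $(3)\Rightarrow(1)\Rightarrow(3)$, the entire content being the observation that the cone $\sigma_x=\sigma_{S_x}$, hence also the destabilization datum attached to $x$, depends on $x$ only through its set of states $S_x$. I would begin by recalling the facts already available: by \Cref{torusLP} one has $C_x\otimes_{\mathbf Z}\mathbf R=\sigma_x=\sigma_{S_x}$, so that $x$ is $\chi$-unstable iff, by the Hilbert--Mumford criterion \Cref{HilbMumAffine}, the linear functional $f:=\langle\chi,-\rangle$ assumes a negative value on $\sigma_x$; and in that case, as in the proof of \Cref{finitenesstheorem} (built on \Cref{unique_cone_min_rational}), $\lambda_{\chi,x}$ is the unique indivisible one parameter subgroup lying in $\sigma_x$ that realizes $M^{\chi}(x)=\inf_{v\in\sigma_x\setminus\{0\}}\frac{\langle\chi,v\rangle}{||v||}<0$.

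For $(1)\Leftrightarrow(2)$ I would just unwind definitions. If $x\in S^{\chi}_{\lambda}$, then $x$ is $\chi$-unstable, so $f$ is negative somewhere on $\sigma_x$ and $\frac{\langle\chi,-\rangle}{||-||}$ attains a negative relative minimum on $\sigma_x$ precisely at the indivisible one parameter subgroup $\lambda_{\chi,x}=\lambda$, which is $(2)$. Conversely, suppose $(2)$ holds. Since $\lambda\in\Lambda^{\chi}$ is indivisible it is a nonzero lattice point of $\sigma_x$, hence $\lambda\in C_x$, and $\langle\chi,\lambda\rangle<0$, so $x$ is $\chi$-unstable by \Cref{HilbMumAffine}; the uniqueness in \Cref{finitenesstheorem}(3) (equivalently \Cref{unique_cone_min_rational}) then forces $\lambda=\lambda_{\chi,x}$, i.e.\ $x\in S^{\chi}_{\lambda}$. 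The one point needing a little care is that ``attains the relative minimum at $\lambda$'' is a priori only well-defined up to positive scaling, as in \Cref{unique_cone_min}; this ambiguity evaporates because $\lambda$ is indivisible and every ray of $\sigma_x$ meets the lattice in a unique indivisible point.

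For $(3)\Rightarrow(1)$ there is nothing to do, since $x\in L(S_x)$. For $(1)\Rightarrow(3)$, take any $y\in L(S_x)$; by definition $S_y=S_x$, hence $\sigma_y=\sigma_{S_y}=\sigma_{S_x}=\sigma_x$. But condition $(2)$ — which we have shown is equivalent to $(1)$ — refers only to the functional $f=\langle\chi,-\rangle$, the fixed norm, the fixed $\lambda$, and the cone $\sigma_x$, data identical for $y$ and for $x$; so $(2)$ holds for $y$, and therefore $y\in S^{\chi}_{\lambda}$. As $y\in L(S_x)$ was arbitrary, $L(S_x)\subset S^{\chi}_{\lambda}$, which is $(3)$. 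I do not anticipate a genuine obstacle here: the real work has been done in \Cref{Linear programming}, and what remains is the bookkeeping observation that the triple (cone, adapted subgroup, (in)stability) factors through $S_x$, together with vigilance in matching the phrase ``relative minimum'' to the meaning fixed in \Cref{unique_cone_min}/\Cref{compute_cone_min} and in handling indivisibility in the $(2)\Rightarrow(1)$ step.
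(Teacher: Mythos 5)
Your proof is correct and follows essentially the same route as the paper's: establish $(1)\Leftrightarrow(2)$ by unwinding the proof of Kempf's theorem (\Cref{finitenesstheorem}), deduce $(1)\Rightarrow(3)$ from the observation that $\sigma_y=\sigma_x$ for all $y\in L(S_x)$ together with $(1)\Leftrightarrow(2)$, and note $(3)\Rightarrow(1)$ is immediate since $x\in L(S_x)$. You are somewhat more explicit than the paper on the $(1)\Leftrightarrow(2)$ step and on the scaling/indivisibility subtlety, but the structure and the key idea — that the relevant data factor through $S_x$ — are the same.
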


\begin{proof}
That statement (1) is equivalent to statement (2) follows from the proof of \Cref{finitenesstheorem}. To prove statement (1) implies statement (3), simply note that $S_{y}=S_{x}$ so that $\sigma_{y}=\sigma_{x}$ for all $y\in L(S_{x})$. Now the equivalence of statement (1) and statement (2) implies $y\in S^{\chi}_{\lambda}$ for all $y\in L(S_{x})$ if $x\in S^{\chi}_{\lambda}$. Finally statement (3) implies statement (1) because $x\in L(S_{x})$. Therefore, statement (1) is equivalent to statement (3).
\end{proof}

We obtain the first preliminary observation of a stratum:

\begin{corollary}\label{stratum_structure_prelim_cor}
Let $\chi\in\pmb{\chi}(G)$ be a character. Then for each $\lambda\in\Lambda^{\chi}$, the stratum $S_{\lambda}^{\chi}$ is a finite disjoint union of $G$-invariant locally closed subvarieties of the form $L(S)$ for some $S\subset[n]$. 
\end{corollary}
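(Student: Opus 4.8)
The plan is to reduce the assertion to the equivalence $(1)\Leftrightarrow(3)$ already established in \Cref{stratum_structure_prelim}. First I would record the clean description of the sets $L(S)$: by definition $L(S)=D(S)\cap V([n]-S)$, so a point $x\in X$ lies in $L(S)$ precisely when its set of states is $S_{x}=S$. Hence the collection $\{L(S)\mid S\subset[n]\}$ is a \emph{finite partition} of $X=\mathbf{A}^{n}_{\mathbf{C}}$, each member being nonempty and, as already observed right before \Cref{stratum_structure_prelim}, an irreducible, locally closed, $G$-invariant subvariety (the $G$-invariance because the action is diagonal and therefore preserves the vanishing pattern of coordinates).

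Next I would invoke \Cref{stratum_structure_prelim}: for $x\in X^{\text{us}}(\chi)$ and $\lambda\in\Lambda^{\chi}$, the condition $x\in S^{\chi}_{\lambda}$ is equivalent to $L(S_{x})\subset S^{\chi}_{\lambda}$. Since $x\in L(S_{x})$ always holds, this says that $x$ lies in $S^{\chi}_{\lambda}$ if and only if the entire partition piece $L(S_{x})$ containing $x$ is contained in $S^{\chi}_{\lambda}$; that is, $S^{\chi}_{\lambda}$ is saturated with respect to the partition $\{L(S)\}$. Therefore
\[
S^{\chi}_{\lambda}=\bigsqcup_{\substack{S\subset[n]\\ L(S)\subset S^{\chi}_{\lambda}}}L(S),
\]
which is a finite disjoint union of $G$-invariant locally closed subvarieties of the required form, proving the corollary.

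I do not expect any genuine obstacle here: the substantive input is entirely contained in \Cref{stratum_structure_prelim}, and what remains is bookkeeping. The only two points deserving a sentence of justification are that the sets $L(S)$ genuinely partition $X$ and that each is irreducible, locally closed, and $G$-invariant; both are immediate from the definition $L(S)=D(S)\cap V([n]-S)$ together with the diagonal form of the $G$-action, and both were already noted in the text preceding the proposition.
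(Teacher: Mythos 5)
Your argument is correct and follows the same route the paper intends: Corollary \ref{stratum_structure_prelim_cor} is drawn directly from the equivalence $(1)\Leftrightarrow(3)$ of Proposition \ref{stratum_structure_prelim} together with the observation that $\{L(S)\mid S\subset[n]\}$ partitions $X$. The paper leaves this as an immediate consequence without further proof, and your write-up simply supplies the bookkeeping.
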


Next, we prove that among all subsets $S\subset[n]$ such that $L(S)\subset S^{\chi}_{\lambda}$, there is a unique maximal one.

\begin{lemma}\label{stratum_comb}
Let $\chi\in\pmb{\chi}(G)$ be a character. For every $\lambda\in\Lambda^{\chi}$, there is a unique maximal subset $\mathcal{M}\subset [n]$ with respect to the property that $L(\mathcal{M})\subset S^{\chi}_{\lambda}$. Moreover, $\mathcal{M}=\{i\in[n]\vert \langle\chi_{i},\lambda\rangle\geq 0\}$. In particular, $V([n]-\mathcal{M})= X_{\lambda}$.
\end{lemma}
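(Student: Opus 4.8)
The plan is to reduce the whole statement to the combinatorial characterization of strata in \Cref{stratum_structure_prelim}: a point $x$ lies in $S^{\chi}_{\lambda}$ precisely when $\frac{\langle\chi,-\rangle}{||-||}$ attains its (necessarily negative) relative minimum on the cone $\sigma_{x}=\sigma_{S_{x}}$ at $\lambda$, and this depends only on the state set $S_{x}$. Since every $L(S)$ is nonempty — the point whose coordinates are $1$ on the indices in $S$ and $0$ elsewhere belongs to it — controlling which $L(S)$ land inside $S^{\chi}_{\lambda}$ amounts to controlling which cones $\sigma_{S}$ the one parameter subgroup $\lambda$ minimizes over. So I set $\mathcal{M}=\{i\in[n]\mid\langle\chi_{i},\lambda\rangle\geq 0\}$ and aim to prove both that $L(\mathcal{M})\subset S^{\chi}_{\lambda}$ and that any $\mathcal{M}'\subset[n]$ with $L(\mathcal{M}')\subset S^{\chi}_{\lambda}$ satisfies $\mathcal{M}'\subset\mathcal{M}$.

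For the containment, I would use that $\lambda\in\Lambda^{\chi}$, so $\lambda=\lambda_{\chi,y}$ for some $y\in X^{\text{us}}(\chi)$. Then $\lambda\in C_{y}$, so by \Cref{torusLP} $\lambda\in\sigma_{y}$, which forces $\langle\chi_{i},\lambda\rangle\geq 0$ for every $i\in S_{y}$; hence $S_{y}\subset\mathcal{M}$ and therefore $\sigma_{\mathcal{M}}\subset\sigma_{y}$. Because $y$ is $\chi$-unstable, $\langle\chi,\lambda\rangle=||\lambda||\cdot M^{\chi}(y)<0$, so $\frac{\langle\chi,-\rangle}{||-||}$ takes a negative value at $\lambda\in\sigma_{\mathcal{M}}$; and since $\lambda$ already minimizes this function on the larger cone $\sigma_{y}$ (by the proof of \Cref{finitenesstheorem}), it minimizes it on the subcone $\sigma_{\mathcal{M}}$ as well, with the minimizing direction unique by \Cref{unique_cone_min}. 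Now any $x\in L(\mathcal{M})$ has $\sigma_{x}=\sigma_{\mathcal{M}}$, so \Cref{stratum_structure_prelim} gives $x\in S^{\chi}_{\lambda}$.

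For maximality, suppose $L(\mathcal{M}')\subset S^{\chi}_{\lambda}$ and pick any $x\in L(\mathcal{M}')$; then $\lambda_{\chi,x}=\lambda$, so $\lambda\in C_{x}$ and by \Cref{torusLP} $\lambda\in\sigma_{x}=\sigma_{\mathcal{M}'}$, i.e. $\langle\chi_{i},\lambda\rangle\geq 0$ for all $i\in\mathcal{M}'$, which is exactly $\mathcal{M}'\subset\mathcal{M}$. Combined with the previous step, $\mathcal{M}$ is the unique maximal subset with the stated property. The last assertion is then a direct unwinding: $[n]-\mathcal{M}=\{i\mid\langle\chi_{i},\lambda\rangle<0\}$, so $V([n]-\mathcal{M})=\{(x_{1},\ldots,x_{n})\mid x_{i}=0\text{ whenever }\langle\chi_{i},\lambda\rangle<0\}$, which is the description of $X_{\lambda}$ recorded in the notation. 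I expect the only point needing genuine care to be the passage of the minimization property from $\sigma_{y}$ down to $\sigma_{\mathcal{M}}$ — specifically, confirming that $\frac{\langle\chi,-\rangle}{||-||}$ still takes a negative value on the smaller cone, so that the uniqueness in \Cref{unique_cone_min} (and hence the equivalence in \Cref{stratum_structure_prelim}) genuinely applies there; the rest is bookkeeping with the definitions of $\sigma_{S}$, $L(S)$, and $X_{\lambda}$.
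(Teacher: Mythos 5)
Your proof is correct and follows essentially the same route as the paper's: both reduce to the combinatorial characterization in \Cref{stratum_structure_prelim}, show $\sigma_{\mathcal{M}}$ sits inside a cone on which $\lambda$ is already the minimizer (you via $S_{y}$ with $\lambda=\lambda_{\chi,y}$, the paper via an arbitrary $S$ with $L(S)\subset S^{\chi}_{\lambda}$), and derive maximality from $\lambda\in\sigma_{S'}$ whenever $L(S')\subset S^{\chi}_{\lambda}$. Your extra remark verifying that $\frac{\langle\chi,-\rangle}{||-||}$ is still negative on $\sigma_{\mathcal{M}}$ before invoking \Cref{unique_cone_min} is a sensible piece of care that the paper leaves implicit.
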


\begin{proof}
Let $\mathcal{M}=\{i\in[n]\vert \langle\chi_{i},\lambda\rangle\geq 0\}$.
Let $S\subset[n]$ be a subset such that $L(S)\subset S^{\chi}_{\lambda}$. We need to show that 
\begin{enumerate}
    \item $S\subset \mathcal{M}$, and 
    \item $L(\mathcal{M})\subset S^{\chi}_{\lambda}$.
\end{enumerate}
For statement (1), since $S^{\chi}_{\lambda}\subset X_{\lambda}$, we have $L(S)\subset X_{\lambda}=V([n]-\mathcal{M})$. This implies $S\subset \mathcal{M}$, justifying statement (1). This in turn implies $\sigma_{\mathcal{M}}\subset \sigma_{S}$. For statement (2), note that $\lambda$ is by definition in $\sigma_{\mathcal{M}}$. In sum, we obtain 
$$\lambda\in\sigma_{\mathcal{M}}\subset\sigma_{S}.$$ By \Cref{stratum_structure_prelim}, the function $\frac{\langle\chi,-\rangle}{||-||}$ attains relative minimum on $\sigma_{S}$ at $\lambda$. Therefore, the function $\frac{\langle\chi,-\rangle}{||-||}$ also attains relative minimum on $\sigma_{\mathcal{M}}$ at $\lambda$. This proves statement (2).
\end{proof}

We also note that a stratum has the following absorption property:

\begin{lemma}\label{stratum_absorb}
Let $\chi\in\pmb{\chi}(G)$ be a character and $\lambda\in\Lambda^{\chi}$. Then if $C\subset A\subset [n]$ is a pair of subsets such that both $L(C)$ and $L(A)$ are contained in $S^{\chi}_{\lambda}$, we have $L(B)\subset S^{\chi}_{\lambda}$ for all $B$ with $C\subset B\subset A$.
\end{lemma}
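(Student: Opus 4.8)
The plan is to translate the statement into a convex-geometric fact about the cones $\sigma_C,\sigma_B,\sigma_A$ and then exploit monotonicity of the assignment $S\mapsto\sigma_S$. Recall from \Cref{stratum_structure_prelim} that for a subset $T\subset[n]$ the inclusion $L(T)\subset S^{\chi}_{\lambda}$ is equivalent, via the indicator point $x$ with $S_x=T$ (so $L(S_x)=L(T)$), to the assertion that the scale-invariant function $\frac{\langle\chi,-\rangle}{||-||}$ attains a negative relative minimum on $\sigma_T$ at $\lambda$. Writing $f=\langle\chi,-\rangle$, $S$ for the unit sphere of $\pmb{\Gamma}(G)_{\mathbf{R}}$, and $s_0=\lambda/||\lambda||$, this says precisely that $s_0$ is the minimizer of $f$ on $S\cap\sigma_T$ and $f(s_0)<0$. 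The other ingredient is that $C\subset B\subset A$ forces $\sigma_A\subset\sigma_B\subset\sigma_C$, since enlarging the index set only adds inequality constraints.

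The core argument is then short. The hypothesis $L(C)\subset S^{\chi}_{\lambda}$ gives that $s_0$ is the (unique, by \Cref{unique_cone_min}) minimizer of $f$ on $S\cap\sigma_C$ and that $f(s_0)<0$. The hypothesis $L(A)\subset S^{\chi}_{\lambda}$ gives in particular $\lambda\in\sigma_A$, hence $\lambda\in\sigma_B$ and $s_0\in S\cap\sigma_B$. Since $f(s_0)<0$, $f$ takes a negative value on $\sigma_B$, so \Cref{unique_cone_min} applies to $\sigma_B$. Now for every $s\in S\cap\sigma_B$ we have $s\in S\cap\sigma_C$, so $f(s)\geq f(s_0)$; thus $s_0$ attains the minimum of $f$ on $S\cap\sigma_B$, and by the uniqueness in \Cref{unique_cone_min} it is the minimizer. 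Since $\lambda\in\Lambda^{\chi}$ is indivisible, $\lambda$ is the one parameter subgroup adapted to the indicator point of $B$, which by \Cref{stratum_structure_prelim} means $L(B)\subset S^{\chi}_{\lambda}$.

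I do not expect a genuine obstacle here: the entire content is the observation that passing from the larger feasible cone $\sigma_C$ to the smaller one $\sigma_B$ cannot manufacture a new, strictly lower minimum as long as the old minimizer direction $\lambda$ is still feasible, and feasibility is exactly what $\lambda\in\sigma_A\subset\sigma_B$ supplies. The only point requiring a little care is the bookkeeping in the translation step — comparing the scale-invariant quantity $\frac{\langle\chi,-\rangle}{||-||}$ (equivalently $f$ on the unit sphere) rather than $\langle\chi,-\rangle$ itself, and recording that both hypotheses $L(C)\subset S^{\chi}_{\lambda}$ and $L(A)\subset S^{\chi}_{\lambda}$ are used: the first for minimality of $s_0$ over the big cone, the second merely to keep $\lambda$ inside $\sigma_B$.
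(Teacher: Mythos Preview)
Your proposal is correct and follows essentially the same argument as the paper: from $C\subset B\subset A$ deduce $\sigma_A\subset\sigma_B\subset\sigma_C$, use $L(A)\subset S^{\chi}_{\lambda}$ to place $\lambda\in\sigma_A\subset\sigma_B$, use $L(C)\subset S^{\chi}_{\lambda}$ to know $\lambda$ minimizes over $\sigma_C$ hence over the subcone $\sigma_B$, and invoke \Cref{stratum_structure_prelim}. The paper's proof is a one-line compression of exactly this reasoning.
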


\begin{proof}
Suppose we have $C\subset B\subset A$ and both $L(C)$ and $L(A)$ are contained in $S^{\chi}_{\lambda}$. Then we have 
$$\lambda\in\sigma_{A}\subset\sigma_{B}\subset\sigma_{C}.$$ The lemma is now a direct result of \Cref{stratum_structure_prelim}.
\end{proof}

Finally, we are ready to describe a stratum:

\begin{theorem}\label{strata_struct}
Let $\chi\in\pmb{\chi}(G)$ be a character and $\lambda\in\Lambda^{\chi}$. 
Let $\mathcal{M}$ be the maximal subset of $[n]$ as in 
\Cref{stratum_comb}. If each $A_{j}$ for $j=1,\ldots,N$ is a subset
of $[n]$ that is minimal with respect to the property that $L(A_{j})\subset S^{\chi}_{\lambda}$, then 
$$S^{\chi}_{\lambda}=V([n]-\mathcal{M})\bigcap\bigg(\bigcup_{j=1}^{N}
D(A_{j})\bigg)=X_{\lambda}\bigcap\bigg(\bigcup_{j=1}^{N}
D(A_{j})\bigg).$$ In particular, $S^{\chi}_{\lambda}$ as an open subvariety of $X_{\lambda}$, is irreducible,
connected and  
smooth. Moreover, $S^{\chi}_{\lambda}$ is obtained by removing a finite union of linear subspaces from  $X_{\lambda}$ and the zariski closure $\overline{S^{\chi}_{\lambda}}$ is the linear subspace $X_{\lambda}$.
\end{theorem}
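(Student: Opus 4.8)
The plan is to establish the set-theoretic identity $S^{\chi}_{\lambda}=V([n]-\mathcal{M})\cap\big(\bigcup_{j}D(A_{j})\big)$ first, and then read off irreducibility, smoothness, and the closure statement as formal consequences. I would start from \Cref{stratum_structure_prelim_cor}, which already tells us $S^{\chi}_{\lambda}=\bigsqcup L(S)$ over a finite family of subsets $S\subset[n]$; call this family $\mathscr{S}$. By \Cref{stratum_comb}, every $S\in\mathscr{S}$ satisfies $S\subset\mathcal{M}$, and by \Cref{stratum_absorb} the family $\mathscr{S}$ is ``convex'' in the Boolean lattice: if $C\subset B\subset A$ with $C,A\in\mathscr{S}$ then $B\in\mathscr{S}$. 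Hence $\mathscr{S}=\bigcup_{j}\{B : A_{j}\subset B\subset\mathcal{M}\}$ where $A_{1},\dots,A_{N}$ are the minimal elements of $\mathscr{S}$ (these exist and are finite since $\mathscr{S}$ is a finite nonempty poset, assuming $S^{\chi}_{\lambda}\neq\emptyset$; the empty case is trivial). Now I would use the elementary fact, implicit in the definitions of $L(\cdot)$ and $L(A;C)$ in \Cref{The structure of a stratum}, that $L(A_{j};\mathcal{M})=\bigcup_{A_{j}\subset B\subset\mathcal{M}}L(B)=V([n]-\mathcal{M})\cap D(A_{j})$. Taking the union over $j$ gives
\begin{equation*}
S^{\chi}_{\lambda}=\bigcup_{j=1}^{N}L(A_{j};\mathcal{M})=\bigcup_{j=1}^{N}\big(V([n]-\mathcal{M})\cap D(A_{j})\big)=V([n]-\mathcal{M})\cap\Big(\bigcup_{j=1}^{N}D(A_{j})\Big),
\end{equation*}
and $V([n]-\mathcal{M})=X_{\lambda}$ by the last sentence of \Cref{stratum_comb}.

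For the geometric conclusions: $X_{\lambda}$ is a coordinate linear subspace of $X=\mathbf{A}^{n}_{\mathbf{C}}$, hence irreducible and smooth. The set $\bigcup_{j}D(A_{j})$ is open in $X$, so $S^{\chi}_{\lambda}$ is a nonempty open subscheme of the irreducible smooth variety $X_{\lambda}$, therefore itself irreducible, connected, and smooth. Its complement in $X_{\lambda}$ is $X_{\lambda}\setminus\bigcup_{j}D(A_{j})=X_{\lambda}\cap\bigcap_{j}V(A_{j})$, which is a finite union of coordinate linear subspaces of $X_{\lambda}$ once one expands $\bigcap_{j}V(\prod_{i\in A_{j}}x_{i})$ into a union of monomial-free coordinate subspaces (concretely, $V(\prod_{i\in A_{j}}x_{i})=\bigcup_{i\in A_{j}}V(x_{i})$, and distributing the intersection over $j$ yields a union of subspaces each of the form $V(\{x_{i_{1}},\dots,x_{i_{N}}\})\cap X_{\lambda}$). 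Finally, since $S^{\chi}_{\lambda}$ is a nonempty open subset of the irreducible variety $X_{\lambda}$, it is dense, so $\overline{S^{\chi}_{\lambda}}=X_{\lambda}$.

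I do not expect a serious obstacle here; the content is entirely bookkeeping on the Boolean lattice of states. The one point requiring a little care is the reconciliation between the two combinatorial descriptions of $S^{\chi}_{\lambda}$ — the ``union of $L(S)$'' decomposition from \Cref{stratum_structure_prelim_cor} and the ``union of $L(A_{j};\mathcal{M})$'' form — which rests on verifying that $\mathscr{S}$ is upward-closed toward $\mathcal{M}$ (immediate from \Cref{stratum_absorb} together with $L(\mathcal{M})\subset S^{\chi}_{\lambda}$ from \Cref{stratum_comb}) and downward-generated by its minimal elements. The mild subtlety is simply noting $L(A_j;\mathcal{M}) = V([n]-\mathcal{M}) \cap D(A_j)$, which was recorded right after the definition of $L(A;C)$, so no new argument is needed. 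The rest is the standard dictionary between open subsets of affine space and complements of coordinate subspaces.
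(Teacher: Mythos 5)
Your proposal is correct and follows essentially the same route as the paper: decompose $S^{\chi}_{\lambda}$ via \Cref{stratum_structure_prelim_cor}, invoke \Cref{stratum_comb} and \Cref{stratum_absorb} to organize the occurring $L(S)$'s into the intervals $[A_{j},\mathcal{M}]$ of the Boolean lattice, and then use the identity $L(A_{j};\mathcal{M})=V([n]-\mathcal{M})\cap D(A_{j})$. You simply make explicit the lattice-theoretic bookkeeping and the deduction of the geometric corollaries (irreducibility, smoothness, density) that the paper compresses into ``direct result of the above computation.''
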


\begin{proof}
By \Cref{stratum_structure_prelim_cor} and \Cref{stratum_absorb}, we have $$S^{\chi}_{\lambda}=\bigcup_{j=1}^{N}L(A_{j};\mathcal{M})=\bigcup_{j=1}^{N}\bigg(V([n]-\mathcal{M})\cap D(A_{j})\bigg)=V([n]-\mathcal{M})\bigcap\bigg(\bigcup_{j=1}^{N}D(A_{j})\bigg).$$ The theorem is a direct result of the above computation.
\end{proof}

\section{Recap on toric varieties}\label{Recap on toric varieties}
The main reference for this section is \cite{MR2810322}. In this section we recall the fact due to Cox that every projective toric variety $X$ is the GIT quotient of an affine space by a diagonalizable group with respect to characters induced by ample divisors on $X$. Our toric varieties are assumed to be normal. In this case a toric variety is constructed from a fan. We refer the reader for the definition of a fan and how to construct a toric variety from a fan to chapter 3 in \cite{MR2810322}.

Starting at \Cref{Set up}, we adopt notations and conventions from \cite{MR2810322} and recall the finite presentation (\ref{sesWDiv}) of the divisor class group of a toric variety.  With (\ref{sesWDiv}) we derive at \Cref{lemma5.1.1} several properties of the diagonalizable group that occurs in the quotient construction of toric varieties. We then briefly recall a numerical criterion, \Cref{numampleness} for a torus invariant divisor to be ample and describe in \Cref{nef_and_ample_cone_thm} the cone of ample divisors on a projective toric variety. Finally, in \Cref{GIT quotient construction of projective toric varieties} we recall Cox's GIT quotient construction for projective toric varieties at \Cref{thm5.1.11} and \Cref{bad_locus_ample}.
\subsection{Set up}\label{Set up}
Let $N$ be a lattice and $M=\Hom_{\mathbf{Z}}(N,\mathbf{Z})$ be the dual. Let 
$\langle-,-\rangle:M\times N\rightarrow \mathbf{Z}$ be the perfect pairing where $\langle
m,n\rangle=m(n)$. Set $N_{\mathbf{R}}$ and
$M_{\mathbf{R}}$ to be $N\otimes_{\mathbf{Z}}\mathbf{R}$ and
$M\otimes_{\mathbf{Z}}\mathbf{R}$ respectively. 

Let $\Sigma\subset\mathbf{N}_{\mathbf{R}}$ be a fan. We write $X_{\Sigma}$ as the toric variety constructed from the fan $\Sigma$. In this case
$T_{N}:=N\otimes_{\mathbf{Z}}\mathbf{C}^{\times}$ is the torus embedded
in $X_{\Sigma}$. The lattice $N$ is realized as the group of one parameter
subgroups $\pmb{\Gamma}(T_{N})$ of $T_{N}$ in the following way: given an element $n\in N$,
the assignment $\mathbf{C}^{\times}\rightarrow T_{N}$ defined by $$t\mapsto
n\otimes t$$ is a one parameter subgroup of $T_{N}$. The lattice $M$
is realized as the group of characters $\pmb{\chi}(T_{N})$ of $T_{N}$ in the following way: given an
element $m\in M$ and an element $n\otimes t\in T_{N}$, the assignment
$T_{N}\rightarrow\mathbf{C}^{\times}$ defined by $$n\otimes t\mapsto
t^{\langle m,n\rangle}$$ is a character of $T_{N}$. Under these identifications the pairing $\langle-,-\rangle:M\times N\rightarrow\mathbf{Z}$ agrees with the pairing defined for $\pmb{\chi}(T_{N})$ and $\pmb{\Gamma}(T_{N})$ in \Cref{Notations and conventions}.

By $\Sigma(i)$ we
mean the collection of $i$-dimensional cones in $\Sigma$ and $\Sigma_{\text{max}}$ means the collection of maximal cones in $\Sigma$. For each ray $\rho\in\Sigma(1)$, we write $u_{\rho}\in N$ as the indivisible lattice point that spans $\rho$ over $\mathbf{R}$. If $\sigma$ is a polyhedral cone, $\sigma(1)$ means the collection of extremal rays of $\sigma$. Namely, $\sigma(1)$ consists of one dimensional faces of $\sigma$.

By the orbit-cone correspondence (theorem 3.2.6 in \cite{MR2810322}), each ray $\rho\in\Sigma(1)$ corresponds to a $T_{N}$-invariant divisor $D_{\rho}\subset X_{\Sigma}$. 
Let $$\mathbf{Z}^{\Sigma(1)}=\bigoplus_{\rho\in\Sigma(1)}\mathbf{Z}\cdot D_{\rho}$$ be the free abelian group generated by the divisors $D_{\rho}$ for $\rho\in\Sigma(1)$. Each $m\in M$,
viewed as a character $\chi^{m}$ of $T_{N}$, is a rational function on
$X_{\Sigma}$. The principal divisor $\text{div}(\chi^{m})$ satisfies the formula 
\begin{equation}\label{DivChar}
\text{div}(\chi^{m})=\sum_{\rho\in\Sigma(1)}\langle
m,u_{\rho}\rangle\cdot D_{\rho}.
\end{equation} It is proved in theorem 4.1.3 in \cite{MR2810322} that
the natural map
$\mathbf{Z}^{\Sigma(1)}\rightarrow\text{Cl}(X_{\Sigma})$ is surjective
with kernel the image of the map $M\rightarrow\mathbf{Z}^{\Sigma(1)}$ defined by $m\mapsto \text{div}(\chi^{m})$. Namely, we have an exact sequence 
\begin{equation}\label{sesWDiv}
\begin{tikzcd}[sep =small]
 M\arrow[r] & \mathbf{Z}^{\Sigma(1)}\arrow[r] &\text{Cl}(X_{\Sigma})\arrow[r] & 0
\end{tikzcd}.
\end{equation}
Moreover, it is exact on the left if $\Sigma$ is complete. Let $\text{CDiv}_{T_{N}}(X_{\Sigma})$ be the group of torus
invariant Cartier divisors on $X_{\Sigma}$. Then there is the exact sequence \begin{equation}\label{sesCDiv}
\begin{tikzcd}
[sep =small]
 M\arrow[r] & \text{CDiv}_{T_{N}}(X_{\Sigma})\arrow[r] &\text{Pic}(X_{\Sigma})\arrow[r] & 0
\end{tikzcd}
\end{equation}
that fits into the commutative diagram of
short exact sequences:
$$\begin{tikzcd}
M\arrow[r]\arrow[d,equal] & \text{CDiv}_{T_{N}}(X_{\Sigma})\arrow[r]\arrow[d,hookrightarrow] &\text{Pic}(X_{\Sigma})\arrow[r]\arrow[d,hookrightarrow] & 0\\
M\arrow[r] & \mathbf{Z}^{\Sigma(1)}\arrow[r] &\text{Cl}(X_{\Sigma})\arrow[r] & 0
\end{tikzcd}.$$
When $\Sigma$ is smooth, there is no distinction between sequence
$(\ref{sesWDiv})$ and sequence $(\ref{sesCDiv}).$ 

When $\Sigma$ is complete, as $\mathbf{C}^{\times}$ is
divisible, we get another short exact sequence by applying
$\Hom_{\mathbf{Z}}(-,\mathbf{C}^{\times})$ to sequence
$(\ref{sesWDiv})$:
\begin{equation}\label{sesG}
\begin{tikzcd}[sep= small]
1\arrow[r]& G\arrow[r]&(\mathbf{C}^{\times})^{\Sigma(1)}\arrow[r]&
T_{N}\arrow[r]& 1
\end{tikzcd}
\end{equation}
where
$G=\Hom_{\mathbf{Z}}(\text{Cl}(X_{\Sigma}),\mathbf{C}^{\times})$. Facts about the group $G$ are summarized in the
\begin{proposition}\label{lemma5.1.1}
Let $\Sigma$ be a complete fan and $G=\Hom_{\mathbf{Z}}(\Cl(X_{\Sigma}),\mathbf{C}^{\times})$. Then 
\begin{enumerate}
\item $G$ is diagonalizable.
\item $\Cl(X_{\Sigma})\simeq\pmb{\chi}(G)$.
\item An element $t\in(\mathbf{C}^{\times})^{\Sigma(1)}$ is in $G$ if and only
  if $$\prod_{\rho\in\Sigma(1)}t_{\rho}^{\langle m ,u_{\rho}\rangle}=1\text{ for
    all }m\in M.$$
\item There is an exact sequence  \begin{equation}\begin{tikzcd}[sep=small]\label{one_PS_SES}
0\arrow[r]&\pmb{\Gamma}(G)\arrow[r]&\mathbf{Z}^{\Sigma(1)}\arrow[r,"\varphi"]&N
\end{tikzcd}\end{equation} dual to (\ref{sesWDiv}) where if $\{e_{\rho}\}$ is the standard basis for $\mathbf{Z}^{\Sigma(1)}$, $\varphi$ is defined by $e_{\rho}\mapsto u_{\rho}$. In particular, $\pmb{\Gamma}(G)$ records the relation among ray generators of $\Sigma(1).$ 
\end{enumerate}
\end{proposition}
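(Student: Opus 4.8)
The plan is to deduce all four assertions from the anti-equivalence between finitely generated abelian groups and diagonalizable groups (the same dictionary used throughout \cite{MR2810322}), applied to the exact sequences (\ref{sesWDiv}) and (\ref{sesG}); most of this is already recorded in \cite{MR2810322}, so the work is essentially bookkeeping. For (1) I would observe that $\Cl(X_{\Sigma})$ is a quotient of the free group $\mathbf{Z}^{\Sigma(1)}$, hence finitely generated, say $\Cl(X_{\Sigma})\cong\mathbf{Z}^{r}\oplus P$ with $P$ finite; applying $\Hom_{\mathbf{Z}}(-,\mathbf{C}^{\times})$ yields $G\cong(\mathbf{C}^{\times})^{r}\times\Hom_{\mathbf{Z}}(P,\mathbf{C}^{\times})$ with $\Hom_{\mathbf{Z}}(P,\mathbf{C}^{\times})$ finite (as $\mathbf{C}^{\times}$ contains all roots of unity), so $G$ is diagonalizable. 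For (2) I would invoke that the contravariant functors $A\mapsto\Hom_{\mathbf{Z}}(A,\mathbf{C}^{\times})$ and $H\mapsto\pmb{\chi}(H)$ furnish this anti-equivalence, so that the evaluation homomorphism $\Cl(X_{\Sigma})\to\pmb{\chi}(G)$, $\alpha\mapsto(\phi\mapsto\phi(\alpha))$, is an isomorphism.

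For (3), the point is that $G$ is by construction the kernel of the arrow $(\mathbf{C}^{\times})^{\Sigma(1)}\to T_{N}$ in (\ref{sesG}), which is obtained by applying $\Hom_{\mathbf{Z}}(-,\mathbf{C}^{\times})$ to $M\hookrightarrow\mathbf{Z}^{\Sigma(1)}$, $m\mapsto\text{div}(\chi^{m})=\sum_{\rho}\langle m,u_{\rho}\rangle\,e_{\rho}$ (formula (\ref{DivChar})), under the identifications $\Hom_{\mathbf{Z}}(\mathbf{Z}^{\Sigma(1)},\mathbf{C}^{\times})=(\mathbf{C}^{\times})^{\Sigma(1)}$ and $\Hom_{\mathbf{Z}}(M,\mathbf{C}^{\times})=N\otimes_{\mathbf{Z}}\mathbf{C}^{\times}=T_{N}$. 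I would unwind this map: the element $t=(t_{\rho})$, regarded as the homomorphism $e_{\rho}\mapsto t_{\rho}$, is sent to the homomorphism $M\to\mathbf{C}^{\times}$, $m\mapsto\prod_{\rho}t_{\rho}^{\langle m,u_{\rho}\rangle}$. Hence $t\in G$ if and only if the latter homomorphism is trivial, i.e. $\prod_{\rho}t_{\rho}^{\langle m,u_{\rho}\rangle}=1$ for every $m\in M$ — finitely many conditions since $M$ is finitely generated.

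For (4) I would apply the left-exact contravariant functor $\Hom_{\mathbf{Z}}(-,\mathbf{Z})$ to the short exact sequence (\ref{sesWDiv}) (exact on the left because $\Sigma$ is complete), obtaining the exact sequence $0\to\Hom_{\mathbf{Z}}(\Cl(X_{\Sigma}),\mathbf{Z})\to\Hom_{\mathbf{Z}}(\mathbf{Z}^{\Sigma(1)},\mathbf{Z})\to\Hom_{\mathbf{Z}}(M,\mathbf{Z})$, and then identify the three terms: $\Hom_{\mathbf{Z}}(\mathbf{Z}^{\Sigma(1)},\mathbf{Z})=\mathbf{Z}^{\Sigma(1)}$ via the dual basis of $\{e_{\rho}\}$; $\Hom_{\mathbf{Z}}(M,\mathbf{Z})=N$ via the perfect pairing $\langle-,-\rangle:M\times N\to\mathbf{Z}$; and $\Hom_{\mathbf{Z}}(\Cl(X_{\Sigma}),\mathbf{Z})=\pmb{\Gamma}(G)$, since the anti-equivalence gives $\pmb{\Gamma}(H)=\Hom_{\mathbf{Z}}(\pmb{\chi}(H),\mathbf{Z})$ for diagonalizable $H$ and $\pmb{\chi}(G)=\Cl(X_{\Sigma})$ by (2) (this identification also matches the integral pairing of \Cref{OPSforTori}, a one parameter subgroup $\lambda$ corresponding to $\chi\mapsto\langle\chi,\lambda\rangle$). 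Finally a dual-basis computation pins down the middle map as $\varphi:e_{\rho}\mapsto u_{\rho}$: the transpose of $m\mapsto\sum_{\rho}\langle m,u_{\rho}\rangle e_{\rho}$ sends $\sum_{\rho}a_{\rho}e_{\rho}^{\ast}$ to the functional $m\mapsto\sum_{\rho}a_{\rho}\langle m,u_{\rho}\rangle=\langle m,\sum_{\rho}a_{\rho}u_{\rho}\rangle$, i.e. to $\sum_{\rho}a_{\rho}u_{\rho}\in N$. The closing ``in particular'' then follows since $(a_{\rho})$ lies in $\pmb{\Gamma}(G)=\ker\varphi$ precisely when $\sum_{\rho}a_{\rho}u_{\rho}=0$, i.e. precisely when it is a relation among the ray generators. (Alternatively, one gets (4) by applying $\pmb{\Gamma}(-)$ directly to (\ref{sesG}) and using left-exactness of $\pmb{\Gamma}$ on diagonalizable groups.)

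I do not anticipate a genuine obstacle; every step is formal once the dictionary is in place. The one point requiring care is keeping the pairing conventions consistent across the identifications $\pmb{\chi}(G)\cong\Cl(X_{\Sigma})$ and $\pmb{\Gamma}(G)\cong\Hom_{\mathbf{Z}}(\Cl(X_{\Sigma}),\mathbf{Z})$, so that the pairing $\langle-,-\rangle$ of \Cref{Notations and conventions} coincides with the one induced by (\ref{sesWDiv}) and the transpose map in (\ref{one_PS_SES}) comes out as $e_{\rho}\mapsto u_{\rho}$ on the nose rather than up to a sign — a compatibility already arranged in the set-up of \Cref{Set up}.
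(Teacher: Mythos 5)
Your proof is correct and follows essentially the same route as the paper: invoke the anti-equivalence between finitely generated abelian groups and diagonalizable groups, apply it to the sequences (\ref{sesWDiv}) and (\ref{sesG}), and read off all four statements. The only cosmetic differences are that the paper gets (1) more tersely from the closed embedding $G\hookrightarrow(\mathbf{C}^{\times})^{\Sigma(1)}$ rather than from the structure theorem, cites \cite{MR2810322} for (3) where you unwind the map explicitly, and for (4) uses (3) plus perfectness of $\langle-,-\rangle$ to identify the kernel, whereas you apply $\Hom_{\mathbf{Z}}(-,\mathbf{Z})$ to (\ref{sesWDiv}) directly — these are interchangeable and land in the same place.
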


\begin{proof}
Statement (1) follows from the inclusion $G\subset(\mathbf{C}^{\times})^{\Sigma(1)}$. Statement (2) follows from the construction of $G$. For statement (3), see lemma 5.1.1 in \cite{MR2810322}.  For statement(4), the inclusion $G\hookrightarrow (\mathbf{C}^{\times})^{\Sigma(1)}$
induces an inclusion
$\pmb{\Gamma}(G)\hookrightarrow\Gamma((\mathbf{C}^{\times})^{\Sigma(1)})\simeq\mathbf{Z}^{\Sigma(1)}$. By
statement (3), a one parameter subgroup
$b\in\mathbf{Z}^{\Sigma(1)}$ is in $\pmb{\Gamma}(G)$ if and only if 
$$\prod_{\rho\in\Sigma(1)}t^{b_{\rho}\cdot\langle m ,u_{\rho}\rangle}=t^{\langle
  m,\sum_{\rho}b_{\rho}u_{\rho}\rangle}=1\text{ for all
}t\in\mathbf{C}^{\times}\text{ and for all }m\in M.$$ Hence $\langle
m,\sum_{\rho\in\Sigma(1)}b_{\rho}u_{\rho}\rangle =0 \text{ for all }m\in M.$ Since the
pairing between $M$ and $N$ is perfect, we have 
$$\sum_{\rho\in\Sigma(1)}b_{\rho}u_{\rho}=0.$$ Namely, $\pmb{\Gamma}(G)$ is
exactly the relation among ray generators. Moreover, the dual of
$M\rightarrow\mathbf{Z}^{\Sigma(1)}$ defined in (\ref{sesWDiv}) is
$\varphi$. Hence (\ref{one_PS_SES})  is the dual of (\ref{sesWDiv}). 
\end{proof}

\begin{notation}
We will also write the pairing (and its extension over $\mathbf{R}$) between $\pmb{\chi}(G)$ and $\pmb{\Gamma}(G)$ as $\langle-,-\rangle$ when the context is clear. Moreover, for a divisor $D\in\Cl(X_{\Sigma})$, we write $\chi_{D}$ as the corresponding character of $G$.
\end{notation}

\subsection{The ample cone}
We supply a numerical criterion to test if a divisor on a toric variety is ample. 
\begin{theorem}\label{numampleness}
Let $\Sigma$ be a fan and $D=\sum_{\rho}a_{\rho} D_{\rho}$ be a divisor on
$X_{\Sigma}$. Then $D$ is Cartier if and only if for all cone
$\sigma\in\Sigma_{\text{max}}$, there is an $m_{\sigma}\in M$ such
that $$\langle m,u_{\rho}\rangle = -a_{\rho}\text{ for all }
\rho\in\sigma(1).$$ Such $m_{\sigma}$ is unique modulo
$\sigma^{\perp}\cap M$. Moreover, if $\Sigma$ is complete, and
$D=\sum_{\rho}a_{\rho} D_{\rho}$ is Cartier with $m_{\sigma}\in M$ for
each $\sigma\in\Sigma_{\text{max}}$ as in the first part of the theorem, then $D$ is ample if and only if 
$$\langle m_{\sigma},u_{\rho}\rangle>-a_{\rho}\text{ for all
}\sigma\in\Sigma_{\text{max}} \text{ and for all }\rho\notin\sigma(1).$$
\end{theorem}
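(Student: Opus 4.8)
The plan is to reduce the statement to a standard fact about toric varieties, namely the correspondence between Cartier divisors on a complete toric variety and support functions on the fan, together with the characterization of ample (or very ample) divisors in terms of strict convexity of the associated support function. I would organize the argument in three stages: (1) the Cartier criterion, (2) the translation of $D$ into a piecewise linear support function $\varphi_D$, and (3) the identification of ``$D$ ample'' with ``$\varphi_D$ strictly convex,'' which then unwinds exactly into the stated inequalities.

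For stage (1): a divisor $D=\sum_\rho a_\rho D_\rho$ is Cartier precisely when it is locally principal, and on the affine chart $U_\sigma$ a torus-invariant principal divisor is $\mathrm{div}(\chi^m)|_{U_\sigma}=\sum_{\rho\in\sigma(1)}\langle m,u_\rho\rangle D_\rho$ by \eqref{DivChar}. Thus $D|_{U_\sigma}$ is principal iff there is $m_\sigma\in M$ with $\langle m_\sigma,u_\rho\rangle=-a_\rho$ for all $\rho\in\sigma(1)$; it suffices to check this on maximal cones since the condition on a face is inherited. Uniqueness of $m_\sigma$ modulo $\sigma^\perp\cap M$ is immediate because $\{u_\rho\mid\rho\in\sigma(1)\}$ spans $\sigma$ (this uses that $\Sigma$ is simplicial-or-not is irrelevant; only that $\sigma^\perp$ is the annihilator of $\mathrm{span}(\sigma)$). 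For stage (2), assuming $D$ Cartier with chosen data $\{m_\sigma\}$, I define the support function $\varphi_D\colon |\Sigma|\to\mathbf{R}$ by $\varphi_D(u)=\langle m_\sigma,u\rangle$ for $u\in\sigma$; it is well-defined and continuous precisely because the $m_\sigma$ agree on intersections of cones (a consequence of the uniqueness clause applied to the common face), and it is linear on each cone with $\varphi_D(u_\rho)=-a_\rho$.

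For stage (3): I would invoke the standard dictionary (Cox--Little--Schenck, Theorem 6.1.14 and its refinements, or equivalently the ``strict upper convexity'' criterion) stating that on a complete toric variety $D$ is ample iff $\varphi_D$ is a strictly convex support function, meaning that for every maximal cone $\sigma$ and every $u\notin\sigma$ one has $\varphi_D(u)>\langle m_\sigma,u\rangle$, and that it is enough to test this for $u=u_\rho$ with $\rho\notin\sigma(1)$ (since the cones are generated by rays, strict convexity across a wall propagates to all rays outside $\sigma$, and convexity across all walls gives global convexity). Substituting $\varphi_D(u_\rho)=-a_\rho$ when $\rho\in\sigma(1)$ — wait, here $\rho\notin\sigma(1)$, so instead I use that $\varphi_D(u_\rho)=-a_\rho$ is the value coming from whichever cone contains $u_\rho$, and compare it with the linear extension $\langle m_\sigma,u_\rho\rangle$ of the piece over $\sigma$ — yields exactly the asserted inequality $\langle m_\sigma,u_\rho\rangle>-a_\rho$ for all $\sigma\in\Sigma_{\max}$ and all $\rho\notin\sigma(1)$.

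The main obstacle is stage (3): verifying that testing strict convexity only on the ray generators $u_\rho$ outside each maximal cone is equivalent to genuine global strict convexity of $\varphi_D$, and that this in turn is equivalent to ampleness rather than merely to $D$ being nef or basepoint-free. The clean way to handle this is to cite the toric ampleness theorem directly (\cite{MR2810322}, Theorem 6.1.14 together with the discussion of ``strictly convex'' support functions in \S6.1), so that the real content of my proof is the bookkeeping in stages (1)--(2) identifying the combinatorial data, plus the observation that convexity of a piecewise-linear function on a complete fan can be checked wall-by-wall and hence ray-by-ray. I would also remark that completeness is used twice: once to ensure $|\Sigma|=N_{\mathbf{R}}$ so that $\varphi_D$ is defined everywhere and its convexity is meaningful, and once so that the polytope $P_D=\{m\mid\langle m,u_\rho\rangle\ge -a_\rho\ \forall\rho\}$ is bounded with $D=$ the divisor of $P_D$.
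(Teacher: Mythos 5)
The paper gives no proof of this theorem: it is a recalled standard fact, and the section explicitly names \cite{MR2810322} as its main reference, so there is no ``paper proof'' to compare against. Your three-stage argument — Cartier $\Leftrightarrow$ existence of local data $m_\sigma$ via \eqref{DivChar}, assembly of the piecewise-linear support function $\varphi_D$, and ampleness $\Leftrightarrow$ strict convexity checked on ray generators — is exactly the proof given in that reference, and it is sound in outline. One sign slip to fix: with the convention $\varphi_D(u_\rho)=-a_\rho$ used there (so $\varphi_D$ is a minimum of linear functions), strict convexity reads $\varphi_D(u)<\langle m_\sigma,u\rangle$ for $u\notin\sigma$, not $>$ as you wrote; the final inequality $\langle m_\sigma,u_\rho\rangle>-a_\rho$ you extract is nonetheless the correct one, so this is a typo rather than a conceptual error, but it should be corrected. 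Also, the reduction of strict convexity to a check on ray generators is itself a nontrivial lemma in \cite{MR2810322} (\S 6.1) rather than an immediate consequence of ``propagation across a wall,'' so it is cleaner to cite it than to assert it.
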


\begin{corollary}\label{Pic_free}
Suppose there is full dimensional cone in
$\Sigma$. Then $\Pic(X_{\Sigma})$ is free of finite rank. In particular,
this holds when $\Sigma$ is complete.
\end{corollary}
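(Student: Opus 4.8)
The plan is to deduce freeness of $\Pic(X_{\Sigma})$ from the short exact sequence (\ref{sesCDiv}) together with the fact that $M$ is a free $\mathbf{Z}$-module. The key point is that when $\Sigma$ contains a full-dimensional cone $\sigma$, the composite $M \to \CDiv_{T_{N}}(X_{\Sigma})$ in (\ref{sesCDiv}) is injective, because a torus-invariant Cartier divisor that is principal is determined on $\sigma$ by some $m_{\sigma} \in M$ via \Cref{numampleness}, and $\sigma^{\perp} \cap M = 0$ since $\sigma$ spans $N_{\mathbf{R}}$. More directly: if $m \in M$ gives the zero divisor $\text{div}(\chi^{m}) = 0$, then by (\ref{DivChar}) we have $\langle m, u_{\rho}\rangle = 0$ for all $\rho \in \Sigma(1)$; since $\sigma$ is full-dimensional its rays span $N_{\mathbf{R}}$, forcing $m = 0$. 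Hence (\ref{sesCDiv}) is a short exact sequence $0 \to M \to \CDiv_{T_{N}}(X_{\Sigma}) \to \Pic(X_{\Sigma}) \to 0$ with $M$ free.

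Next I would argue that $\CDiv_{T_{N}}(X_{\Sigma})$ is a finitely generated free abelian group: it is a subgroup of $\mathbf{Z}^{\Sigma(1)}$ (via the inclusion in the commutative diagram following (\ref{sesCDiv})), and $\mathbf{Z}^{\Sigma(1)}$ is free of finite rank since a fan has finitely many rays, so $\CDiv_{T_{N}}(X_{\Sigma})$ is free of finite rank as a subgroup of a finitely generated free abelian group. Now $\Pic(X_{\Sigma})$ is the cokernel of an injection of free finitely generated abelian groups $M \hookrightarrow \CDiv_{T_{N}}(X_{\Sigma})$. A priori such a cokernel need only be finitely generated, not free, so the remaining task is to rule out torsion.

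To kill torsion I would invoke ampleness, which is exactly what the hypothesis ``$\Sigma$ contains a full-dimensional cone'' is there to guarantee (a complete, or more generally full-dimensional, fan admits an ample — or at least a nonzero nef and big — divisor, and in the complete case \Cref{numampleness} produces ample divisors directly). The clean way: by \Cref{numampleness} there exists a Cartier divisor $D$ that is ample, so $mD$ is very ample for $m \gg 0$, giving a closed embedding $X_{\Sigma} \hookrightarrow \mathbf{P}^{r}$; pulling back, $\Pic(X_{\Sigma})$ receives the class of $\mathscr{O}(1)$, and one checks the class of an ample divisor is non-torsion, which rules out a torsion summand by a standard argument. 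Alternatively, and more in keeping with the toric setup: the map $\CDiv_{T_{N}}(X_{\Sigma}) \to \mathbf{Z}^{\Sigma(1)} \to \Cl(X_{\Sigma})$ identifies $\Pic(X_{\Sigma})$ with a subgroup of $\Cl(X_{\Sigma}) = \mathbf{Z}^{\Sigma(1)}/M$, and a subgroup of the cokernel of $M \hookrightarrow \mathbf{Z}^{\Sigma(1)}$; one then shows this particular subgroup is free. The cleanest self-contained route is probably: since $M \hookrightarrow \CDiv_{T_{N}}(X_{\Sigma})$ is a saturated inclusion — i.e. if $kD' \in \text{image}(M)$ for some $k > 0$ and $D' \in \CDiv_{T_{N}}(X_{\Sigma})$, then $D' \in \text{image}(M)$, which follows because $\langle m, u_{\rho}\rangle = -k a_{\rho}$ for all $\rho$ forces $k \mid \langle m, u_{\rho}\rangle$ hence, using that $M$ is saturated in $M_{\mathbf{R}}$ and the rays span, $m/k \in M$ — the quotient $\Pic(X_{\Sigma})$ is torsion-free, hence free (being also finitely generated). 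The ``In particular, this holds when $\Sigma$ is complete'' clause is immediate since a complete fan has $|\Sigma| = N_{\mathbf{R}}$ and therefore contains a full-dimensional cone.

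The main obstacle I anticipate is making the saturation (torsion-freeness) step fully rigorous: one must be careful that the inclusion $M \hookrightarrow \CDiv_{T_{N}}(X_{\Sigma})$, not merely $M \hookrightarrow \mathbf{Z}^{\Sigma(1)}$, is saturated, and this is where the full-dimensional cone hypothesis does real work — without it $\CDiv_{T_{N}}(X_{\Sigma})$ could fail to contain enough divisors and $\Pic$ could pick up torsion. Everything else (freeness of $\mathbf{Z}^{\Sigma(1)}$, subgroups of free groups being free, finite generation) is routine.
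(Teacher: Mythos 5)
Your overall strategy coincides with the paper's: use the short exact sequence involving $\CDiv_{T_{N}}(X_{\Sigma})$ to get finite generation, then kill torsion. The first part is fine. But the torsion-freeness step has a genuine gap. You assert that if $K D' = \mathrm{div}(\chi^{m})$ for some $m\in M$, then $K \mid \langle m,u_{\rho}\rangle$ for all $\rho$ together with ``the rays span'' forces $m/K\in M$. This inference is false: the ray generators $\{u_{\rho}\}$ span $N_{\mathbf{R}}$ but need not generate $N$ over $\mathbf{Z}$, so integrality of $\langle m/K,u_{\rho}\rangle$ on the rays only puts $m/K$ in the dual of the sublattice $N'\subset N$ spanned by the $u_{\rho}$, which can be strictly larger than $M$. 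Concretely, take $N=\mathbf{Z}^{2}$ with indivisible rays $u_{1}=(1,2)$, $u_{2}=(1,-2)$, $u_{3}=(-1,0)$ (they span $N_{\mathbf{R}}$ but generate $\mathbf{Z}\times 2\mathbf{Z}\subsetneq N$), and $m=(0,1)$, $K=2$: then $\langle m,u_{\rho}\rangle\in\{2,-2,0\}$ is always divisible by $2$, yet $m/2=(0,1/2)\notin M$. So divisibility at the rays alone is not enough, and the phrase ``$M$ is saturated in $M_{\mathbf{R}}$'' does no work here.

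What saves the paper's argument is that it uses the Cartier-ness of $D'$ itself, not only of $KD'$. Since $D'=\sum a_{\rho}D_{\rho}$ is Cartier and $\sigma$ is a full-dimensional cone, \Cref{numampleness} yields an $m_{\sigma}\in M$ with $\langle m_{\sigma},u_{\rho}\rangle=-a_{\rho}$ on $\sigma(1)$. Comparing on $\sigma(1)$ gives $\langle K m_{\sigma},u_{\rho}\rangle=\langle m,u_{\rho}\rangle$ there, and because $\sigma$ is full-dimensional (so $\sigma(1)$ spans $N_{\mathbf{R}}$), $Km_{\sigma}=m$; then $K\cdot\mathrm{div}(\chi^{m_{\sigma}})=\mathrm{div}(\chi^{m})=KD'$ and torsion-freeness of $\mathbf{Z}^{\Sigma(1)}$ gives $\mathrm{div}(\chi^{m_{\sigma}})=D'$. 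In my counterexample, the putative $D'=D_{1}-D_{2}$ is \emph{not} Cartier (no $m_{\sigma}\in M$ exists on $\mathrm{Cone}(u_{1},u_{2})$), so nothing goes wrong --- but your argument never invoked the Cartier condition on $D'$, which is exactly where the full-dimensional hypothesis does its real work. Also note that your ``Route 1'' does not close the gap either: $\Pic$ containing a non-torsion (e.g.\ ample) class is compatible with it also having a torsion summand, so that route cannot rule out torsion.
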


\begin{proof}
Because of the short exact sequence (\ref{sesCDiv}), we know that
$\Pic(X_{\Sigma})$ is finitely generated. Hence it is sufficient to
prove that $\Pic(X_{\Sigma})$ is torsion free.
Let $\sigma$ be a full dimensional cone in $\Sigma$.
Let $D=\sum_{\rho}a_{\rho}D_{\rho}$ be a torus invariant Cartier
divisor and $K\cdot D$ is linearly equivalent to $0$ for some
$K>0$. We need to prove that $D$ is linearly equivalent to $0$. By 
\Cref{numampleness}, there exists an $m_{\sigma}\in M$ such
that $\langle m_{\sigma},u_{\rho}\rangle=-a_{\rho}$ for all
$\rho\in\sigma(1)$. By the short exact sequence (\ref{sesCDiv}), there
exists an $m\in M$ such that $\langle m,u_{\rho}\rangle=-K\cdot
a_{\rho}$ for all $\rho\in\Sigma(1)$. Hence $\langle K\cdot
m_{\sigma},u_{\rho}\rangle=\langle m,u_{\rho}\rangle$ for all
$\rho\in\sigma(1)$. Since $\sigma$ is full dimensional, $K\cdot
m_{\sigma}=m$. As $\mathbf{Z}^{\Sigma(1)}$ is torsion free, we also
have $\text{div}(\chi^{ m_{\sigma}})=D$. Hence $D$ is zero in $\Pic(X_{\Sigma}).$
\end{proof}

We write $$\Amp(X_{\Sigma})$$ as the collection of ample
divisors on $X_{\Sigma}$. The \emph{ample cone }for $X_{\Sigma}$ is
the real cone over $\Amp(X_{\Sigma})$ in
$\Pic(X_{\Sigma})_{\mathbf{R}}$ and is denoted by $$\Amp(X_{\Sigma})_{\mathbf{R}}.$$

The ample cone of projective toric varieties are well understood. It
is the interior of the nef cone. We recall theorem 6.3.22 from \cite{MR2810322}:
\begin{theorem}\label{nef_and_ample_cone_thm}
Let $X_{\Sigma}$ be a projective toric variety. Then the cone generated by nef divisors, called the nef cone is a full dimensional, strongly convex rational polyhedral cone
in $\text{Pic}(X_{\Sigma})_{\mathbf{R}}$. Moreover, a Cartier divisor is
ample if and only if it is in the interior of the nef cone. 
\end{theorem}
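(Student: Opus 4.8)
The plan is to realize the nef cone explicitly as a finite intersection of rational half-spaces inside $\Pic(X_{\Sigma})_{\mathbf{R}}$ (finite-dimensional by \Cref{Pic_free}), and then read off all four assertions from that description together with \Cref{numampleness}.

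First I would set up the relevant linear functionals. Since $\Sigma$ is complete, every maximal cone is full-dimensional, so for a torus-invariant Cartier divisor $D=\sum_{\rho}a_{\rho}D_{\rho}$ the lattice point $m_{\sigma}\in M$ supplied by \Cref{numampleness} is unique for each $\sigma\in\Sigma_{\text{max}}$ and depends $\mathbf{R}$-linearly on $D$. Set $\ell_{\sigma,\rho}(D):=\langle m_{\sigma},u_{\rho}\rangle+a_{\rho}$ for $\sigma\in\Sigma_{\text{max}}$ and $\rho\in\Sigma(1)$; then $\ell_{\sigma,\rho}\equiv 0$ whenever $\rho\in\sigma(1)$, and the standard toric numerical criterion for nefness (the non-strict analog of the ampleness criterion in \Cref{numampleness}, standard in toric geometry; see \cite{MR2810322}) says precisely that $D$ is nef if and only if $\ell_{\sigma,\rho}(D)\ge 0$ for all $\sigma$ and all $\rho\notin\sigma(1)$. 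A short check with $(\ref{DivChar})$ — adding $\mathrm{div}(\chi^{m})$ to $D$ replaces $a_{\rho}$ by $a_{\rho}+\langle m,u_{\rho}\rangle$ and $m_{\sigma}$ by $m_{\sigma}-m$ — shows each $\ell_{\sigma,\rho}$ is invariant under linear equivalence, hence descends to a rational linear functional on $\Pic(X_{\Sigma})_{\mathbf{R}}$. Consequently the nef cone is $\{x\in\Pic(X_{\Sigma})_{\mathbf{R}}\mid\ell_{\sigma,\rho}(x)\ge 0\ \text{for all}\ \sigma\in\Sigma_{\text{max}},\ \rho\notin\sigma(1)\}$, a finite intersection of rational half-spaces and therefore a rational polyhedral cone.

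Next I would dispatch full-dimensionality and strong convexity. For full-dimensionality, projectivity of $X_{\Sigma}$ provides an ample divisor, which may be chosen torus-invariant because $\CDiv_{T_{N}}(X_{\Sigma})$ surjects onto $\Pic(X_{\Sigma})$ by $(\ref{sesCDiv})$; by \Cref{numampleness} its class lies in the open set $\{\ell_{\sigma,\rho}>0\ \text{for all}\ \rho\notin\sigma(1)\}$, which is contained in the nef cone, so the nef cone has nonempty interior. For strong convexity, suppose $[D]$ and $-[D]$ are both nef and pick a torus-invariant Cartier representative $D=\sum a_{\rho}D_{\rho}$; nefness of $D$ gives $\ell_{\sigma,\rho}(D)\ge 0$ while nefness of $-D$ gives $\ell_{\sigma,\rho}(D)\le 0$, so $\langle m_{\sigma},u_{\rho}\rangle=-a_{\rho}$ for every $\sigma\in\Sigma_{\text{max}}$ and every $\rho\in\Sigma(1)$. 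Fixing a full-dimensional $\sigma_{0}$ and using $(\ref{DivChar})$, this forces $D=-\mathrm{div}(\chi^{m_{\sigma_{0}}})$, hence $[D]=0$; so the nef cone contains no line.

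Finally, for the ampleness statement one inclusion is immediate: ample divisors are nef, and the set of ample classes is open since by \Cref{numampleness} it is cut out by the strict inequalities $\ell_{\sigma,\rho}>0$ for $\rho\notin\sigma(1)$, so it lies in the interior of the nef cone. For the converse, if $[D]$ lies in the interior of the nef cone and $[A]$ is an ample class, then $[D]-\varepsilon[A]$ is still nef for all sufficiently small $\varepsilon>0$, whence for each $\sigma\in\Sigma_{\text{max}}$ and $\rho\notin\sigma(1)$ we get $\ell_{\sigma,\rho}([D])=\ell_{\sigma,\rho}([D]-\varepsilon[A])+\varepsilon\,\ell_{\sigma,\rho}([A])>0$; when $[D]$ is an integral class, \Cref{numampleness} then shows the corresponding Cartier divisor is ample. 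The step I expect to require the most care is the bookkeeping around the functionals $\ell_{\sigma,\rho}$ — checking that $m_{\sigma}$ is well defined and $\mathbf{R}$-linear in $D$ and that $\ell_{\sigma,\rho}$ descends to $\Pic(X_{\Sigma})_{\mathbf{R}}$ — together with invoking the correct non-strict numerical criterion for nefness, which is the only ingredient not already isolated in the excerpt; everything else is routine manipulation of $(\ref{DivChar})$ and \Cref{numampleness}.
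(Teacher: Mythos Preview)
Your argument is correct and follows the standard route one finds in the toric literature: describe the nef cone as the finite intersection of the rational half-spaces $\ell_{\sigma,\rho}\ge 0$, use an ample class to see the interior is nonempty, use the equalities $\ell_{\sigma,\rho}=0$ forced by $\pm[D]$ nef to get strong convexity, and finish with the perturbation $[D]-\varepsilon[A]$.

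The only comparison to make is that the paper does not actually prove this theorem at all: it is quoted verbatim as Theorem~6.3.22 of \cite{MR2810322} and used as a black box. Your sketch is essentially the argument one would find (or reconstruct) from that reference, so there is no meaningful divergence in approach---you have simply filled in what the paper omits. The one external ingredient you flag, the non-strict numerical criterion for nefness, is indeed not isolated in the excerpt but is standard (Theorem~6.1.7 or Theorem~6.3.12 in \cite{MR2810322}); everything else is, as you say, bookkeeping with \eqref{DivChar} and \Cref{numampleness}.
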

\subsection{GIT quotient construction of projective toric varieties}\label{GIT quotient construction of projective toric varieties}
In this section we recall Cox's GIT quotient construction for projective toric varieties. The main result of this section is \Cref{bad_locus_ample} where we supply an alternative calculation of the unstable locus by cooking up a one parameter subgroup to destabilize points in the linear subspace defined by each primitive collection. The one parameter subgroups we used in \Cref{bad_locus_ample} are related to primitive relations on simplicial projective toric varieties (\Cref{primitive_relation}). The primitive relations were known to generate the Mori cone on a simplicial projective toric variety \cite{MR1133869}. In addition, the one parameter subgroups introduced in \Cref{bad_locus_ample} will allow us to extend the notion of stratifications induced by ample divisors to $\mathbf{R}$-ample divisors in the ample cone later at \Cref{Extensions_to_the_ample_cone}.

To begin with, for each fan $\Sigma$, we define the affine space $$\mathbf{C}^{\Sigma(1)}=\Spec\mathbf{C}[x_{\rho}\mid\rho\in\Sigma(1)].$$ 
For each cone $\sigma\in\Sigma_{\text{max}}$, define the
monomial $$x^{\hat{\sigma}}:=\prod_{\rho\notin\sigma(1)}x_{\rho}.$$
Then define the
ideal $$B(\Sigma)=(x^{\hat{\sigma}}|\sigma\in\Sigma_{\text{max}})\subset\mathbf{C}[x_{\rho}\mid\rho\in\Sigma(1)].$$ We
then have the
vanishing locus $$Z(\Sigma)=V(B(\Sigma))\subset\mathbf{C}^{\Sigma(1)}.$$ There is a cleaner
description of $B(\Sigma)$ and $Z(\Sigma)$, using the notion of \emph{primitive
  collections}. 
  \begin{definition}\label{primitive_collection_def}
  A subset $C\subset\Sigma(1)$ is a \emph{primitive
  collection} if:
\begin{enumerate}
\item $C\not\subset\sigma(1)$ for all $\sigma\in\Sigma$, and 
\item for any proper subset $C^{\prime}$ of $C$, there is a cone
  $\sigma\in\Sigma$ such that $C^{\prime}\subset\sigma(1)$.
\end{enumerate}
\end{definition}

\begin{proposition}\label{proposition5.1.6}
The ideal $B(\Sigma)$ has the primary decomposition 
$$B(\Sigma)=\bigcap_{C}(\{x_{\rho}|\rho\in C\})$$ which induces 
$$Z(\Sigma)=\bigcup_{C}V(\{x_{\rho}|\rho\in C\}).$$ Here both the union and
the intersection are taken over all primitive collections $C$ of $\Sigma(1)$.
\end{proposition}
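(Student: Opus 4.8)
The plan is to prove the stated primary decomposition of $B(\Sigma)$ and deduce the description of $Z(\Sigma)$ by taking vanishing loci. The key is to relate the monomial generators $x^{\hat\sigma} = \prod_{\rho\notin\sigma(1)} x_\rho$ to the monomial ideals $(\{x_\rho \mid \rho\in C\})$ attached to primitive collections $C$. Since every ideal in sight is a monomial ideal, I would argue entirely at the level of monomials, which makes all the set-theoretic bookkeeping transparent.

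First I would fix notation: for a subset $S\subset\Sigma(1)$ write $P(S) = (\{x_\rho \mid \rho\in S\})$, a monomial prime ideal. A monomial $m = \prod x_\rho^{a_\rho}$ lies in $P(S)$ iff $a_\rho > 0$ for some $\rho\in S$, i.e. iff $\mathrm{supp}(m)\cap S \neq\emptyset$. Consequently $m\in\bigcap_C P(C)$ iff $\mathrm{supp}(m)$ meets every primitive collection $C$, and $m\in B(\Sigma) = (x^{\hat\sigma}\mid\sigma\in\Sigma_{\mathrm{max}})$ iff $\mathrm{supp}(m) \supseteq \Sigma(1)\setminus\sigma(1)$ for some maximal cone $\sigma$, equivalently iff $\Sigma(1)\setminus\mathrm{supp}(m) \subseteq \sigma(1)$ for some $\sigma\in\Sigma_{\mathrm{max}}$. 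So the whole statement reduces to the combinatorial equivalence, for every subset $T\subset\Sigma(1)$ (think of $T = \Sigma(1)\setminus\mathrm{supp}(m)$):
\begin{equation*}
T\subseteq\sigma(1)\text{ for some }\sigma\in\Sigma_{\mathrm{max}} \iff \Sigma(1)\setminus T\text{ meets every primitive collection.}
\end{equation*}

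For the forward direction: if $T\subseteq\sigma(1)$ and some primitive collection $C$ were disjoint from $\Sigma(1)\setminus T$, then $C\subseteq T\subseteq\sigma(1)$, contradicting condition (1) in \Cref{primitive_collection_def}. For the reverse direction I would argue the contrapositive: suppose $T$ is not contained in $\sigma(1)$ for any $\sigma\in\Sigma_{\mathrm{max}}$ (equivalently, since faces of cones in $\Sigma$ are in $\Sigma$, not contained in $\sigma(1)$ for any $\sigma\in\Sigma$); I must produce a primitive collection $C\subseteq T$. Take $C\subseteq T$ minimal with respect to the property that $C\not\subseteq\sigma(1)$ for all $\sigma\in\Sigma$ — this exists since $T$ itself has the property and $\Sigma(1)$ is finite. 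Minimality gives exactly condition (2): every proper subset $C'\subsetneq C$ lies in some $\sigma(1)$. Hence $C$ is a primitive collection contained in $T$, so $C$ is disjoint from $\Sigma(1)\setminus T$, as desired. This establishes $B(\Sigma) = \bigcap_C P(C)$; calling it a \emph{primary} decomposition is then immediate since each $P(C)$ is prime (hence primary) and, by the minimality/irredundancy one can further note, the decomposition can be taken irredundant, though that is not needed for the statement. Finally, applying $V(-)$ turns the intersection of ideals into a union of closed subschemes, giving $Z(\Sigma) = V(B(\Sigma)) = \bigcup_C V(P(C)) = \bigcup_C V(\{x_\rho\mid\rho\in C\})$.

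The only real obstacle is the reverse combinatorial implication — extracting a genuine primitive collection inside $T$ — and the minimality argument above handles it cleanly; everything else is routine translation between monomial ideals and their supports. One subtlety worth a sentence in the write-up: in passing from ``not contained in $\sigma(1)$ for $\sigma\in\Sigma_{\mathrm{max}}$'' to ``not contained in $\sigma(1)$ for $\sigma\in\Sigma$'' one uses that every cone of $\Sigma$ is a face of some maximal cone, so its rays form a subset of the rays of a maximal cone.
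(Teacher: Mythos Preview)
Your argument is correct. The paper does not actually prove this proposition: its entire proof is the sentence ``See proposition 5.1.6 from \cite{MR2810322},'' i.e.\ a citation to Cox--Little--Schenck. What you have written is essentially the standard combinatorial proof one finds there: reduce the equality of monomial ideals to a statement about supports, and then show the equivalence ``$T\subseteq\sigma(1)$ for some $\sigma\in\Sigma$'' $\Leftrightarrow$ ``$T$ contains no primitive collection'' by a direct minimality argument. So your proposal supplies the details the paper omits, and there is nothing to compare against beyond the reference.

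One small cosmetic point: the parenthetical ``since faces of cones in $\Sigma$ are in $\Sigma$'' in the middle of the reverse direction is not quite the fact you need there; what you need (and correctly state at the end) is that every cone of $\Sigma$ lies in some maximal cone, so that ``not contained in $\sigma(1)$ for any $\sigma\in\Sigma_{\max}$'' upgrades to ``not contained in $\sigma(1)$ for any $\sigma\in\Sigma$.'' You might tighten that sentence in a final write-up.
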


\begin{proof}
See proposition 5.1.6 from \cite{MR2810322}.
\end{proof}

The closed subset $Z(\Sigma)$ is used for Cox's quotient construction for toric varieties. Below is Cox's theorem from \cite{MR1299003}. We refer the reader to chapter 5 in \cite{MR2810322} for the definition of an almost geometric quotient of a variety by an algebraic group. 
\begin{theorem}\label{thm5.1.11}
Let $X_{\Sigma}$ be a toric variety and $G=\Hom_{\mathbf{Z}}(\Cl(X_{\Sigma}),\mathbf{C}^{\times})$ act on $\mathbf{C}^{\Sigma(1)}$ via the inclusion $G\subset(\mathbf{C}^{\times})^{\Sigma(1)}$. Then there is a map
$$\pi:\mathbf{C}^{\Sigma(1)}\backslash Z(\Sigma)\rightarrow X_{\Sigma}$$ that
realizes $X_{\Sigma}$ as an almost geometric quotient of
$\mathbf{C}^{\Sigma(1)}\backslash Z(\Sigma)$ by $G$.
\end{theorem}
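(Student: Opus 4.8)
The plan is to verify that $\pi$ is an almost geometric quotient by checking its defining properties directly from the combinatorics of $\Sigma$, reducing everything to an affine cover. First I would fix the open cover of $\mathbf{C}^{\Sigma(1)}\setminus Z(\Sigma)$ by the principal open sets $U_\sigma := D(x^{\hat\sigma})$ for $\sigma\in\Sigma_{\mathrm{max}}$; by \Cref{proposition5.1.6} these cover the complement of $Z(\Sigma)$, since a point lies in $Z(\Sigma)$ precisely when it lies in $V(\{x_\rho\mid \rho\in C\})$ for some primitive collection $C$, equivalently when $x^{\hat\sigma}$ vanishes at it for every maximal $\sigma$. On the toric side, $X_\Sigma$ is covered by the affine charts $U_\sigma = \Spec\mathbf{C}[\sigma^\vee\cap M]$. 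The first key step is to identify, for each maximal $\sigma$, the ring of $G$-invariants of $\mathbf{C}[x_\rho\mid\rho\in\Sigma(1)]_{x^{\hat\sigma}}$ with $\mathbf{C}[\sigma^\vee\cap M]$. This is a monomial computation: a Laurent monomial $\prod_\rho x_\rho^{a_\rho}$ is $G$-invariant iff its class in $\Cl(X_\Sigma)$ (via the surjection $\mathbf{Z}^{\Sigma(1)}\to\Cl(X_\Sigma)$ of \eqref{sesWDiv}) is trivial, i.e. iff $(a_\rho)_\rho = (\langle m,u_\rho\rangle)_\rho$ for some $m\in M$; imposing that the monomial be regular on $U_\sigma$ (only $x_\rho$ for $\rho\notin\sigma(1)$ inverted) translates exactly into $\langle m,u_\rho\rangle\ge 0$ for all $\rho\in\sigma(1)$, i.e. $m\in\sigma^\vee\cap M$. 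This gives $U_\sigma /\!\!/ G \cong U_\sigma \subset X_\Sigma$ as the affine-GIT-quotient on each chart.

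Next I would check that these local quotient maps glue: on overlaps $U_\sigma\cap U_\tau = D(x^{\hat\sigma}x^{\hat\tau})$ the invariant rings localize compatibly and agree with $\mathbf{C}[(\sigma\cap\tau)^\vee\cap M]$, matching the gluing data that defines $X_\Sigma$ from its fan. Hence the maps $\pi_\sigma: U_\sigma\to U_\sigma$ patch to a morphism $\pi:\mathbf{C}^{\Sigma(1)}\setminus Z(\Sigma)\to X_\Sigma$. Then I would verify the properties in the definition of an almost geometric quotient: $\pi$ is $G$-invariant and surjective (surjectivity because each affine piece $U_\sigma\to U_\sigma$ is the affine GIT quotient map, which is surjective); $\pi$ is constant on $G$-orbits and separates orbits generically; and there is a $G$-invariant dense open subset of $\mathbf{C}^{\Sigma(1)}\setminus Z(\Sigma)$ on which $\pi$ is a geometric quotient — the natural candidate being the preimage of the torus $T_N\subset X_\Sigma$, where the action of $G$ is free with quotient $T_N$ by the exact sequence \eqref{sesG}. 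One should also note $\dim(\mathbf{C}^{\Sigma(1)}\setminus Z(\Sigma)) = |\Sigma(1)|$ and $\dim G = |\Sigma(1)| - \dim X_\Sigma$ (from \eqref{sesWDiv}, using completeness or at least $\Pic$ being full rank), so the dimensions are consistent with $X_\Sigma$ being the quotient.

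The main obstacle I expect is the invariant-ring computation on each chart: carefully pinning down that the $G$-invariant part of the localization $\mathbf{C}[x_\rho]_{x^{\hat\sigma}}$ is generated exactly by the monomials $x^{m} := \prod_\rho x_\rho^{\langle m, u_\rho\rangle}$ with $m\in\sigma^\vee\cap M$, and that no non-monomial invariants appear. The non-monomial point is handled by the fact that $G$ is diagonalizable, so $\mathbf{C}[x_\rho]_{x^{\hat\sigma}}$ decomposes as a direct sum of $\pmb{\chi}(G)$-weight spaces spanned by monomials, and the invariants are the weight-zero part — a monomial has weight zero iff its exponent vector lies in the image of $M\to\mathbf{Z}^{\Sigma(1)}$, by exactness of \eqref{sesWDiv}. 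Granting this, the rest is bookkeeping with cones and their duals. Since the statement and its proof are due to Cox, the cleanest route is to cite \cite{MR1299003} and \cite{MR2810322} for the full argument while recording the structure above; I would present the invariant-ring identification as the one computation worth spelling out, as it is exactly the link between the combinatorics of $\Sigma$ and the GIT picture that the rest of the paper relies on.
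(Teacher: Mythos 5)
The paper does not prove Theorem~\ref{thm5.1.11}; it states it as Cox's theorem with citations to \cite{MR1299003} and to chapter~5 of \cite{MR2810322}, so there is no internal argument to compare against. Your sketch faithfully reproduces Cox's proof: the affine cover of $\mathbf{C}^{\Sigma(1)}\setminus Z(\Sigma)$ by the $D(x^{\hat\sigma})$, the weight-space decomposition of the localized polynomial ring under the diagonalizable group $G$, the identification of the invariant ring with $\mathbf{C}[\sigma^\vee\cap M]$ via the exactness of sequence~(\ref{sesWDiv}), the gluing over maximal cones, and the observation that the quotient is geometric over the dense torus via sequence~(\ref{sesG}) are exactly the ingredients of the standard argument. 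One point worth making explicit, since the paper's statement omits it: the identification of monomial exponent vectors in the kernel of $\mathbf{Z}^{\Sigma(1)}\to\Cl(X_\Sigma)$ with elements of $M$ (and hence the isomorphism with $\mathbf{C}[\sigma^\vee\cap M]$) requires $M\to\mathbf{Z}^{\Sigma(1)}$ to be injective, i.e.\ that $\Sigma(1)$ spans $N_{\mathbf{R}}$ (equivalently $X_\Sigma$ has no torus factor); you gesture at this with the remark about $\Cl(X_\Sigma)$ having the expected rank, but it is a genuine hypothesis in Cox's theorem, automatically satisfied in the projective setting this paper actually uses.
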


Since $G$ acts on $\mathbf{C}^{\Sigma(1)}$ and $\text{Cl}(X_{\Sigma})\simeq\pmb{\chi}(G)$, we can associate the GIT
quotient $\mathbf{C}^{\Sigma(1)}/\!\!/_{\chi_{D}}G$ with any divisor $D\in\text{Cl}(X_{\Sigma})$. It is natural to ask if there is a divisor $D\in\Cl(X_{\Sigma})$ such that $\mathbf{C}^{\Sigma(1)}/\!\!/_{\chi_{D}}G\simeq X_{\Sigma}$. This amounts to ask if there is a divisor $D$ such that $(\mathbf{C}^{\Sigma(1)})^{\text{ss}}(\chi_{D})=\mathbf{C}^{\Sigma(1)}\backslash Z(\Sigma)$. The answer is affirmative if $X_{\Sigma}$ is projective and if $D\in\Amp(X_{\Sigma})$. 
In \cite{MR2810322} Cox proved this by computing the $\chi_{D}$-invariants. We give an alternative proof using the numerical criterion for semistability (\Cref{HilbMumAffine}), and the numerical criterion for ampleness (\Cref{numampleness}). 

\begin{proposition}\label{bad_locus_ample}
Let $X_{\Sigma}$ be a projective toric variety and
$D\in\Cl(X_{\Sigma})$ be an ample divisor. Then $$(\mathbf{C}^{\Sigma(1)})^{\text{ss}}(\chi_{D})=\mathbf{C}^{\Sigma(1)}\backslash
Z(\Sigma).$$
\end{proposition}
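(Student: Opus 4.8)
The plan is to deduce both inclusions directly from the Hilbert--Mumford criterion \Cref{HilbMumAffine}, feeding in two structural inputs: the identification in \Cref{lemma5.1.1}(4) of the one parameter subgroups of $G$ with the integral relations $\sum_{\rho}b_{\rho}u_{\rho}=0$ among the ray generators, and the numerical ampleness criterion \Cref{numampleness}. The first step is the translation. Writing $D=\sum_{\rho}a_{\rho}D_{\rho}$, a one parameter subgroup of $G$ is an integral vector $b=(b_{\rho})_{\rho\in\Sigma(1)}$ with $\sum_{\rho}b_{\rho}u_{\rho}=0$; since $G$ acts diagonally through $G\hookrightarrow(\mathbf{C}^{\times})^{\Sigma(1)}$, the limit $\lim_{t\to 0}b(t)\cdot x$ exists precisely when $b_{\rho}\geq 0$ for every $\rho\in S_{x}$ (this is \Cref{torusLP} applied to the coordinate characters), and under the duality of \Cref{lemma5.1.1}(4) one has $\langle\chi_{D},b\rangle=\sum_{\rho}a_{\rho}b_{\rho}$. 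Thus \Cref{HilbMumAffine} reads: $x$ is $\chi_{D}$-semistable if and only if $\sum_{\rho}a_{\rho}b_{\rho}\geq 0$ for every integral relation $(b_{\rho})$ among the $u_{\rho}$ that is nonnegative on $S_{x}$.

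For the inclusion $\mathbf{C}^{\Sigma(1)}\setminus Z(\Sigma)\subset(\mathbf{C}^{\Sigma(1)})^{\text{ss}}(\chi_{D})$ I would use that $x\notin Z(\Sigma)$ means some generator $x^{\hat{\sigma}}=\prod_{\rho\notin\sigma(1)}x_{\rho}$ of $B(\Sigma)$ is nonzero at $x$, that is, $\{\rho\mid\rho\notin\sigma(1)\}\subset S_{x}$ for some $\sigma\in\Sigma_{\text{max}}$. Choosing $m_{\sigma}\in M$ as in \Cref{numampleness}, one has $a_{\rho}=-\langle m_{\sigma},u_{\rho}\rangle$ for $\rho\in\sigma(1)$ and $a_{\rho}>-\langle m_{\sigma},u_{\rho}\rangle$ for $\rho\notin\sigma(1)$; so for any relation $(b_{\rho})$ nonnegative on $S_{x}$, hence on every $\rho\notin\sigma(1)$, comparing term by term gives $\sum_{\rho}a_{\rho}b_{\rho}\geq -\sum_{\rho}\langle m_{\sigma},u_{\rho}\rangle b_{\rho}=-\langle m_{\sigma},\sum_{\rho}b_{\rho}u_{\rho}\rangle=0$, so $x$ is semistable by \Cref{HilbMumAffine}.

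For the reverse inclusion $Z(\Sigma)\subset(\mathbf{C}^{\Sigma(1)})^{\text{us}}(\chi_{D})$ I would construct, for each primitive collection $C$, a one parameter subgroup destabilizing every point of $V(\{x_{\rho}\mid\rho\in C\})$; by \Cref{proposition5.1.6} this covers all of $Z(\Sigma)$. Set $\gamma_{C}=\sum_{\rho\in C}u_{\rho}$; since $\Sigma$ is complete there is a $\sigma\in\Sigma_{\text{max}}$ with $\gamma_{C}\in\sigma$, and I can write $\gamma_{C}=\sum_{\rho\in\sigma(1)}c_{\rho}u_{\rho}$ with all $c_{\rho}\in\mathbf{Q}_{\geq 0}$. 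Let $b$ be the rational vector equal to $-1$ on $C\setminus\sigma(1)$, to $c_{\rho}-1$ on $C\cap\sigma(1)$, to $c_{\rho}$ on $\sigma(1)\setminus C$, and to $0$ elsewhere. Then $\sum_{\rho}b_{\rho}u_{\rho}=\gamma_{C}-\gamma_{C}=0$, so a positive integer multiple $\lambda(C)$ of $b$ is a one parameter subgroup of $G$; its coordinates are nonnegative off $C$, hence on $S_{x}$ whenever $x$ vanishes on $C$, so $\lim_{t\to 0}\lambda(C)(t)\cdot x$ exists. Finally, with $m_{\sigma}\in M$ as in \Cref{numampleness}, the identities $\langle m_{\sigma},u_{\rho}\rangle=-a_{\rho}$ for $\rho\in\sigma(1)$ and $\sum_{\rho\in\sigma(1)}c_{\rho}u_{\rho}=\gamma_{C}=\sum_{\rho\in C}u_{\rho}$ give $\langle\chi_{D},b\rangle=-\sum_{\rho\in C}\bigl(\langle m_{\sigma},u_{\rho}\rangle+a_{\rho}\bigr)$, where each summand is $0$ for $\rho\in\sigma(1)$ and strictly positive for $\rho\notin\sigma(1)$ by \Cref{numampleness}; since $C\not\subset\sigma(1)$ for a primitive collection, the sum is positive, so $\langle\chi_{D},\lambda(C)\rangle<0$ and $x$ is $\chi_{D}$-unstable. (With a bit more care this $\lambda(C)$ can be taken to depend only on $C$, which is \Cref{thmB}.)

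The main obstacle is the second inclusion. The first is a one-line estimate once \Cref{numampleness} is available, whereas the second requires producing the right relation $b$ --- the primitive relation attached to $C$ --- and the genuinely delicate point is the case $C\cap\sigma(1)\neq\emptyset$: there the strictness of $\langle\chi_{D},\lambda(C)\rangle<0$ is rescued precisely by the defining property $C\not\subset\sigma(1)$ of a primitive collection together with the strict inequalities built into the ampleness criterion.
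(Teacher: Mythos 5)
Your proof is correct and follows essentially the same route as the paper: both directions go through the Hilbert--Mumford criterion, the identification of $\pmb{\Gamma}(G)$ with integral relations among the $u_{\rho}$, and the numerical ampleness criterion, and the destabilizing one parameter subgroup you construct for each primitive collection $C$ (the vector $b$ with entries $-1$, $c_{\rho}-1$, $c_{\rho}$, $0$ cleared to integer denominators) is exactly the $\lambda$ of the paper's equation \eqref{non_simplicial_primitive_relation}. The only presentational difference is that you assert directly that $\gamma_{C}=\sum_{\rho\in C}u_{\rho}\in\sigma$ admits a rational nonnegative representation $\sum_{\rho\in\sigma(1)}c_{\rho}u_{\rho}$, whereas the paper routes this through a simplicial subcone of $\sigma$ via CLS Proposition 11.1.7; and your verification that $\langle\chi_{D},b\rangle<0$ regroups the sum over $\rho\in C$ rather than over $\rho\in\sigma(1)$, but the two computations are algebraically identical, and both hinge on the same key point that $C\not\subset\sigma(1)$ supplies a strictly positive term.
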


\begin{proof}
We first show that a point $x\notin Z(\Sigma)$ is
$\chi_{D}$-semistable for any ample divisor
$D=\sum_{\rho}a_{\rho}D_{\rho}$. Let
$\lambda\in\mathbf{Z}^{\Sigma(1)}$ be 
a one parameter subgroup of $G$ such that
$\lim\limits_{t\rightarrow 0}\lambda(t)\cdot x$ exists. For each
$\rho\in\Sigma(1)$, we have $$(\lambda(t)\cdot
x)_{\rho}=t^{\lambda_{\rho}}\cdot x_{\rho}.$$
By definition of $Z(\Sigma)$, there is a
maximal cone $\sigma\in\Sigma_{\text{max}}$ such that $x^{\hat{\sigma}}(x)\neq
0$. Hence $\lim\limits_{t\rightarrow 0}\lambda(t)\cdot x$ exists only
if $\lambda_{\rho}\geq 0$ for all $\rho\notin\sigma(1)$. Recall we
have $m_{\sigma}\in M$ from \Cref{numampleness}. Now 
\begin{equation*}
\begin{split}
\langle\chi_{D},\lambda\rangle=&\sum_{\rho}a_{\rho}\lambda_{\rho}=\sum_{\rho\in\sigma(1)}a_{\rho}\lambda_{\rho}+\sum_{\rho\notin\sigma(1)}a_{\rho}\lambda_{\rho}\\
=&-\sum_{\rho\in\sigma(1)}\langle
m_{\sigma},u_{\rho}\rangle\lambda_{\rho}+\sum_{\rho\notin\sigma(1)}a_{\rho}\lambda_{\rho}\\
\geq&-\sum_{\rho\in\sigma(1)}\langle
m_{\sigma},u_{\rho}\rangle\lambda_{\rho}-\sum_{\rho\notin\sigma(1)}\langle
m_{\sigma},u_{\rho}\rangle\lambda_{\rho}\\
=&-\langle m_{\sigma},\sum_{\rho}\lambda_{\rho}u_{\rho}\rangle=0\end{split}
\end{equation*}
by \Cref{numampleness} and the fact that $\pmb{\Gamma}(G)$ is the
relation among ray generators.
Conversely, let $x\in Z(\Sigma)$, we need to establish a one
parameter subgroup $\lambda$ of $G$ such that 
\begin{itemize}
\item $\lim\limits_{t\rightarrow 0}\lambda(t)\cdot x$ exists, and 
\item $\langle\chi_{D},\lambda\rangle<0$.
\end{itemize}
For this, there is a primitive collection $C$ such that $x\in
V(\{x_{\rho}|\rho\in C\})$. Let $$u=\sum_{\rho\in C}u_{\rho}.$$ Then since $\Sigma$ is complete, there is a maximal cone $\sigma\in\Sigma_{\text{max}}$ such that $u\in\sigma$. By
proposition 11.1.7 in \cite{MR2810322}, there is a subcollection
$S\subset\sigma(1)$ such that $u\in\text{Cone}(S)$ and such that
$\text{Cone}(S)$ is a simplicial cone. Hence there are
$b_{\rho}\in\mathbf{Q}_{\geq 0}$ for $\rho\in\sigma(1)$ such that
$u=\sum_{\rho\in\sigma(1)}b_{\rho}u_{\rho}$. Let $K$ be a positive
integer large enough so that $K\cdot b_{\rho}\in\mathbf{N}$ for all
$\rho\in\sigma(1)$. Then we have a relation 
$$K\cdot u = \sum_{\rho\in\sigma(1)}(K\cdot b_{\rho})u_{\rho}.$$
Let 
\begin{equation}\label{non_simplicial_primitive_relation}c_{\rho} = \begin{cases}
-K & \text{ if }\rho\in C\backslash\sigma(1),\\
K\cdot (b_{\rho}-1) & \text{ if }\rho\in C\cap\sigma(1),\\
K\cdot b_{\rho} & \text{ if }\rho\in\sigma(1)\backslash C,\\
0 & \text{ otherwise}.
\end{cases}\end{equation}
Since each $c_{\rho}$ is an integer and $\sum_{\rho}c_{\rho}u_{\rho}=0$, we see that the collection
$\{c_{\rho}\}_{\rho\in\Sigma(1)}$ induces a one parameter subgroup $\lambda$ of
$G$. Furthermore $c_{\rho}\geq 0$ for all $\rho\notin C$. Hence $\displaystyle{\lim_{t\rightarrow 0}}\lambda(t)\cdot x$ exists. It remains to
show that
$\langle\chi_{D},\lambda\rangle=\sum_{\rho}a_{\rho}c_{\rho}<0$. Since
$C$ is a primitive collection, there is at least one $\rho^{\prime}\in
C\backslash\sigma(1)$ and $c_{\rho^{\prime}}=-K<0$. Therefore, we have 
\begin{equation*}
\begin{split}
\sum_{\rho}a_{\rho}c_{\rho}=&\sum_{\rho\in\sigma(1)}a_{\rho}c_{\rho}+\sum_{\rho\notin\sigma(1)}a_{\rho}c_{\rho}\\
<&-\sum_{\rho\in\sigma(1)}\langle
m_{\sigma},u_{\rho}\rangle c_{\rho}-\sum_{\rho\notin\sigma(1)}\langle
m_{\sigma},u_{\rho}\rangle c_{\rho}\\
=&-\langle m_{\sigma},\sum_{\rho}c_{\rho}u_{\rho}\rangle =0,
\end{split}
\end{equation*}
 where the $m_{\sigma}$ comes from \Cref{numampleness}.
\end{proof}

It is worthwhile pointing out that the one parameter subgroup we cooked up in \Cref{non_simplicial_primitive_relation} does not depend on the divisor $D$, but only on the primitive collection $C$. Namely, the one parameter subgroup defined by \Cref{non_simplicial_primitive_relation} fails the Hilbert-Mumford criterion for every point $x\in V(\{x_{\rho}\vert\rho\in C\})$ and for every ample divisor $D$. 

The one parameter subgroup introduced in \Cref{non_simplicial_primitive_relation} is related to primitive relations for simplicial complete fans. 
Let $\Sigma$ be a complete simplicial fan and $C\subset\Sigma(1)$ be a primitive collection. Then the sum $u=\sum_{\rho\in C}u_{\rho}$ is in the relative interior of a unique cone $\sigma\in\Sigma$. Since $\Sigma$ is simplicial, there exists unique $q_{\rho}\in\mathbf{Q}_{\geq 0}$ for each $\rho\in\sigma(1)$ such that $u=\sum_{\rho\in\sigma(1)}q_{\rho}u_{\rho}.$\begin{definition}
\label{primitive_relation}
The tuple $a\in\mathbf{Q}^{\Sigma(1)}$ where 
$$a_{\rho}=\begin{cases}
    1,\text{ if }\rho\in C\cap \backslash\sigma(1)\\
    1-b_{\rho},\text{ if }\rho\in C\cap\sigma(1)\\
    b_{\rho},\text{ if }\rho\in\sigma(1)\backslash C\\
    0, \text{ otherwise}.
\end{cases}$$ is called the \emph{primitive relation} of $C$.\end{definition}

\section{Toric VSIT}\label{Toric_VSIT_Section}

Let $X_{\Sigma}$ be a projective toric variety. The tasks in this section break mainly down to the following:
\begin{enumerate}
    \item Extend the notions of stability and stratification induced by an ample divisor to those induced by any $\mathbf{R}$-ample divisor in the ample cone and explain the strategy to compute stratifications induced by $\mathbf{R}$-ample divisors at \Cref{Extensions_to_the_ample_cone}.
    \item Identify certain collections of $\mathbf{R}$-ample divisors on $X_{\Sigma}$ that may cause the stratification to change as walls at \Cref{wall and semi-chamber}.
    \item Provide a sufficient condition for two $\mathbf{R}$-ample divisors to induce equivalent stratifications and prove that walls capture variations at \Cref{The main results}. They are formulated as \Cref{Toric_VSIT_decomposition_ample_cone}, \Cref{type_one_wall_captures_type_one_variation} and \Cref{type_two_variation_crosses_type_two_walls} respectively.
    \item Relate VSIT (\Cref{ToricVSIT}) and several topological properties of the strata to the combinatorial properties of the fan $\Sigma$ at \Cref{Relations to the structure of the fan}.
\end{enumerate}

We also supply an example at \Cref{Variation of stratification-an_elementary_example} where we compute stratifications induced by all $\mathbf{R}$-ample divisors. The picture is simple enough for us to summarize stratifications and their variations at \Cref{strat_summary_blow_up_P2}.

\subsection{Notations and conventions}\label{Notations and conventions_Toric_VSIT_}
Let $X_{\Sigma}$ be a projective toric variety and $G=\Hom_{\mathbf{Z}}(\Cl(X_{\Sigma}),\mathbf{C}^{\times})$. We equip $\pmb{\Gamma}(G)_{\mathbf{R}}$ with the inner product 
$$(-,-):\pmb{\Gamma}(G)_{\mathbf{R}}\times\pmb{\Gamma}(G)_{\mathbf{R}}\rightarrow\mathbf{R}$$ obtained by restricting the standard inner product on $\mathbf{R}^{\Sigma(1)}$ via the $\mathbf{R}$-extension of the inclusion $\pmb{\Gamma}(G)\hookrightarrow\mathbf{Z}^{\Sigma(1)}$ in sequence (\ref{one_PS_SES}). The induced norm is written as $$||-||:\pmb{\Gamma}(G)_{\mathbf{R}}\rightarrow\mathbf{R}.$$  

If $D$ is an element in $\Pic(X_{\Sigma})_{\mathbf{R}}$, we write $\chi_{D}^{\ast}$ as the unique vector in $\pmb{\Gamma}(G)_{\mathbf{R}}$ such that $$(\chi_{D}^{\ast},v)=\langle\chi_{D},v\rangle\text{ for all }v\in\pmb{\Gamma}(G)_{\mathbf{R}}.$$ 

We let $\mathcal{L}$ be the set of subsets of $\Sigma(1)$ where $S\in\mathcal{L}$ if and only if there is a primitive collection $C$ such that $S\subset \Sigma(1)-C$. With this we see that a point $x\in\mathbf{C}^{\Sigma(1)}$ is in $Z(\Sigma)$ if and only if $S_{x}\in\mathcal{L}$ where $S_{x}$ was defined as the state of $x$ in \Cref{Notations and conventions}. If $S\in\mathcal{L}$, we define \begin{enumerate}
\item the polyhedral cone $\sigma_{S}=\{v\in\pmb{\Gamma}(G)_{\mathbf{R}}\vert \langle\chi_{D_{\rho}},v\rangle\geq 0\text{ for all }\rho\in S\}$, and
\item the subspace $W_{S}= \{v\in\pmb{\Gamma}(G)_{\mathbf{R}}\vert \langle\chi_{D_{\rho}},v\rangle= 0\text{ for all }\rho\in S\}$.\end{enumerate}
Moreover, if $D\in\Amp(X_{\Sigma})_{\mathbf{R}}$, we define the finite subset of $\pmb{\Gamma}(G)_{\mathbf{R}}$ \begin{enumerate}[resume]
\item $\Lambda^{D}_{S}=\{-\Proj_{W_{Z}}\chi^{\ast}_{D}\bigm| -\Proj_{W_{Z}}\chi^{\ast}_{D}\in\sigma_{S}, Z\subset S\}.$
\end{enumerate}
If $x$ is a point in $Z(\Sigma)$ and $D\in\Amp(X_{\Sigma})_{\mathbf{R}}$, we let $$\Lambda^{D}_{x}:=\Lambda^{D}_{S_{x}}.$$ Obviously the set $\Lambda^{D}_{x}$ depends only on the states of $x$. We will see in \Cref{Kempf_ext_to_R_ample} that for every $S\in\mathcal{L}$, there is a unique longest vector $\lambda^{D}_{S}\in\Lambda^{D}_{S}$. Letting $\lambda^{D}_{x}= \lambda^{D}_{S_{x}}$, we define 
$$\Lambda^{D}:=\{\lambda^{D}_{S}\mid S\in\mathcal{L}\}=\{\lambda^{D}_{x}\mid x\in Z(\Sigma)\}.$$
We note without proof of the
\begin{fact}\label{fact_subspace_spanned_by_a_face}
Each face of $\sigma_{S}$ is of the form $W_{Z}\cap\sigma_{S}$ for some $Z\subset S$ and the subspace spanned by a face of $\sigma_{S}$ is of the form $W_{Z}$ for some $Z\subset S$.\end{fact}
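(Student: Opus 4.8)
The plan is to read both clauses off from the standard description of the faces of a polyhedral cone presented by finitely many linear inequalities. Write $f_{\rho}:=\langle\chi_{D_{\rho}},-\rangle$ for $\rho\in S$, so that $\sigma_{S}=\bigcap_{\rho\in S}H_{f_{\rho}}^{+}$ and, directly from the definitions, $W_{Z}=\bigcap_{\rho\in Z}H_{f_{\rho}}$ for every $Z\subset S$. The whole argument comes down to identifying, for a prescribed face of $\sigma_{S}$, which of the defining inequalities $f_{\rho}\geq 0$ vanish on it.

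For the first clause, let $F=H_{f}\cap\sigma_{S}$ be a face with $H_{f}$ a supporting hyperplane, so $f\geq 0$ on $\sigma_{S}$; equivalently $f$ lies in the dual cone $\sigma_{S}^{\vee}$. Since $\sigma_{S}$ is the cone dual to the finitely generated (hence closed) cone $\text{Cone}(\chi^{\ast}_{D_{\rho}}\mid\rho\in S)$, biduality of polyhedral cones gives $f=\sum_{\rho\in S}c_{\rho}f_{\rho}$ with all $c_{\rho}\geq 0$. Put $Z=\{\rho\in S\mid c_{\rho}>0\}\subset S$. For any $w\in\sigma_{S}$ the value $f(w)=\sum_{\rho\in S}c_{\rho}f_{\rho}(w)$ is a sum of nonnegative terms, so it vanishes precisely when $f_{\rho}(w)=0$ for every $\rho\in Z$; hence $F=\sigma_{S}\cap W_{Z}$, which is the first clause.

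For the second clause, start from the presentation $F=\sigma_{S}\cap W_{Z}$ just obtained and pass to the possibly larger set $Z'=\{\rho\in S\mid F\subset H_{f_{\rho}}\}$; then $Z\subseteq Z'$ (because $F\subset W_{Z}\subset H_{f_{\rho}}$ for each $\rho\in Z$), and since $F\subset W_{Z'}$ we in fact have $F=\sigma_{S}\cap W_{Z'}$, whence $\Sp(F)\subseteq W_{Z'}$. For the reverse inclusion I would produce a point $v_{0}\in F$ lying strictly inside every half-space indexed by $S\setminus Z'$: for each such $\rho$ pick $w_{\rho}\in F$ with $f_{\rho}(w_{\rho})>0$ (possible since $F\not\subset H_{f_{\rho}}$) and take $v_{0}=\sum_{\rho\in S\setminus Z'}w_{\rho}$, which lies in $F$ because a face of a cone is again a convex cone and satisfies $f_{\rho}(v_{0})>0$ for all $\rho\in S\setminus Z'$ and $f_{\rho}(v_{0})=0$ for all $\rho\in Z'$. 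Given $w\in W_{Z'}$ and $\varepsilon>0$ small, the point $v_{0}+\varepsilon w$ has $f_{\rho}(v_{0}+\varepsilon w)>0$ for $\rho\in S\setminus Z'$ and $f_{\rho}(v_{0}+\varepsilon w)=0$ for $\rho\in Z'$, so $v_{0}+\varepsilon w\in\sigma_{S}\cap W_{Z'}=F$; therefore $w=\varepsilon^{-1}\big((v_{0}+\varepsilon w)-v_{0}\big)\in\Sp(F)$, giving $W_{Z'}\subseteq\Sp(F)$ and hence $\Sp(F)=W_{Z'}$.

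I do not expect a genuine obstacle: the content is elementary convex geometry, and one could instead simply cite the standard face-lattice description of a polyhedral cone cut out by linear inequalities, in the spirit of the unproved facts already recorded in \Cref{Some linear algebra}. The only point calling for care is the bookkeeping of index sets: the support $Z$ of the supporting functional settles the first clause, while the span of $F$ is controlled by the a priori larger set $Z'$ of inequalities active on all of $F$; one must check $Z\subseteq Z'$ (equivalently, that $F=\sigma_{S}\cap W_{Z'}$) so that the perturbation $v_{0}+\varepsilon w$ returns to $F$ and not merely to $W_{Z'}$.
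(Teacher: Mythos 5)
The paper explicitly states this Fact ``without proof,'' so there is no in-text argument to compare against; you are filling a gap rather than rederiving something. Your argument is correct, and it is the standard polyhedral-geometry one.

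A few points worth flagging for completeness, all of which you handled correctly even if tersely. In the first clause, biduality is being invoked for the finitely generated cone $C=\text{Cone}(\chi^{\ast}_{D_{\rho}}\mid\rho\in S)$, whose dual is $\sigma_{S}$ by the definitions (using $f_{\rho}(v)=(\chi^{\ast}_{D_{\rho}},v)$), so $f^{\ast}\in\sigma_{S}^{\vee}=C^{\vee\vee}=C$ yields the nonnegative expansion $f=\sum_{\rho\in S}c_{\rho}f_{\rho}$; closedness of $C$ is exactly what makes biduality apply, and you noted this. Your passage from $Z$ to $Z'$ is the right move: the support $Z=\{\rho : c_{\rho}>0\}$ determines which $W_{Z}$ carves out $F$, but the span of $F$ is governed by the (generally larger) set $Z'$ of all inequalities active on $F$, and the containment $Z\subseteq Z'$ is what guarantees $\sigma_{S}\cap W_{Z'}=\sigma_{S}\cap W_{Z}=F$, so that the perturbed point $v_{0}+\varepsilon w$ lands back in $F$ rather than merely in $W_{Z'}$. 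The degenerate cases also work: if $S\setminus Z'=\emptyset$ then $v_{0}=0\in F$ and the same $\varepsilon$-argument shows $W_{Z'}\subseteq\Sp(F)$; if $f=0$ then $Z=\emptyset$, $W_{\emptyset}=\pmb{\Gamma}(G)_{\mathbf{R}}$, and $F=\sigma_{S}$, as it should be. The choice of $\varepsilon$ depending on $w$ is harmless since you only need membership in $\Sp(F)$ one vector at a time.
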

\subsection{Extensions to the ample cone}\label{Extensions_to_the_ample_cone}
We need a dedicated analysis of the ample cone to understand variation of stratifications. The first step is to extend the notions of stability and stratifications induced by ample divisors to the entire ample cone. For stability, the extension adopts numerical conditions imposed by the Hilbert-Mumford criterion. 

\begin{definition}
Let $X_{\Sigma}$ be a projective toric variety and $D\in\Amp(X_{\Sigma})_{\mathbf{R}}$. We say a point $x\in \mathbf{C}^{\Sigma(1)}$ is $\chi_{D}$-\emph{semistable} if the function $\langle\chi_{D},-\rangle:\pmb{\Gamma}(G)_{\mathbf{R}}\rightarrow\mathbf{R}$ takes non-negative values at the cone $\sigma_{x}$. Else, we say $x$ is $\chi_{D}$-\emph{unstable}.\end{definition}

We now show that the $\chi_{D}$-semistable locus stays constant throughout the entire ample cone.
\begin{proposition}\label{Unstable_locus_ample_cone_extension}
Let $X_{\Sigma}$ be a projective toric variety, $D\in\Amp(X_{\Sigma})_{\mathbf{R}}$, and $x\in \mathbf{C}^{\Sigma(1)}$. Then the linear function $\langle\chi_{D},-\rangle:\pmb{\Gamma}(G)_{\mathbf{R}}\rightarrow\mathbf{R}$ assumes a negative value at the cone $\sigma_{x}$ if and only if $x\in Z(\Sigma)$. 
\end{proposition}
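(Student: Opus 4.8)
The plan is to re-run the proof of \Cref{bad_locus_ample} after relaxing everything over $\mathbf{R}$: the integral ample divisor becomes an arbitrary $D\in\Amp(X_{\Sigma})_{\mathbf{R}}$, and the destabilizing lattice points $\lambda\in C_{x}$ become arbitrary vectors $v\in\sigma_{x}\subseteq\pmb{\Gamma}(G)_{\mathbf{R}}$. Three facts make the argument carry over. First, the numerical ampleness criterion must hold over $\mathbf{R}$: fixing a representative $D=\sum_{\rho}a_{\rho}D_{\rho}$ with $a_{\rho}\in\mathbf{R}$, for each $\sigma\in\Sigma_{\text{max}}$ there should be $m_{\sigma}\in M_{\mathbf{R}}$ with $\langle m_{\sigma},u_{\rho}\rangle=-a_{\rho}$ for $\rho\in\sigma(1)$ and $\langle m_{\sigma},u_{\rho}\rangle>-a_{\rho}$ for $\rho\notin\sigma(1)$. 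I would deduce this from \Cref{numampleness}, \Cref{nef_and_ample_cone_thm}, and \Cref{Pic_free}: the quantities $\langle m_{\sigma},u_{\rho}\rangle+a_{\rho}$ (for $\rho\notin\sigma(1)$) are independent of the chosen representative and $\mathbf{R}$-linear in the class $[D]\in\Pic(X_{\Sigma})_{\mathbf{R}}$, so their common positivity locus is an open cone; by \Cref{numampleness} it meets the lattice $\Pic(X_{\Sigma})$ exactly in the ample classes, hence equals $\Amp(X_{\Sigma})_{\mathbf{R}}$, since $\Pic(X_{\Sigma})$ is a lattice in $\Pic(X_{\Sigma})_{\mathbf{R}}$ and the ample cone is the interior of the nef cone. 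Second, tensoring the left-exact sequence \eqref{one_PS_SES} with $\mathbf{R}$ keeps it left-exact, so every $v\in\pmb{\Gamma}(G)_{\mathbf{R}}\subseteq\mathbf{R}^{\Sigma(1)}$ satisfies $\sum_{\rho}v_{\rho}u_{\rho}=0$. Third, under $\Cl(X_{\Sigma})_{\mathbf{R}}\simeq\pmb{\chi}(G)_{\mathbf{R}}$ one has $\langle\chi_{D_{\rho}},v\rangle=v_{\rho}$, so that $\sigma_{x}=\{v\in\pmb{\Gamma}(G)_{\mathbf{R}}\mid v_{\rho}\geq 0\text{ for all }\rho\in S_{x}\}$.

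With these in hand I would argue in two cases. If $x\in Z(\Sigma)$, then \Cref{proposition5.1.6} gives a primitive collection $C$ with $x\in V(\{x_{\rho}\mid\rho\in C\})$, so $S_{x}\cap C=\emptyset$. Taking the one parameter subgroup $\lambda=\{c_{\rho}\}_{\rho}\in\pmb{\Gamma}(G)$ of \eqref{non_simplicial_primitive_relation}---which depends only on $C$---the sign conditions $c_{\rho}\geq 0$ for $\rho\notin C$ together with $S_{x}\subseteq\Sigma(1)-C$ force $\lambda\in\sigma_{x}$, while $\langle\chi_{D},\lambda\rangle=\sum_{\rho}a_{\rho}c_{\rho}<0$ by the computation that closes the proof of \Cref{bad_locus_ample}; that computation uses only the (now real) ampleness data, the relation $\sum_{\rho}c_{\rho}u_{\rho}=0$, and the existence of some $\rho'\in C\backslash\sigma(1)$ with $c_{\rho'}<0$. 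Hence $\langle\chi_{D},-\rangle$ takes a negative value on $\sigma_{x}$. Conversely, if $x\notin Z(\Sigma)$ then $x^{\hat{\sigma}}(x)\neq 0$ for some $\sigma\in\Sigma_{\text{max}}$, that is, $\Sigma(1)-\sigma(1)\subseteq S_{x}$, so every $v\in\sigma_{x}$ satisfies $v_{\rho}\geq 0$ for all $\rho\notin\sigma(1)$, and the estimate from the first half of the proof of \Cref{bad_locus_ample} gives
\[
\langle\chi_{D},v\rangle=\sum_{\rho}a_{\rho}v_{\rho}\ \geq\ -\sum_{\rho}\langle m_{\sigma},u_{\rho}\rangle v_{\rho}=-\Big\langle m_{\sigma},\sum_{\rho}v_{\rho}u_{\rho}\Big\rangle=0.
\]
Thus $\langle\chi_{D},-\rangle\geq 0$ on $\sigma_{x}$, so it takes no negative value there; combining the two cases yields the asserted equivalence.

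I do not expect a serious obstacle, since the statement is an $\mathbf{R}$-linear relaxation of \Cref{bad_locus_ample}: once arbitrary real vectors $v$ and real coefficients $a_{\rho}$, $m_{\sigma}$ are permitted, the two inequality computations are word-for-word those already established. The one point needing genuine care is the first input above---extending the numerical ampleness criterion \Cref{numampleness} from integral Cartier divisors to all of $\Amp(X_{\Sigma})_{\mathbf{R}}$ with the same strict inequalities---which I would handle by the open-cone / dense-lattice argument indicated there.
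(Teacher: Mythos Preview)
Your argument is correct, but the paper's proof is considerably shorter because it sidesteps the need to extend \Cref{numampleness} to $\mathbf{R}$-coefficients altogether. Recall that $\Amp(X_{\Sigma})_{\mathbf{R}}$ is \emph{defined} in the paper as the real cone over $\Amp(X_{\Sigma})$, so any $D\in\Amp(X_{\Sigma})_{\mathbf{R}}$ can by definition be written as $D=\sum_{i}r_{i}D_{i}$ with $r_{i}>0$ and $D_{i}\in\Amp(X_{\Sigma})$ integral ample. The paper then simply invokes \Cref{bad_locus_ample} for each $D_{i}$ and uses linearity of the pairing: if $x\in Z(\Sigma)$ the one-parameter subgroup $\lambda(C)$ of \eqref{non_simplicial_primitive_relation} satisfies $\langle\chi_{D_{i}},\lambda(C)\rangle<0$ for every $i$, hence $\langle\chi_{D},\lambda(C)\rangle=\sum_{i}r_{i}\langle\chi_{D_{i}},\lambda(C)\rangle<0$; and if $x\notin Z(\Sigma)$ then $\langle\chi_{D_{i}},\lambda\rangle\geq 0$ for all $\lambda\in\sigma_{x}$ and all $i$, whence the same for $D$.

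Your route---pushing the Cartier data $m_{\sigma}$ and the strict inequalities of \Cref{numampleness} into $M_{\mathbf{R}}$ and rerunning the two inequality chains verbatim---works as well, and has the minor conceptual advantage of showing directly that the real numerical criterion still characterises $\Amp(X_{\Sigma})_{\mathbf{R}}$. But it costs you the open-cone/dense-lattice verification you flag at the end, whereas the paper's linearity trick requires nothing beyond the definition of the real ample cone and the already-proved integral case.
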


\begin{proof}
Suppose $x\in Z(\Sigma)$. Then $x\in V(\{x_{\rho}\mid\rho\in C\})$ for some primitive collection $C$. By \Cref{bad_locus_ample}, there is a one parameter subgroup $\lambda(C)\in\sigma_{y}$ such that $\langle\chi_{D^{\prime}},\lambda(C)\rangle<0$ for all $D^{\prime}\in\Amp(X_{\Sigma})$ and for all $y\in V(\{x_{\rho}\mid \rho\in C\})$. Writing $D=\sum_{i}r_{i}\cdot D_{i}$ for $D_{i}\in\Amp(X_{\Sigma})$ and $r_{i}>0$, we see that $\langle\chi_{D},\lambda(C)\rangle<0$. Since $\lambda(C)\in\sigma_{x}$, the if part is proved. 

Conversely, suppose $x\notin Z(\Sigma)$. Then by \Cref{bad_locus_ample}, $\langle\chi_{D^{\prime}},\lambda\rangle\geq 0$ for all $\lambda\in\sigma_{x}$ and for all $D^{\prime}\in\Amp(X_{\Sigma})$. Again writing $D=\sum_{i}r_{i}D_{i}$ for $r_{i}>0$ and for $D_{i}\in \Amp(X_{\Sigma})$, we get $\langle\chi_{D},\lambda\rangle\geq 0$ for all $\lambda\in\sigma_{x}$. The proposition is proved.
\end{proof}

Hence \begin{equation}\label{Rample_bad_locus}\begin{split}(\mathbf{C}^{\Sigma(1)})^{\text{ss}}(\chi_{D})&=\mathbf{C}^{\Sigma(1)}-Z(\Sigma)\\
&=\mathbf{C}^{\Sigma(1)}-\bigcup_{C}V(x_{\rho}\mid\rho\in C)\text{ for all }D\in\Amp(X_{\Sigma})_{\mathbf{R}}.\end{split}\end{equation}

We can also perform numerical analysis of instability as in \Cref{Numerical analysis of instability} on the entire ample cone. 
\begin{definition}\label{one_PS_adapted_to_extended}
Let $X_{\Sigma}$ be a projective toric variety and let $x\in Z(\Sigma)$. For any $D\in\Amp(X_{\Sigma})_{\mathbf{R}}$, we set $$M^{D}(x)=\inf_{\lambda\in\sigma_{x}\backslash\{0\}}\frac{\langle\chi_{D},\lambda\rangle}{||\lambda||}.$$ We say an element $\lambda\in\pmb{\Gamma}(G)_{\mathbf{R}}$ is $\chi_{D}$-\emph{adapted to} $x$ if $\frac{\langle\chi_{D},\lambda\rangle}{||\lambda||}=M^{D}(x)$. We say $\lambda$ is $\chi_{D}$-\emph{adapted to a subset} $Y\subset Z(\Sigma)$ if $\lambda$ is $\chi_{D}$-adapted to all points in $Y$.
\end{definition}

Kempf's theorem, \Cref{finitenesstheorem} can now be formulated in the ample cone. 

\begin{theorem}\label{Kempf_ext_to_R_ample}
Let $X_{\Sigma}$ be a projective toric variety. 
For any $x\in Z(\Sigma)$ and $D\in\Amp(X_{\Sigma})_{\mathbf{R}}$, there is a unique vector $\lambda^{D}_{x}\in\Lambda^{D}_{x}$ that is $\chi_{D}$-adapted to $x$. In this case, $$M^{D}(x)=-||\lambda^{D}_{x}||.$$
\end{theorem}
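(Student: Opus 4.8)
The plan is to obtain \Cref{Kempf_ext_to_R_ample} by translating the linear-programming results of \Cref{Linear programming} into the present toric notation. Put $V=\pmb{\Gamma}(G)_{\mathbf{R}}$ and let $f=\langle\chi_{D},-\rangle:V\to\mathbf{R}$; by the defining property of $\chi^{\ast}_{D}$ we have $f^{\ast}=\chi^{\ast}_{D}$. Since $x\in Z(\Sigma)$, \Cref{Unstable_locus_ample_cone_extension} guarantees that $f$ assumes a negative value on the rational polyhedral cone $\sigma_{x}=\sigma_{S_{x}}$, so \Cref{unique_cone_min} and \Cref{compute_cone_min} apply with $\sigma=\sigma_{x}$. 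They produce a unique point $s$ on the unit sphere where $f$ attains its relative minimum on $\sigma_{x}$, a unique longest vector $v\in\Lambda^{f}_{\sigma_{x}}$ with $s=v/||v||$, and the identity $f(s)=-||v||$.

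First I would identify the relevant index sets. By \Cref{fact_subspace_spanned_by_a_face} the span of any face $F\preceq\sigma_{S_{x}}$ equals $W_{Z}$ for some $Z\subset S_{x}$, so each member $-\Proj_{\Sp(F)}f^{\ast}$ of $\Lambda^{f}_{\sigma_{x}}$ has the form $-\Proj_{W_{Z}}\chi^{\ast}_{D}$; hence $\Lambda^{f}_{\sigma_{x}}\subset\Lambda^{D}_{x}$ and in particular $v\in\Lambda^{D}_{x}$. Moreover, repeating the computation in the proof of \Cref{compute_cone_min}, every $w\in\Lambda^{D}_{x}$ satisfies $f(w)=-||w||^{2}$, hence $f(w)/||w||=-||w||$. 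Next I would connect this to $M^{D}(x)$: because $\sigma_{x}$ is a cone, $M^{D}(x)=\inf_{\lambda\in\sigma_{x}\setminus\{0\}}f(\lambda)/||\lambda||$ equals the relative minimum of $f$ on the unit sphere in $\sigma_{x}$, namely $f(s)=-||v||$.

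Finally I would settle adaptedness and uniqueness. A nonzero $w$ is $\chi_{D}$-adapted to $x$ exactly when $f(w/||w||)=M^{D}(x)=f(s)$, i.e.\ (by uniqueness of the minimizer $s$) when $w/||w||=s$; taking $w=v$ shows $v$ is $\chi_{D}$-adapted. If $w\in\Lambda^{D}_{x}$ is also $\chi_{D}$-adapted, then $w$ is a positive multiple of $v$, and since $w=-\Proj_{W_{Z'}}\chi^{\ast}_{D}$ and $v=-\Proj_{W_{Z}}\chi^{\ast}_{D}$ are both projections of $\chi^{\ast}_{D}$, \Cref{lem_for_indexing_projections} forces $w=v$. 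Setting $\lambda^{D}_{x}:=v$ then yields the unique $\chi_{D}$-adapted vector in $\Lambda^{D}_{x}$ together with $M^{D}(x)=-||\lambda^{D}_{x}||$; combined with $f(w)/||w||=-||w||\ge M^{D}(x)$ for all $w\in\Lambda^{D}_{x}$, this also shows $\lambda^{D}_{x}$ is the unique longest element of $\Lambda^{D}_{x}$. I do not expect a real obstacle here: the only subtleties are the bookkeeping identification $f^{\ast}=\chi^{\ast}_{D}$ together with \Cref{fact_subspace_spanned_by_a_face} to move between $\Lambda^{f}_{\sigma_{x}}$ and $\Lambda^{D}_{x}$, and the appeal to \Cref{lem_for_indexing_projections} to promote the uniqueness statement from $\Lambda^{f}_{\sigma_{x}}$ to the a priori larger set $\Lambda^{D}_{x}$.
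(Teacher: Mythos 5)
Your proof is correct and follows the same route as the paper: translate to the linear-programming setup via $f=\langle\chi_{D},-\rangle$, $f^{\ast}=\chi^{\ast}_{D}$, apply \Cref{Unstable_locus_ample_cone_extension}, \Cref{unique_cone_min}, \Cref{compute_cone_min}, and \Cref{fact_subspace_spanned_by_a_face}, then read off $M^{D}(x)=-||\lambda^{D}_{x}||$ from the computation in \Cref{compute_cone_min}. You are somewhat more careful than the paper's terse proof in handling the possibility that $\Lambda^{D}_{x}$ properly contains $\Lambda^{f}_{\sigma_{x}}$ and in invoking \Cref{lem_for_indexing_projections} to settle uniqueness, but these are explications of the same argument rather than a different approach.
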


\begin{proof}
By \Cref{Unstable_locus_ample_cone_extension} and \Cref{unique_cone_min} and,  the function $$\frac{\langle\chi_{D},-\rangle}{||-||}:\pmb{\Gamma}(G)_{\mathbf{R}}\backslash\{0\}\rightarrow\mathbf{R}$$ achieves the relative minimum at a unique ray in $\sigma_{x}$. The existence and the uniqueness of $\lambda^{D}_{x}$ in $\Lambda^{D}_{x}$ is a direct consequence of \Cref{fact_subspace_spanned_by_a_face} and \Cref{compute_cone_min}. Finally, the equality $M^{D}(x)=-||\lambda^{D}_{x}||$ is derived by the same calculation done in \Cref{compute_cone_min}.
\end{proof}

Next, we formulate Hesselink's stratification of $\mathbf{C}^{\Sigma(1)}$ for any $D\in\Amp(X_{\Sigma})_{\mathbf{R}}$. For each $\lambda\in \Lambda^{D}$, we define the subset $$S^{\chi_{D}}_{\lambda}=\{x\in Z(\Sigma)\mid \lambda^{D}_{x}=\lambda\}.$$ By \Cref{lem_for_indexing_projections}, there is a unique $\lambda\in\Lambda^{D}$ that is $\chi_{D}$-adapted to $x$ for each point $x\in Z(\Sigma)$. Therefore, we have the decomposition $$\mathbf{C}^{\Sigma(1)}=(\mathbf{C}^{\Sigma(1)})^{\text{ss}}(\chi_{D})\cup\big(\bigcup_{\lambda\in\Lambda^{D}}S^{\chi_{D}}_{\lambda}\big).$$  

We define a strict partial order on $\Lambda^{D}$ by setting $\lambda>\lambda^{\prime}$ in $\Lambda^{D}$ if $||\lambda||>||\lambda^{\prime}||$. Using the same argument from \Cref{HesAffine}, one then shows that the above decomposition induces a stratification of $\mathbf{C}^{\Sigma(1)}$ and that it is equivalent to Hesselink's stratification whenever $D$ is an ample divisor.

We refer to the stratification from the above decomposition as \emph{the stratification of} $\mathbf{C}^{\Sigma(1)}$ \emph{induced by }$D\in\Amp(X_{\Sigma})_{\mathbf{R}}$. To compute the stratification induced by $D\in\Amp(X_{\Sigma})_{\mathbf{R}}$, we first note that the vector $\lambda^{D}_{x}$ only depends on the states $S_{x}$ of $x$ so $\lambda^{D}_{S}$ is $\chi_{D}$-adapted to $L(S)$ for each $S$. Hence computing stratification induced by $D$ is simply grouping $\{L(S)\}_{S\in\mathcal{L}}$ by $\{\lambda_{S}^{D}\}_{S\in\mathcal{L}}$. To compute $\lambda^{D}_{S}$, one simply computes the finite set $\Lambda^{D}_{S}$, and finds the longest vector in it. For $S$ whose size is small,  $\Lambda^{D}_{S}$ can easily be computed by hand. The complexity of enumerating $\Lambda^{D}_{S}$ grows exponentially with respect to the size of $S$  because it involves the power set of $S$. For larger $S$, we would rely on the computer program we wrote. See \Cref{Compute_stratification_with_respect_to_ample_divisors} for more information. 

We now define SIT-equivalence for $\mathbf{R}$-ample divisors.
\begin{definition}\label{R_ample_SIT_equivalence}
Let $X_{\Sigma}$ be a projective toric variety. We say two $\mathbf{R}$-ample divisors on $X_{\Sigma}$ are \emph{SIT-equivalent} if they induce equivalent stratifications of $\mathbf{C}^{\Sigma(1)}$.
\end{definition}

We end this section with a continuity result, \Cref{order_is_continuous} that is needed later. In short, we prove that for every point $x\in Z(\Sigma)$, choosing the longest vector $\lambda^{D}_{x}\in\Lambda^{D}_{x}$ is continuous with respect to $D$ in the ample cone. 
\begin{proposition}
Let $X_{\Sigma}$ be a projective toric variety. For any $x\in Z(\Sigma)$, the assignment 
    $\xi_{x}:\Amp(X_{\Sigma})_{\mathbf{R}}\rightarrow\pmb{\Gamma}(G)_{\mathbf{R}}$ defined by \begin{equation}\label{decision_making}
    D\mapsto \lambda^{D}_{x}
\end{equation}
is continuous.
\end{proposition}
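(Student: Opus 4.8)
The plan is to fix $x\in Z(\Sigma)$, write $S=S_x$ and recall that $\lambda^D_x=\lambda^D_S$ is the unique longest vector in the finite set $\Lambda^D_S=\{-\Proj_{W_Z}\chi^\ast_D\mid -\Proj_{W_Z}\chi^\ast_D\in\sigma_S,\ Z\subset S\}$, which exists by \Cref{Kempf_ext_to_R_ample}. Since $S$ is fixed, the indexing set $\{Z\subset S\}$ is finite and independent of $D$, and for each $Z$ the map $D\mapsto -\Proj_{W_Z}\chi^\ast_D$ is continuous (indeed linear) on $\Pic(X_\Sigma)_{\mathbf R}$: the assignment $D\mapsto\chi^\ast_D$ is linear by the defining relation $(\chi^\ast_D,v)=\langle\chi_D,v\rangle$, and orthogonal projection onto the fixed subspace $W_Z$ is a linear operator. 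So the only subtlety is that the competing candidates appearing in $\Lambda^D_S$ change discontinuously with $D$ — a vector $-\Proj_{W_Z}\chi^\ast_D$ enters or leaves the pool depending on whether it lies in $\sigma_S$ — and a priori the argmax could jump.

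First I would show continuity at an arbitrary point $D_0\in\Amp(X_\Sigma)_{\mathbf R}$. Let $\lambda_0=\lambda^{D_0}_x=-\Proj_{W_{Z_0}}\chi^\ast_{D_0}$ for some $Z_0\subset S$ realizing the maximum, and let $F_0\preceq\sigma_S$ be the face with $\lambda_0\in\Relint(F_0)$; by \Cref{unique_cone_min} and \Cref{fact_subspace_spanned_by_a_face} we may take $W_{Z_0}=\Sp(F_0)$, and moreover $\lambda_0$ is the point of $\sigma_S\cap S^{\mathrm{unit}}$ (up to scaling) where $\frac{\langle\chi_{D_0},-\rangle}{||-||}$ attains its strict relative minimum, with value $-||\lambda_0||<0$. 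The key continuity input is that the relative minimum of a linear functional over the compact set $\sigma_S\cap S^{\mathrm{unit}}$ depends continuously on the functional; hence for $D$ near $D_0$ the minimizing ray of $\frac{\langle\chi_D,-\rangle}{||-||}$ on $\sigma_S$ is close to $\mathbf R_{>0}\cdot\lambda_0$, and in particular lies in the relative interior of the same face $F_0$ of $\sigma_S$ for $D$ in a neighborhood of $D_0$ (faces form a finite partition $\sigma_S=\bigsqcup_{F\preceq\sigma_S}\Relint(F)$, and $\Relint(F_0)$ is open in $\Sp(F_0)\cap\sigma_S$, so a ray near an interior point of $F_0$ stays in $\Relint(F_0)$). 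Once we know the minimizing face is locally constant equal to $F_0$, \Cref{unique_cone_min} gives $\lambda^D_x=-\Proj_{\Sp(F_0)}\chi^\ast_D=-\Proj_{W_{Z_0}}\chi^\ast_D$ for all such $D$, which is a continuous (linear) function of $D$. Continuity of $\xi_x$ at $D_0$ follows.

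The main obstacle — the only genuinely non-formal point — is the claim that the minimizing face is \emph{locally} constant, i.e. that the strict minimizer cannot jump to a different face under an arbitrarily small perturbation of $D$. I would handle this by a compactness/uniqueness argument: on $K:=\sigma_S\cap S^{\mathrm{unit}}$ the map $(D,s)\mapsto\frac{\langle\chi_D,s\rangle}{||s||}=\langle\chi_D,s\rangle$ is jointly continuous, $\chi_{D_0}$ attains a \emph{strict} minimum at $s_0:=\lambda_0/||\lambda_0||$ (strictness from \Cref{unique_cone_min}), so for any open neighborhood $U$ of $s_0$ in $K$ there is $\delta>0$ with $\min_{K\setminus U}\langle\chi_{D_0},-\rangle>\langle\chi_{D_0},s_0\rangle+\delta$; shrinking the neighborhood of $D_0$ so that $||\chi_D-\chi_{D_0}||_{\mathrm{op}}$ is small forces the minimizer of $\langle\chi_D,-\rangle$ on $K$ into $U$. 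Choosing $U$ inside the relative interior of $F_0$ (possible since $s_0\in\Relint(F_0)$ and $\Relint(F_0)$ is open in $K$) pins the minimizing face, and the rest is linear algebra. I expect writing out this $\varepsilon$–$\delta$ step cleanly, together with citing \Cref{Unstable_locus_ample_cone_extension} to guarantee $\langle\chi_D,-\rangle$ stays negative somewhere on $\sigma_S$ throughout the neighborhood (so \Cref{unique_cone_min} remains applicable), to be the bulk of the proof; everything else is bookkeeping with the linearity of $D\mapsto\chi^\ast_D$ and of orthogonal projection.
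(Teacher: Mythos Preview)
Your argument has a genuine gap at precisely the step you flag as the crux. The claim that the minimizing face $F_0$ is locally constant is false, and the specific error is the assertion that ``$\Relint(F_0)$ is open in $K$''. The relative interior of $F_0$ is open in $\Sp(F_0)$, but as a subset of $K$ it is a lower-dimensional slice whenever $F_0$ is a proper face of $\sigma_S$, hence \emph{not} open in $K$; so you cannot choose an open $U\subset K$ lying inside $\Relint(F_0)$. Concretely, take $\sigma_S=\{(a,b)\in\mathbf{R}^2: a\geq 0\}$ with the standard inner product and $-\chi^{\ast}_{D_0}=(0,-1)$, which lies on the boundary face $\{a=0\}$. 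Perturbing to $-\chi^{\ast}_{D}=(\epsilon,-1)$ with $\epsilon>0$ gives $\lambda^{D}_x=(\epsilon,-1)$ in the interior of $\sigma_S$: the face jumps, even though $\lambda^{D}_x$ itself varies continuously.

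Your compactness/uniqueness paragraph does correctly establish that the normalized minimizer $s(D)\in K$ depends continuously on $D$; the mistake is only in the final sentence where you try to upgrade this to constancy of the face. From continuity of $s(D)$ you can in fact finish directly, since $M^{D}(x)=\langle\chi_D,s(D)\rangle$ is then continuous and $\lambda^{D}_x=-M^{D}(x)\cdot s(D)$. The paper takes a different and shorter route: it covers $\Amp(X_\Sigma)_{\mathbf{R}}$ by the finitely many sets $\Amp(X_\Sigma)_Z=\{D:\lambda^{D}_x=-\Proj_{W_Z}\chi^{\ast}_D\}$, on each of which $\xi_x$ coincides with the linear map $D\mapsto -\Proj_{W_Z}\chi^{\ast}_D$, and concludes from that.
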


\begin{proof}
Let $S_{x}\subset\Sigma(1)$ be the states of $x$. For each $Z\subset S_{x}$, define the subset of the ample cone 
$$\Amp(X_{\Sigma})_{Z}=\{D\in\Amp(X_{\Sigma})_{\mathbf{R}}\mid -\Proj_{W_{Z}}\chi^{\ast}_{D}=\lambda^{D}_{x}\}.$$ By \Cref{unique_cone_min}, the union $\cup_{Z\subset S_{x}}\Amp(X_{\Sigma})_{Z}$ is the entire ample cone. The assignment $\xi_{x}$ on $\Amp(X)_{Z}$ is given by $$D\mapsto -\Proj_{W_{Z}}\chi^{\ast}_{D},$$ which is obviously continuous. 
\end{proof}

\begin{corollary}\label{order_is_continuous}
Let $X_{\Sigma}$ be a projective toric variety. For any $x\in Z(\Sigma)$, the assignment $D\mapsto M^{D}(x)$ defined for $D\in\Amp(X_{\Sigma})_{\mathbf{R}}$ is a negative valued continuous function to the real line $\mathbf{R}$.
\end{corollary}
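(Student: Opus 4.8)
The plan is to express $M^{D}(x)$ directly in terms of the assignment $\xi_{x}$ whose continuity was just established, and then compose with the norm. By \Cref{Kempf_ext_to_R_ample}, for every $D\in\Amp(X_{\Sigma})_{\mathbf{R}}$ we have the identity $M^{D}(x)=-||\lambda^{D}_{x}||=-||\xi_{x}(D)||$. Since $\xi_{x}\colon\Amp(X_{\Sigma})_{\mathbf{R}}\rightarrow\pmb{\Gamma}(G)_{\mathbf{R}}$ is continuous by the preceding proposition and the norm $||-||\colon\pmb{\Gamma}(G)_{\mathbf{R}}\rightarrow\mathbf{R}$ is continuous, the composite $D\mapsto -||\xi_{x}(D)||$ is continuous. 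This yields the continuity assertion of the corollary.

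It remains to check that the function takes values in the negative reals, i.e.\ that $\lambda^{D}_{x}\neq 0$ for every $D$ in the ample cone. Here I would invoke the hypothesis $x\in Z(\Sigma)$: by \Cref{Unstable_locus_ample_cone_extension}, the linear functional $\langle\chi_{D},-\rangle$ attains a negative value somewhere on $\sigma_{x}$, so the infimum $M^{D}(x)=\inf_{\lambda\in\sigma_{x}\backslash\{0\}}\tfrac{\langle\chi_{D},\lambda\rangle}{||\lambda||}$ is strictly negative. Equivalently $||\lambda^{D}_{x}||>0$, hence $M^{D}(x)=-||\lambda^{D}_{x}||<0$, and $D\mapsto M^{D}(x)$ is a negative valued function.

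I do not expect any real obstacle here: the substantive content is already packaged into \Cref{Kempf_ext_to_R_ample} (the formula $M^{D}(x)=-||\lambda^{D}_{x}||$) and into the preceding proposition (continuity of $\xi_{x}$, which reduces to the observation that $\Amp(X_{\Sigma})_{\mathbf{R}}$ is covered by the finitely many sets $\Amp(X_{\Sigma})_{Z}$ on each of which $\xi_{x}$ agrees with the manifestly continuous map $D\mapsto -\Proj_{W_{Z}}\chi^{\ast}_{D}$). The one point deserving a word of care is that one should not try to conclude continuity via a pasting lemma applied to the cover $\{\Amp(X_{\Sigma})_{Z}\}$, since the pieces need not be closed; instead it is cleanest to work with the single global formula $M^{D}(x)=-||\xi_{x}(D)||$ and use continuity of $\xi_{x}$ as a black box, which is exactly the route sketched above.
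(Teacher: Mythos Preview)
Your proposal is correct and essentially identical to the paper's proof: the paper also writes $D\mapsto M^{D}(x)$ as the composite of $\xi_{x}$ with $(-1)\cdot||-||$. You in fact say slightly more than the paper by explicitly justifying the \emph{negative valued} claim via \Cref{Unstable_locus_ample_cone_extension}, which the paper leaves implicit.
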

\begin{proof}
The assignment $D\mapsto M^{D}(x)$ is the continuous map $\xi_{x}$ post composed with the continuous map $(-1)\cdot||-||:\pmb{\Gamma}(G)_{\mathbf{R}}\rightarrow\mathbf{R}$.
\end{proof}

\subsection{Wall and semi-chamber}\label{wall and semi-chamber}
Let $X_{\Sigma}$ be a projective toric variety. \Cref{Rample_bad_locus} implies that the $\chi_{D}$-semistable locus stays constant throughout  $D\in\Amp(X_{\Sigma})_{\mathbf{R}}$. Hence the variation of stratifications of $\mathbf{C}^{\Sigma(1)}$ induced by $\mathbf{R}$-ample divisors  only occurs in the $\chi_{D}$-unstable locus $Z(\Sigma)$. In addition, we have seen in \Cref{Extensions_to_the_ample_cone} that each $\chi_{D}$-stratum is a grouping of $\{L(S)\}_{S\in\mathcal{L}}$ by the list of vectors $\{\lambda^{D}_{S}\}_{S\in\mathcal{L}}$. We therefore deduce that: 
\begin{enumerate}
    \item There is a type one variation (\Cref{two_types_of_variation_def}) between the stratifications of $\mathbf{C}^{\Sigma(1)}$ induced by $D,D^{\prime}\in\Amp(X_{\Sigma})_{\mathbf{R}}$ if and only if there is  a pair $S_{1},S_{2}\in\mathcal{L}$ such that $\lambda^{D}_{S_{1}}\neq\lambda^{D}_{S_{2}}$ while $\lambda^{D^{\prime}}_{S_{1}}=\lambda^{D^{\prime}}_{S_{2}}$.
    \item If there is a type two variation (\Cref{two_types_of_variation_def}) between the stratifications of $\mathbf{C}^{\Sigma(1)}$ induced by $D,D^{\prime}\in\Amp(X_{\Sigma})_{\mathbf{R}}$, then there is  a pair $S_{1},S_{2}\in\mathcal{L}$ such that $$||\lambda^{D}_{S_{1}}|| <||\lambda^{D}_{S_{2}}||\text{ while } ||\lambda^{D^{\prime}}_{S_{1}}||\geq ||\lambda^{D^{\prime}}_{S_{2}}||.$$
\end{enumerate}  

Therefore, the $\mathbf{R}$-ample divisors $D\in\Amp(X_{\Sigma})_{\mathbf{R}}$ such that the stratifications undergo type one (resp. type two) variations should be captured by the collections $$\{D\in\Amp(X_{\Sigma})_{\mathbf{R}}\mid \Proj_{W_{Z_{1}}}\chi^{\ast}_{D}=\Proj_{W_{Z_{2}}}\chi^{\ast}_{D}\}$$ $$(\text{resp. } \{D\in\Amp(X_{\Sigma})_{\mathbf{R}}\mid ||\Proj_{W_{Z_{1}}}\chi^{\ast}_{D}||=||\Proj_{W_{Z_{2}}}\chi^{\ast}_{D}||\})$$ for various $Z_{1},Z_{2}\in\mathcal{L}$. 

We now formulate two types of walls that intuitively capture two types of variations.
\begin{definition}\label{walls_def}
Suppose $X_{\Sigma}$ is a projective toric variety. Let 
$\emptyset\neq H\subsetneq\Amp(X_{\Sigma})_{\mathbf{R}}$ be a proper subset. We say $H$ is a \emph{type
one wall }if there are $Z$ and a singleton $\{\rho\}$ in $\mathcal{L}$
such that
\begin{enumerate}
\item $W_{Z\cup\{\rho\}}$ is a codimension one subspace of $W_{Z}$, and 
\item $H=\{D\in\Amp(X_{\Sigma})_{\mathbf{R}}\mid
  \Proj_{W_{Z}}\chi^{\ast}_{D}=\Proj_{W_{Z\cup\{\rho\}}}\chi^{\ast}_{D}\}.$ In this case, we say $H$ is a type one wall with respect to $Z$ and $\{\rho\}$.
\end{enumerate}
We say $H$ is a \emph{type two wall} if there are 
$Z_{1},Z_{2}\in\mathcal{L}$ such that
\begin{enumerate}
\item there is no containment between $W_{Z_{1}}$ and $W_{Z_{2}}$, and 
\item $H=\{D\in\Amp(X_{\Sigma})_{\mathbf{R}}\mid||\Proj_{W_{Z_{1}}}\chi^{\ast}_{D}||=||\Proj_{W_{Z_{2}}}\chi^{\ast}_{D}||\}$. In this case, we say $H$ is a type two wall with respect to $Z_{1}$ and $Z_{2}$.
\end{enumerate}
\end{definition}

\begin{remark}\label{remark_on_walls}
Here are some remarks of the definition of walls.  First, let $Z,\{\rho\}\in\mathcal{L}$. For the collection $\{D\in\Amp(X_{\Sigma})_{\mathbf{R}}\mid \Proj_{W_{Z}}\chi^{\ast}_{D}=\Proj_{W_{Z\cup\{\rho\}}}\chi^{\ast}_{D}\}$ to be proper it only makes sense for $W_{Z\cup\{\rho\}}$ to be a proper subspace of $W_{Z}$. Since $W_{Z\cup\{\rho\}}$ is the vanishing locus of the single linear functional $\langle\chi_{D_{\rho}},-\rangle:\pmb{\Gamma}(G)_{\mathbf{R}}\rightarrow\mathbf{R}$ on $W_{Z}$, it has to be of codimension one in $W_{Z}$. 

As for type two walls, the non-containment assumption required for a type two wall is due to the following observation: Suppose $W_{Z_{2}}\subset W_{Z_{1}}$. Then for any $D\in\Amp(X_{\Sigma})_{\mathbf{R}}$, $||\Proj_{W_{Z_{1}}}\chi^{\ast}_{D}||=||\Proj_{W_{Z_{2}}}\chi^{\ast}_{D}||$ is equivalent to $\Proj_{W_{Z_{1}}}\chi^{\ast}_{D}=\Proj_{W_{Z_{2}}}\chi^{\ast}_{D}$. We will see in \Cref{walls_contain_all_critical_points} below that a non-empty collection $$\{D\in\Amp(X_{\Sigma})_{\mathbf{R}}\mid \Proj_{W_{Z_{1}}}\chi^{\ast}_{D}=\Proj_{W_{Z_{2}}}\chi^{\ast}_{D}\}$$ is either the whole ample cone or an intersection of type one walls. Hence when there is a containment between $W_{Z_{1}}$ and $W_{Z_{2}}$, a non-empty proper subset of $\Amp(X)_{\mathbf{R}}$ of the form $\{D\in\Amp(X_{\Sigma})_{\mathbf{R}}\mid ||\Proj_{W_{Z_{1}}}\chi^{\ast}_{D}||=||\Proj_{W_{Z_{2}}}\chi^{\ast}_{D}||\}$ belongs to type one walls already.
\end{remark}

\begin{proposition}\label{walls_contain_all_critical_points}
Let $X_{\Sigma}$ be a projective toric variety. Let $Z_{1}$ and $Z_{2}$ be two subsets of
$\mathcal{L}$. Then a non-empty collection  
$$H=\{D\in\Amp(X_{\Sigma})_{\mathbf{R}}\mid\Proj_{W_{Z_{1}}}\chi_{D}^{\ast}=\Proj_{W_{Z_{2}}}\chi_{D}^{\ast}\}$$ is either the whole ample cone or an intersection of type one walls. On the other hand, a non-empty collection $$H^{\prime}=\{D\in\Amp(X_{\Sigma})_{\mathbf{R}}\mid ||\Proj_{W_{Z_{1}}}\chi_{D}^{\ast}||=||\Proj_{W_{Z_{2}}}\chi_{D}^{\ast}||\}$$
is either the whole ample cone, an intersection of type one walls, or a type two wall.
\end{proposition}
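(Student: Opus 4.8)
The plan is to treat the ``projection'' collection $H$ first and then reduce the ``norm'' collection $H'$ to it. For $H$, the key point is that the single vector equation $\Proj_{W_{Z_1}}\chi^{\ast}_{D}=\Proj_{W_{Z_2}}\chi^{\ast}_{D}$ decouples into a finite list of scalar linear conditions indexed by the rays of $Z_1$ and $Z_2$. Indeed $W_{Z_1}\cap W_{Z_2}=W_{Z_1\cup Z_2}$ straight from the definitions, so whenever the two projections agree their common value already lies in $W_{Z_1\cup Z_2}$; invoking \Cref{lem_for_indexing_projections} one then obtains
$$H=\Big(\bigcap_{\rho\in Z_2}\{D\mid(\chi^{\ast}_{D},\Proj_{W_{Z_1}}\chi^{\ast}_{D_{\rho}})=0\}\Big)\cap\Big(\bigcap_{\rho\in Z_1}\{D\mid(\chi^{\ast}_{D},\Proj_{W_{Z_2}}\chi^{\ast}_{D_{\rho}})=0\}\Big)$$
inside $\Amp(X_{\Sigma})_{\mathbf{R}}$, where $\chi^{\ast}_{D_{\rho}}$ denotes the vector dual to the functional $\langle\chi_{D_{\rho}},-\rangle$. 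I would then examine each factor, say one with $\rho\in Z_2$. If $\Proj_{W_{Z_1}}\chi^{\ast}_{D_{\rho}}=0$, equivalently $W_{Z_1\cup\{\rho\}}=W_{Z_1}$, the factor is all of $\Amp(X_{\Sigma})_{\mathbf{R}}$. Otherwise $W_{Z_1\cup\{\rho\}}$ has codimension one in $W_{Z_1}$, the factor equals $\{D\mid\Proj_{W_{Z_1}}\chi^{\ast}_{D}=\Proj_{W_{Z_1\cup\{\rho\}}}\chi^{\ast}_{D}\}$, and since $Z_1\in\mathcal{L}$ and $\{\rho\}\in\mathcal{L}$ (because $\rho\in Z_2\in\mathcal{L}$ and $\mathcal{L}$ is closed under passing to subsets), this is a type one wall with respect to $Z_1$ and $\{\rho\}$ in the sense of \Cref{walls_def}, provided it is a nonempty proper subset of the cone. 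Nonemptiness is inherited from $H\ne\emptyset$, and properness holds because $\Amp(X_{\Sigma})_{\mathbf{R}}$ is open and full-dimensional in $\Pic(X_{\Sigma})_{\mathbf{R}}$ (\Cref{Pic_free}, \Cref{nef_and_ample_cone_thm}) and so cannot be contained in a linear hyperplane (and if the defining functional vanishes identically on $\Pic(X_{\Sigma})_{\mathbf{R}}$, the factor is again the whole cone). The factors with $\rho\in Z_1$ are handled symmetrically, with $Z_2$ in place of $Z_1$. After discarding the factors that equal the whole cone, $H$ is either the whole ample cone or a finite intersection of type one walls.

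For $H'$ I would split on whether one of $W_{Z_1},W_{Z_2}$ contains the other. If, say, $W_{Z_2}\subset W_{Z_1}$, then $\Proj_{W_{Z_1}}\chi^{\ast}_{D}=\Proj_{W_{Z_2}}\chi^{\ast}_{D}+\big(\Proj_{W_{Z_1}}\chi^{\ast}_{D}-\Proj_{W_{Z_2}}\chi^{\ast}_{D}\big)$ is an orthogonal decomposition, the second summand lying in $W_{Z_1}$ and being orthogonal to $W_{Z_2}$; so the Pythagorean law \Cref{Pythagorean} shows $\|\Proj_{W_{Z_1}}\chi^{\ast}_{D}\|=\|\Proj_{W_{Z_2}}\chi^{\ast}_{D}\|$ if and only if $\Proj_{W_{Z_1}}\chi^{\ast}_{D}=\Proj_{W_{Z_2}}\chi^{\ast}_{D}$; hence $H'$ coincides with the collection $H$ for these same $Z_1,Z_2$, and the first part applies. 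If there is no containment, then by \Cref{walls_def} $H'$ is a type two wall with respect to $Z_1,Z_2$ as soon as it is a nonempty proper subset of $\Amp(X_{\Sigma})_{\mathbf{R}}$; it is nonempty by hypothesis, $Z_1,Z_2\in\mathcal{L}$, and it is cut out inside the ample cone by the vanishing of the quadratic form $D\mapsto\|\Proj_{W_{Z_1}}\chi^{\ast}_{D}\|^{2}-\|\Proj_{W_{Z_2}}\chi^{\ast}_{D}\|^{2}$ on $\Pic(X_{\Sigma})_{\mathbf{R}}$. If this quadratic form is identically zero on $\Pic(X_{\Sigma})_{\mathbf{R}}$, then $H'$ is the whole ample cone; otherwise its zero locus has empty interior, so openness of the ample cone forces $H'\subsetneq\Amp(X_{\Sigma})_{\mathbf{R}}$ and $H'$ is a type two wall. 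This gives the three alternatives.

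The genuinely routine parts are the orthogonality and Pythagorean bookkeeping and the standard fact that a nonzero polynomial cannot vanish on a nonempty open subset of a real vector space. The step needing the most care is the reduction of the vector equation for $H$ to the indexed intersection of hyperplane conditions above, together with the verification that every nontrivial factor really matches \Cref{walls_def}: one must see that the ``base'' index ($Z_1$ or $Z_2$) belongs to $\mathcal{L}$ and that the single adjoined ray forms a singleton in $\mathcal{L}$. This is exactly why the condition is best resolved one ray at a time rather than through an arbitrary chain of subsets --- an intermediate set $Z_1\cup A$ need not lie in $\mathcal{L}$, whereas $Z_1$ itself and each singleton $\{\rho\}$ with $\rho\in Z_2$ do, and that is all the definition of a type one wall requires.
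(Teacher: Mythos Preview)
Your proof is correct and follows essentially the same route as the paper's. Both arguments reduce the equality $\Proj_{W_{Z_1}}\chi_D^{\ast}=\Proj_{W_{Z_2}}\chi_D^{\ast}$ to a finite intersection of conditions of the form $\Proj_{W_{Z_i}}\chi_D^{\ast}=\Proj_{W_{Z_i\cup\{\rho\}}}\chi_D^{\ast}$ with $\rho$ drawn from the other set, and then handle $H'$ by the containment/no-containment split. The only cosmetic difference is that the paper first passes through $W_{Z_1\cup Z_2}$ and picks a sequence of rays realizing that intersection, whereas you index over all $\rho\in Z_2$ (resp.\ $\rho\in Z_1$) at once and use self-adjointness of orthogonal projection to phrase each factor as a linear condition; your version is slightly more explicit about why each surviving factor is a genuine type one wall (checking $Z_i\in\mathcal{L}$, $\{\rho\}\in\mathcal{L}$, and properness via full-dimensionality of the ample cone), which the paper leaves implicit.
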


\begin{proof}
For the collection $H$, note that $\Proj_{W_{Z_{1}}}\chi^{\ast}_{D}=\Proj_{W_{Z_{2}}}\chi^{\ast}_{D}$ is equivalent to $$\Proj_{W_{Z_{1}}}\chi^{\ast}_{D}=\Proj_{W_{Z_{1}\cup Z_{2}}}\chi^{\ast}_{D}\text{ and }\Proj_{W_{Z_{1}\cup Z_{2}}}\chi^{\ast}_{D}=\Proj_{W_{Z_{2}}}\chi^{\ast}_{D}.$$ We may choose a sequence $l_{1},\ldots, l_{k}$ in $Z_{2}$ such that $$ W_{Z_{1}\cup l_{1}\cup\ldots\cup l_{k}}=W_{Z_{1}\cup Z_{2}}$$ and similarly $r_{1},\ldots r_{m}$ in $Z_{1}$ such that $$W_{Z_{2}\cup r_{1}\cup\ldots\cup r_{m}}=W_{Z_{1}\cup Z_{2}}.$$ We can now express $H$ as the intersection \begin{equation*}\begin{split}
H=&\big(\cap_{i}\{D\in\Amp(X_{\Sigma})_{\mathbf{R}}\mid \Proj_{W_{Z_{1}}}\chi^{\ast}_{D}=\Proj_{W_{Z_{1}\cup l_{i}}}\chi^{\ast}_{D}\}\big)\bigcap\\&\big(\cap_{j}\{D\in\Amp(X_{\Sigma})\mid \Proj_{W_{Z_{2}}}\chi^{\ast}_{D}=\Proj_{W_{Z_{2}\cup r_{j}}}\chi^{\ast}_{D}\}\big).\end{split}\end{equation*} 

If $H$ is a proper subset of $\Amp(X_{\Sigma})_{\mathbf{R}}$, then some subsets that occurred in the above intersection are proper and are type one walls. The statement for $H$ is proved. 

For $H^{\prime}$ with $H^{\prime}\neq\Amp(X_{\Sigma})_{\mathbf{R}}$, if there is a containment $W_{Z_{1}}\subset W_{Z_{2}}$, then $H^{\prime}$ is the same as $H$. This is an intersection of type one walls as we have seen. If there is no containment between $W_{Z_{1}}$ and $W_{Z_{2}}$, then $H^{\prime}$ is a type two wall by definition. 
\end{proof}

Therefore, walls should completely capture the divisors $D$ such that the stratification undergoes variations. We will rigorously prove that type one walls (resp. type two walls) capture type one (resp. type two) variations at \Cref{type_one_wall_captures_type_one_variation} and  \Cref{type_two_variation_crosses_type_two_walls}. We now describe the structure of walls. 
\begin{proposition}\label{struct_walls}
Let $X_{\Sigma}$ be a projective toric variety. 
\begin{enumerate}
    \item A type one wall is of codimension 1 in $\Amp(X_{\Sigma})_{\mathbf{R}}$. It is the intersection of a codimension 1 subspace of
$\Pic(X_{\Sigma})_{\mathbf{R}}$ with
$\Amp(X_{\Sigma})_{\mathbf{R}}$. 
\item A type two wall is of codimension at least 1 in $\Amp(X_{\Sigma})_{\mathbf{R}}$. It is either the intersection of a linear subspace of $\Pic(X_{\Sigma})_{\mathbf{R}}$ with $\Amp(X_{\Sigma})_{\mathbf{R}}$, or the intersection of a regular codimension 1 submanifold of
$\Pic(X_{\Sigma})_{\mathbf{R}}$ with $\Amp(X_{\Sigma})_{\mathbf{R}}$ away from a subspace of codimension at
least 2. 
\item If $X_{\Sigma}$ is simplicial, then a type two wall is of codimension 1 in $\Amp(X_{\Sigma})_{\mathbf{R}}$. It is the intersection of a regular codimension 1 submanifold of
$\Pic(X_{\Sigma})_{\mathbf{R}}$ with $\Amp(X_{\Sigma})_{\mathbf{R}}$ away from a subspace of codimension at least 2. 
\end{enumerate}
Moreover, walls are defined by equations that satisfy the following properties: 
\begin{itemize}
\item the condition that a point
  $\sum_{\rho}a_{\rho}D_{\rho}\in\Amp(X_{\Sigma})_{\mathbf{R}}$ is in
  a type one wall corresponds to a linear equation of
  $a_{\rho}$'s with rational coefficient,
\item the condition that a point
  $\sum_{\rho}a_{\rho}D_{\rho}\in\Amp(X_{\Sigma})_{\mathbf{R}}$ is in
  a type two wall corresponds to a homogeneous quadratic equation of
  $a_{\rho}$'s with rational coefficients.
\end{itemize}
\end{proposition}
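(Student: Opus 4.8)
The plan is to transport the whole picture from $\pmb{\Gamma}(G)_{\mathbf{R}}$ to $\Pic(X_{\Sigma})_{\mathbf{R}}$ along the assignment $D\mapsto\chi^{\ast}_{D}$, which is $\mathbf{Q}$-linear and injective (because $\Pic(X_{\Sigma})\hookrightarrow\Cl(X_{\Sigma})\simeq\pmb{\chi}(G)$ by \Cref{lemma5.1.1} and the pairing between $\pmb{\chi}(G)_{\mathbf{R}}$ and $\pmb{\Gamma}(G)_{\mathbf{R}}$ is perfect). The key rationality observation is this: each $W_{Z}$ is cut out of $\pmb{\Gamma}(G)_{\mathbf{R}}$ by the integral functionals $\langle\chi_{D_{\rho}},-\rangle$, so it is a rational subspace; since the chosen inner product is integral on $\pmb{\Gamma}(G)$, the complement $W_{Z}^{\perp}$ is rational too, and hence the orthogonal projection $\Proj_{W_{Z}}$ is a rational linear operator. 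Consequently any quantity assembled from the $\Proj_{W_{Z}}\chi^{\ast}_{D}$ and the inner product is a polynomial in the coefficients $a_{\rho}$ of $D=\sum_{\rho}a_{\rho}D_{\rho}$ with rational coefficients; this is what ultimately yields the two "defining equation'' bullets, and it is important because it lets us avoid ever choosing an orthonormal basis (whose Gram--Schmidt coefficients would be irrational).

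For a type one wall $H$ with respect to $Z$ and $\{\rho\}$, hypothesis (1) says $W_{Z\cup\{\rho\}}$ is a hyperplane of $W_{Z}$, so inside $W_{Z}$ it equals $\{v:(\ell,v)=0\}$ where $\ell:=\Proj_{W_{Z}}\chi^{\ast}_{D_{\rho}}$, which is nonzero precisely because $W_{Z\cup\{\rho\}}\subsetneq W_{Z}$. Then $\Proj_{W_{Z}}v-\Proj_{W_{Z\cup\{\rho\}}}v$ is the projection of $v$ onto the line $\mathbf{R}\ell$, so $D\in H$ if and only if $(\ell,\chi^{\ast}_{D})=0$, i.e.\ $\langle\chi_{D_{\rho}},\Proj_{W_{Z}}\chi^{\ast}_{D}\rangle=0$. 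By the first paragraph this is a single linear equation in the $a_{\rho}$ with rational coefficients, so it defines a linear subspace of $\Pic(X_{\Sigma})_{\mathbf{R}}$ of codimension $0$ or $1$; since $H$ is a nonempty proper subset of the open, full-dimensional ample cone (\Cref{nef_and_ample_cone_thm}), the codimension is exactly $1$, giving part (1) and the first bullet.

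For type two walls, first apply the Pythagorean law \eqref{Pythagorean}: $\|\Proj_{W_{Z_{1}}}v\|=\|\Proj_{W_{Z_{2}}}v\|$ is equivalent to $\dist(v,W_{Z_{1}})=\dist(v,W_{Z_{2}})$. Set $q(D):=\|\Proj_{W_{Z_{1}}}\chi^{\ast}_{D}\|^{2}-\|\Proj_{W_{Z_{2}}}\chi^{\ast}_{D}\|^{2}=(\chi^{\ast}_{D},\Proj_{W_{Z_{1}}}\chi^{\ast}_{D})-(\chi^{\ast}_{D},\Proj_{W_{Z_{2}}}\chi^{\ast}_{D})$; by the first paragraph this is a homogeneous quadratic form in the $a_{\rho}$ with rational coefficients, and $H=\{q=0\}\cap\Amp(X_{\Sigma})_{\mathbf{R}}$, which is the second bullet. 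Since $H$ is nonempty and proper, $q$ is neither identically zero nor definite on $\Pic(X_{\Sigma})_{\mathbf{R}}$. Diagonalizing $q$ over $\mathbf{R}$, there are two cases. If $q$ is semidefinite of rank $\geq1$, then $\{q=0\}$ is its radical, a linear subspace of codimension $\geq1$, and we are in the first alternative of part (2). If $q$ is indefinite, write it as $x_{1}^{2}+\cdots+x_{p}^{2}-y_{1}^{2}-\cdots-y_{n}^{2}$ with $p,n>0$; its radical $V'=\{x_{1}=\cdots=x_{p}=y_{1}=\cdots=y_{n}=0\}$ has codimension $p+n\geq2$, and on $\Pic(X_{\Sigma})_{\mathbf{R}}\setminus V'$ the differential of $q$ is nonzero, so $0$ is a regular value and $\{q=0\}\setminus V'$ is a regular codimension $1$ submanifold by \Cref{regularlevelset} (this is exactly the argument of \Cref{type_two_wall_is_codim_1}); intersecting with the ample cone gives the second alternative of part (2). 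When $\Sigma$ is simplicial, $\Pic(X_{\Sigma})$ has finite index in $\Cl(X_{\Sigma})$, so by \eqref{sesWDiv} the map $D\mapsto\chi^{\ast}_{D}$ is an $\mathbf{R}$-linear isomorphism onto $\pmb{\Gamma}(G)_{\mathbf{R}}$; then $H$ corresponds to the full equi-distant locus of $W_{Z_{1}}$ and $W_{Z_{2}}$, which by the non-containment hypothesis and \Cref{type_two_wall_is_codim_1} is a regular codimension $1$ submanifold away from a subspace of codimension $\geq2$, proving part (3).

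The step I expect to be the main obstacle is the non-simplicial case of part (2): because $D\mapsto\chi^{\ast}_{D}$ need not be surjective onto $\pmb{\Gamma}(G)_{\mathbf{R}}$, one cannot simply invoke \Cref{type_two_wall_is_codim_1} and must instead analyze the restricted quadratic form $q$ on $\Pic(X_{\Sigma})_{\mathbf{R}}$ directly --- verifying that in the indefinite case its radical still has codimension $\geq2$ so that \Cref{regularlevelset} applies, and isolating the semidefinite case in which the wall degenerates to a linear subspace --- all while keeping track of rationality by working with the projection operators rather than with orthonormal bases.
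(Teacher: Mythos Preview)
Your proof is correct and follows essentially the same route as the paper: for type one walls you reduce to the vanishing of the single linear functional $D\mapsto\langle\chi_{D_{\rho}},\Proj_{W_{Z}}\chi^{\ast}_{D}\rangle$ (the paper calls this $\nu_{\rho}$), for type two walls you diagonalize the quadratic form $q$ via Sylvester and apply the regular level set theorem, and for the simplicial case you use that $D\mapsto\chi^{\ast}_{D}$ is an isomorphism to invoke \Cref{type_two_wall_is_codim_1} directly.

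The one place where you package things differently is the rationality argument. The paper fixes an integral basis $\{\lambda_{1},\dots,\lambda_{q}\}$ of $W_{Z}$, writes $\Proj_{W_{Z}}\chi^{\ast}_{D}=\sum_{j}b_{j}\lambda_{j}$, and solves the integral Gram system $A\cdot(b_{j})=(\langle\chi_{D},\lambda_{i}\rangle)$ to exhibit each $b_{j}$ as a $\mathbf{Q}$-linear combination of the $a_{\rho}$. You instead observe once and for all that $W_{Z}$ is a rational subspace (cut out by the integral functionals $\langle\chi_{D_{\rho}},-\rangle$), that the inner product is integral on $\pmb{\Gamma}(G)$, and hence that $\Proj_{W_{Z}}$ is a rational linear operator; rationality of the linear and quadratic equations then follows without ever writing down the matrix. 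Both arguments are valid; yours is a little cleaner conceptually, while the paper's explicit matrix equation has the side benefit of being exactly what is implemented in the computer program of \Cref{Potential_one_parameter_subgroups}.
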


\begin{proof}
For (1), let $H$ be the type one wall with respect to $Z$ and $\{\rho\}$. Namely,  $$H=\{D\in\Amp(X_{\Sigma})_{\mathbf{R}}\mid\Proj_{W_{Z}}\chi^{\ast}_{D}=\Proj_{W_{Z_\cup\{\rho\}}}\chi^{\ast}_{D}\}.$$ Define the linear map $\nu_{\rho}:\Pic(X_{\Sigma})_{\mathbf{R}}\rightarrow\mathbf{R}$ by the composition
$$\begin{tikzcd}
\Pic(X_{\Sigma})_{\mathbf{R}}\arrow[r,"\ast"]&\pmb{\Gamma}(G)_{\mathbf{R}}\arrow[r,"\Proj_{W_{Z}}(-)"]&[20]W_{Z}\arrow[r,"\langle\chi_{D_{\rho}}\text{,}-\rangle"]&[13]\mathbf{R}
\end{tikzcd}.$$
Namely, $$\nu_{\rho}(D)=\langle\chi_{D_{\rho}},\Proj_{W_{Z}}\chi^{\ast}_{D}\rangle.$$
We then have 
\begin{equation*}
\begin{split}\Proj_{W_{Z}}\chi^{\ast}_{D}&=\Proj_{W_{Z\cup\{\rho\}}}\chi^{\ast}_{D}\Leftrightarrow\Proj_{W_{Z}}\chi^{\ast}_{D}\in
W_{Z\cup\{\rho\}}\\
&\Leftrightarrow\langle\chi_{D_{\rho}},\Proj_{W_{Z}}\chi^{\ast}_{D}\rangle=\nu_{\rho}(D)=0.\end{split}\end{equation*} Hence  $$H=\ker\nu_{\rho}\cap\Amp(X_{\Sigma})_{\mathbf{R}}.$$ Since $H$ is a proper subset, 
$\ker\nu_{\rho}$ is a
codimension one subspace of $\Pic(X_{\Sigma})_{\mathbf{R}}.$ \\
\indent For (2), let $H^{\prime}$ be the type two wall $$\{D\in\Amp(X_{\Sigma})_{\mathbf{R}}\mid ||\Proj_{W_{Z_{1}}}\chi^{\ast}_{D}||=||\Proj_{W_{Z_{2}}}\chi_{D}^{\ast}||\}$$ with respect to $Z_{1}$ and $Z_{2}$. Notice that $H^{\prime}$ is defined by the vanishing locus $V(Q)$ of the quadratic form $Q$ associated to the bilinear form $B$ on $\Pic(X_{\Sigma})_{\mathbf{R}}$ defined by  \begin{equation}\label{quadratic_form}B(\chi_{D},\chi_{D^{\prime}})=(\Proj_{W_{Z_{1}}}\chi^{\ast}_{D},\Proj_{W_{Z_{1}}}\chi_{D^{\prime}}^{\ast})-(\Proj_{W_{Z_{2}}}\chi_{D}^{\ast},\Proj_{W_{Z_{2}}}\chi_{D^{\prime}}^{\ast}).\end{equation} By Sylvester's law of inertia, $Q$ looks like
$x_{1}^{2}+\cdots+x_{m}^{2}-y_{1}^{2}-\cdots y_{n}^{2}$ in a suitable coordinate of $\Pic(X_{\Sigma})_{\mathbf{R}}$. If one of $m$
or $n$ is zero, then $V(Q)$ is a linear subspace. If both $m$ and
$n$ are nonzero, then by \Cref{regularlevelset}, $V(Q)$ is a regular submanifold of codimension
1 away from the codimension at least 2 linear subspace $V(x_{1},\ldots,x_{m},y_{1},\ldots, y_{n})$.\\
\indent For (3), let $H^{\prime}$ be the type two wall with respect to $Z_{1},Z_{2}$ as in (2). When $X_{\Sigma}$ is simplicial, $\Pic(X_{\Sigma})_{\mathbf{Q}}\simeq\Cl(X_{\Sigma})_{\mathbf{Q}}$ so the map $\Pic(X_{\Sigma})_{\mathbf{R}}\rightarrow\pmb{\Gamma}(G)_{\mathbf{R}}$ defined by $D\mapsto \chi^{\ast}_{D}$ is an isomorphism. Hence the collection $$\{D\in\Pic(X_{\Sigma})_{\mathbf{R}}\mid ||\Proj_{W_{Z_{1}}}\chi^{\ast}_{D}||=||\Proj_{W_{Z_{2}}}\chi^{\ast}_{D}|| \}$$ is diffeomorphic to the collection $$\{v\in\pmb{\Gamma}(G)_{\mathbf{R}}\mid ||\Proj_{W_{Z_{1}}}v||=||\Proj_{W_{Z_{2}}}v||\},$$ which is the same as equi-distant  collection $$\{v\in\pmb{\Gamma}(G)_{\mathbf{R}}\mid \dist(v,W_{Z_{1}})=\dist(v,W_{Z_{2}})\}.$$ By \Cref{type_two_wall_is_codim_1}, $H^{\prime}$ is the intersection of a regular submanifold away from a subspace of codimension at least 2 with $\Amp(X_{\Sigma})_{\mathbf{R}}$.\\
\indent
We now deal with rationality of the equations of walls. For this, fix an integral $\mathbf{R}$-basis  $\{\lambda_{1},\ldots,\lambda_{q}\}\subset\pmb{\Gamma}(G)$ of $W_{Z}$ for each $Z\in\mathcal{L}$. Let $D=\sum_{\rho}a_{\rho}D_{\rho}$ be an element in the ample cone. Then there exists
$b_{1},\ldots,b_{q}$ such that
$\sum_{i}b_{i}\lambda_{j}=\Proj_{W_{Z}}\chi^{\ast}_{D}$. We will prove that each $b_{i}$ is a linear sum of $a_{\rho}$'s with rational coefficients. If this is proved, then if $D$ is in a type one wall, there are a subset $Z$ and a $\rho\notin Z$ such that $\langle\chi_{D_{\rho}}\Proj_{W_{Z}}\chi^{\ast}_{D}\rangle=0$ as we have seen. Since $\langle\chi_{D_{\rho}},\lambda_{i}\rangle\in\mathbf{Z}$, $\langle\chi_{D_{\rho}}\Proj_{W_{Z}}\chi^{\ast}_{D}\rangle=0$ is translated to a linear equation of $a_{\rho}'s$ with rational coefficients. Similarly, the condition that each $b_{i}$ is a linear sum of $a_{\rho}$'s with rational coefficients imply rationality of the equations of type two walls. 

For each $i=1,\ldots, q$, we have 
$$\langle\chi_{D},\lambda_{i}\rangle=(\chi^{\ast}_{D},\lambda_{i})=(\Proj_{W_{Z}}\chi_{D}^{\ast},\lambda_{i})=\sum_{j=1}^{q}b_{j}(\lambda_{i},\lambda_{j}).$$
Letting $A$
be the $q\times q$ invertible matrix with integral entries
$(\lambda_{i},\lambda_{j})$, we see that 
\begin{equation}\label{potential_one_PS_matrix_eq}
A\cdot\begin{pmatrix}
b_{1}\\
\vdots\\
b_{q}
\end{pmatrix} =
\begin{pmatrix}
\langle\chi_{D},\lambda_{1}\rangle\\
\vdots\\
\langle\chi_{D},\lambda_{q}\rangle
\end{pmatrix}
\end{equation}
Since each $\langle\chi_{D},\lambda_{i}\rangle$ is an integral combination of $a_{\rho}$'s,  each $b_{i}$ is a
$\mathbf{Q}$-combination of $a_{\rho}$'s. The proposition is proved.
\end{proof}

Since $\mathcal{L}$ is finite, we have finitely many walls. We now define semi-chambers that are open subsets in the complement of the union of walls. 

\begin{definition}\label{semi-chamber_def}
Let $X_{\Sigma}$ be a projective toric variety. Let $\{F_{i}\}$ be the
defining equations for walls in
$\Amp(X_{\Sigma})_{\mathbf{R}}$. A \emph{semi-chamber} is a non-empty
open semialgebraic set of the
form $$\{D\in\Amp(X_{\Sigma})_{\mathbf{R}}\mid\pm
F_{i}(D)>0\text{ for all }i\}.$$
\end{definition}

\begin{proposition}\label{finite_wall_semi-chamber}
Each wall and semi-chamber is a cone. The ample cone is a finite union of walls and semi-chambers. Moreover, semi-chambers are mutually exclusive.
\end{proposition}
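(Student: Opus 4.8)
The plan is to read off everything from the explicit descriptions of walls in \Cref{struct_walls} together with the fact that $\Amp(X_{\Sigma})_{\mathbf{R}}$ is itself a cone: by \Cref{nef_and_ample_cone_thm} it is the interior of a rational polyhedral cone, so it is closed under multiplication by positive scalars. First I would record the homogeneity of the wall equations. By the proof of \Cref{struct_walls}, a type one wall with respect to $Z,\{\rho\}$ is $\ker\nu_{\rho}\cap\Amp(X_{\Sigma})_{\mathbf{R}}$ for the linear functional $\nu_{\rho}(D)=\langle\chi_{D_{\rho}},\Proj_{W_{Z}}\chi^{\ast}_{D}\rangle$, and a type two wall with respect to $Z_{1},Z_{2}$ is $V(Q)\cap\Amp(X_{\Sigma})_{\mathbf{R}}$ for the quadratic form $Q$ attached to the bilinear form \eqref{quadratic_form}. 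Since $D\mapsto\chi^{\ast}_{D}$ and the orthogonal projections are linear, $\nu_{\rho}$ is homogeneous of degree $1$ and $Q$ of degree $2$. As $\mathcal{L}$ is finite there are only finitely many such functions; let $\{F_{1},\dots,F_{k}\}$ be the full list, with $F_{i}$ homogeneous of degree $d_{i}\in\{1,2\}$. The zero locus of a homogeneous function is a cone, and the intersection of a cone with the cone $\Amp(X_{\Sigma})_{\mathbf{R}}$ is again a cone; this proves that every wall is a cone.

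For semi-chambers, recall from \Cref{semi-chamber_def} that each one has the form $C_{\epsilon}:=\{D\in\Amp(X_{\Sigma})_{\mathbf{R}}\mid \epsilon_{i}F_{i}(D)>0\text{ for all }i\}$ for a sign vector $\epsilon\in\{\pm1\}^{k}$. For $t>0$ one has $F_{i}(tD)=t^{d_{i}}F_{i}(D)$, so $\epsilon_{i}F_{i}(tD)>0\iff\epsilon_{i}F_{i}(D)>0$; since also $tD\in\Amp(X_{\Sigma})_{\mathbf{R}}$, this shows $C_{\epsilon}$ is a cone. There are at most $2^{k}$ sign vectors, hence finitely many semi-chambers, and combined with the finiteness of the set of walls this gives the finiteness clauses. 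To see that the ample cone is covered, take any $D\in\Amp(X_{\Sigma})_{\mathbf{R}}$: if $F_{i}(D)=0$ for some $i$ then $D$ lies on the associated wall; otherwise put $\epsilon_{i}=\operatorname{sgn}F_{i}(D)$ for all $i$, so that $D\in C_{\epsilon}$, which is then non-empty and hence a genuine semi-chamber containing $D$. Therefore $\Amp(X_{\Sigma})_{\mathbf{R}}$ is the union of its finitely many walls and finitely many semi-chambers.

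Finally, mutual exclusivity is immediate once semi-chambers are indexed by sign vectors: if $\epsilon\neq\epsilon'$, say $\epsilon_{i}\neq\epsilon_{i}'$, then no $D$ can satisfy both $\epsilon_{i}F_{i}(D)>0$ and $\epsilon_{i}'F_{i}(D)=-\epsilon_{i}F_{i}(D)>0$, so $C_{\epsilon}\cap C_{\epsilon'}=\emptyset$; in particular two distinct non-empty $C_{\epsilon}$'s are disjoint. There is no serious obstacle here — the only point worth a moment's attention is carrying the degrees $d_{i}$ through, so that it is the \emph{sign} of $F_{i}$, not its value, that is preserved under positive rescaling; the rest is unwinding \Cref{walls_def}, \Cref{semi-chamber_def}, and \Cref{struct_walls}.
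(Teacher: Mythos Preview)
Your proof is correct and follows essentially the same approach as the paper. The paper's own argument is a terse two-sentence version of exactly what you wrote: walls and semi-chambers are cones because their defining equations are homogeneous, and the remaining claims follow from finiteness of the wall set together with the definition of semi-chambers; you have simply unpacked these observations with the explicit sign-vector bookkeeping.
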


\begin{proof}
That each wall and semi-chamber is a cone follows from the fact that walls are defined by the vanishing loci of homogeneous polynomials. The rest follows directly from the fact that there are finitely many walls and from the definition of semi-chambers.
\end{proof}
\subsection{The main results}\label{The main results}
Let $X_{\Sigma}$ be a projective toric variety. In this section we prove that if two $\mathbf{R}$-ample divisors are in the same semi-chamber, then they are SIT-equivalent at \Cref{Toric_VSIT_decomposition_ample_cone}. Namely, they induce equivalent stratifications of $\mathbf{C}^{\Sigma(1)}$. We also prove that type one (resp. type two) walls capture type one (resp. type two) variations at \Cref{type_one_wall_captures_type_one_variation} (resp. \Cref{type_two_variation_crosses_type_two_walls}).

\begin{lemma}\label{lemma_local_constant_WPS}
Let $X_{\Sigma}$ be a projective toric variety. Let $D_{a}$ and $D_{b}$ be in the same semi-chamber in $\Amp(X_{\Sigma})_{\mathbf{R}}$. Then the following three statements hold:
\begin{enumerate}
    \item For any $Z,Z^{\prime}\in\mathcal{L}$, $\Proj_{W_{Z}}\chi_{D_{a}}^{\ast}=\Proj_{W_{Z^{\prime}}}\chi_{D_{a}}^{\ast}$ implies $\Proj_{W_{Z}}\chi^{\ast}_{D}=\Proj_{W_{Z^{\prime}}}\chi^{\ast}_{D}$ for all $D\in\Amp(X_{\Sigma})_{\mathbf{R}}$. In particular, $$\Proj_{W_{Z}}\chi^{\ast}_{D_{a}}=\Proj_{W_{Z^{\prime}}}\chi^{\ast}_{D_{a}}\Leftrightarrow\Proj_{W_{Z^{\prime}}}\chi^{\ast}_{D_{b}}=\Proj_{W_{Z^{\prime}}}\chi^{\ast}_{D_{b}}.$$
\item For any $Z,Z^{\prime}\in\mathcal{L}$, $||\Proj_{W_{Z}}D_{a}^{\ast}||<||\Proj_{W_{Z^{\prime}}}D_{a}^{\ast}||\Leftrightarrow||\Proj_{W_{Z}}D_{b}^{\ast}||<||\Proj_{W_{Z^{\prime}}}D_{b}^{\ast}||$.  
    \item For any $Z\subset S\in\mathcal{L}$, $-\Proj_{W_{Z}}\chi_{D_{a}}^{\ast}\in\Lambda^{D_{a}}_{S}\Leftrightarrow
-\Proj_{W_{Z}}\chi_{D_{b}}^{\ast}\in\Lambda^{D_{b}}_{S}$.
\end{enumerate}
\end{lemma}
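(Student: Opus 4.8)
The plan is to reduce all three statements to one mechanism: any relation among the vectors $\Proj_{W_{Z}}\chi^{\ast}_{D}$ that is capable of distinguishing two $\mathbf{R}$-ample divisors is governed either by a wall‑defining equation $F_{i}$ — which by \Cref{semi-chamber_def} has a fixed sign and never vanishes on a given semi‑chamber — or by a linear or quadratic form that does not vanish anywhere on the convex, hence connected, cone $\Amp(X_{\Sigma})_{\mathbf{R}}$, where it again has a fixed sign; so such a relation is constant as $D$ moves inside one semi‑chamber. For statement (1), assume $\Proj_{W_{Z}}\chi^{\ast}_{D_{a}}=\Proj_{W_{Z^{\prime}}}\chi^{\ast}_{D_{a}}$, so $D_{a}$ lies in the nonempty set $H=\{D\in\Amp(X_{\Sigma})_{\mathbf{R}}\mid\Proj_{W_{Z}}\chi^{\ast}_{D}=\Proj_{W_{Z^{\prime}}}\chi^{\ast}_{D}\}$. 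By \Cref{walls_contain_all_critical_points}, $H$ is either all of $\Amp(X_{\Sigma})_{\mathbf{R}}$ or an intersection of type one walls; the latter would place $D_{a}$ on a wall, contradicting that it lies in a semi‑chamber (\Cref{semi-chamber_def}). Hence $H=\Amp(X_{\Sigma})_{\mathbf{R}}$, which is the first assertion; specializing to $D=D_{b}$ and using symmetry in $D_{a},D_{b}$ gives the displayed equivalence.

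For statement (3), since $\mathcal{L}$ is closed under taking subsets and $Z\subseteq S$, both $Z$ and each singleton $\{\rho\}$ with $\rho\in S$ lie in $\mathcal{L}$. Given $Z\subseteq S$, the condition $-\Proj_{W_{Z}}\chi^{\ast}_{D}\in\Lambda^{D}_{S}$ is equivalent to $-\Proj_{W_{Z}}\chi^{\ast}_{D}\in\sigma_{S}$, i.e.\ to $\nu_{\rho}(D):=\langle\chi_{D_{\rho}},\Proj_{W_{Z}}\chi^{\ast}_{D}\rangle\leq 0$ for every $\rho\in S$. I would show each linear functional $\nu_{\rho}$ on $\Pic(X_{\Sigma})_{\mathbf{R}}$ has a fixed sign on the semi‑chamber: if $\nu_{\rho}\equiv 0$ this is immediate; if $\nu_{\rho}\not\equiv 0$ then a short computation gives $\Proj_{W_{Z}}\chi^{\ast}_{D_{\rho}}\neq 0$, so $\langle\chi_{D_{\rho}},-\rangle$ does not vanish identically on $W_{Z}$ and $W_{Z\cup\{\rho\}}$ has codimension one in $W_{Z}$, whence — by the computation in the proof of \Cref{struct_walls} — $\ker\nu_{\rho}\cap\Amp(X_{\Sigma})_{\mathbf{R}}$, when nonempty, is precisely the type one wall with respect to $Z$ and $\{\rho\}$; thus $\nu_{\rho}$ is, up to scalar, one of the $F_{i}$ and has a fixed sign on the semi‑chamber, while if $\ker\nu_{\rho}$ misses the ample cone then $\nu_{\rho}$ has a fixed nonzero sign on the connected cone $\Amp(X_{\Sigma})_{\mathbf{R}}$. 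In all cases $\nu_{\rho}(D_{a})\leq 0\Leftrightarrow\nu_{\rho}(D_{b})\leq 0$ for each $\rho\in S$, and the conjunction over $\rho\in S$ yields statement (3).

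For statement (2), if there is a containment between $W_{Z}$ and $W_{Z^{\prime}}$, say $W_{Z^{\prime}}\subseteq W_{Z}$, then $\Proj_{W_{Z^{\prime}}}=\Proj_{W_{Z^{\prime}}}\circ\Proj_{W_{Z}}$ gives $||\Proj_{W_{Z^{\prime}}}\chi^{\ast}_{D}||\leq||\Proj_{W_{Z}}\chi^{\ast}_{D}||$ for all $D$, so the strict inequality in (2) never holds and the equivalence is vacuous; in the opposite containment the strict inequality $||\Proj_{W_{Z}}\chi^{\ast}_{D}||<||\Proj_{W_{Z^{\prime}}}\chi^{\ast}_{D}||$ is equivalent to $\Proj_{W_{Z}}\chi^{\ast}_{D}\neq\Proj_{W_{Z^{\prime}}}\chi^{\ast}_{D}$ and the claim reduces to statement (1). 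If there is no containment, let $Q(D)=||\Proj_{W_{Z}}\chi^{\ast}_{D}||^{2}-||\Proj_{W_{Z^{\prime}}}\chi^{\ast}_{D}||^{2}$ on $\Pic(X_{\Sigma})_{\mathbf{R}}$, the quadratic form attached to the bilinear form $(\ref{quadratic_form})$ in the proof of \Cref{struct_walls}; its zero set inside $\Amp(X_{\Sigma})_{\mathbf{R}}$ is empty (then $Q$ has a fixed sign on the connected ample cone), all of $\Amp(X_{\Sigma})_{\mathbf{R}}$ (then both strict inequalities fail), or — by \Cref{walls_def}, since there is no containment — the type two wall with respect to $Z$ and $Z^{\prime}$ (then $Q$ is a wall equation $F_{i}$, hence of fixed nonzero sign on the semi‑chamber). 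In every case $Q(D_{a})<0\Leftrightarrow Q(D_{b})<0$, which is statement (2).

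The step I expect to demand the most care is statement (3): one must dispose cleanly of the degenerate possibilities ($\nu_{\rho}\equiv 0$, and $\ker\nu_{\rho}$ disjoint from the ample cone) and, in the remaining case, verify that $\ker\nu_{\rho}\cap\Amp(X_{\Sigma})_{\mathbf{R}}$ genuinely satisfies the definition of a type one wall in \Cref{walls_def} — in particular the codimension‑one requirement on $W_{Z\cup\{\rho\}}$, which is exactly where the hypothesis $\nu_{\rho}\not\equiv 0$ is used. With that bookkeeping in place, statements (1) and (2) follow almost formally from \Cref{walls_contain_all_critical_points}, \Cref{struct_walls}, \Cref{walls_def}, and the definition of a semi‑chamber.
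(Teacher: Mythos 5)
Your proof is correct and follows essentially the same route as the paper: statement (1) via \Cref{walls_contain_all_critical_points} and the definition of a semi-chamber, statement (3) by tracking the sign of each $\nu_{\rho}$ across the semi-chamber (either $\nu_{\rho}\equiv 0$, or $\ker\nu_{\rho}\cap\Amp(X_{\Sigma})_{\mathbf{R}}$ is empty, or it is a type one wall), and statement (2) by the containment/non-containment split, using statement (1) in the containment case and the quadratic form from (\ref{quadratic_form}) together with convexity otherwise. The only real difference is bookkeeping: you make explicit the degenerate sub-cases that the paper's proof treats implicitly.
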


\begin{proof}
For (1), suppose $\Proj_{W_{Z}}\chi_{D_{a}}^{\ast}=\Proj_{W_{Z^{\prime}}}\chi_{D_{a}}^{\ast}$. Then the collection $H=\{D\in \Amp(X_{\Sigma})_{\mathbf{R}}\mid \Proj_{W_{Z}}\chi^{\ast}_{D}=\Proj_{W_{Z^{\prime}}}\chi^{\ast}_{D}\}$ contains $D_{a}$ so is non-empty. \Cref{walls_contain_all_critical_points} dictates that $H$ is 
either $\Amp(X_{\Sigma})_{\mathbf{R}}$ or an intersection
of type one walls. Since $D_{a}$ is in a semi-chamber, $H=\Amp(X_{\Sigma})_{\mathbf{R}}$. Hence $\Proj_{W_{Z}}\chi_{D}^{\ast}=\Proj_{W_{Z^{\prime}}}\chi_{D}^{\ast}$ for all $D\in\Amp(X_{\Sigma})_{\mathbf{R}}$. Statement (1) is proved. 

For (2), suppose $||\Proj_{W_{Z}}\chi_{D_{a}}||<||\Proj_{W_{Z^{\prime}}}\chi_{D_{a}}||.$
We consider two cases where in one case there is a containment between $W_{Z}$
and $W_{Z^{\prime}}$, and no containment in the other. If
there is a containment between $W_{Z}$ and $W_{Z^{\prime}}$, since $||\Proj_{W_{Z}}\chi_{D_{a}}^{\ast}||<||\Proj_{W_{Z^{\prime}}}\chi_{D_{a}}^{\ast}||,$ we have $W_{Z}\subsetneq W_{Z^{\prime}}$. Hence
$$||\Proj_{W_{Z}}\chi_{D_{b}}^{\ast}||\leq
||\Proj_{W_{Z^{\prime}}}\chi_{D_{b}}^{\ast}||.$$ If
$||\Proj_{W_{Z}}\chi_{D_{b}}^{\ast}||=||\Proj_{W_{Z^{\prime}}}\chi_{D_{b}}^{\ast}||$,
then we would have
$$\Proj_{W_{Z}}\chi_{D_{b}}^{\ast}=\Proj_{W_{Z^{\prime}}}\chi_{D_{b}}^{\ast}.$$ 
However, statement (1) implies $\Proj_{W_{Z}}\chi_{D_{a}}^{\ast}=\Proj_{W_{Z^{\prime}}}\chi_{D_{a}}^{\ast}$, a contradiction. Hence
$||\Proj_{W_{Z}}\chi_{D_{b}}^{\ast}||<||\Proj_{W_{Z^{\prime}}}\chi_{D_{b}}||$ as desired. Suppose there is no containment between $W_{Z}$ and
$W_{Z^{\prime}}$. The collection 
$H^{\prime}=\{D\in\Amp(X_{\Sigma})_{\mathbf{R}}\mid ||\Proj_{W_{Z}}\chi_{D}^{\ast}||=||\Proj_{W_{Z^{\prime}}}\chi_{D}^{\ast}||\}$
is either empty or a type
two wall. If $H^{\prime}=\emptyset$, then by continuity of the quadratic $Q$ form (induced by a bilinear form defined similarly as in \Cref{quadratic_form}) and convexity of $\Amp(X_{\Sigma})_{\mathbf{R}}$, $$||\Proj_{W_{Z}}\chi^{\ast}_{D}||<||\Proj_{W_{Z^{\prime}}}\chi^{\ast}_{D}||\text{ for all }D\in\Amp(X_{\Sigma})_{\mathbf{R}}.$$ If $H^{\prime}\neq\emptyset$, it is a type two wall. As $D_{a},D_{b}$ are in the same semi-chamber, $Q(D_{a})$ and $Q(D_{b})$ have the same sign.  Hence $||\Proj_{W_{Z}}\chi_{D_{b}}^{\ast}||<||\Proj_{W_{Z^{\prime}}}\chi_{D_{b}}^{\ast}||$ as well. The argument can be reversed so statement (2) holds.

For (3), let $\rho\in S$ and 
$\nu_{\rho}:\Pic(X_{\Sigma})_{\mathbf{R}}\rightarrow\mathbf{R}$ be the linear map defined by
$D\mapsto
\langle\chi_{D_{\rho}},\Proj_{W_{Z}}\chi_{D}^{\ast}\rangle$. We will prove $$\nu_{\rho}(D_{a})< 0\Leftrightarrow\nu(D_{b})<0.$$ Suppose $\nu_{\rho}(D_{a})<0$. The set $H^{\prime\prime}:=\ker\nu_{\rho}\cap\Amp(X_{\Sigma})_{\mathbf{R}}$ is either empty, or a type one wall with respect to the subsets $Z$ and $Z\cup\{\rho\}$. If $H^{\prime\prime}=\emptyset$, then by continuity of $\nu_{\rho}$ and convexity of $\Amp(X_{\Sigma})_{\mathbf{R}}$, $\nu_{\rho}(D)<0$ for all $D\in\Amp(X_{\Sigma})_{\mathbf{R}}$. 
If $H^{\prime\prime}$ is a type one wall, then as $D_{a},D_{b}$ are in the same semi-chamber, $\nu_{\rho}(D_{b})<0$ as well. The argument can be reversed. Since we did this for every $\rho\in S$, we get 
$-\Proj_{W_{Z}}\chi_{D_{a}}^{\ast}\in\sigma_{S}$ if and only
if $-\Proj_{W_{Z}}\chi_{D_{b}}^{\ast}\in\sigma_{S}$. The validity of statement (3) is established and the lemma is proved.
\end{proof}

\begin{lemma}\label{local_constant_WPS}
Let $X_{\Sigma}$ be a projective toric variety and let $G=\Hom_{\mathbf{Z}}(\Cl(X_{\Sigma}),\mathbf{C}^{\times})$. Order the vectors in $\pmb{\Gamma}(G)_{\mathbf{R}}$ by their norms. If $D_{a}$ and $D_{b}$ are in the same semi-chamber in $\Amp(X_{\Sigma})_{\mathbf{R}}$, then there is an order preserving
bijection $$\Xi:\bigcup_{S\in\mathcal{L}}\Lambda^{D_{a}}_{S}\rightarrow\bigcup_{S\in\mathcal{L}}\Lambda^{D_{b}}_{S}$$
defined
by $$-\Proj_{W_{Z}}\chi_{D_{a}}^{\ast}\mapsto-\Proj_{W_{Z}}\chi_{D_{b}}^{\ast}$$ 
for all subsets $Z\subset S$ such that $-\Proj_{W_{Z}}\chi_{D_{a}}^{\ast}\in\Lambda^{D_{a}}_{S}$ and for all $S\in\mathcal{L}$. Moreover, for any subcollection $\{S_{1},\ldots S_{l}\}\subset\mathcal{L}$, $\Xi$ restricts to an order preserving bijection between $\cup_{i=1}^{l}\Lambda^{D_{a}}_{S_{i}}$ and $\cup_{i=1}^{l}\Lambda^{D_{b}}_{S_{i}}$.
\end{lemma}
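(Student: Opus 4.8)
The plan is to construct the map $\Xi$ in two stages: first show it is well-defined and bijective using \Cref{lemma_local_constant_WPS}(1), then check it is order-preserving using \Cref{lemma_local_constant_WPS}(2). The subtle point is that an element of $\bigcup_{S}\Lambda^{D_a}_S$ may be represented as $-\Proj_{W_Z}\chi^{\ast}_{D_a}$ for several different subspaces $W_Z$; so I must verify that the assignment does not depend on the choice of representative. First I would take $\lambda\in\bigcup_{S}\Lambda^{D_a}_S$ and suppose $\lambda=-\Proj_{W_Z}\chi^{\ast}_{D_a}=-\Proj_{W_{Z'}}\chi^{\ast}_{D_a}$ for two subsets $Z,Z'$ (each contained in some member of $\mathcal{L}$, by \Cref{fact_subspace_spanned_by_a_face}). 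Then $\Proj_{W_Z}\chi^{\ast}_{D_a}=\Proj_{W_{Z'}}\chi^{\ast}_{D_a}$, so \Cref{lemma_local_constant_WPS}(1) gives $\Proj_{W_Z}\chi^{\ast}_{D}=\Proj_{W_{Z'}}\chi^{\ast}_{D}$ for \emph{all} $D\in\Amp(X_\Sigma)_{\mathbf{R}}$, in particular for $D=D_b$. Hence $-\Proj_{W_Z}\chi^{\ast}_{D_b}=-\Proj_{W_{Z'}}\chi^{\ast}_{D_b}$, so $\Xi$ is well-defined. Here I also need that if $-\Proj_{W_Z}\chi^{\ast}_{D_a}\in\Lambda^{D_a}_S$ then $-\Proj_{W_Z}\chi^{\ast}_{D_b}\in\Lambda^{D_b}_S$, which is exactly \Cref{lemma_local_constant_WPS}(3); so the target of $\Xi$ lands in $\bigcup_S\Lambda^{D_b}_S$.

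Next I would show $\Xi$ is a bijection. Surjectivity is clear since, by symmetry (swapping the roles of $D_a$ and $D_b$, both being in the same semi-chamber), the analogously defined map $\Xi':\bigcup_S\Lambda^{D_b}_S\to\bigcup_S\Lambda^{D_a}_S$, $-\Proj_{W_Z}\chi^{\ast}_{D_b}\mapsto-\Proj_{W_Z}\chi^{\ast}_{D_a}$, is well-defined by the same argument and is visibly a two-sided inverse to $\Xi$. In particular injectivity follows too: if $\Xi(\lambda)=\Xi(\mu)$ then applying $\Xi'$ gives $\lambda=\mu$.

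For order preservation: recall the order on $\pmb{\Gamma}(G)_{\mathbf{R}}$ is by norm. Take $\lambda=-\Proj_{W_Z}\chi^{\ast}_{D_a}$ and $\mu=-\Proj_{W_{Z'}}\chi^{\ast}_{D_a}$ in $\bigcup_S\Lambda^{D_a}_S$. Then $||\lambda||<||\mu||$ means $||\Proj_{W_Z}\chi^{\ast}_{D_a}||<||\Proj_{W_{Z'}}\chi^{\ast}_{D_a}||$, and \Cref{lemma_local_constant_WPS}(2) gives $||\Proj_{W_Z}\chi^{\ast}_{D_b}||<||\Proj_{W_{Z'}}\chi^{\ast}_{D_b}||$, i.e.\ $||\Xi(\lambda)||<||\Xi(\mu)||$; similarly for equality (using well-definedness to see that equal norms force $\lambda=\mu$, hence $\Xi(\lambda)=\Xi(\mu)$, and conversely). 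Reversing the roles of $D_a,D_b$ gives the converse implications, so $\Xi$ preserves the order in both directions. Finally, for the last sentence of the lemma, fix a subcollection $\{S_1,\dots,S_l\}\subset\mathcal{L}$. Since each $\Lambda^{D_a}_{S_i}$ is, by \Cref{lemma_local_constant_WPS}(3), carried by $\Xi$ into $\Lambda^{D_b}_{S_i}$ (and vice versa by $\Xi'$), the restriction of $\Xi$ to $\bigcup_{i=1}^{l}\Lambda^{D_a}_{S_i}$ maps bijectively onto $\bigcup_{i=1}^{l}\Lambda^{D_b}_{S_i}$, and it is order-preserving as the restriction of an order-preserving bijection.

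The main obstacle is the well-definedness step: one must be careful that the "representative" $W_Z$ used to write an element of $\Lambda^{D}_{S}$ is not canonically attached to that element, so all the reasoning has to be phrased in terms of subspaces $W_Z$ rather than in terms of abstract strata, and \Cref{lemma_local_constant_WPS}(1)–(3) must be invoked precisely in that form. Everything else is a formal consequence once this is set up.
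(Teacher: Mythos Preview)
Your proof is correct and follows essentially the same approach as the paper: well-definedness and injectivity come from part (1) of \Cref{lemma_local_constant_WPS}, the image landing in the target and surjectivity come from part (3), and order preservation comes from part (2), with the subcollection statement following by the same reasoning. One small slip: the parenthetical claim that ``equal norms force $\lambda=\mu$'' is false and unnecessary---order preservation for the strict partial order only requires the strict-inequality equivalence, which you already obtain from (2) applied in both directions.
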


\begin{proof}
Statement (1) in \Cref{lemma_local_constant_WPS} implies the map $\Xi$ is well defined and injective. Statement (3) in \Cref{lemma_local_constant_WPS} implies that the image of $\Xi$ lands inside $\cup_{S\in\mathcal{L}}\Lambda^{D_{b}}_{S}$ and that $\Xi$ is surjective. Statement (2) in \Cref{lemma_local_constant_WPS} implies that $\Xi$ is order preserving. The same logic works for any subcollection
 $\{S_{1},\ldots,S_{l}\}\subset\mathcal{L}$. \end{proof}

\begin{proposition}\label{last_before_AffineVIIT}
Let $X_{\Sigma}$ be a projective toric variety and suppose $D_{a},D_{b}$ are in the same semi-chamber in $\Amp(X_{\Sigma})_{\mathbf{R}}$. Let $S\in\mathcal{L}$ and $Z$ be a subset of $S$. Then 
    $$\lambda^{D_{a}}_{S}=-\Proj_{W_{Z}}\chi_{D_{a}}^{\ast}\Leftrightarrow\lambda^{D_{b}}_{S}=-\Proj_{W_{Z}}\chi_{D_{b}}^{\ast}.$$
\end{proposition}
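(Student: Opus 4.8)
The plan is to read this off from \Cref{Kempf_ext_to_R_ample} together with \Cref{local_constant_WPS}. The key observation is that $\lambda^{D}_{S}$ is characterized purely order-theoretically: it is the \emph{unique} longest vector in the finite set $\Lambda^{D}_{S}$ (this uniqueness is exactly the content of \Cref{Kempf_ext_to_R_ample}, which rests on \Cref{compute_cone_min}), so it is strictly longer than every other member of $\Lambda^{D}_{S}$. Since an order-preserving and order-reflecting bijection between finite totally-preordered sets must send the unique strict maximum of its source to the unique strict maximum of its target, the statement reduces to producing such a bijection, and \Cref{local_constant_WPS} supplies it.

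Concretely, first I would dispose of the degenerate case by a membership argument: by statement (3) of \Cref{lemma_local_constant_WPS}, $-\Proj_{W_{Z}}\chi_{D_{a}}^{\ast}\in\Lambda^{D_{a}}_{S}$ if and only if $-\Proj_{W_{Z}}\chi_{D_{b}}^{\ast}\in\Lambda^{D_{b}}_{S}$; and since $\lambda^{D}_{S}\in\Lambda^{D}_{S}$ always, if these projections fail to lie in the respective $\Lambda$-sets then both sides of the claimed equivalence are false. So I may assume $-\Proj_{W_{Z}}\chi_{D_{a}}^{\ast}\in\Lambda^{D_{a}}_{S}$ and $-\Proj_{W_{Z}}\chi_{D_{b}}^{\ast}\in\Lambda^{D_{b}}_{S}$. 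Then I would invoke \Cref{local_constant_WPS} with the one-element subcollection $\{S\}$: it gives an order-preserving bijection $\Xi\colon\Lambda^{D_{a}}_{S}\to\Lambda^{D_{b}}_{S}$, $-\Proj_{W_{Z}}\chi_{D_{a}}^{\ast}\mapsto-\Proj_{W_{Z}}\chi_{D_{b}}^{\ast}$, and because statement (2) of \Cref{lemma_local_constant_WPS} is symmetric in $D_{a}$ and $D_{b}$, $\Xi^{-1}$ is order-preserving as well. Now $\lambda^{D_{a}}_{S}=-\Proj_{W_{Z}}\chi_{D_{a}}^{\ast}$ is equivalent to saying that $-\Proj_{W_{Z}}\chi_{D_{a}}^{\ast}$ is the strictly norm-maximal element of $\Lambda^{D_{a}}_{S}$; applying $\Xi$ this is equivalent to $-\Proj_{W_{Z}}\chi_{D_{b}}^{\ast}$ being the strictly norm-maximal element of $\Lambda^{D_{b}}_{S}$, i.e.\ to $\lambda^{D_{b}}_{S}=-\Proj_{W_{Z}}\chi_{D_{b}}^{\ast}$, which is what we want.

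I do not anticipate a genuine obstacle: the substantive work has already been carried out in \Cref{lemma_local_constant_WPS} and \Cref{local_constant_WPS}. The only points requiring a little care are (i) using the \emph{uniqueness} of the longest vector from \Cref{Kempf_ext_to_R_ample} — without strictness one would only conclude that the two maximal norms agree, not that they are realized at $\Xi$-corresponding vectors — and (ii) making sure the membership bookkeeping at the start is routed through statement (3) of \Cref{lemma_local_constant_WPS} so that the case $-\Proj_{W_{Z}}\chi_{D_{a}}^{\ast}\notin\Lambda^{D_{a}}_{S}$ is handled symmetrically.
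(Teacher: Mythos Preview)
Your proposal is correct and follows essentially the same route as the paper: the paper's proof is the one-line ``direct result of \Cref{local_constant_WPS}, applied to the single subset $S$,'' and you have simply unpacked what that means, including the routine membership check via statement~(3) of \Cref{lemma_local_constant_WPS} and the uniqueness of the longest vector from \Cref{Kempf_ext_to_R_ample}.
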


\begin{proof}
This is a direct result of \Cref{local_constant_WPS}, applied to the single subset $S$.
\end{proof}
We are now ready to prove
\begin{theorem}\label{Toric_VSIT_decomposition_ample_cone}
Let $X_{\Sigma}$ be a projective toric variety. If two $\mathbf{R}$-ample divisors on $X_{\Sigma}$ are in the same semi-chamber, then they are SIT-equivalent.
\end{theorem}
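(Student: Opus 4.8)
The plan is to build the required bijection of index sets directly from the combinatorial data already assembled in \Cref{Extensions_to_the_ample_cone}. Recall that for $D\in\Amp(X_{\Sigma})_{\mathbf{R}}$ the stratification of $\mathbf{C}^{\Sigma(1)}$ induced by $D$ has index set $\Lambda^{D}\sqcup\{\mathbf{e}\}$, where $\mathbf{e}$ indexes the semistable locus (the global minimum for the order), each $\lambda\in\Lambda^{D}$ is ordered by $\|\lambda\|$, and $S^{\chi_{D}}_{\lambda}=\bigsqcup\{L(S)\mid S\in\mathcal{L},\ \lambda^{D}_{S}=\lambda\}$ because $\lambda^{D}_{x}$ depends only on the state $S_{x}$ and $Z(\Sigma)=\bigsqcup_{S\in\mathcal{L}}L(S)$. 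So, per \Cref{two_types_of_variation_def}, it suffices to exhibit a bijection $\Phi\colon\Lambda^{D_{a}}\sqcup\{\mathbf{e}\}\to\Lambda^{D_{b}}\sqcup\{\mathbf{e}\}$ that preserves strata and preserves order.

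First I would invoke \Cref{local_constant_WPS}, valid since $D_{a}$ and $D_{b}$ lie in a common semi-chamber, to get an order-preserving bijection $\Xi\colon\bigcup_{S\in\mathcal{L}}\Lambda^{D_{a}}_{S}\to\bigcup_{S\in\mathcal{L}}\Lambda^{D_{b}}_{S}$ sending $-\Proj_{W_{Z}}\chi^{\ast}_{D_{a}}\mapsto-\Proj_{W_{Z}}\chi^{\ast}_{D_{b}}$. The key observation is that $\Xi(\lambda^{D_{a}}_{S})=\lambda^{D_{b}}_{S}$ for every $S\in\mathcal{L}$: by \Cref{Kempf_ext_to_R_ample} we may write $\lambda^{D_{a}}_{S}=-\Proj_{W_{Z}}\chi^{\ast}_{D_{a}}$ for a suitable $Z\subset S$, and then $\Xi(\lambda^{D_{a}}_{S})=-\Proj_{W_{Z}}\chi^{\ast}_{D_{b}}=\lambda^{D_{b}}_{S}$ by \Cref{last_before_AffineVIIT}. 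Hence $\Xi$ restricts to a bijection $\Lambda^{D_{a}}=\{\lambda^{D_{a}}_{S}\mid S\in\mathcal{L}\}\to\Lambda^{D_{b}}$, and I set $\Phi$ equal to this restriction together with $\Phi(\mathbf{e})=\mathbf{e}$.

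Then I would verify the two conditions. Preservation of strata: the $\mathbf{e}$-strata coincide by \eqref{Rample_bad_locus}; for $\lambda\in\Lambda^{D_{a}}$, injectivity of $\Xi$ together with $\Xi(\lambda^{D_{a}}_{S})=\lambda^{D_{b}}_{S}$ gives $\lambda^{D_{a}}_{S}=\lambda\Leftrightarrow\lambda^{D_{b}}_{S}=\Phi(\lambda)$, so $S^{\chi_{D_{a}}}_{\lambda}$ and $S^{\chi_{D_{b}}}_{\Phi(\lambda)}$ are disjoint unions of $L(S)$ over the same index set, hence equal. Preservation of order: on $\Lambda^{D_{a}}$ this is the norm-order preservation of $\Xi$; and $\mathbf{e}$ is matched with $\mathbf{e}$ correctly because it is the global minimum on each side — every $\lambda\in\Lambda^{D_{a}}$ is nonzero, since $M^{D_{a}}(x)=-\|\lambda\|<0$ for any $x$ in the corresponding stratum by \Cref{Kempf_ext_to_R_ample} and \Cref{Unstable_locus_ample_cone_extension}, and likewise for $\Lambda^{D_{b}}$.

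I do not anticipate a genuine obstacle: the substance is packaged in \Cref{lemma_local_constant_WPS}, \Cref{local_constant_WPS}, and \Cref{last_before_AffineVIIT}. The only point that needs care is bookkeeping — passing from the ``candidate set'' $\bigcup_{S}\Lambda^{D}_{S}$ to the genuine index set $\Lambda^{D}$ (the image of $S\mapsto\lambda^{D}_{S}$) consistently for both $D_{a}$ and $D_{b}$, which is exactly what the well-definedness of $\Xi$ in \Cref{local_constant_WPS} guarantees, and treating the trivial index $\mathbf{e}$ separately since it is not of the form $-\Proj_{W_{Z}}\chi^{\ast}_{D}$.
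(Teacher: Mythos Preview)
Your proposal is correct and follows essentially the same approach as the paper: both arguments use \Cref{last_before_AffineVIIT} to transport $\lambda^{D_a}_{S}$ to $\lambda^{D_b}_{S}$ and then appeal to the injectivity and order-preservation of $\Xi$ from \Cref{local_constant_WPS} to verify that the resulting map on index sets preserves strata and order. Your version is slightly more explicit in packaging $\Phi$ as a restriction of $\Xi$ and in handling the trivial index $\mathbf{e}$, but the substance is identical.
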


\begin{proof}
Suppose $D_{a}$ and $D_{b}$ are two $\mathbf{R}$-ample divisors on $X_{\Sigma}$ and they are in the same semi-chamber. 
Let $S_{1},S_{2}$ be two elements in $\mathcal{L}$. We first prove that $L(S_{1})$ and $L(S_{2})$ are in the same $\chi_{D_{a}}$-stratum if and only if they are in the same $\chi_{D_{b}}$-stratum. This would give rise to a bijection between the set of $\chi_{D_{a}}$-strata and the set of $\chi_{D_{b}}$-strata that preserves each stratum as sets. For this, pick subsets $Z_{1},Z_{2}$ of $S_{1}$ and $S_{2}$ respectively so that $\lambda^{D_{a}}_{S_{1}}=-\Proj_{W_{Z_{1}}}\chi^{\ast}_{D_{a}}$ and $\lambda^{D_{a}}_{S_{2}}=-\Proj_{W_{Z_{2}}}\chi^{\ast}_{D_{a}}$. By \Cref{last_before_AffineVIIT}, we have $\lambda^{D_{b}}_{S_{1}}=-\Proj_{W_{Z_{1}}}\chi^{\ast}_{D_{b}}$ and  $\lambda^{D_{b}}_{S_{2}}=-\Proj_{W_{Z_{2}}}\chi^{\ast}_{D_{b}}$. Suppose $x,y$ are in the same $\chi_{D_{a}}$-stratum. We then have $\lambda^{D_{a}}_{S_{1}}=\lambda^{D_{a}}_{S_{2}}$. By \Cref{local_constant_WPS}, $\lambda^{D_{b}}_{S_{1}}=\lambda^{D_{b}}_{S_{2}}$. Hence $x,y$ are in the same $\chi_{D_{b}}$-stratum. The argument can be reversed. Hence we obtain a bijection between the set of $\chi_{D_{a}}$-strata and the set of $\chi_{D_{b}}$-strata that preserves strata as sets.

Finally, the bijection preserves the strict partial order by \Cref{local_constant_WPS}, applied to the entire collection $\mathcal{L}$.
\end{proof}

The rest of the section is to prove that each type of walls capture a type of variation. We consider line segments connecting pairs of $\mathbf{R}$-ample divisors that induce different stratifications. Since we do not know if a semi-chamber is convex or not, \Cref{local_constant_for_a_segment} below cannot be applied to establish the bijection part of \Cref{local_constant_WPS}. 
\begin{lemma}\label{local_constant_for_a_segment}
Let $X_{\Sigma}$ be a projective toric variety and $D,D^{\prime}$ be two elements in $\Amp(X_{\Sigma})_{\mathbf{R}}$ such that the line segment $\overline{DD^{\prime}}$ does not intersect any type one walls properly. Then for any $D^{\prime\prime}\in\overline{DD^{\prime}}$ and any $S\in\mathcal{L}$, the assignment $$\Psi_{S}:\Lambda^{D}_{S}\rightarrow\Lambda^{D^{\prime\prime}}_{S}$$ defined by  $-\Proj_{W_{Z}}\chi^{\ast}_{D}\mapsto-\Proj_{W_{Z}}\chi^{\ast}_{D^{\prime\prime}}$ is a bijection.
\end{lemma}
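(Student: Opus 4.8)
The plan is to show $\Psi_S$ is a well-defined injection whose image is exactly $\Lambda^{D^{\prime\prime}}_S$; the argument runs parallel to \Cref{lemma_local_constant_WPS} and \Cref{local_constant_WPS}, with ``$D_a,D_b$ lie in one semi-chamber'' replaced by ``$D$ and $D^{\prime}$ are joined by a segment meeting no type one wall properly''. Recall that $\Lambda^{D}_S$ consists of the vectors $-\Proj_{W_Z}\chi^{\ast}_D$, ranging over subsets $Z\subseteq S$, that happen to lie in $\sigma_S$, and that two subsets $Z_1,Z_2$ give the same vector exactly when $\Proj_{W_{Z_1}}\chi^{\ast}_D=\Proj_{W_{Z_2}}\chi^{\ast}_D$. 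The first thing I would record is the reading of the hypothesis that makes it bite: a segment meets a type one wall $H$ (a codimension one piece of $\Amp(X_\Sigma)_{\mathbf{R}}$) properly precisely when $\overline{DD^{\prime}}\cap H$ is nonempty but $\overline{DD^{\prime}}\not\subseteq H$, so the hypothesis forces, for every type one wall $H$, that either $\overline{DD^{\prime}}\cap H=\emptyset$ or $\overline{DD^{\prime}}\subseteq H$. The same dichotomy then holds for every subsegment, in particular for $\overline{DD^{\prime\prime}}$ and $\overline{D^{\prime\prime}D}$.

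\textbf{Well-definedness and injectivity.} Suppose $Z_1,Z_2\subseteq S$ satisfy $\Proj_{W_{Z_1}}\chi^{\ast}_D=\Proj_{W_{Z_2}}\chi^{\ast}_D$. The set $H=\{D^{\star}\in\Amp(X_\Sigma)_{\mathbf{R}}\mid \Proj_{W_{Z_1}}\chi^{\ast}_{D^{\star}}=\Proj_{W_{Z_2}}\chi^{\ast}_{D^{\star}}\}$ contains $D$, so by \Cref{walls_contain_all_critical_points} it is either all of $\Amp(X_\Sigma)_{\mathbf{R}}$ or a finite intersection of type one walls. In the first case $D^{\prime\prime}\in H$ automatically; in the second, each constituent type one wall contains $D$, hence contains all of $\overline{DD^{\prime}}$ by the dichotomy, so $D^{\prime\prime}$ lies in each, i.e. $D^{\prime\prime}\in H$. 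Either way $\Proj_{W_{Z_1}}\chi^{\ast}_{D^{\prime\prime}}=\Proj_{W_{Z_2}}\chi^{\ast}_{D^{\prime\prime}}$, so $\Psi_S$ is well defined; running the identical argument with $\overline{D^{\prime\prime}D}$ in place of $\overline{DD^{\prime}}$ and $D^{\prime\prime}$ as base point shows the implication reverses, so $\Psi_S$ is injective.

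\textbf{The image equals $\Lambda^{D^{\prime\prime}}_S$.} Take $-\Proj_{W_Z}\chi^{\ast}_D\in\Lambda^{D}_S$; I must check $\langle\chi_{D_\rho},-\Proj_{W_Z}\chi^{\ast}_{D^{\prime\prime}}\rangle\geq 0$ for each $\rho\in S$. Fix $\rho\in S$ and let $\nu_\rho(D^{\star})=\langle\chi_{D_\rho},\Proj_{W_Z}\chi^{\ast}_{D^{\star}}\rangle$ be the linear functional on $\Pic(X_\Sigma)_{\mathbf{R}}$ as in the proof of \Cref{struct_walls}(1), so $\nu_\rho(D)\leq 0$ and the goal is $\nu_\rho(D^{\prime\prime})\leq 0$; note $Z,\{\rho\},Z\cup\{\rho\}$ all lie in $\mathcal{L}$, being subsets of $S$. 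As computed there, $\ker\nu_\rho$ consists of the $D^{\star}$ with $\Proj_{W_Z}\chi^{\ast}_{D^{\star}}=\Proj_{W_{Z\cup\{\rho\}}}\chi^{\ast}_{D^{\star}}$, so $\ker\nu_\rho\cap\Amp(X_\Sigma)_{\mathbf{R}}$ is empty, all of $\Amp(X_\Sigma)_{\mathbf{R}}$, or — being a proper nonempty subset of that form — a type one wall. Using that $\Amp(X_\Sigma)_{\mathbf{R}}$ is convex, hence connected (\Cref{nef_and_ample_cone_thm}), together with the segment dichotomy: if $\ker\nu_\rho\supseteq\Amp(X_\Sigma)_{\mathbf{R}}$ then $\nu_\rho$ vanishes on the ample cone and $\nu_\rho(D^{\prime\prime})=0$; if $\ker\nu_\rho$ misses the ample cone then $\nu_\rho$ has constant sign there and $\nu_\rho(D)\leq 0$ forces $\nu_\rho<0$ throughout; and if $\ker\nu_\rho\cap\Amp(X_\Sigma)_{\mathbf{R}}$ is a type one wall $H$, then either $D\in H$, whence $\overline{DD^{\prime}}\subseteq H$ and $\nu_\rho(D^{\prime\prime})=0$, or $D\notin H$, whence $\overline{DD^{\prime}}\cap H=\emptyset$, so $\nu_\rho$ is of constant nonzero sign on the segment and $\nu_\rho(D^{\prime\prime})<0$. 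In every case $\nu_\rho(D^{\prime\prime})\leq 0$, hence $-\Proj_{W_Z}\chi^{\ast}_{D^{\prime\prime}}\in\sigma_S$ and $\Psi_S$ maps into $\Lambda^{D^{\prime\prime}}_S$. Surjectivity is the same sign analysis run along $\overline{D^{\prime\prime}D}$: given $-\Proj_{W_Z}\chi^{\ast}_{D^{\prime\prime}}\in\Lambda^{D^{\prime\prime}}_S$, one gets $-\Proj_{W_Z}\chi^{\ast}_D\in\sigma_S$, i.e. $-\Proj_{W_Z}\chi^{\ast}_D\in\Lambda^{D}_S$, and it maps to the given vector. Combining the two parts, $\Psi_S$ is a bijection.

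I expect the crux to be the well-definedness step: one needs that the locus where two face-projections of $\chi^{\ast}_{(-)}$ agree is cut out by type one walls (this is exactly \Cref{walls_contain_all_critical_points}), and then that containing the single endpoint $D$ already propagates to containing the whole segment — which is precisely where ``meets no type one wall properly'' is used, and where the boundary case of a vector $-\Proj_{W_Z}\chi^{\ast}_D$ sitting on a wall must be handled explicitly rather than assumed away. The image and surjectivity assertions are a softer version of the same bookkeeping, this time on the signs of the $\nu_\rho$'s.
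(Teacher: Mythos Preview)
Your proof is correct and follows essentially the same approach as the paper's: reduce to the two statements that (i) the equality $\Proj_{W_{Z_1}}\chi^{\ast}_{D}=\Proj_{W_{Z_2}}\chi^{\ast}_{D}$ propagates along the segment via \Cref{walls_contain_all_critical_points}, and (ii) the sign of each $\nu_\rho$ propagates along the segment via the dichotomy that a type one wall either contains the segment or misses it. Your treatment is in fact slightly more careful than the paper's in handling the boundary case $\nu_\rho(D)=0$ (the paper literally argues $\nu_\rho(D)<0\Leftrightarrow\nu_\rho(D^{\prime\prime})<0$, whereas membership in $\sigma_S$ requires the non-strict inequality), but the substance is identical.
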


\begin{proof}
It comes down to show the following two statements:
\begin{enumerate}
    \item For any $Z\subset S$, $-\Proj_{W_{Z}}\chi^{\ast}_{D}\in\Lambda^{D}_{S}$ if and only if $-\Proj_{W_{Z}}\chi^{\ast}_{D^{\prime\prime}}\in\Lambda^{D^{\prime\prime}}_{S}$.
    \item For any $Z_{1},Z_{2}\subset S$, $-\Proj_{W_{Z_{1}}}\chi^{\ast}_{D}=-\Proj_{W_{Z_{2}}}\chi^{\ast}_{D}$ if and only if\\ $-\Proj_{W_{Z_{1}}}\chi^{\ast}_{D^{\prime\prime}}=-\Proj_{W_{Z_{2}}}\chi^{\ast}_{D^{\prime\prime}}$.
\end{enumerate}
Note that statement (1) implies the image of $\Psi_{S}$ lands in $\Lambda^{D^{\prime\prime}}_{S}$ and that $\Psi_{S}$ is surjective. Statement (2) implies $\Psi_{S}$ is well-defined and injective. The proofs are entirely analogous to \Cref{lemma_local_constant_WPS}. To be careful we supply the proofs here.

For statement (1), let $Z\subset S$. Define $\nu_{\rho}:\Pic(X_{\Sigma})_{\mathbf{R}}\rightarrow\mathbf{R}$ for every $\rho\in S$ by $$\nu_{\rho}(\tilde{D})=\langle \chi_{D_{\rho}},\Proj_{W_{Z}}\chi^{\ast}_{\tilde{D}}\rangle$$ where $D_{\rho}\in\Cl(X_{\Sigma})$ is the torus invariant divisor corresponding to $\rho$. We need to show that $$\nu_{\rho}(D)<0\Leftrightarrow\nu_{\rho}(D^{\prime\prime})<0\text{ for all }\rho\in S.$$ 

For this, suppose $\nu_{\rho}(D)<0$. The set $H=\ker\nu_{\rho}\cap\Amp(X_{\Sigma})_{\mathbf{R}}$ is then either empty or a type one wall. Hence  If $H$ is empty then by continuity of $\nu_{\rho}$ and convexity of $\Amp(X_{\Sigma})_{\mathbf{R}}$, $\nu_{\rho}(D^{\prime\prime\prime})<0$ for any $D^{\prime\prime\prime}\in\Amp(X_{\Sigma})_{\mathbf{R}}$. In particular, $\nu_{\rho}(D^{\prime\prime})<0$. If $H$ is a type one wall but $\nu_{\rho}(D^{\prime\prime})\geq 0$, then by continuity of $\nu_{\rho}$ on the line segment $\overline{DD^{\prime\prime}}$, the type one wall $H$ must intersect $\overline{DD^{\prime\prime}}$ properly, contradicting the assumption that $\overline{DD^{\prime}}$ does not intersect any type one wall properly. The argument can be reversed to show the converse. Since we can do this for any $Z\subset S$, statement (1) is proved. 

For statement (2), assume $\Proj_{W_{Z_{1}}}\chi^{\ast}_{D}=\Proj_{W_{Z_{2}}}\chi^{\ast}_{D}$ for some $Z_{1},Z_{2}\subset S$. Then the set $$H^{\prime}=\{\tilde{D}\in\Amp(X_{\Sigma})_{\mathbf{R}}\mid \Proj_{W_{Z_{1}}}\chi^{\ast}_{\tilde{D}}=\Proj_{W_{Z_{2}}}\chi^{\ast}_{\tilde{D}}\}$$ is either the entire $\Amp(X_{\Sigma})_{\mathbf{R}}$ or an intersection of type one walls by \Cref{walls_contain_all_critical_points}. Hence $H^{\prime}$ either contains the entire line segment $\overline{DD^{\prime}}$ or intersects it at the point $D$. Since $\overline{DD^{\prime}}$ does not intersect any type one wall properly, it can only be the case that $\overline{DD^{\prime}}\subset H^{\prime}$, in which case  $\Proj_{W_{Z_{1}}}\chi^{\ast}_{D^{\prime}}=\Proj_{W_{Z_{2}}}\chi^{\ast}_{D^{\prime}}$ is automatic. The argument can be reversed to show the converse. Statement (2) and therefore the lemma is proved.
\end{proof}

\begin{lemma}\label{decision_making_involves_type_one_walls}
Let $X_{\Sigma}$ be a projective toric variety, $D,D^{\prime}$ be two elements in $\Amp(X_{\Sigma})_{\mathbf{R}}$ and $S\in\mathcal{L}$. Suppose there are two subsets $Z_{1},Z_{2}\subset S$ such that the following conditions hold:
\begin{enumerate}
    \item At $D$, $\lambda^{D}_{S}=-\Proj_{W_{Z_{1}}}\chi^{\ast}_{D}\neq-\Proj_{W_{Z_{2}}}\chi^{\ast}_{D}$.
    \item At $D^{\prime}$, $\lambda^{D^{\prime}}_{S}=-\Proj_{W_{Z_{2}}}\chi^{\ast}_{D^{\prime}}\neq-\Proj_{W_{Z_{1}}}\chi^{\ast}_{D^{\prime}}$.
\end{enumerate}
Then the line segment $\overline{DD^{\prime}}$ intersects a type one wall properly.
\end{lemma}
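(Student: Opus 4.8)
The plan is to follow the adapted one-parameter subgroup along the segment and catch the instant at which the subset realizing it peels off of $Z_1$. Since $\Amp(X_{\Sigma})_{\mathbf{R}}$ is convex (\Cref{nef_and_ample_cone_thm}), the path $D_t := (1-t)D + tD'$, $t\in[0,1]$, stays in the ample cone; note also $D\neq D'$, since otherwise the two hypotheses would force $\lambda^{D}_{S}$ to equal two distinct vectors. Fix a point $x$ with $S_x = S$ (such $x$ lies in $Z(\Sigma)$ because $S\in\mathcal{L}$), so $\lambda^{D}_{S}=\lambda^{D}_{x}$. By the continuity of $D\mapsto\lambda^{D}_{x}$ proved just before \Cref{order_is_continuous}, and because $D\mapsto -\Proj_{W_{Z_1}}\chi^{\ast}_{D}$ is a composition of linear maps, both $t\mapsto\lambda^{D_t}_{S}$ and $t\mapsto -\Proj_{W_{Z_1}}\chi^{\ast}_{D_t}$ are continuous on $[0,1]$. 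Hence $A:=\{\,t\in[0,1]\mid \lambda^{D_t}_{S}=-\Proj_{W_{Z_1}}\chi^{\ast}_{D_t}\,\}$ is closed; hypothesis (1) gives $0\in A$ and hypothesis (2) gives $1\notin A$, so $\tau:=\max A$ exists and lies in $[0,1)$.

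The central step is to pin down a subset that takes over from $Z_1$ immediately past $\tau$. For each $t\in(\tau,1]$ one has $\lambda^{D_t}_{S}\neq -\Proj_{W_{Z_1}}\chi^{\ast}_{D_t}$, and since $\lambda^{D_t}_{S}\in\Lambda^{D_t}_{S}$ there is some $Z(t)\subseteq S$ with $\lambda^{D_t}_{S}=-\Proj_{W_{Z(t)}}\chi^{\ast}_{D_t}$. As $S$ has finitely many subsets, some fixed $Z_3\subseteq S$ occurs as $Z(t_n)$ along a sequence $t_n\downarrow\tau$. Letting $n\to\infty$ in $\lambda^{D_{t_n}}_{S}=-\Proj_{W_{Z_3}}\chi^{\ast}_{D_{t_n}}$ and using $\tau\in A$ yields $\Proj_{W_{Z_1}}\chi^{\ast}_{D_\tau}=\Proj_{W_{Z_3}}\chi^{\ast}_{D_\tau}$, whereas $t_n>\tau$ forces $\Proj_{W_{Z_1}}\chi^{\ast}_{D_{t_n}}\neq\Proj_{W_{Z_3}}\chi^{\ast}_{D_{t_n}}$.

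Finally I would invoke \Cref{walls_contain_all_critical_points} for the collection $H:=\{\,\widetilde{D}\in\Amp(X_{\Sigma})_{\mathbf{R}}\mid \Proj_{W_{Z_1}}\chi^{\ast}_{\widetilde{D}}=\Proj_{W_{Z_3}}\chi^{\ast}_{\widetilde{D}}\,\}$: this is legitimate since $Z_1,Z_3\in\mathcal{L}$ (being subsets of $S\in\mathcal{L}$), and $H$ is non-empty ($D_\tau\in H$) and proper ($D_{t_1}\notin H$), so $H$ is a finite intersection $H_1\cap\cdots\cap H_k$ of type one walls. Because $D_{t_1}\notin H$, some $H_i$ contains $D_\tau\in\overline{DD'}$ but not $D_{t_1}\in\overline{DD'}$; since $H_i$ is the trace on the ample cone of a hyperplane in $\Pic(X_{\Sigma})_{\mathbf{R}}$ (\Cref{struct_walls}(1)) and $\overline{DD'}$ is not contained in it, the segment meets $H_i$ in a single point, i.e. crosses the type one wall $H_i$ properly.

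I expect the delicate point to be the middle step, where the subset realizing $\lambda^{D_t}_{S}$ is not canonical and where the $W_{Z_1}$-projection need not lie in $\sigma_S$ once $t>0$; one must take care that the pigeonhole argument produces $Z_3$ together with a sequence genuinely converging to $\tau$. The design choice that makes everything go through is to track the \emph{vectors} $\lambda^{D_t}_{S}$ rather than the scalars $\|\Proj_{W_Z}\chi^{\ast}_{D_t}\|$: a more naive intermediate-value argument on a difference of norms would only place $D_\tau$ on an equi-norm locus, which \Cref{walls_contain_all_critical_points} allows to be a \emph{type two} wall, whereas the equal-projection locus for the pair $(Z_1,Z_3)$ is always an intersection of type one walls — which is precisely the conclusion we need.
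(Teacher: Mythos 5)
Your argument is correct, and it proceeds by a genuinely different route from the paper's. The paper argues by contradiction: assuming $\overline{DD'}$ crosses no type one wall, it invokes \Cref{local_constant_for_a_segment} to carry both projections along the segment, passes to the family of equi-\emph{norm} (type two) conditions $\|\Proj_{W_{Z_1}}\chi^*_{\tilde D}\|=\|\Proj_{W_{Z_i}}\chi^*_{\tilde D}\|$ for all competitors $Z_i$ without containment, picks the type two wall crossing closest to $D$, and then uses the uniqueness in \Cref{unique_cone_min} at that crossing to force an equal-\emph{projection} condition — i.e., a type one wall — yielding the contradiction. Your proof instead works directly and vector-valued from the start: you track $t\mapsto\lambda^{D_t}_S$ and $t\mapsto-\Proj_{W_{Z_1}}\chi^*_{D_t}$ as continuous maps, take $\tau=\max A$ where they agree, pigeonhole a subset $Z_3$ realizing $\lambda^{D_{t_n}}_S$ along a sequence $t_n\downarrow\tau$, and pass to the limit to land on the equal-projection locus $\{\Proj_{W_{Z_1}}\chi^*=\Proj_{W_{Z_3}}\chi^*\}$ at $D_\tau$ without leaving it along the whole segment. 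Applying \Cref{walls_contain_all_critical_points} directly to that locus then produces the type one wall. Each step checks out: $Z_1,Z_3\in\mathcal{L}$ since subsets of $S\in\mathcal{L}$ remain in $\mathcal{L}$; $A$ is closed by continuity; the pigeonhole genuinely delivers a single $Z_3$ with $t_n\downarrow\tau$; and the linearity of type one walls (\Cref{struct_walls}(1)) gives the proper intersection once $D_\tau\in H_i$ but $D_{t_1}\notin H_i$. The payoff of your approach is that it bypasses type two walls entirely — and hence also sidesteps \Cref{local_constant_for_a_segment} — by keeping track of projections as vectors rather than comparing norms, exactly as you note in your closing paragraph; the paper instead leverages lemmas it has already built toward the larger \Cref{type_one_wall_captures_type_one_variation}, where a version of this argument is reused.
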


\begin{proof}
Suppose on the contrary that $\overline{DD^{\prime}}$ does not intersect any type one wall properly. Then by \Cref{local_constant_for_a_segment}, both $-\Proj_{W_{Z_{1}}}\chi^{\ast}_{D^{\prime\prime}}$ and $-\Proj_{W_{Z_{2}}}\chi^{\ast}_{D^{\prime\prime}}$ are elements of $\Lambda^{D^{\prime\prime}}_{S}$ for all $D^{\prime\prime}\in\overline{DD^{\prime}}.$ This together with the assumptions of the lemma imply that there is no containment between $W_{Z_{1}}$ and $W_{Z_{2}}$. To see this, if say $W_{Z_{1}}\subset W_{Z_{2}}$, then $||\Proj_{W_{Z_{1}}}\chi^{\ast}_{D}||\leq ||\Proj_{W_{Z_{2}}}\chi^{\ast}_{D}||$, contradicting the assumption that $\lambda^{D}_{S}=-\Proj_{W_{Z_{1}}}\chi^{\ast}_{D}\neq-\Proj_{W_{Z_{2}}}\chi^{\ast}_{D}$. Similarly we cannot have $W_{Z_{2}}\subset W_{Z_{1}}$ for the assumptions at $D^{\prime}$.

Let $-\Proj_{W_{Z_{1}}}\chi^{\ast}_{D},\ldots,-\Proj_{W_{Z_{N}}}\chi^{\ast}_{D}$ be $N$ distinct elements in $\Lambda^{D}_{S}$ where there is no containment between $W_{Z_{i}}$ and $W_{Z_{j}}$ for any $i\neq j$. We can consider the $N-1$ conditions \begin{equation}\label{decision_making_type_two_walls}||\Proj_{W_{Z_{1}}}\chi^{\ast}_{\tilde{D}}||=||\Proj_{W_{Z_{i}}}\chi^{\ast}_{\tilde{D}}||\text{ for }\tilde{D}\in\Amp(X_{\Sigma})_{\mathbf{R}}\text{ and for }i=2.\ldots, N.\end{equation} Since $\lambda^{D}_{S}=-\Proj_{W_{Z_{1}}}\chi^{\ast}_{D}$, we have \begin{equation}\label{assumption_on_D}||\Proj_{W_{Z_{1}}}\chi^{\ast}_{D}||>||\Proj_{W_{Z_{i}}}\chi^{\ast}_{D}||\text{ for all }i=2,\ldots, N.\end{equation} Hence each of the $N-1$ conditions defines either the empty set or a type two wall and is not met by $D$. In particular, none of these conditions includes the whole line segment $\overline{DD^{\prime}}$.

By continuity of the quadratic form associated to the bilinear form defined at (\ref{quadratic_form}), the condition for $i=2$ in (\ref{decision_making_type_two_walls}) defines a type two wall that intersects the line segment$\overline{DD^{\prime}}$ properly. Hence the set $A$ of points where the type two walls from (\ref{decision_making_type_two_walls}) intersect $\overline{DD^{\prime}}$ properly is not empty. Note that $D\notin A$.

Since a type two wall intersects a line properly at at most two points, the set $A$ is finite. We can therefore pick the point $D_{a}\in A$ that is closest to $D$. By \Cref{local_constant_for_a_segment} and the construction of $a$, it must be the case that $\lambda^{D^{\prime\prime}}_{S}=-\Proj_{W_{Z_{1}}}\chi^{\ast}_{D^{\prime\prime}}$ for all $D^{\prime\prime}$ in the half open interval $\overline{DD_{a}}\backslash D_{a}$. We claim that $$\lambda^{D_{a}}_{S}=-\Proj_{W_{Z_{1}}}\chi^{\ast}_{D_{a}}.$$
If this is not the case then there is a $j\neq 1$ such that $$\lambda^{D_{a}}_{S}=-\Proj_{W_{Z_{j}}}\chi^{\ast}_{D_{a}}\neq-\Proj_{W_{Z_{1}}}\chi^{\ast}_{D_{a}}.$$ Hence $||\Proj_{W_{Z_{j}}}\chi^{\ast}_{D_{a}}||>||\Proj_{W_{Z_{1}}}\chi^{\ast}_{D_{a}}||$. By continuity, there will be a point $D_{b}\in\overline{DD_{a}}\backslash D_{a}$ such that $||\Proj_{W_{Z_{j}}}\chi^{\ast}_{D_{b}}||=||\Proj_{W_{Z_{1}}}\chi^{\ast}_{D_{b}}||,$ contradicting the construction of $a$. The claim is proved. 

However, if the point $a$ is the intersection of $\overline{DD^{\prime}}$ with the type two wall defined for $k$ in (\ref{decision_making_type_two_walls}), then the uniqueness imposed by \Cref{unique_cone_min} forces  $$\Proj_{W_{Z_{1}}}\chi^{\ast}_{D_{a}}=\Proj_{W_{Z_{k}}}\chi^{\ast}_{D_{a}}.$$ This implies $D_{a}$ is in the intersection of type one walls defined by the condition $$\Proj_{W_{Z_{1}}}\chi^{\ast}_{\tilde{D}}=\Proj_{W_{Z_{k}}}\chi^{\ast}_{\tilde{D}}\text{ for }\tilde{D}\in\Amp(X_{\Sigma})_{\mathbf{R}},$$ contradicting our assumption that $\overline{DD^{\prime}}$ intersects no type one walls properly. The lemma is proved.
\end{proof}

\begin{theorem}\label{type_one_wall_captures_type_one_variation}
Let $X_{\Sigma}$ be a projective toric variety. If $D,D^{\prime}$ are two elements in $\Amp(X_{\Sigma})_{\mathbf{R}}$ such that the stratifications induced by $D$ and $D^{\prime}$ undergo a type one variation, then the line segment $\overline{DD^{\prime}}$ intersects a type one wall properly.
\end{theorem}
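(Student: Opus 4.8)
The plan is to prove the contrapositive: assuming the line segment $\overline{DD^{\prime}}$ meets no type one wall properly, I will show the stratifications induced by $D$ and $D^{\prime}$ cannot differ by a type one variation. By the observations recorded at the start of \Cref{wall and semi-chamber}, a type one variation means that, after possibly interchanging $D$ and $D^{\prime}$, there exist $S_{1},S_{2}\in\mathcal{L}$ with $\lambda^{D}_{S_{1}}\neq\lambda^{D}_{S_{2}}$ yet $\lambda^{D^{\prime}}_{S_{1}}=\lambda^{D^{\prime}}_{S_{2}}$; this is the situation I would contradict. The standing assumption lets me invoke \Cref{local_constant_for_a_segment} for $S_{1}$ and for $S_{2}$, so along the whole segment the membership of a projection $-\Proj_{W_{Z}}\chi^{\ast}$ in $\Lambda_{S_{i}}$, and the coincidence of two such projections (statement (2) of that lemma), are independent of the point chosen.

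Next I would fix subsets $Y_{i}\subset S_{i}$ realizing the adapted vector, $\lambda^{D}_{S_{i}}=-\Proj_{W_{Y_{i}}}\chi^{\ast}_{D}$ for $i=1,2$, and split into two cases. If $-\Proj_{W_{Y_{i}}}\chi^{\ast}_{D^{\prime}}=\lambda^{D^{\prime}}_{S_{i}}$ for both $i$, then from $\lambda^{D^{\prime}}_{S_{1}}=\lambda^{D^{\prime}}_{S_{2}}$ I get $\Proj_{W_{Y_{1}}}\chi^{\ast}_{D^{\prime}}=\Proj_{W_{Y_{2}}}\chi^{\ast}_{D^{\prime}}$; since $Y_{1},Y_{2}\in\mathcal{L}$ ($\mathcal{L}$ being closed under subsets), \Cref{walls_contain_all_critical_points} tells me the locus $\{\tilde D\mid\Proj_{W_{Y_{1}}}\chi^{\ast}_{\tilde D}=\Proj_{W_{Y_{2}}}\chi^{\ast}_{\tilde D}\}$ is either all of $\Amp(X_{\Sigma})_{\mathbf{R}}$ or an intersection of type one walls, and as it contains $D^{\prime}$ while $\overline{DD^{\prime}}$ meets no type one wall properly it must contain the whole segment, forcing $\Proj_{W_{Y_{1}}}\chi^{\ast}_{D}=\Proj_{W_{Y_{2}}}\chi^{\ast}_{D}$ and hence $\lambda^{D}_{S_{1}}=\lambda^{D}_{S_{2}}$, contrary to hypothesis. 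In the remaining case, say $-\Proj_{W_{Y_{1}}}\chi^{\ast}_{D^{\prime}}\neq\lambda^{D^{\prime}}_{S_{1}}$; I would pick $Y_{1}^{\prime}\subset S_{1}$ with $\lambda^{D^{\prime}}_{S_{1}}=-\Proj_{W_{Y_{1}^{\prime}}}\chi^{\ast}_{D^{\prime}}$, note that $\Proj_{W_{Y_{1}}}\chi^{\ast}_{D^{\prime}}\neq\Proj_{W_{Y_{1}^{\prime}}}\chi^{\ast}_{D^{\prime}}$ transfers along the segment (statement (2) of \Cref{local_constant_for_a_segment}) to $\Proj_{W_{Y_{1}}}\chi^{\ast}_{D}\neq\Proj_{W_{Y_{1}^{\prime}}}\chi^{\ast}_{D}$, and observe that then $\lambda^{D}_{S_{1}}=-\Proj_{W_{Y_{1}}}\chi^{\ast}_{D}\neq-\Proj_{W_{Y_{1}^{\prime}}}\chi^{\ast}_{D}$ while $\lambda^{D^{\prime}}_{S_{1}}=-\Proj_{W_{Y_{1}^{\prime}}}\chi^{\ast}_{D^{\prime}}\neq-\Proj_{W_{Y_{1}}}\chi^{\ast}_{D^{\prime}}$ — exactly the hypothesis of \Cref{decision_making_involves_type_one_walls} with $S=S_{1}$ and its two subsets taken to be $Y_{1}$ and $Y_{1}^{\prime}$. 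That lemma then yields a proper intersection of $\overline{DD^{\prime}}$ with a type one wall, contradicting the standing assumption. Either way I obtain the desired contradiction.

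The step I expect to be the real obstacle is the second case. The bijection of \Cref{local_constant_for_a_segment} matches $\Lambda^{D}_{S}$ with $\Lambda^{D^{\prime}}_{S}$ but carries no control of lengths, so the \emph{longest} element — the one that actually indexes the stratum containing $L(S)$ — can jump from one $W_{Z}$-representative to another as one moves along the segment; packaging such a jump as a proper crossing of a type one wall is exactly what \Cref{decision_making_involves_type_one_walls} accomplishes, and the delicate bookkeeping is in transporting the appropriate (in)equalities between $D$ and $D^{\prime}$ with statement (2) of \Cref{local_constant_for_a_segment} so that that lemma's hypotheses are met with the right pair of subsets. Everything else — the reduction to the pair $S_{1},S_{2}$, the first case, and the downward closure of $\mathcal{L}$ — is routine once this is set up.
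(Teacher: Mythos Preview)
Your proof is correct and follows essentially the same route as the paper's. Both arguments assume the segment meets no type one wall properly, fix representatives $Y_{i}\subset S_{i}$ (the paper calls them $Z_{i}$ and also fixes $Z_{i}'$ for $D'$ up front), and split on whether the adapted vector for some $S_{i}$ ``jumps'' to a different representative at $D'$: if it does, \Cref{decision_making_involves_type_one_walls} gives the contradiction; if neither jumps, the equality $\lambda^{D'}_{S_{1}}=\lambda^{D'}_{S_{2}}$ transfers to $D$ via \Cref{walls_contain_all_critical_points} (equivalently, the argument behind statement~(2) of \Cref{local_constant_for_a_segment}) and contradicts $\lambda^{D}_{S_{1}}\neq\lambda^{D}_{S_{2}}$. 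The only cosmetic difference is that the paper packages the no-jump case as a claim proved by contraposition, while you treat it as a separate case directly.
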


\begin{proof}
Without loss of generality, we may pick a pair $S_{1},S_{2}$ of elements in $\mathcal{L}$ such that $L(S_{1})$ and $L(S_{2})$ are in different  $\chi_{D}$-strata but in the same $\chi_{D^{\prime}}$-stratum. Let $Z_{1},Z_{2}$ (resp. $Z_{1}^{\prime},Z^{\prime}_{2}$) be subsets of $S_{1}$ and $S_{2}$ so that $\lambda^{D}_{S_{1}}=-\Proj_{W_{Z_{1}}}\chi^{\ast}_{D}$, $\lambda^{D}_{S_{2}}=-\Proj_{W_{Z_{2}}}\chi^{\ast}_{D}$ (resp. $\lambda^{D^{\prime}}_{S_{1}}=-\Proj_{W_{Z_{1}^{\prime}}}\chi^{\ast}_{D^{\prime}}$, $\lambda^{D^{\prime}}_{S_{2}}=-\Proj_{W_{Z^{\prime}_{2}}}\chi^{\ast}_{D^{\prime}}.$) 

Suppose on the contrary that the line segment $\overline{DD^{\prime}}$ intersects no type one walls properly. By \Cref{local_constant_for_a_segment}, we then have 
\begin{enumerate}
    \item $\Proj_{W_{Z_{1}}}\chi^{\ast}_{D^{\prime\prime}}\neq \Proj_{W_{Z_{2}}}\chi^{\ast}_{D^{\prime\prime}}$ for all $D^{\prime\prime}\in\overline{DD^{\prime}}$, and 
    \item $\Proj_{W_{Z_{1}^{\prime}}}\chi^{\ast}_{D^{\prime\prime}}= \Proj_{W_{Z_{2}}^{\prime}}\chi^{\ast}_{D^{\prime\prime}}$ for all $D^{\prime\prime}\in\overline{DD^{\prime}}$\label{item2}. 
\end{enumerate}
We claim that $\Proj_{W_{Z_{i}}}\chi^{\ast}_{D}\neq\Proj_{W_{Z_{i}^{\prime}}}\chi^{\ast}_{D}$ for at least one of $i=1,2$. Suppose not, then by \Cref{item2}, we have $$\Proj_{W_{Z_{1}}}\chi^{\ast}_{D}=\Proj_{W_{Z_{1}^{\prime}}}\chi^{\ast}_{D}=\Proj_{W_{Z_{2}^{\prime}}}\chi^{\ast}_{D}=\Proj_{W_{Z_{2}}}\chi^{\ast}_{D},$$ contradicting the assumption that $L(S_{1})$ and $L(S_{2})$ are in different $\chi_{D}$-strata. The claim is proved.

Suppose $\Proj_{W_{Z_{1}}}\chi^{\ast}_{D}\neq\Proj_{W_{Z_{1}^{\prime}}}\chi^{\ast}_{D}$. By \Cref{local_constant_for_a_segment}, $$\Proj_{W_{Z_{1}}}\chi^{\ast}_{D^{\prime}}\neq \Proj_{W_{Z_{1}^{\prime}}}\chi^{\ast}_{D^{\prime}}$$ as well. By \Cref{decision_making_involves_type_one_walls} (applied to $S_{1}$), the line segment $\overline{DD^{\prime}}$ intersects a type one wall properly, a contradiction. The same arguments works for the case that $\Proj_{W_{Z_{2}}}\chi^{\ast}_{D}\neq\Proj_{W_{Z_{2}^{\prime}}}\chi^{\ast}_{D}$. The theorem is proved.
\end{proof}

\begin{theorem}\label{type_two_variation_crosses_type_two_walls}
Let $X_{\Sigma}$ be a projective toric variety and $D,D^{\prime}$ be two elements in $\Amp(X_{\Sigma})_{\mathbf{R}}$. If the stratifications induced by $D$ and $D^{\prime}$ undergo a type two variation and the line segment $\overline{DD^{\prime}}$ intersects no type one walls properly, then $\overline{DD^{\prime}}$ intersects a type two wall properly.
\end{theorem}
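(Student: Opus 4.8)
The plan is to run an intermediate value argument along the segment $\overline{DD^{\prime}}$, feeding into it the output of \Cref{decision_making_involves_type_one_walls}: once type one walls are forbidden, the subspace $W_{Z}$ that realizes the $\chi_{D}$-adapted one parameter subgroup of a given stratum $S$ cannot change along the segment, so the only remaining way two strata can swap their relative height is a sign change of the quadratic form attached to a type two wall. Concretely: a type two variation means there is a strata-preserving bijection (condition~(1) of \Cref{two_types_of_variation_def}) which fails to preserve the order. Since each $\chi_{D}$-stratum is the disjoint union of those $L(S)$, $S\in\mathcal{L}$, on which $\lambda^{D}_{S}$ is constant — and likewise for $D^{\prime}$ — a strata-preserving bijection forces $\lambda^{D}_{S}=\lambda^{D}_{S^{\prime}}\Leftrightarrow\lambda^{D^{\prime}}_{S}=\lambda^{D^{\prime}}_{S^{\prime}}$ for all $S,S^{\prime}\in\mathcal{L}$; failure of order-preservation then produces, after possibly swapping $D$ and $D^{\prime}$, a pair $S_{1},S_{2}\in\mathcal{L}$ with $\|\lambda^{D}_{S_{1}}\|<\|\lambda^{D}_{S_{2}}\|$ and $\|\lambda^{D^{\prime}}_{S_{1}}\|\geq\|\lambda^{D^{\prime}}_{S_{2}}\|$ (this is the observation recorded at the start of \Cref{wall and semi-chamber}). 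Using \Cref{Kempf_ext_to_R_ample} I would fix $Z_{i}\subseteq S_{i}$ with $\lambda^{D}_{S_{i}}=-\Proj_{W_{Z_{i}}}\chi^{\ast}_{D}$ and $Z_{i}^{\prime}\subseteq S_{i}$ with $\lambda^{D^{\prime}}_{S_{i}}=-\Proj_{W_{Z_{i}^{\prime}}}\chi^{\ast}_{D^{\prime}}$, noting $Z_{i},Z^{\prime}_{i}\in\mathcal{L}$.

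Next I would transport these descriptions across the segment. Since $\overline{DD^{\prime}}$ meets no type one wall properly, the contrapositive of \Cref{decision_making_involves_type_one_walls}, applied to $S_{i}$ with the subsets $Z_{i}$ and $Z_{i}^{\prime}$, rules out the configuration of that lemma; hence at least one of $-\Proj_{W_{Z_{i}}}\chi^{\ast}_{D}=-\Proj_{W_{Z^{\prime}_{i}}}\chi^{\ast}_{D}$ or $-\Proj_{W_{Z_{i}}}\chi^{\ast}_{D^{\prime}}=-\Proj_{W_{Z^{\prime}_{i}}}\chi^{\ast}_{D^{\prime}}$ holds, and by part~(2) of \Cref{local_constant_for_a_segment} either of these propagates to $-\Proj_{W_{Z_{i}}}\chi^{\ast}_{\tilde{D}}=-\Proj_{W_{Z^{\prime}_{i}}}\chi^{\ast}_{\tilde{D}}$ for every $\tilde D\in\overline{DD^{\prime}}$; in particular $\lambda^{D^{\prime}}_{S_{i}}=-\Proj_{W_{Z_{i}}}\chi^{\ast}_{D^{\prime}}$. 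Thus the norm inequalities become $\|\Proj_{W_{Z_{1}}}\chi^{\ast}_{D}\|<\|\Proj_{W_{Z_{2}}}\chi^{\ast}_{D}\|$ and $\|\Proj_{W_{Z_{1}}}\chi^{\ast}_{D^{\prime}}\|\geq\|\Proj_{W_{Z_{2}}}\chi^{\ast}_{D^{\prime}}\|$. A containment $W_{Z_{2}}\subseteq W_{Z_{1}}$ would contradict the strict inequality at $D$, while $W_{Z_{1}}\subseteq W_{Z_{2}}$ together with the inequality at $D^{\prime}$ would force $\Proj_{W_{Z_{1}}}\chi^{\ast}_{D^{\prime}}=\Proj_{W_{Z_{2}}}\chi^{\ast}_{D^{\prime}}$, which by \Cref{local_constant_for_a_segment}(2) again contradicts the strict inequality at $D$; so there is no containment between $W_{Z_{1}}$ and $W_{Z_{2}}$.

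To finish, let $g$ be the continuous homogeneous quadratic function $\tilde D\mapsto\|\Proj_{W_{Z_{1}}}\chi^{\ast}_{\tilde D}\|^{2}-\|\Proj_{W_{Z_{2}}}\chi^{\ast}_{\tilde D}\|^{2}$ on $\Amp(X_{\Sigma})_{\mathbf{R}}$, i.e.\ the quadratic form associated, as in \eqref{quadratic_form}, to the pair $W_{Z_{1}},W_{Z_{2}}$. By the previous paragraph $g(D)<0\leq g(D^{\prime})$, so the intermediate value theorem supplies $D_{0}\in\overline{DD^{\prime}}$ with $g(D_{0})=0$ and $D_{0}\neq D$. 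Then $H=\{\tilde D\in\Amp(X_{\Sigma})_{\mathbf{R}}\mid\|\Proj_{W_{Z_{1}}}\chi^{\ast}_{\tilde D}\|=\|\Proj_{W_{Z_{2}}}\chi^{\ast}_{\tilde D}\|\}$ is nonempty (it contains $D_{0}$), proper (it misses $D$), and — since $W_{Z_{1}},W_{Z_{2}}$ are incomparable — is by definition (\Cref{walls_def}) the type two wall with respect to $Z_{1}$ and $Z_{2}$; as $\overline{DD^{\prime}}$ meets $H$ but is not contained in it, $\overline{DD^{\prime}}$ intersects $H$ properly. When $g(D^{\prime})>0$ the point $D_{0}$ can even be taken in the relative interior of $\overline{DD^{\prime}}$; the only borderline case is $g(D^{\prime})=0$, in which case $D^{\prime}$ itself lies on $H$ while $D$ does not.

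The step I expect to be the main obstacle is the second paragraph: converting the ``combinatorial'' change — which subspace $W_{Z}$ computes the adapted one parameter subgroup for a fixed $S$ — into an honest crossing of a hypersurface in $\Pic(X_{\Sigma})_{\mathbf{R}}$. This is exactly what \Cref{decision_making_involves_type_one_walls} and \Cref{local_constant_for_a_segment} are engineered for, so the real content is organizing the case analysis (no-containment of the $W_{Z_{i}}$, the degenerate endpoint situation) so that the intermediate value argument applies cleanly; the remainder is routine.
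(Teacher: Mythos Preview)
Your argument is correct and uses the same two key lemmas as the paper (\Cref{local_constant_for_a_segment} and \Cref{decision_making_involves_type_one_walls}), but you organise them differently. The paper first invokes the continuity result \Cref{order_is_continuous} to locate an intermediate $D''\in\overline{DD'}$ where $M^{D''}(x)=M^{D''}(y)$, reads off subsets $Z_{1}'',Z_{2}''$ there, and only then argues non-containment and proper intersection by transporting back to $D$ via \Cref{decision_making_involves_type_one_walls}. You instead fix $Z_{1},Z_{2}$ at the endpoint $D$, use the contrapositive of \Cref{decision_making_involves_type_one_walls} to show the \emph{same} $Z_{i}$ compute $\lambda^{D'}_{S_{i}}$, and then run the intermediate value argument directly on the single quadratic form $g$. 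This is a bit more economical: it bypasses \Cref{order_is_continuous} entirely and avoids the back-and-forth between $D''$ and $D$.

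One small formal point: when you rule out $W_{Z_{1}}\subseteq W_{Z_{2}}$ you cite ``\Cref{local_constant_for_a_segment}(2)'' for the propagation of $\Proj_{W_{Z_{1}}}\chi^{\ast}_{D'}=\Proj_{W_{Z_{2}}}\chi^{\ast}_{D'}$ back to $D$. That statement, as written in the lemma's proof, is formulated for $Z_{1},Z_{2}\subset S$ with a common $S$, whereas your $Z_{1}\subset S_{1}$ and $Z_{2}\subset S_{2}$. The underlying argument only needs $Z_{1},Z_{2}\in\mathcal{L}$ and really rests on \Cref{walls_contain_all_critical_points}, so it would be cleaner to cite that proposition directly (as the paper itself essentially does in the analogous step): the set $\{\tilde D:\Proj_{W_{Z_{1}}}\chi^{\ast}_{\tilde D}=\Proj_{W_{Z_{2}}}\chi^{\ast}_{\tilde D}\}$ contains $D'$ but not $D$, hence is a nonempty proper intersection of type one walls, and $\overline{DD'}$ would meet one of them properly.
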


\begin{proof}
Without loss of generality, we may pick a pair $S_{1},S_{2}$ of elements in $\mathcal{L}$ so that $$M^{D}(x)>M^{D}(y)\text{ but }M^{D^{\prime}}(x)\leq M^{D^{\prime}}(y)\text{ for all }x\in L(S_{1}),y\in L(S_{2}).$$
By \Cref{order_is_continuous}, there exists a $D^{\prime\prime}\in\overline{DD^{\prime}}$ such that $M^{D^{\prime\prime}}(x)=M^{D^{\prime\prime}}(y)$ for all $x\in L(S_{1})$ and $y\in L(S_{2})$. Suppose $M^{D^{\prime\prime}}(x)=||\Proj_{Z_{1}^{\prime\prime}}\chi^{\ast}_{D^{\prime\prime}}||$ and $M^{D^{\prime\prime}}(y)=||\Proj_{W_{Z_{2}^{\prime\prime}}}\chi^{\ast}_{D}||$ where $Z_{1}^{\prime\prime}$, $Z_{2}^{\prime\prime}$ are subsets of $S_{1}$ and $S_{2}$ respectively. Let us consider the collection $$H=\{\tilde{D}\in\Amp(X_{\Sigma})_{\mathbf{R}}\bigm|||\Proj_{W_{Z_{1}^{\prime\prime}}}\chi^{\ast}_{\tilde{D}}||=||\Proj_{W_{Z_{2}^{\prime\prime}}}\chi^{\ast}_{\tilde{D}}||\}.$$

Obviously $D^{\prime\prime}\in H\cap\overline{DD^{\prime}}$. We will show that $H$ is a type two wall that intersects $\overline{DD^{\prime}}$ properly. For this, we prove that
\begin{enumerate}
    \item there is no containment between $W_{Z_{1}^{\prime\prime}}$ and $W_{Z_{2}^{\prime\prime}}$, and 
    \item $H\cap\overline{DD^{\prime}}\subsetneq \overline{DD^{\prime}}.$ 
\end{enumerate}
Note that statement (1) and the fact that $D^{\prime\prime}\in H$ implies that $H$ is either the entire ample cone or a type two wall. Statement (2) implies that $H$ is a type two wall and $H$ intersects the line segment $\overline{DD^{\prime}}$ properly.

To prove (1), suppose there is a containment, say $W_{Z_{1}^{\prime\prime}}\subset W_{Z_{2}^{\prime\prime}}$. Then the condition that $||\Proj_{W_{Z_{1}^{\prime\prime}}}\chi^{\ast}_{D^{\prime\prime}}||=||\Proj_{W_{Z_{2}^{\prime\prime}}}\chi^{\ast}_{D^{\prime\prime}}||$ actually implies $\Proj_{W_{Z_{1}^{\prime\prime}}}\chi^{\ast}_{D^{\prime\prime}}=\Proj_{W_{Z_{2}^{\prime\prime}}}\chi^{\ast}_{D^{\prime\prime}}$, causing a type one variation at $D^{\prime\prime}$ on the line segment $\overline{DD^{\prime}}$. By \Cref{type_one_wall_captures_type_one_variation}, the line segment $\overline{DD^{\prime\prime}}$ intersects a type one wall properly, a contradiction to the assumption about the line segment $\overline{DD^{\prime}}.$  Statement (1) is proved. 

For statement (2), let $Z_{1},Z_{2}$ be subsets of $S_{1}$ and $S_{2}$ respectively such that $\lambda^{D}_{S_{1}}=-\Proj_{W_{Z_{1}}}\chi^{\ast}_{D}$ and $\lambda_{S_{2}}^{D}=-\Proj_{W_{Z_{2}}}\chi^{\ast}_{D}$. We claim that $\Proj_{W_{Z_{i}}}\chi^{\ast}_{D}=\Proj_{W_{Z_{i}^{\prime\prime}}}\chi^{\ast}_{D}$ for both $i=1,2$.

If the claim is not true, say for $i=1$, then by \Cref{local_constant_for_a_segment}, $\Proj_{W_{Z_{1}}}\chi^{\ast}_{D^{\prime\prime}}\neq\Proj_{W_{Z_{1}^{\prime\prime}}}\chi^{\ast}_{D^{\prime\prime}}$ as well.  \Cref{decision_making_involves_type_one_walls} (applied to $S_{1}$) then implies that the line segment $\overline{DD^{\prime\prime}}$ intersects a type one wall properly, a contradiction to the assumption about $\overline{DD^{\prime}}$. The same argument works for the case $i=2$. Hence the claim is true. 

Now suppose on the contrary, that $H\cap\overline{DD^{\prime}}=\overline{DD^{\prime}}$. The claim together with the assumption that $H\cap\overline{DD^{\prime}}=\overline{DD^{\prime}}$ then imply 
\begin{equation*}\begin{split}
M^{D}(x)=&||\Proj_{W_{Z_{1}}}\chi^{\ast}_{D}||=||\Proj_{W_{Z_{1}^{\prime\prime}}}\chi^{\ast}_{D}||\\
=&||\Proj_{W_{Z_{2}^{\prime\prime}}}\chi^{\ast}_{D}||=
||\Proj_{W_{Z_{2}}}\chi^{\ast}_{D}||=M^{D}(y)\end{split}\end{equation*} for all $x\in L(S_{1})$ and $y\in L(S_{2})$, a contradiction. Statement (2) and therefore the theorem is proved.
\end{proof}
With \Cref{struct_walls}, \Cref{finite_wall_semi-chamber}, \Cref{Toric_VSIT_decomposition_ample_cone}, \Cref{type_one_wall_captures_type_one_variation}, and \Cref{type_two_variation_crosses_type_two_walls}, we obtain the summary:
\begin{theorem}
Let $X_{\Sigma}$ be a projective toric variety. There are two types of walls in the cone of ample divisors, called type one walls and type two walls respectively. The following properties hold for walls:
\begin{enumerate}
    \item There are finitely many walls. 
    \item A type one (resp. type two) wall is a rational hyperplane (resp. homogeneous quadratic hypersurface).
    \item Type one walls capture type one variations in the following sense: Let $D,D^{\prime}$ be two $\mathbf{R}$-ample divisors on $X_{\Sigma}$. Then the stratifications induced by $D$ and $D^{\prime}$ undergo a type one variation only if the line segment $\overline{DD^{\prime}}$ intersects a type one wall properly in the ample cone. Namely, $\overline{DD^{\prime}}$ crosses a type one wall.
    \item Type two walls capture type two variations in the following sense:  Let $D,D^{\prime}$ be two $\mathbf{R}$-ample divisors on $X_{\Sigma}$. If the stratifications induced by $D$ and $D^{\prime}$ undergo a type two variation and if the line segment $\overline{DD^{\prime}}$ intersects no type one walls properly, then $\overline{DD^{\prime}}$ intersects a type two wall properly in the ample cone. Namely, $\overline{DD^{\prime}}$ crosses a type two wall.\end{enumerate}
Away from the walls, the ample cone is decomposed into semi-chambers such that the following properties hold:
\begin{enumerate}
    \item There are finitely many semi-chambers.
    \item A semi-chamber is a cone, possibly not convex.
    \item The stratification stays constant in a semi-chamber in the following sense: If two $\mathbf{R}$-ample divisors are in the same semi-chamber, then they induce equivalent stratifications of $\mathbf{C}^{\Sigma(1)}$.
\end{enumerate}
\end{theorem}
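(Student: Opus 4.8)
The plan is to assemble the statement out of the results already proved in \Cref{wall and semi-chamber} and \Cref{The main results}; the theorem is a repackaging of \Cref{struct_walls}, \Cref{finite_wall_semi-chamber}, \Cref{Toric_VSIT_decomposition_ample_cone}, \Cref{type_one_wall_captures_type_one_variation}, and \Cref{type_two_variation_crosses_type_two_walls}, so the proof is essentially bookkeeping: match each bulleted clause to the lemma or proposition that establishes it.

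First I would dispose of the clauses about walls. By \Cref{walls_def}, a type one wall is determined by a pair $(Z,\{\rho\})$ with $Z,\{\rho\}\in\mathcal{L}$ and a type two wall by a pair $(Z_{1},Z_{2})$ with $Z_{1},Z_{2}\in\mathcal{L}$; since $\mathcal{L}$ is a finite collection of subsets of $\Sigma(1)$, there are finitely many walls, which is exactly \Cref{finite_wall_semi-chamber}. The geometric description — a type one wall is the intersection of $\Amp(X_{\Sigma})_{\mathbf{R}}$ with a codimension one subspace of $\Pic(X_{\Sigma})_{\mathbf{R}}$ defined by a linear equation in the $a_{\rho}$'s with rational coefficients, while a type two wall is cut out by a homogeneous quadratic equation in the $a_{\rho}$'s with rational coefficients — is precisely the content of \Cref{struct_walls} together with its rationality addendum, so I would simply invoke it for clause (2). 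For clause (3), that type one walls capture type one variations, I would cite \Cref{type_one_wall_captures_type_one_variation} verbatim; for clause (4), that type two walls capture type two variations under the additional hypothesis that $\overline{DD^{\prime}}$ meets no type one wall properly, I would cite \Cref{type_two_variation_crosses_type_two_walls}.

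Next I would turn to semi-chambers. By \Cref{semi-chamber_def} a semi-chamber is one of the cells obtained by fixing a sign $\pm$ for each wall-defining equation $F_{i}$; since there are finitely many $F_{i}$ and each $F_{i}$ is homogeneous, there are finitely many semi-chambers, each is a cone, and distinct semi-chambers are disjoint — this is \Cref{finite_wall_semi-chamber}. No convexity is claimed, consistent with the non-convex examples appearing later in the paper. Finally, clause (3) for semi-chambers, that two $\mathbf{R}$-ample divisors lying in a common semi-chamber induce equivalent stratifications of $\mathbf{C}^{\Sigma(1)}$, is exactly \Cref{Toric_VSIT_decomposition_ample_cone}. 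The one point that wants a word of care — and really the only thing resembling an obstacle — is simply to make sure the indexing of walls and semi-chambers is the finite semialgebraic data of \Cref{semi-chamber_def}, but this is immediate from the definitions; all the genuine mathematical work sits in the five cited results, and there is no further difficulty in combining them.
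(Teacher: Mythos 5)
Your proposal is correct and follows exactly the same route as the paper: the theorem is stated as a summary assembled from \Cref{struct_walls}, \Cref{finite_wall_semi-chamber}, \Cref{Toric_VSIT_decomposition_ample_cone}, \Cref{type_one_wall_captures_type_one_variation}, and \Cref{type_two_variation_crosses_type_two_walls}, which is precisely the bookkeeping you carry out. Your clause-by-clause matching is accurate and complete, so there is nothing to add.
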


\subsection{Variation of stratification - an elementary example}\label{Variation of stratification-an_elementary_example}
The following example illustrates the two types of variations. This is from the smooth toric surface obtained by blowing up $\mathbf{P}^{2}_{\mathbf{C}}$ at a point.

\subsubsection{Set up}
Consider the complete fan 
$$\begin{tikzpicture}
    \draw (-2,2) -- (0,0);
    \draw (0,0) -- (0,2);
    \draw (0,0) -- (2,0);
    \draw (0,0) -- (0,-2);
    \draw (-2.2,2.2) node {$u_{0}$};
    \draw (0,2.2) node {$u_{1}$};
    \draw (2.3,0) node {$u_{2}$};
    \draw (0,-2.2) node {$u_{3}$};
    \filldraw [black] (-2,2) circle (2pt);
    \filldraw [black] (0,2) circle (2pt);
    \filldraw [black] (2,0) circle (2pt);
    \filldraw [black] (0,-2) circle (2pt);
    \draw (-1.5,0) node {$\tau_{3}$};
    \draw (-0.8,1.3) node {$\tau_{0}$};
    \draw (1,1) node {$\tau_{1}$};
    \draw (1,-1) node {$\tau_{2}$};
\end{tikzpicture}$$
where \begin{enumerate}
    \item $u_{0}=(-1,1)$,
    \item $u_{1}=(0,1)$,
    \item $u_{2}=(1,0)$,
    \item $u_{3}=(0,-1)$,
\end{enumerate}
and 
\begin{enumerate}
    \item $\tau_{0}=\text{Cone }(u_{0},u_{1})$,
    \item $\tau_{1}=\text{Cone }(u_{1},u_{2})$,
    \item $\tau_{2}=\text{Cone }(u_{2},u_{3})$,
    \item $\tau_{3}=\text{Cone }(u_{0},u_{3})$.
\end{enumerate}
We will compute the stratification of $\mathbf{C}^{\Sigma(1)}=\mathbf{C}^{4}$ induced by all ample divisors on $X_{\Sigma}$. Let us first compute the ample cone of $X_{\Sigma}$.

\subsubsection{The ample cone}
To begin with, sequence (\ref{sesWDiv}) in this case is $0\rightarrow\mathbf{Z}^{2}\rightarrow\mathbf{Z}^{4}\rightarrow\Cl(X_{\Sigma})\rightarrow 0$ where $\mathbf{Z}^{2}\rightarrow\mathbf{Z}^{4}$ is defined by the matrix 
$\begin{pmatrix}
-1 & 1\\
0 & 1\\
1 & 0\\
0 & -1
\end{pmatrix}$. Therefore, we have $D_{0}=D_{2}$ and $D_{0}+D_{1}=D_{3}$ in $\Cl(X_{\Sigma})$. One also checks that $\{D_{0},D_{1}\}$ is linearly independent in $\Cl(X_{\Sigma})$ so that it forms a $\mathbf{Z}$-basis of $\Cl(X_{\Sigma})$. With respect to this basis sequence (\ref{sesWDiv}) is represented by  
\begin{equation}\label{sesWDiv_blow_up_P2}0\rightarrow\mathbf{Z}^{2}\rightarrow\mathbf{Z}^{4}\rightarrow\mathbf{Z}^{2}\rightarrow 0\end{equation} where $\mathbf{Z}^{4}\rightarrow\mathbf{Z}^{2}$ is given by the matrix $\begin{pmatrix}
1 & 0 & 1 & 1\\
0 & 1 & 0 & 1
\end{pmatrix}.$

Let $D=a_{0}D_{0}+a_{1}D_{1}$ be a divisor. Then we have 
\begin{enumerate}
    \item $m_{\tau_{0}}=(a_{0}-a_{1},-a_{1})$,
    \item $m_{\tau_{1}}=(0,-a_{1})$,
    \item $m_{\tau_{2}}=(0,0)$, and 
    \item $m_{\tau_{3}}=(a_{0},0)$.
\end{enumerate}
By \Cref{numampleness}, $D$ is ample if and only if 
$$\begin{cases}
\langle m_{\tau_{0}},u_{2}\rangle = a_{0}-a_{1}>0,\\
\langle m_{\tau_{0}},u_{3}\rangle = a_{1}>0,\\
\langle m_{\tau_{1}}, u_{0}\rangle = -a_{1}>-a_{0},\\
\langle m_{\tau_{1}}, u_{3}\rangle = a_{1} > 0,\\
\langle m_{\tau_{2}}, u_{0}\rangle = 0>-a_{0},\\
\langle m_{\tau_{2}}, u_{1}\rangle = 0>-a_{1},\\
\langle m_{\tau_{3}}, u_{1}\rangle = 0>-a_{1},\\
\langle m_{\tau_{3}}, u_{2}\rangle =a_{0} >0.
\end{cases}$$
This system of inequalities is essentially $a_{0}>a_{1}>0$. We now compute several things about $G$.

\subsubsection{The group $G$}
Applying $\mathrm{Hom}_{\mathbf{Z}}(-,\mathbf{C}^{\times})$ to sequence (\ref{sesWDiv_blow_up_P2}), we see that $G\simeq(\mathbf{C}^{\times})^{2}$ injects into $(\mathbf{C}^{\times})^{\Sigma(1)}=(\mathbf{C}^{\times})^{4}$ by the formula $$(t_{1},t_{2})\mapsto (t_{1},t_{2},t_{1},t_{1}\cdot t_{2}).$$ Applying $\mathrm{Hom}_{\mathbf{Z}}(-,\mathbf{Z})$ to sequence (\ref{sesWDiv_blow_up_P2}), we see that $\pmb{\Gamma}(G)\simeq\mathbf{Z}^{2}$ injects into $\mathbf{Z}^{4}$ by the matrix $\begin{pmatrix}
1 & 0\\
0 & 1\\
1 & 0\\
1 & 1
\end{pmatrix}$ with respect to the dual basis $\{D_{0}^{\ast},D_{1}^{\ast}\}$ of $\pmb{\Gamma}(G)$.
It follows that the inner product $(-,-)$ on $\pmb{\Gamma}(G)_{\mathbf{R}}$ satisfies 
$$(aD_{0}^{\ast}+bD_{1}^{\ast},cD_{0}^{\ast}+dD_{1}^{\ast})=ac+bd+ac+(a+b)\cdot (c+d).$$ 

From now on we will write a divisor on $X_{\Sigma}$ as a tuple $(a_{0},a_{1})$ instead of $a_{0}D_{0}+a_{1}D_{1}$. Similarly we write an element in $\pmb{\Gamma}(G)_{\mathbf{R}}$ as a tuple $(a,b)$ instead of $aD_{0}^{\ast}+bD_{1}^{\ast}$.
\subsubsection{Stratifications induced by \texorpdfstring{$\mathbf{R}$}{R}-ample divisors}
We are now ready to analyze the stratification induced by all $\mathbf{R}$-ample divisors. Here the primitive collections are $\{\rho_{0},\rho_{2}\}$ and $\{\rho_{1},\rho_{3}\}$ so $$\mathcal{L}=\{\emptyset\},\{0\},\{2\},\{1\},\{3\},\{0,2\},\{1,3\}.$$ The idea is to write the one parameter subgroup $\lambda^{D}_{S}$ that is $\chi_{D}$-adapted to $L(S)$ as a piecewise function of $D$ for each $S\in\mathcal{L}$. For brevity, when we write $\lambda^{D}_{S}$, $\Lambda^{D}_{S}$ $\sigma_{S}$ and $W_{S}$, we do not separate elements in $S$ by commas. We will write $L_{S}$ instead of $L(S)$ and without commas as well.

It is computed that \begin{equation}\label{chi_ast}
 -\chi^{\ast}_{D}=(\frac{-2a_{0}+a_{1}}{5},\frac{a_{0}-3a_{1}}{5})
.\end{equation} and 
$-\Proj_{W_{13}}\chi^{\ast}_{D}=(0,0)$ as $W_{13}=0$

For $S=\emptyset$, note that $L_{\emptyset}$ is the origin in $\mathbf{C}^{4}$ and $\sigma_{\emptyset}=\pmb{\Gamma}(G)_{\mathbf{R}}$. Hence it is obvious that $\lambda^{D}_{\emptyset}=-\chi^{\ast}_{D}$.

For $S=\{0\},\{2\}$, or $\{0,2\}$, we have $\sigma_{S}=\{(a,b)\in\pmb{\Gamma}(G)_{\mathbf{R}}\vert a\geq 0\}$. Looking at \Cref{chi_ast}, $-\chi^{\ast}_{D}\in\Lambda^{D}_{S}$ if and only if $-2a_{0}+a_{1}\geq 0$. This is not possible in the ample cone. It is computed that $-\Proj_{W_{S}}\chi^{\ast}_{D}=(0,-\frac{a_{1}}{2}).$ We conclude that $$\lambda^{D}_{0}=\lambda^{D}_{2}=\lambda^{D}_{02}=(0,-\frac{a_{1}}{2}).$$

For $S=\{1\}$, we have $\sigma_{1}=\{(a,b)\in\pmb{\Gamma}(G)_{\mathbf{R}}\vert b\geq 0\}$. Looking at \Cref{chi_ast}, we have $-\chi^{\ast}_{D}\in\Lambda^{D}_{1}$ if and only if $a_{0}-3a_{1}\geq 0$, which is possible in the ample cone. It is computed that $-\Proj_{W_{1}}\chi^{\ast}_{D}=(-\frac{a_{0}}{3},0).$ We conclude that 
$$\lambda^{D}_{1}=\begin{cases}
-\chi^{\ast}_{D}\text{ if }a_{0}-3a_{1}\geq 0, \text{ or}\\
-\Proj_{W_{1}}\chi^{\ast}_{D}=(-\frac{a_{0}}{3},0)\text{ otherwise}.
\end{cases}$$

For $S=\{3\}$, we have $\sigma_{3}=\{(a,b)\in\pmb{\Gamma}(G)_{\mathbf{R}}\vert a+b\geq 0\}.$ Looking at \Cref{chi_ast}, we have $-\chi^{\ast}_{D}\in\Lambda^{D}_{3}$ if and only if $-(a_{0}+2a_{1})\geq 0$, which is impossible in the ample cone. It is computed that $-\Proj_{W_{3}}\chi^{\ast}_{D}=(\frac{-a_{0}+a_{1}}{3},\frac{a_{0}-a_{1}}{3})$. We conclude that $$\lambda^{D}_{3}=-\Proj_{W_{3}}\chi^{\ast}_{D}=(\frac{-a_{0}+a_{1}}{3},\frac{a_{0}-a_{1}}{3}).$$

For $S=\{1,3\}$, we have $\sigma_{13}=\{(a,b)\in\pmb{\Gamma}(G)_{\mathbf{R}}\vert a+b,b\geq 0\}.$ By what we had, $-\chi^{\ast}_{D}$, $-\Proj_{W_{1}}\chi^{\ast}_{D}$ are not in $\Lambda^{D}_{13}$, but $-\Proj_{W_{3}}\chi^{\ast}_{D}\in\Lambda^{D}_{13}.$ On the other hand $W_{13}\subset W_{3}$ so that $||\Proj_{W_{13}}\chi_{D}^{\ast}||\leq ||\Proj_{W_{3}}\chi^{\ast}_{D}||$. We conclude that $$\lambda^{D}_{13}=-\Proj_{W_{3}}\chi^{\ast}_{D}=(\frac{-a_{0}+a_{1}}{3},\frac{a_{0}-a_{1}}{3}).$$

We obtain the following description of the set of $\chi_{D}$-strata for each ample divisor $D=(a_{0},a_{1})$: If $a_{0}-3a_{1}\geq 0$, then 
\begin{itemize}
    \item $L_{\emptyset}\cup L_{1}=V(x_{0},x_{2},x_{3})$ is the $\chi_{D}$-stratum indexed by $-\chi^{\ast}_{D}$,
    \item $L_{0}\cup L_{2}\cup L_{02}=V(x_{1},x_{3})-V(x_{0},x_{2})$ is the $\chi_{D}$-stratum indexed by $(0,-\frac{a_{1}}{2})$, and 
    \item $L_{3}\cup L_{13}=V(x_{0},x_{2})-V(x_{3})$ is the $\chi_{D}$-stratum indexed $(\frac{-a_{0}+a_{1}}{3},\frac{a_{0}-a_{1}}{3})$ 
\end{itemize}
If $a_{0}-3a_{1}<0$, then 
\begin{itemize}
    \item $L_{\emptyset}=V(x_{0},x_{1},x_{2},x_{3})$ is the $\chi_{D}$-stratum indexed by $-\chi^{\ast}_{D}$,
    \item $L_{1}=V(x_{0},x_{2},x_{3})-V(x_{1})$ is the $\chi_{D}$-stratum indexed by $(-\frac{a_{0}}{3},0)$,
    \item $L_{0}\cup L_{2}\cup L_{02}=V(x_{1},x_{3})-V(x_{0},x_{2})$ is the $\chi_{D}$-stratum indexed by $(0,-\frac{a_{1}}{2})$, and 
    \item $L_{3}\cup L_{13}=V(x_{0},x_{2})-V(x_{3})$ is the $\chi_{D}$-stratum indexed by $(\frac{-a_{0}+a_{1}}{3},\frac{a_{0}-a_{1}}{3})$. 
\end{itemize}
Crossing the linear wall $a_{0}-3a_{1}=0$, we see that the stratification undergoes a type one variation. There are type two variations among the order of the strata which we now describe. 

When $a_{0}-3a_{1}\geq 0$, we only need to compare the order between the $\chi_{D}$-stratum indexed by $(0,-\frac{a_{1}}{2})$ and $(\frac{-a_{0}+a_{1}}{3},\frac{a_{0}-a_{1}}{3})$. We have $$||(\frac{-a_{0}+a_{1}}{3},\frac{a_{0}-a_{1}}{3})||=\frac{a_{0}-a_{1}}{\sqrt{3}}\geq\frac{2a_{1}}{\sqrt{3}}>\frac{a_{1}}{\sqrt{2}}=||(0,-\frac{a_{1}}{2})||.$$ Hence the $\chi_{D}$-strata are ordered as $$V(x_{0},x_{2},x_{3})>V(x_{0},x_{2})-V(x_{3})>V(x_{1},x_{3})-V(x_{0},x_{2}).$$

When $a_{0}-3a_{1}<0$, we have to compare the order between the $\chi_{D}$-strata indexed by $(-\frac{a_{0}}{3},0),(0,-\frac{a_{1}}{2})$ and $(\frac{-a_{0}+a_{1}}{3},\frac{a_{0}-a_{1}}{3})$. Note that we always have $$||(-\frac{a_{0}}{3},0)||=\frac{a_{0}}{\sqrt{3}}>\frac{a_{0}-a_{1}}{\sqrt{3}}=||(\frac{-a_{0}+a_{1}}{3},\frac{a_{0}-a_{1}}{3})||.$$  It remains to compare $(0,-\frac{a_{1}}{2}), (\frac{-a_{0}+a_{1}}{3},\frac{a_{0}-a_{1}}{3})$ and $(-\frac{a_{0}}{3},0),(0,-\frac{a_{1}}{2})$.

It is computed that 
$$||(0,-\frac{a_{1}}{2})||>||(\frac{-a_{0}+a_{1}}{3},\frac{a_{0}-a_{1}}{3})||\Leftrightarrow a_{0}<(1+\sqrt{\frac{3}{2}})\cdot a_{1}$$ and $$||(0,-\frac{a_{1}}{2})||>||(-\frac{a_{0}}{3},0)||\Leftrightarrow a_{0}<\sqrt{\frac{3}{2}}\cdot a_{1}.$$

\subsubsection{Summary}
We may divide the ample cone into six regions where each region corresponds to a distinct class of stratification of $\mathbf{C}^{4}$ (see \Cref{strat_summary_blow_up_P2}). 

$$\begin{tikzpicture}[
roundnode/.style={circle, draw=green!60, fill=green!5, very thick, minimum size=7mm},
squarednode/.style={rectangle, draw=red!60, fill=red!5, very thick,inner sep =0pt, minimum size=2mm},
]
\filldraw[color=green] (0,0) -- (6,0) -- (6,6);
\draw[color = purple, ultra thick] (0,0) -- (6,0);
\draw[->] (0,0) -- (6,0);
\draw (6.3,0) node {$a_{0}$};
\draw (0,0)[color = green] -- (6,6);
\draw[color=red] (0,0) -- (6,2);
\draw [->](0,0) -- (0,6);
\draw (0,6.3) node {$a_{1}$};
\draw[color = blue] (6,4.9) -- (0,0);
\draw [color=blue] (0,0) -- (6,2.7);
\draw [color = purple, ultra thick] (0,0) -- (6,6);
\node[squarednode] at (5.8,1) {1};
\node[squarednode] at (5.8,2.15) {2};
\node[squarednode] at (5.8,2.6) {3};
\node[squarednode] at (5.8,4) {4};
\node[squarednode] at (5.8,4.8) {5};
\node[squarednode] at (5.8,5.5) {6};
\draw (6.9,2) node {$a_{0}=3a_{1}$};
\draw (7.5,2.7) node {$a_{0}=(1+\sqrt{\frac{3}{2}})a_{1}$};
\draw (7.0,4.9) node {$a_{0}=\sqrt{\frac{3}{2}}a_{1}$};
\draw (6.7,6) node {$a_{0}=a_{1}$};
\end{tikzpicture}$$
Here the line defined by $a_{0}=a_{1}$ and the $a_{0}$-axis form the boundary of the ample cone. The two blue lines $a_{0}=\sqrt{\frac{3}{2}}a_{1}$ and $a_{0}=(1+\sqrt{\frac{3}{2}})a_{1}$ were labeled as 5 and 3. They are the type two walls with respect to $\{0\},\{1\}$ and $\{0\},\{3\}$ respectively. The red line $a_{0}=3a_{1}$ is the type one wall with respect to $\emptyset,\{1\}$.
\begin{table}[ht]
    \caption{Stratifications induced by $\mathbf{R}$-ample divisors in each area. Here $\mathbf{0}$ represents the origin in $\mathbf{C}^{4}$.}
\begin{tabular}{|c|l|}
\hline
\text{Area} &\multicolumn{1}{|c|}{\text{Stratification}}\\
\hline
1     & $V(x_{0},x_{2},x_{3})>V(x_{0},x_{2})-V(x_{3})>V(x_{1},x_{3})-V(x_{0},x_{2})$\\
\hline
2&$\mathbf{0}>V(x_{0},x_{2},x_{3})-V(x_{1})>V(x_{0},x_{2})-V(x_{3})>V(x_{1},x_{3})-V(x_{0},x_{2})$\\
\hline
\multirow{2}{*}{3}&$\mathbf{0}>V(x_{0},x_{2},x_{3})-V(x_{1})>V(x_{0},x_{2})-V(x_{3})$\\
    &$\mathbf{0}>V(x_{0},x_{2},x_{3})-V(x_{1})>V(x_{1},x_{3})-V(x_{0},x_{2})$\\
\hline
4 & $\mathbf{0}>V(x_{0},x_{2},x_{3})-V(x_{1})>V(x_{1},x_{3})-V(x_{0},x_{2})>V(x_{0},x_{2})-V(x_{3})$\\
\hline
\multirow{2}{*}{5}&$\mathbf{0}>V(x_{0},x_{2},x_{3})-V(x_{1})>V(x_{0},x_{2})-V(x_{3})$\\
&$\mathbf{0}>V(x_{1},x_{3})-V(x_{0},x_{2})>V(x_{0},x_{2})-V(x_{3})$\\
\hline
6 & $\mathbf{0}>V(x_{1},x_{3})-V(x_{0},x_{2})>V(x_{0},x_{2},x_{3})-V(x_{1})>V(x_{0},x_{2})-V(x_{3})$\\
\hline
\end{tabular}\label{strat_summary_blow_up_P2}\end{table}

We see that:
\begin{enumerate}
    \item There are six areas corresponding to six SIT-equivalence classes of $\mathbf{R}$-ample divisors. 
    \item The stratification does not change inside a semi-chamber.
    \item Type two walls capture type two variations. More specifically crossing a type two wall swaps the order of a pair of strata and on the wall the strict partial order breaks up into two linearly ordered chains.
    \item The type one wall captures a type one variation. More specifically crossing the type one wall downwards combines $L_{1}$ with $\mathbf{0}$ into a single strata.
\end{enumerate}

\subsection{Relations to the structure of the fan}\label{Relations to the structure of the fan}
\subsubsection{A VSIT adjunction}\label{A VSIT adjunction}
In this section we prove that toric VSIT is intrinsic to the two combinatorial structures of a fan: its  primitive collections and the relations among its ray generators. For this we formulate an equivalence between complete fans that capture the two combinatorial structures.

\begin{definition}\label{ample_equivalence_between_fans}
Let $N_{1}$ and $N_{2}$ be two free abelian groups of the same rank. Let
$\Sigma_{1}\subset(N_{1})_{\mathbf{R}},\Sigma_{2}\subset(N_{2})_{\mathbf{R}}$
be two complete fans. We say $\Sigma_{1}$ and $\Sigma_{2}$ are \emph{amply equivalent} if there is a bijection $\Psi:\Sigma_{1}(1)\rightarrow\Sigma_{2}(1)$ between the rays such that the following two properties hold:
\begin{enumerate}
    \item $\Psi$ preserves primitive collections. That is, for any subset $C\subset\Sigma_{1}(1)$, $\Psi(C)$ is a primitive collection of $\Sigma_{2}$ if and only if $C$ is a primitive collection of $\Sigma_{1}$.
    \item $\Psi$ preserves relations among the ray generators. That is, for any tuple of integers $\{a_{\rho}\}_{\rho\in\Sigma_{1}(1)}$, $\sum_{\rho}a_{\rho}u_{\rho}=0$ if and only if $\sum_{\rho}a_{\rho}u_{\Psi(\rho)}=0$ where $u_{\rho}$ (resp. $u_{\Psi(\rho)}$) represents the ray generator of $\rho$ (resp. $\Psi(\rho)$). 
\end{enumerate}
\end{definition}

We will show that for any pair of amply equivalent fans, the toric varieties associated to them possess isomorphic $\mathbf{Q}$-ample cones. 
To construct such isomorphisms, we first need 

\begin{proposition}\label{StructCompFan}
Let $N_{1}$ and $N_{2}$ be two free abelian groups of rank $n$. Let
$\Sigma_{1}\subset(N_{1})_{\mathbf{R}},\Sigma_{2}\subset(N_{2})_{\mathbf{R}}$
be two amply equivalent fans with the bijection $\Psi:\Sigma_{1}(1)\rightarrow\Sigma_{2}(1)$ that preserves their primitive collections and relations among their ray generators. Then there is a $\mathbf{Q}$-linear isomorphism $\Phi:(N_{1})_{\mathbf{Q}}\rightarrow (N_{2})_{\mathbf{Q}}$ such that
\begin{enumerate}
\item $\Phi(u_{\rho})=u_{\Psi(\rho)}$ for all $\rho\in\Sigma_{1}(1)$, and
\item if $\Phi_{\mathbf{R}}$ is the extension of $\Phi$ over $\mathbf{R}$, then a cone $\sigma_{1}\subset(N_{1})_{\mathbf{R}}$ is a cone in $\Sigma_{1}$ if and only if
  $\Phi_{\mathbf{R}}(\sigma_{1})$ is a cone in $\Sigma_{2}.$
  \end{enumerate}
  Moreover, if $G_{i}=\Hom_{\mathbf{Z}}(\Cl(X_{\Sigma_{i}}),\mathbf{C}^{\times})$, then 
  \begin{enumerate}[resume]
 \item $\Phi$ induces an isomorphism $\pmb{\Gamma}(G_{1})_{\mathbf{Q}}\simeq\pmb{\Gamma}(G_{2})_{\mathbf{Q}}$.
\end{enumerate}
\end{proposition}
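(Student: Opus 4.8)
The plan is to obtain (1) and (3) from condition (2) of ample equivalence (preservation of linear relations) alone, and to use condition (1) (preservation of primitive collections) only for (2).

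First I would construct $\Phi$. Since $\Sigma_1$ is complete, $|\Sigma_1|=(N_1)_{\mathbf{R}}$, so the ray generators $\{u_\rho\mid\rho\in\Sigma_1(1)\}$ span $(N_1)_{\mathbf{Q}}$, and likewise $\{u_{\Psi(\rho)}\mid\rho\in\Sigma_1(1)\}$ spans $(N_2)_{\mathbf{Q}}$. Define $\Phi$ on this spanning set by $u_\rho\mapsto u_{\Psi(\rho)}$ and extend by linearity; well-definedness amounts exactly to the implication $\sum_\rho a_\rho u_\rho=0\Rightarrow\sum_\rho a_\rho u_{\Psi(\rho)}=0$, which is condition (2) of \Cref{ample_equivalence_between_fans} for integral $a_\rho$ and extends to rational $a_\rho$ by clearing denominators. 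Then $\Phi$ is surjective, since its image contains a spanning set of $(N_2)_{\mathbf{Q}}$, hence an isomorphism because $\dim_{\mathbf{Q}}(N_1)_{\mathbf{Q}}=n=\dim_{\mathbf{Q}}(N_2)_{\mathbf{Q}}$; equivalently, $u_{\Psi(\rho)}\mapsto u_\rho$ is a well-defined inverse by the converse implication in condition (2). This is statement (1); statement (2) of the conclusion is postponed below.

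For part (3), I would invoke the exact sequence $0\to\pmb{\Gamma}(G_i)\to\mathbf{Z}^{\Sigma_i(1)}\xrightarrow{\varphi_i}N_i$ of \Cref{lemma5.1.1}, so that after $-\otimes_{\mathbf{Z}}\mathbf{Q}$ we may identify $\pmb{\Gamma}(G_i)_{\mathbf{Q}}$ with $\ker(\varphi_{i,\mathbf{Q}})\subseteq\mathbf{Q}^{\Sigma_i(1)}$, where $\varphi_i(e_\rho)=u_\rho$. The bijection $\Psi$ induces a coordinate isomorphism $\Psi_\ast\colon\mathbf{Q}^{\Sigma_1(1)}\to\mathbf{Q}^{\Sigma_2(1)}$, $e_\rho\mapsto e_{\Psi(\rho)}$, and the defining property of $\Phi$ gives $\Phi\circ\varphi_{1,\mathbf{Q}}=\varphi_{2,\mathbf{Q}}\circ\Psi_\ast$; since $\Phi$ is an isomorphism, $\Psi_\ast$ carries $\ker(\varphi_{1,\mathbf{Q}})$ isomorphically onto $\ker(\varphi_{2,\mathbf{Q}})$, and this restricted map is the asserted isomorphism $\pmb{\Gamma}(G_1)_{\mathbf{Q}}\simeq\pmb{\Gamma}(G_2)_{\mathbf{Q}}$, compatible with $\Phi$ through the $\varphi_i$. (At heart this is again the observation that under $\Psi_\ast$ the space of rational relations among the $u_\rho$ goes to the space of rational relations among the $u_{\Psi(\rho)}$, which is condition (2).)

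The substantial point is (2), and I expect the combinatorial ingredient there to be the main obstacle. The isomorphism $\Phi_{\mathbf{R}}$ sends the rays of $\Sigma_1$ bijectively to those of $\Sigma_2$, so $\Phi_{\mathbf{R}}(\Sigma_1):=\{\Phi_{\mathbf{R}}(\sigma)\mid\sigma\in\Sigma_1\}$ is a complete fan with the same one-dimensional cones as $\Sigma_2$, and its primitive collections are the $\Psi$-images of those of $\Sigma_1$, hence by condition (1) exactly the primitive collections of $\Sigma_2$. It then remains to show that a complete fan is determined by its ray generators, as a configuration of vectors, together with its primitive collections; granting this, $\Phi_{\mathbf{R}}(\Sigma_1)=\Sigma_2$, which is (2). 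To prove this combinatorial fact I would argue as follows: the family $\Delta_\Sigma=\{T\subseteq\Sigma(1)\mid T\subseteq\sigma(1)\text{ for some }\sigma\in\Sigma\}$ is a downward-closed simplicial complex whose minimal non-faces are, by \Cref{primitive_collection_def}, precisely the primitive collections, so $\Delta_\Sigma=\{T\mid T\text{ contains no primitive collection}\}$ is recovered from the primitive collections alone; since a strongly convex cone is the cone over its set of rays and a proper face of such a cone has strictly fewer rays, the maximal faces of $\Delta_\Sigma$ are exactly the ray sets of the maximal cones of $\Sigma$ (using completeness so that every cone of $\Sigma$ is a face of a maximal one); therefore the maximal cones of $\Sigma$ are the cones over the maximal faces of $\Delta_\Sigma$, and $\Sigma$ is the collection of all their faces. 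For simplicial fans this is classical (cf.\ \cite{MR1133869}); since the proposition is stated for arbitrary complete fans, the care is in not trying to read individual cones off the primitive collections directly but instead routing everything through $\Delta_\Sigma$, as above.
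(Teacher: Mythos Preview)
Your argument is correct. Parts (1) and (3) are essentially the paper's proof: the paper picks $n$ independent ray generators as a basis and then checks $\Phi(u_\rho)=u_{\Psi(\rho)}$ for the rest, while you define $\Phi$ on the full spanning set and verify well-definedness directly; for (3) both of you use the commutative square built from the sequence of \Cref{lemma5.1.1}.

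For (2) you take a genuinely different route. The paper argues directly: given a maximal cone $\sigma_1\in\Sigma_1$, it shows $\Psi(\sigma_1(1))\subseteq\sigma_2(1)$ for some $\sigma_2\in\Sigma_2$ via a descending-chain argument (if $\Psi(L_1)$ is not contained in any $\sigma(1)$, pass to a smaller subset with the same property, and keep going until you hit a primitive collection, contradicting preservation of primitive collections), then uses symmetry to upgrade the inclusion $\Phi_{\mathbf{R}}(\sigma_1)\subseteq\sigma_2$ to equality. You instead abstract the problem: you observe that the family $\Delta_\Sigma=\{T\subseteq\Sigma(1)\mid T\subseteq\sigma(1)\text{ for some }\sigma\}$ has the primitive collections as its minimal non-faces, so $\Delta_\Sigma$ is recovered from the primitive collections alone, and its maximal faces are exactly the ray sets of the maximal cones; hence two complete fans on the same set of rays with the same primitive collections coincide. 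Your approach isolates the underlying combinatorial principle cleanly and makes the role of the primitive collections transparent (they are the Stanley--Reisner data of the fan), at the cost of a small extra abstraction. The paper's inline argument is really proving the same implication ``no primitive subcollection $\Rightarrow$ contained in some $\sigma(1)$'' but without naming the simplicial complex; its advantage is that no auxiliary object is introduced. Either way the key step is the same combinatorial fact, and both handle the non-simplicial case correctly because one only needs to recover the \emph{maximal} cones from $\Delta_\Sigma$, not all cones.
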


\begin{proof}
To construct the isomorphism $\Phi$, pick $n$ $\mathbf{Q}$-linearly
independent ray generators $\{u_{\rho_{1}},\dots, u_{\rho_{n}}\}$ from $\Sigma_{1}$. We may do so as $\Sigma_{1}$ is complete. We then have a  $\mathbf{Q}$-linear map
$\Phi:(N_{1})_{\mathbf{Q}}\rightarrow(N_{2})_{\mathbf{Q}}$ defined by
sending $u_{\rho_{i}}$ to $u_{\Psi(\rho_{i})}$. By the assumption on the relation among
ray generators, $\{u_{\Psi(\rho_{1})},\ldots, u_{\Psi(\rho_{n})}\}$ is independent over $\mathbf{Q}$ as well. The map $\Phi$ is therefore
an isomorphism.

We now show that $\Phi(u_{\rho})=u_{\Psi(\rho)}$ for any other $u_{\rho}$. Since $\{u_{\rho_{1}},\ldots,
u_{\rho_{n}}\}$ is a $\mathbf{Q}$-basis of $(N_{1})_{\mathbf{Q}}$, there
exist a nonzero $K\in\mathbf{Z}$ and integers $a_{i}$ such that
$K\cdot u_{\rho}=\sum_{i}a_{i}u_{\rho_{i}}.$ By the assumption on the
relation among ray generators, we also have $K\cdot
u_{\Psi(\rho)}=\sum_{i}a_{i}u_{\Psi(\rho_{i})}$. Therefore, we get 
$$K\cdot\Phi(u_{\rho})=\Phi(K\cdot
u_{\rho})=\sum_{i}a_{i}\Phi(u_{\rho_{i}})=\sum_{i}a_{i}u_{\Psi(\rho_{i})}=K\cdot
u_{\Psi(\rho)}.$$
Hence we have $\Phi(u_{\rho})=u_{\Psi(\rho)}$ for
all $\rho$. This proves statement(1).

For statement (2) on the cones, it is sufficient to show that $\Phi_{\mathbf{R}}(\sigma_{1})\in\Sigma_{2}(n)$ for any $\sigma_{1}\in\Sigma_{1}(n)$ because we can apply the result to $\Phi_{\mathbf{R}}^{-1}$ on the cones in $\Sigma_{2}(n)$ and because a fan is completely determined by its maximal cones. We can further reduce to show that for any $\sigma_{1}\in\Sigma_{1}(n)$, there exists a cone $\sigma_{2}\in\Sigma_{2}(n)$ such that $\Phi_{\mathbf{R}}(\sigma_{1})\subset \sigma_{2}$. For then we can apply the result to $\Phi_{\mathbf{R}}^{-1}$ on $\sigma_{2}$ and get another inclusion $$\sigma_{1}\subset\Phi^{-1}_{\mathbf{R}}(\sigma_{2})\subset \tilde{\sigma_{1}}\text{ for some }\tilde{\sigma_{1}}\in\Sigma_{1}(n).$$ The maximality of $\sigma_{1}$ as a cone in $\Sigma_{1}$ would imply $\sigma_{1}=\Phi_{\mathbf{R}}^{-1}(\sigma_{2}),$ which in turn implies $\Phi_{\mathbf{R}}(\sigma_{1})=\sigma_{2}$.

Let $\sigma_{1}$ be a cone in $\Sigma_{1}(n)$ and $L_{1}=\sigma_{1}(1)$ be the set of rays in
$\sigma_{1}$. Then any subset of $L_{1}$ (including $L_{1}$) does not form a primitive collection of $\Sigma_{1}$. By assumption any subset of $\Psi(L_{1})$ does not form a primitive collection of $\Sigma_{2}$. Therefore, either 
\begin{enumerate}
\item $\{u_{\Psi(\rho)}\}_{\rho\in L_{1}}$ is contained the list of ray
  generators of some cone in $\Sigma_{2}$, or 
\item there is a proper subset $L_{2}\subsetneq L_{1}$ such that 
  $\{u_{\Psi(\rho)}\}_{\rho\in L_{2}}$ is not contained in the list of ray
  generators of any cone in $\Sigma_{2}$.
\end{enumerate}
Suppose condition (1) fails and let $L_{2}\subsetneq L_{1}$ be the subset from condition (2). As $\Psi(L_{2})$ does not form a primitive
collection in $\Sigma_{2}$, the same two conditions can be said about
$\Psi(L_{2})$. By construction of $L_{2}$, condition (1) fails for $\Psi(L_{2})$ also. Hence we obtain a proper subset
$L_{3}\subsetneq L_{2}$ such that (1) fails for $\Psi(L_{3})$. This process cannot continue indefinitely as $L_{1}$ is a finite set. Therefore, condition (1) holds for $\Psi(L_{1})$ in the first place. Namely, $\{u_{\Psi(\rho)}\}_{\rho\in L_{1}}$ is contained the list of ray
generators of some cone in $\Sigma_{2}$ 

Let $\sigma_{2}$ be a cone in $\Sigma_{2}$ whose list of ray generators contains
$\{u_{\Psi(\rho)}\}_{\rho\in L_{1}}$. Then clearly $\Phi_{\mathbf{R}}(\sigma_{1})\subset\sigma_{2}$. The validity of statement (2) is established.

For statement (3), let $\Sigma(1)$ be an indexing set of $\Sigma_{1}$ and let $\{e_{i}\}_{i\in\Sigma(1)}$ be the standard basis of $\mathbf{Q}^{\Sigma(1)}$. Define the $\mathbf{Q}$-linear maps $$\pi_{1}:\mathbf{Q}^{\Sigma(1)}\rightarrow (N_{1})_{\mathbf{Q}}\text{ } \big(\text{resp. }\pi_{2}:\mathbf{Q}^{\Sigma(1)}\rightarrow(N_{2})_{\mathbf{Q}}\big)$$ by $$e_{i}\mapsto u_{\rho_{i}}\text{ }\big(\text{resp. }e_{i}\mapsto u_{\Psi(\rho_{i})}\big).$$ Then $\Phi$ is compatible with these two maps by statement (1). Each $\pi_{i}$ is surjective with kernel $\pmb{\Gamma}(G_{i})_{\mathbf{Q}}$ by statement (4) from \Cref{lemma5.1.1}. Hence the isomorphism $\Phi$ fits into the following commutative diagram of short exact sequences \begin{equation}\label{amply_equi_diagram_SES}\begin{tikzcd}[sep = small]
0\arrow[r]&\pmb{\Gamma}(G_{1})_{\mathbf{Q}}\arrow[r]\arrow[d,"\varphi"]&\mathbf{Q}^{\Sigma(1)}\arrow[r]\arrow[d,equal]&(N_{1})_{\mathbf{Q}}\arrow[r]\arrow[d,"\Phi"]&0\\
0\arrow[r]&\pmb{\Gamma}(G_{2})_{\mathbf{Q}}\arrow[r]&\mathbf{Q}^{\Sigma(1)}\arrow[r]&(N_{2})_{\mathbf{Q}}\arrow[r]&0
\end{tikzcd}.\end{equation}
Since the right and middle vertical map are isomorphisms, the induced map $\varphi:\pmb{\Gamma}(G_{1})_{\mathbf{Q}}\rightarrow\pmb{\Gamma}(G_{2})_{\mathbf{Q}}$ is an isomorphism.
\end{proof}

Ample equivalence has the following implication when a complete fan is amply equivalent to a smooth complete fan:

\begin{corollary}\label{StrucSmooCompFan}
Let $\Sigma_{1}\subset(N_{1})_{\mathbf{R}},\Sigma_{2}\subset(N_{2})_{\mathbf{R}}$
be two amply equivalent complete fans with $\Sigma_{1}$ smooth. If $\Phi:(N_{1})_{\mathbf{Q}}\rightarrow
(N_{2})_{\mathbf{Q}}$
is the $\mathbf{Q}$-linear isomorphism constructed in \Cref{StructCompFan}, its restriction to $N_{1}$ factors through $N_{2}$ as a $\mathbf{Z}$-linear monomorphism $N_{1}\hookrightarrow N_{2}$.
Moreover, $\Sigma_{2}$ is simplicial and the following statements are
equivalent:
\begin{enumerate}
\item One of the maximal cones in $\Sigma_{2}$ is smooth.
\item The restriction of $\Phi$ to $N_{1}$ factors through $N_{2}$ as a $\mathbf{Z}$-isomorphism $N_{1}\simeq N_{2}$.
\item $\Sigma_{2}$ is a smooth fan. 
\end{enumerate}
In this case, $\varphi$ induces a toric isomorphism $X_{\Sigma_{1}}\simeq X_{\Sigma_{2}}$.
\end{corollary}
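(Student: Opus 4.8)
The plan is to transport everything through the $\mathbf{Q}$-linear isomorphism $\Phi\colon(N_1)_{\mathbf{Q}}\to(N_2)_{\mathbf{Q}}$ furnished by \Cref{StructCompFan}, keeping in mind its two key features: $\Phi(u_\rho)=u_{\Psi(\rho)}$ for every $\rho\in\Sigma_1(1)$, and its $\mathbf{R}$-extension $\Phi_{\mathbf{R}}$ carries the cones of $\Sigma_1$ bijectively onto the cones of $\Sigma_2$ (hence, being a linear bijection of the ambient space, it respects the face relation and maximality of cones, so it maps the rays of a cone $\sigma_1$ onto the rays of $\Phi_{\mathbf{R}}(\sigma_1)$). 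Since $u_{\Psi(\rho)}$ is by definition the primitive lattice generator of the ray $\Psi(\rho)$ of $\Sigma_2$, the map $\Phi$ sends the ray generators of $\sigma_1$ to the ray generators of $\Phi_{\mathbf{R}}(\sigma_1)$ with no rescaling; I will use this matching throughout.

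First I would establish the monomorphism $N_1\hookrightarrow N_2$. Because $\Sigma_1$ is complete it contains an $n$-dimensional cone $\sigma_1$, and because $\Sigma_1$ is smooth the $n$ ray generators $u_{\rho_1},\dots,u_{\rho_n}$ of $\sigma_1$ form a $\mathbf{Z}$-basis of $N_1$. Then $\Phi(N_1)=\mathbf{Z}\langle u_{\Psi(\rho_1)},\dots,u_{\Psi(\rho_n)}\rangle\subseteq N_2$, so the restriction of $\Phi$ to $N_1$ factors through $N_2$, and it is injective since $\Phi$ is. Simpliciality of $\Sigma_2$ is then immediate: a smooth fan is simplicial, a linear isomorphism preserves $\mathbf{R}$-linear independence of generating sets, and by \Cref{StructCompFan} every cone of $\Sigma_2$ has the form $\Phi_{\mathbf{R}}(\sigma_1)$ for some $\sigma_1\in\Sigma_1$.

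Next I would prove the equivalence of the three conditions by the cycle ``$\Sigma_2$ smooth'' $\Rightarrow$ ``some maximal cone of $\Sigma_2$ is smooth'' $\Rightarrow$ ``$\Phi|_{N_1}\colon N_1\simeq N_2$'' $\Rightarrow$ ``$\Sigma_2$ smooth''. The first implication is trivial. For the second, let $\sigma_2\in\Sigma_2$ be a smooth maximal cone; it is $n$-dimensional (the fan is complete), so its ray generators form a $\mathbf{Z}$-basis of $N_2$. Writing $\sigma_2=\Phi_{\mathbf{R}}(\sigma_1)$ with $\sigma_1\in\Sigma_1$ maximal and hence, by smoothness of $\Sigma_1$, smooth, the ray generators of $\sigma_1$ are a $\mathbf{Z}$-basis of $N_1$ whose $\Phi$-image is exactly the $\mathbf{Z}$-basis of $N_2$ above; thus $\Phi(N_1)=N_2$. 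For the third implication, run the matching in reverse: every maximal cone of $\Sigma_2$ equals $\Phi_{\mathbf{R}}(\sigma_1)$ for a smooth maximal $\sigma_1\in\Sigma_1$ whose ray generators are a $\mathbf{Z}$-basis of $N_1$; if $\Phi|_{N_1}$ is a $\mathbf{Z}$-isomorphism their images are a $\mathbf{Z}$-basis of $N_2$, so that cone is smooth, and hence $\Sigma_2$ is smooth.

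Finally, under these equivalent conditions $\Phi\colon N_1\to N_2$ is a lattice isomorphism carrying the fan $\Sigma_1$ onto $\Sigma_2$, i.e.\ an isomorphism of fans, which induces a toric isomorphism $X_{\Sigma_1}\simeq X_{\Sigma_2}$ by functoriality of the toric-variety construction; its compatibility with $\varphi$ is read off from diagram $(\ref{amply_equi_diagram_SES})$, whose vertical maps are now $\mathbf{Z}$-isomorphisms. I do not expect a genuine obstacle here: the argument is bookkeeping on top of \Cref{StructCompFan}. The one point that must be handled carefully is precisely the ray-generator matching noted in the first paragraph --- that $\Phi_{\mathbf{R}}$ sends the rays of a maximal cone $\sigma_1$ onto those of $\Phi_{\mathbf{R}}(\sigma_1)$ and that $\Phi$ sends their primitive generators to primitive generators, with no denominators appearing --- since this is what legitimizes transferring the statement ``these vectors form a $\mathbf{Z}$-basis'' between the two lattices $N_1$ and $N_2$.
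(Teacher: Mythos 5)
Your proof is correct and follows essentially the same route as the paper's: obtain the factorization $\Phi(N_1)\subset N_2$ from the $\mathbf{Z}$-basis given by the ray generators of a smooth maximal cone of $\Sigma_1$, read simpliciality of $\Sigma_2$ off the fact that $\Phi_{\mathbf{R}}$ carries cones to cones, and run the equivalence through the observation that $\Phi$ matches ray generators with ray generators. The paper's proof states the equivalence more tersely (``$\varphi$ is an isomorphism iff it sends a $\mathbf{Z}$-basis to a $\mathbf{Z}$-basis; the equivalence follows from statement (2) of \Cref{StructCompFan}''), while you unwind the same reasoning into an explicit cycle $(3)\Rightarrow(1)\Rightarrow(2)\Rightarrow(3)$ and flag the key point --- that $\Phi(u_\rho)=u_{\Psi(\rho)}$ is already the \emph{primitive} generator, so no denominators appear --- which the paper leaves implicit.
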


\begin{proof}
Since $\Sigma_{1}$ is smooth and complete, there is a maximal cone
$\sigma_{1}\in\Sigma_{1}$ whose ray generators form a $\mathbf{Z}$-basis for
$N_{1}$. Hence every element in $N_{1}$ is a $\mathbf{Z}$-combination of ray generators of $\sigma_{1}$. By the construction of $\Phi$, we have $\Phi(N_{1})\subset N_{2}$. Namely, the restriction of $\Phi$ to $N_{1}$ factors through $N_{2}$.

Let $\varphi:N_{1}\rightarrow N_{2}$ be the factorization. Since $\Phi:(N_{1})_{\mathbf{Q}}\rightarrow (N_{2})_{\mathbf{Q}}$ is an
isomorphism, $\varphi$ is injective.

That $\Sigma_{2}$ is simplicial follows directly from statement (2) in \Cref{StructCompFan} that every cone in $\Sigma_{2}$ is the image of a smooth cone in $\Sigma_{1}$ under the isomorphism $\Phi_{\mathbf{R}}$. 

Finally, $\varphi$ is an isomorphism if and only if $\varphi$ sends a $\mathbf{Z}$-basis of $N_{1}$ to a $\mathbf{Z}$-basis of $N_{2}$. The equivalence of the three statements now follows from statement (2) in \Cref{StructCompFan}.
\end{proof}

Here is an example of a pair of amply equivalent fans where one of them is smooth but the other is only simplicial.

\begin{example}
Let $\Sigma_{1}\subset\mathbf{R}^{2}$ be the complete fan with ray generators $u_{0}=(1,0)$, $u_{2}=(0,1)$, and $u_{3}=(-1,-1)$ and let $\Sigma_{2}\subset\mathbf{R}^{2}$ be the complete fan with ray generators $v_{0}=(2,3)$, $v_{1}=(1,-1)$ and $v_{2}=(-3,-2)$. Then since $\sum_{i}u_{i}=\sum_{i}v_{i}=(0,0)$, the assignment $$\text{Cone }(u_{i})\mapsto\text{Cone }(v_{i})$$ is a bijection between rays that preserves relations among the ray generators. Moreover, both $\Sigma_{1}(1)$ and $\Sigma_{2}(1)$ are the only primitive collections for $\Sigma_{1}$ and $\Sigma_{2}$ respectively. Hence $\Sigma_{1}$ and $\Sigma_{2}$ are amply equivalent but $\Sigma_{2}$ is not smooth.
\end{example}

\Cref{adjunction} below is an equality that we will use extensively. Let $k$ be a field. Suppose $V_{i},W_{i}$ for $i=1,2$ are finite dimensional vector
spaces over $k$ with a perfect pairing
$\langle-,-\rangle_{i}:V_{i}\times W_{i}\rightarrow k$ that identifies
$V_{i}$ and $W_{i}$ as duals respectively. Then for any $k$-linear map
$f:W_{1}\rightarrow W_{2}$ and $v\in V_{2}$, $w\in W_{1}$, we have 
\begin{equation}\label{adjunction}\langle v_{2},f(w)\rangle_{2}=\langle f^{\ast}(v_{2}),w\rangle_{1}\end{equation} where $f^{\ast}:V_{2}\rightarrow V_{1}$ is the dual of $f$.

We may now make sense of the term "ample equivalence" between fans. We show that two complete toric varieties where their fans are amply equivalent have isomorphic ample cones over the field of rational numbers (and therefore over the field of real numbers as well).

\begin{proposition}\label{why_amply_equivalent}
Let $\Sigma_{1}$ and $\Sigma_{2}$ be two amply equivalent fans and $$G_{i}=\Hom_{Z}(\Cl(X_{\Sigma_{i}}),\mathbf{C}^{\times})\text{ for }i=1,2.$$ Let $\varphi:\pmb{\Gamma}(G_{1})_{\mathbf{Q}}\rightarrow\pmb{\Gamma}(G_{2})_{\mathbf{Q}}$ be the isomorphism from statement (3) of \Cref{StructCompFan}. Then the dual of $\varphi$ is an isomorphism $\varphi^{\ast}:\Cl(X_{\Sigma_{2}})_{\mathbf{Q}}\rightarrow\Cl(X_{\Sigma_{1}})_{\mathbf{Q}}$ that restricts to $\Pic(X_{\Sigma_{2}})_{\mathbf{Q}}\simeq\Pic(X_{\Sigma_{1}})_{\mathbf{Q}}$ with $\varphi^{\ast}(\Amp(X_{\Sigma_{2}})_{\mathbf{Q}})= \Amp(X_{\Sigma_{1}})_{\mathbf{Q}}$.
\end{proposition}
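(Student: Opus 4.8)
First I would pin down $\varphi^{\ast}$ explicitly. Since $\Sigma_1$ and $\Sigma_2$ are complete, \Cref{lemma5.1.1} identifies $\Cl(X_{\Sigma_i})_{\mathbf{Q}}$ with $\pmb{\chi}(G_i)_{\mathbf{Q}}$, which under the perfect pairing $\langle-,-\rangle_i$ is the $\mathbf{Q}$-dual of $\pmb{\Gamma}(G_i)_{\mathbf{Q}}$; concretely, writing $q_i\colon\mathbf{Q}^{\Sigma(1)}\twoheadrightarrow\Cl(X_{\Sigma_i})_{\mathbf{Q}}$ for the map of (\ref{sesWDiv}) and $\pmb{\Gamma}(G_i)_{\mathbf{Q}}\hookrightarrow\mathbf{Q}^{\Sigma(1)}$ as in (\ref{one_PS_SES}), one has $\langle q_i(D),\lambda\rangle_i=\sum_{\rho}a_{\rho}\lambda_{\rho}$ for $D=\sum_{\rho}a_{\rho}D_{\rho}$. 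Thus $\varphi^{\ast}$, the transpose of the isomorphism $\varphi$, is an isomorphism $\Cl(X_{\Sigma_2})_{\mathbf{Q}}\to\Cl(X_{\Sigma_1})_{\mathbf{Q}}$. Moreover, the middle column of (\ref{amply_equi_diagram_SES}) being $\mathrm{id}_{\mathbf{Q}^{\Sigma(1)}}$, a vector $\lambda\in\pmb{\Gamma}(G_1)_{\mathbf{Q}}$ and its image $\varphi(\lambda)$ have the same coordinates in $\mathbf{Q}^{\Sigma(1)}$, so for any $\mathbf{Q}$-Weil divisor $D\in\mathbf{Q}^{\Sigma(1)}$ -- where $\Sigma_1(1)$ and $\Sigma_2(1)$ are identified via $\Psi$ --
\[
\langle\varphi^{\ast}(q_2(D)),\lambda\rangle_1=\langle q_2(D),\varphi(\lambda)\rangle_2=\sum_{\rho}a_{\rho}\lambda_{\rho}=\langle q_1(D),\lambda\rangle_1,
\]
and, the pairing being perfect, $\varphi^{\ast}(q_2(D))=q_1(D)$. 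In words, $\varphi^{\ast}$ reinterprets a $\mathbf{Q}$-Weil divisor on $X_{\Sigma_2}$ as the same formal combination of torus-invariant prime divisors on $X_{\Sigma_1}$, so the problem reduces to comparing which $D$ are $\mathbf{Q}$-Cartier, respectively $\mathbf{Q}$-ample, on the two varieties.

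Next I would transport the data $m_{\sigma}$ from \Cref{numampleness}. Recall $\Pic(X_{\Sigma_i})_{\mathbf{Q}}$ is the subspace of $\mathbf{Q}$-Cartier classes in $\Cl(X_{\Sigma_i})_{\mathbf{Q}}$; clearing denominators in \Cref{numampleness}, $q_i(D)\in\Pic(X_{\Sigma_i})_{\mathbf{Q}}$ iff for every maximal cone $\sigma$ there is $m_{\sigma}\in(M_i)_{\mathbf{Q}}$ with $\langle m_{\sigma},u_{\rho}\rangle=-a_{\rho}$ for all $\rho\in\sigma(1)$, and $q_i(D)\in\Amp(X_{\Sigma_i})_{\mathbf{Q}}$ iff in addition $\langle m_{\sigma},u_{\rho}\rangle>-a_{\rho}$ for all $\rho\notin\sigma(1)$. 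Let $\Phi\colon(N_1)_{\mathbf{Q}}\simeq(N_2)_{\mathbf{Q}}$ be the isomorphism of \Cref{StructCompFan} and $\Phi^{\ast}\colon(M_2)_{\mathbf{Q}}\simeq(M_1)_{\mathbf{Q}}$ its dual. By \Cref{StructCompFan}(1) and (2), and since the maximal cones of a complete $n$-dimensional fan are exactly its $n$-dimensional cones, $\sigma\mapsto\Phi_{\mathbf{R}}(\sigma)$ is a bijection between the maximal cones of $\Sigma_1$ and those of $\Sigma_2$ carrying $\sigma(1)$ to $\Psi(\sigma(1))$. Fix such a $\sigma$ and put $\sigma'=\Phi_{\mathbf{R}}(\sigma)$. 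The adjunction (\ref{adjunction}) gives $\langle m,u_{\Psi(\rho)}\rangle_2=\langle m,\Phi(u_{\rho})\rangle_2=\langle\Phi^{\ast}(m),u_{\rho}\rangle_1$ for all $m\in(M_2)_{\mathbf{Q}}$, whence $m_{\sigma}\in(M_1)_{\mathbf{Q}}$ realizes the equalities (resp.\ the strict inequalities) of \Cref{numampleness} for $\sigma$ on $X_{\Sigma_1}$ precisely when $m_{\sigma'}:=(\Phi^{\ast})^{-1}(m_{\sigma})$ realizes them for $\sigma'$ on $X_{\Sigma_2}$.

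Running this over all maximal cones shows that the conditions defining $\Pic(X_{\Sigma_1})_{\mathbf{Q}}$ and $\Pic(X_{\Sigma_2})_{\mathbf{Q}}$ (resp.\ $\Amp(X_{\Sigma_1})_{\mathbf{Q}}$ and $\Amp(X_{\Sigma_2})_{\mathbf{Q}}$) hold for the same $\mathbf{Q}$-Weil divisors $D$. Combined with $\varphi^{\ast}(q_2(D))=q_1(D)$, this gives $\varphi^{\ast}(\Pic(X_{\Sigma_2})_{\mathbf{Q}})=\Pic(X_{\Sigma_1})_{\mathbf{Q}}$ and $\varphi^{\ast}(\Amp(X_{\Sigma_2})_{\mathbf{Q}})=\Amp(X_{\Sigma_1})_{\mathbf{Q}}$; as $\varphi^{\ast}$ is already an isomorphism of the ambient $\mathbf{Q}$-vector spaces, its restriction to $\Pic(X_{\Sigma_2})_{\mathbf{Q}}$ is an isomorphism onto $\Pic(X_{\Sigma_1})_{\mathbf{Q}}$.

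The one delicate point is the first step -- correctly viewing $\Cl(X_{\Sigma_i})_{\mathbf{Q}}$ as the dual of $\pmb{\Gamma}(G_i)_{\mathbf{Q}}$, so that $\varphi^{\ast}$ genuinely lands in $\Cl(X_{\Sigma_1})_{\mathbf{Q}}$, and reading off $\varphi^{\ast}(q_2(D))=q_1(D)$ from diagram (\ref{amply_equi_diagram_SES}). Once that is in place, transporting $m_{\sigma}$ by $\Phi^{\ast}$ and comparing the (in)equalities of \Cref{numampleness} termwise -- as well as the passage from the integral to the rational form of that criterion -- are routine.
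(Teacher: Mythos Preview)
Your proof is correct and follows essentially the same approach as the paper: identify $\varphi^{\ast}$ via the dual of diagram (\ref{amply_equi_diagram_SES}) as sending $\sum q_{\rho}D_{\Psi(\rho)}$ to $\sum q_{\rho}D_{\rho}$, then transport the data $m_{\sigma}$ of \Cref{numampleness} through $\Phi^{\ast}$ using the adjunction (\ref{adjunction}) and the bijection between maximal cones from \Cref{StructCompFan}(2). Your presentation is slightly more streamlined---you work over $\mathbf{Q}$ throughout and establish the correspondence of $m_{\sigma}$'s bijectively, whereas the paper explicitly clears denominators (finding integers $K',K''$) and proves one inclusion $\varphi^{\ast}(\Pic(X_{\Sigma_2})_{\mathbf{Q}})\subset\Pic(X_{\Sigma_1})_{\mathbf{Q}}$ before appealing to symmetry via $(\varphi^{\ast})^{-1}$---but the underlying argument is the same.
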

\begin{proof}
Let $\Psi:\Sigma_{1}(1)\simeq\Sigma_{2}(1)$ be the bijection between rays that preserves the primitive collections and relations among the ray generators. By statment (4) from \Cref{lemma5.1.1}, the short exact sequences dual to those in diagram (\ref{amply_equi_diagram_SES}) fit into the following commutative diagram: $$\begin{tikzcd}[sep=small]
0\arrow[r]&(M_{2})_{\mathbf{Q}}\arrow[r]\arrow[d,"\Phi^{\ast}"]&\mathbf{Q}^{\Sigma(1)}\arrow[r]\arrow[d,equal]&\Cl(X_{\Sigma_{2}})_{\mathbf{Q}}\arrow[r]\arrow[d,"\varphi^{\ast}"]&0\\
0\arrow[r]&(M_{1})_{\mathbf{Q}}\arrow[r]&\mathbf{Q}^{\Sigma(1)}\arrow[r]&\Cl(X_{\Sigma_{1}})_{\mathbf{Q}}\arrow[r]&0
\end{tikzcd}$$
where $\varphi^{\ast}$ is an isomorphism and sends a class of $\mathbf{Q}$-Weil divisor
$\sum_{i}q_{i}D_{\Psi(\rho_{i})}$ in $\Cl(X_{\Sigma_{2}})_{\mathbf{Q}}$
to the class of $\sum_{i}q_{i}D_{\rho_{i}}$ in $\Cl(X_{\Sigma_{1}})_{\mathbf{Q}}$. We now show that $\varphi^{\ast}$ restrict to an isomorphism $\Pic(X_{\Sigma_{2}})_{\mathbf{Q}}\simeq\Pic(X_{\Sigma_{1}})_{\mathbf{Q}}$. 

For this it is enough to show that $\varphi^{\ast}(\Pic(X_{\Sigma_{2}})_{\mathbf{Q}})\subset\Pic(X_{\Sigma_{1}})_{\mathbf{Q}}$ because we may apply the result to the inverse of $\varphi^{\ast}$ and obtain another inclusion $(\varphi^{\ast})^{-1}(\Pic(X_{\Sigma_{1}})_{\mathbf{Q}})\subset \Pic(X_{\Sigma_{2}})_{\mathbf{Q}}$. We then have  $$\Pic(X_{\Sigma_{2}})_{\mathbf{Q}}\subset(\varphi^{\ast})^{-1}(\Pic(X_{\Sigma_{1}})_{\mathbf{Q}})\subset \Pic(X_{\Sigma_{2}})_{\mathbf{Q}}$$ so that $\Pic(X_{\Sigma_{2}})_{\mathbf{Q}})=(\varphi^{\ast})^{-1}(\Pic(X_{\Sigma_{1}})_{\mathbf{Q}}),$ which in turn implies $\varphi^{\ast}(\Pic(X_{\Sigma_{2}})_{\mathbf{Q}})=\Pic(X_{\Sigma_{1}})_{\mathbf{Q}}$.

Let $D=\sum_{i}q_{i}D_{\Psi(\rho_{i})}$ be a 
$\mathbf{Q}$-Cartier divisor on $X_{\Sigma_{2}}$. We need to construct an integer $K>0$
such that $K\cdot \varphi^{\ast}(D)$ is Cartier on $X_{\Sigma_{1}}$. For this, first let us choose an integer $K^{\prime}>0$ such that 
\begin{enumerate}
\item $K^{\prime}\cdot q_{\rho}\in\mathbf{Z}$ for all $\rho$, and
\item $K^{\prime}\cdot D$ is Cartier on $X_{\Sigma_{2}}$.
\end{enumerate}
Write $K^{\prime}\cdot D$ as $\sum_{i}a_{i}D_{\Psi(\rho_{i})}$. Then by \Cref{numampleness}, for every maximal cone $\sigma_{2}$ in $\Sigma_{2}$, there exists an $m_{\sigma_{2}}\in M_{2}$ such that $$\langle m_{\sigma_{2}},u_{\Psi(\rho_{i})}\rangle=-a_{i}\text{ for all }\Psi(\rho_{i})\in\sigma_{2}(1).$$ Now choose an integer $K^{\prime\prime}$ large enough so that $K^{\prime\prime}\cdot \Phi^{\ast}(m_{\sigma_{2}})\in M_{1}$ for all $\sigma_{2}\in\Sigma_{2}(n)$. We claim that $K^{\prime}\cdot K^{\prime\prime}\cdot \varphi^{\ast}(D)$ is Cartier on $X_{\Sigma_{1}}$.

For this, let $\sigma_{1}$ be a maximal cone in $\Sigma_{1}$. Then by \Cref{StructCompFan}, $\Phi(\sigma_{1})$ is a maximal cone in $\Sigma_{2}$. Let
$\langle-,-\rangle_{i}:(M_{i})_{\mathbf{Q}}\times (N_{i})_{\mathbf{Q}}\rightarrow\mathbf{Q}$ be the $\mathbf{Q}$-extension of the 
natural perfect pairing. Then by \Cref{adjunction}, we have 
$$\langle K^{\prime\prime}\cdot \Phi^{\ast}(m_{\Phi(\sigma_{1})}),u_{\rho_{i}}\rangle_{1}=\langle K^{\prime\prime}\cdot m_{\Phi(\sigma_{1})},u_{\Psi(\rho_{i})}\rangle_{2}=-K^{\prime\prime}\cdot a_{i}\text{ for all }\rho_{i}\in\sigma_{1}(1).$$ By \Cref{numampleness}, $K^{\prime\prime}\cdot K^{\prime}\cdot\varphi^{\ast}(D)$ is Cartier on $X_{\Sigma_{1}}$, proving the equality $\varphi^{\ast}(\Pic(X_{\Sigma_{2}})_{\mathbf{Q}})=\Pic(X_{\Sigma_{1}})_{\mathbf{Q}}.$
The statement on ample cones can be proved similarly using \Cref{numampleness} and \Cref{adjunction}. The proposition is proved.
\end{proof}

We are now ready to prove that the variation of stratifications from toric GIT is intrinsic to the primitive collections and relations among the ray generators of a fan. If two complete fans $\Sigma_{1},\Sigma_{2}$ are amply equivalent, then by \Cref{bad_locus_ample}, the unstable loci in their GIT quotient constructions can be identified. We will write $Z(\Sigma)$ as the common unstable locus. 

\begin{theorem}[VSIT Adjunction]\label{ToricVSIT}
Suppose $\Sigma_{1}$ and $\Sigma_{2}$ are two amply equivalent fans. Let $$G_{i}=\Hom_{\mathbf{Z}}(\Cl(X_{\Sigma_{i}}),\mathbf{C}^{\times})\text{ for }i=1,2$$ and $\varphi:\pmb{\Gamma}(G_{1})_{\mathbf{Q}}\simeq\pmb{\Gamma}(G_{2})_{\mathbf{Q}}$ be the isomorphism from \Cref{StructCompFan}.  Then the assignment of the strata  \begin{equation}\label{adjunction_stratification}\begin{tikzcd}\text{\huge $S$}^{\varphi(\chi_{D})}_{\lambda}\arrow[r,mapsto,shift left=1]&\text{\huge $S$}^{\chi_{D}}_{\varphi^{\ast}(\lambda)}\end{tikzcd}\end{equation} defined for all $D\in\Amp(X_{\Sigma_{2}})_{\mathbf{Q}}$ and $\lambda\in\pmb{\Gamma}(G_{1})_{\mathbf{Q}}$ is an equivalence of stratification of $Z(\Sigma)$. The theorem also holds for $D\in\Amp(X_{\Sigma_{2}})_{\mathbf{R}}$ and $\lambda\in\pmb{\Gamma}(G_{1})_{\mathbf{R}}$.
\end{theorem}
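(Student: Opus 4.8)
The plan is to show that, after transporting everything through the common affine space $\mathbf{C}^{\Sigma(1)}$, the two stratifications are literally the same, so that the assignment (\ref{adjunction_stratification}) is essentially the identity on strata. Throughout, write $Z(\Sigma)\subset\mathbf{C}^{\Sigma(1)}$ for the common unstable locus; that this is well defined follows from \Cref{bad_locus_ample}, \Cref{proposition5.1.6}, and the fact that $\Psi$ preserves primitive collections.

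The first step is to unwind $\varphi$. In diagram (\ref{amply_equi_diagram_SES}) the middle vertical arrow is the identity on $\mathbf{Q}^{\Sigma(1)}$ and $\Phi\circ\pi_1=\pi_2$ by statement (1) of \Cref{StructCompFan}, so $\pmb{\Gamma}(G_1)_{\mathbf{Q}}=\ker\pi_1=\ker\pi_2=\pmb{\Gamma}(G_2)_{\mathbf{Q}}$ as subspaces of $\mathbf{Q}^{\Sigma(1)}$, and $\varphi$, hence its extension $\varphi_{\mathbf{R}}$, is the restriction of the identity. Since the inner products on $\pmb{\Gamma}(G_1)_{\mathbf{R}}$ and $\pmb{\Gamma}(G_2)_{\mathbf{R}}$ are both the restriction of the standard inner product of $\mathbf{R}^{\Sigma(1)}$, the map $\varphi_{\mathbf{R}}$ is an isometry. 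This is the only genuinely new input; everything else is functoriality of the Kempf construction.

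Next I would transport the remaining data. For $D\in\Amp(X_{\Sigma_2})_{\mathbf{Q}}$ (or $\Amp(X_{\Sigma_2})_{\mathbf{R}}$), \Cref{why_amply_equivalent} gives $\varphi^{\ast}(D)\in\Amp(X_{\Sigma_1})_{\mathbf{Q}}$ (resp. $\Amp(X_{\Sigma_1})_{\mathbf{R}}$). Using the defining property of $\chi^{\ast}$ together with the adjunction $\langle\varphi^{\ast}(\chi_D),v\rangle=\langle\chi_D,\varphi_{\mathbf{R}}(v)\rangle$ from (\ref{adjunction}), one checks $\varphi_{\mathbf{R}}(\chi^{\ast}_{\varphi^{\ast}(D)})=\chi^{\ast}_{D}$; that is, the two instability functionals agree on the common space. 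Likewise, for $x\in Z(\Sigma)$ with state $S_x$, the functional $\langle\chi_{D_\rho},-\rangle$ is simply the $\rho$-th coordinate functional on $\pmb{\Gamma}(G_i)_{\mathbf{R}}\subset\mathbf{R}^{\Sigma(1)}$ for $i=1,2$ (dual to (\ref{one_PS_SES})), so the cone $\sigma_{S_x}$ built for the $G_1$-action and the cone $\sigma_{S_x}$ built for the $G_2$-action are the same subset of the common space, carried to one another by $\varphi_{\mathbf{R}}$.

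The last step is to feed this into Kempf's theorem in the form of \Cref{Kempf_ext_to_R_ample} (which rests on \Cref{unique_cone_min}). The adapted one parameter subgroups $\lambda^{\varphi^{\ast}(D)}_x$ and $\lambda^{D}_x$ are the unique minimisers of $v\mapsto\langle\chi,v\rangle/\|v\|$ over the same cone with the same norm, so $\varphi(\lambda^{\varphi^{\ast}(D)}_x)=\lambda^{D}_x$ and $M^{\varphi^{\ast}(D)}(x)=M^{D}(x)$. Hence $S^{\varphi^{\ast}(\chi_D)}_{\lambda}=\{x\in Z(\Sigma):\lambda^{\varphi^{\ast}(D)}_x=\lambda\}$ coincides with $\{x\in Z(\Sigma):\lambda^{D}_x=\varphi(\lambda)\}=S^{\chi_D}_{\varphi(\lambda)}$, so (\ref{adjunction_stratification}) preserves strata; because $\varphi$ is an isometry it preserves the norm ordering, and therefore the strict partial order on indices, and it fixes the common semistable stratum $\mathbf{C}^{\Sigma(1)}\setminus Z(\Sigma)$. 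By \Cref{two_types_of_variation_def} this is an equivalence of stratifications, proving the $\mathbf{Q}$-case; the $\mathbf{R}$-case is word for word the same, using $\varphi_{\mathbf{R}}$ and the $\mathbf{R}$-part of \Cref{why_amply_equivalent}. The main obstacle is bookkeeping rather than mathematics: one must keep straight that $\Cl(X_{\Sigma_1})$ and $\Cl(X_{\Sigma_2})$ are only $\mathbf{Q}$-isomorphic while $\pmb{\Gamma}(G_1)$ and $\pmb{\Gamma}(G_2)$ genuinely coincide as sublattices of $\mathbf{Z}^{\Sigma(1)}$, so that the various identifications (characters versus divisors, one parameter subgroups versus relations among ray generators) are mutually compatible.
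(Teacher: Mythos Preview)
Your proposal is correct and follows essentially the same route as the paper: both arguments observe that $\varphi$ is the restriction of the identity on $\mathbf{Q}^{\Sigma(1)}$ (hence an isometry for the restricted standard norm), invoke the adjunction $\langle\varphi^{\ast}(\chi_D),\lambda\rangle=\langle\chi_D,\varphi(\lambda)\rangle$, and then appeal to the uniqueness of Kempf's adapted one parameter subgroup to match strata and their ordering. Your version is slightly more explicit in identifying the cones $\sigma_{S_x}$ and the vectors $\chi^{\ast}_D$ on the two sides, whereas the paper compresses this into the single displayed equality of normalised pairings, but the content is the same.
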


\begin{proof}
Recall that we had a diagram on short exact
sequences 
$$\begin{tikzcd}[sep = small]
0\arrow[r]&\pmb{\Gamma}(G_{1})_{\mathbf{Q}}\arrow[r]\arrow[d,"\varphi"]&\mathbf{Q}^{\Sigma(1)}\arrow[r]\arrow[d,equal]&(N_{1})_{\mathbf{Q}}\arrow[r]\arrow[d,"\Phi"]&0\\
0\arrow[r]&\pmb{\Gamma}(G_{2})_{\mathbf{Q}}\arrow[r]&\mathbf{Q}^{\Sigma(1)}\arrow[r]&(N_{2})_{\mathbf{Q}}\arrow[r]&0
\end{tikzcd}.$$
Let $||-||_{i}$ be the norm on $\pmb{\Gamma}(G_{i})_{\mathbf{R}}$. Recall that the norms on $\pmb{\Gamma}(G_{i})$ comes
from restricting the standard norm on
$\pmb{\Gamma}((C^{\times})^{\Sigma(1)}).$ Therefore, for every 
$\lambda\in\pmb{\Gamma}(G_{1})_{\mathbf{Q}}$, we have
$||\lambda||_{1}=||\varphi(\lambda)||_{2}$. Because of this and
\Cref{adjunction}, we get 
\begin{equation}\label{StratAdjunction}\frac{\langle\varphi^{\ast}(\chi_{D}),\lambda\rangle_{G_{1}}}{||\lambda||_{1}}=\frac{\langle\chi_{D},\varphi(\lambda)\rangle_{G_{2}}}{||\varphi(\lambda)||_{2}}\text{
    for all }\chi_{D}\in\Cl(X_{\Sigma_{2}})_{\mathbf{Q}} \text{ and
  } \lambda\in\pmb{\Gamma}(G_{1})_{\mathbf{Q}}\end{equation}
where
$\langle-,-\rangle_{G_{i}}:\pmb{\chi}(G_{i})_{\mathbf{Q}}\times\pmb{\Gamma}(G_{i})_{\mathbf{Q}}\rightarrow\mathbf{Q}$
denotes the $\mathbf{Q}$-extension of the natural pairing. 

Recall that for any $x\in Z(\Sigma)$ and $D\in \Amp(X_{\Sigma_{2}})_{\mathbf{R}}$, $\lambda^{D}_{x}$ is the vector in $\pmb{\Gamma}(G_{2})_\mathbf{R}$ that is $\chi_{D}$-adapted to $x$ (\Cref{one_PS_adapted_to_extended}). Also note that when $D$ is in $\Amp(X_{\Sigma_{2}})_{\mathbf{Q}}$, the vector $\lambda^{D}_{x}$ is in $\pmb{\Gamma}(G_{2})_{\mathbf{Q}}$.

\Cref{StratAdjunction} and the fact that $\varphi$ preserves norms imply that for any $x\in Z(\Sigma)$, $$\lambda=\lambda^{\varphi^{\ast}(D)}_{x}\Leftrightarrow\varphi(\lambda)=\lambda^{D}_{x}\text{ for all
}D\in\Amp(X_{\Sigma_{2}})_{\mathbf{Q}},\lambda\in\pmb{\Gamma}(G_{1})_{\mathbf{Q}}.$$ This in turn implies that the assignment (\ref{adjunction_stratification})  is a bijection of strata that preserves each stratum as sets. That assignment (\ref{adjunction_stratification}) preserves strict partial order follows from the fact that $\varphi$ preserves norms.  The proof easily extends over $\mathbf{R}$. The theorem is proved.
\end{proof}
\subsubsection{Topological properties of the strata}
In this section we supply some topological properties of the strata. We first note a property satisfied by the primitive collections of a fan.
\begin{proposition}\label{union_primitive_collection}
Let $\Sigma\subset N_{\mathbf{R}}$ be a complete fan. Then the union of primitive collections is $\Sigma(1).$
\end{proposition}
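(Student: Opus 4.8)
The plan is to show that every ray $\rho \in \Sigma(1)$ belongs to some primitive collection, which, combined with the fact that primitive collections are by definition subsets of $\Sigma(1)$, gives the desired equality. Fix a ray $\rho \in \Sigma(1)$ with primitive generator $u_\rho$. Since $\Sigma$ is complete, $N_{\mathbf{R}}$ is covered by the cones of $\Sigma$, so in particular $-u_\rho$ lies in some cone of $\Sigma$; let $\sigma$ be a cone of $\Sigma$ containing $-u_\rho$ in its relative interior (equivalently, pick any maximal cone containing $-u_\rho$ and then pass to the face whose relative interior contains it).

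The key observation is that $\rho \notin \sigma(1)$: if $u_\rho$ were a ray generator of $\sigma$, then $\sigma$ would contain both $u_\rho$ and $-u_\rho$, forcing $\sigma$ to contain the line $\mathbf{R}\cdot u_\rho$, contradicting strong convexity of the cones in a fan. Now consider the set $S = \{\rho\} \cup \sigma(1)$. This set is \emph{not} contained in the ray set of any single cone of $\Sigma$: any cone $\tau$ with $S \subset \tau(1)$ would contain $\sigma$ as a face, hence would contain $-u_\rho$ in its relative interior's closure and also $u_\rho$ as a ray generator, again contradicting strong convexity as above. So $S$ satisfies condition (1) of \Cref{primitive_collection_def}. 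If in addition every proper subset of $S$ lies in $\tau(1)$ for some cone $\tau \in \Sigma$, then $S$ is itself a primitive collection containing $\rho$ and we are done. Otherwise, there is a minimal subset $C \subseteq S$ that still satisfies condition (1) — i.e. $C$ is not contained in the ray set of any cone — and by minimality every proper subset of $C$ \emph{is} contained in the ray set of some cone, so $C$ is a primitive collection. The only thing to check is that $\rho \in C$: this holds because $\sigma(1)$ alone \emph{is} contained in a cone's ray set (namely $\sigma$'s), so any subset of $S$ failing condition (1) must include the element $\rho$.

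The main obstacle — really the only subtle point — is the last sentence: one must argue that the ray $\rho$ cannot be dropped when passing to a minimal "bad" subset. The clean way to phrase this is: if $C \subseteq S$ and $\rho \notin C$, then $C \subseteq \sigma(1)$, which is contained in the ray set of $\sigma$, so $C$ automatically satisfies condition (2)'s hypothesis and fails condition (1)'s negation — i.e. such a $C$ is not a witness to failure. Hence the minimal subset of $S$ that is not contained in any cone's ray set must contain $\rho$, and that subset is a primitive collection. All the geometric inputs (completeness giving the covering, strong convexity ruling out a cone containing $\pm u_\rho$) are standard facts about fans from the reference \cite{MR2810322} and may be invoked directly.
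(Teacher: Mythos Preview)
Your proof is correct and follows essentially the same approach as the paper's: both use completeness to place $-u_\rho$ in some cone $\sigma$, invoke strong convexity to see that $\{\rho\}\cup\sigma(1)$ cannot lie in the ray set of any cone, and then pass to a minimal subset to obtain a primitive collection containing $\rho$. The only cosmetic difference is that the paper keeps $\rho$ fixed from the outset and minimizes over subsets $S\subset\sigma(1)$ with $\{\rho\}\cup S$ not contained in any $\sigma'(1)$, whereas you minimize over all subsets of $\{\rho\}\cup\sigma(1)$ and then observe afterward that $\rho$ survives; these are equivalent formulations.
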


\begin{proof}
Let $\rho\in\Sigma(1)$ be a ray. We will construct a primitive collection that contains $\rho$. Since $\Sigma$ is complete, there exists a $\sigma\in\Sigma_{\text{max}}$ such that $-u_{\rho}$ is in $\sigma.$ Then since $\sigma$ is strongly convex, $\text{Cone }(u_{\rho})\cap\sigma=\{0\}.$ Hence $\tilde{\sigma}:=\text{Cone }(u_{\rho},\sigma)$ is a cone properly containing $\sigma$. This implies  $\tilde{\sigma}$ is not contained in $\sigma^{\prime}$ for any $\sigma^{\prime}\in\Sigma_{\text{max}}$. This implies $\{\rho\}\cup\sigma(1)$ is not contained in any $\sigma^{\prime}(1)$. We make take $S\subset\sigma(1)$ to be a subset minimal with respect to this property: $\{\rho\}\cup S$ is not contained in any $\sigma^{\prime}(1)$ for any $\sigma^{\prime}\in\Sigma_{\text{max}}$. It follows from the definition of primitive collections (\Cref{primitive_collection_def}) that $\{\rho\}\cup S$ is a primitive collection.
\end{proof}

Note that the analysis we did in \Cref{The structure of a stratum} can be extended over the ample cone. In particular, \Cref{strata_struct} can be applied to stratifications induced by $\mathbf{R}$-ample divisors to derive the 

\begin{theorem}\label{toric_quotient_strata}
Let $D$ be an $\mathbf{R}$-ample divisor of the projective toric variety $X_{\Sigma}$. Then for each $\lambda\in\Lambda^{D}$, the stratum 
$S^{\chi_{D}}_{\lambda}$ is smooth, irreducible and open inside the zariski closure
$\overline{S^{\chi_{D}}_{\lambda}}$. More precisely, $$S^{\chi_{D}}_{\lambda}= V(\{x_{\rho}|\rho\in
S\})-\cup_{R}V(\{x_{\rho}|\rho\in R\})$$ where $S$ is a subset of
$\Sigma(1)$ containing some primitive collection and $R$ runs
through some collection of subsets of $\Sigma(1)-S$.
\end{theorem}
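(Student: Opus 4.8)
The plan is to deduce Theorem~\ref{toric_quotient_strata} directly from the general structure result \Cref{strata_struct} together with the combinatorial facts established for the toric setting. First I would observe that the whole apparatus of \Cref{The structure of a stratum} — namely \Cref{stratum_structure_prelim}, \Cref{stratum_structure_prelim_cor}, \Cref{stratum_comb}, \Cref{stratum_absorb}, and \Cref{strata_struct} — was phrased for a character $\chi$ of a diagonalizable group $G$ acting on $\mathbf{A}^{n}$, using only the combinatorial data of the cones $\sigma_{x}$ and the linear functional $\langle\chi,-\rangle$. In our toric situation with $D\in\Amp(X_{\Sigma})_{\mathbf{R}}$, \Cref{Extensions_to_the_ample_cone} set up exactly the same combinatorial picture: the unstable locus $Z(\Sigma)$ decomposes into the pieces $L(S)$ for $S\in\mathcal{L}$, each $L(S)$ sits in a single stratum $S^{\chi_{D}}_{\lambda^{D}_{S}}$, and $\lambda^{D}_{S}$ is obtained by minimizing $\langle\chi_{D},-\rangle/\|-\|$ over the rational polyhedral cone $\sigma_{S}$, just as in the proof of \Cref{finitenesstheorem}. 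Hence all the lemmas of \Cref{The structure of a stratum} apply verbatim with $[n]$ replaced by $\Sigma(1)$, $V(S)$ by $V(\{x_{\rho}\mid\rho\in S\})$, and the semistable locus replaced by $\mathbf{C}^{\Sigma(1)}\setminus Z(\Sigma)$; the only point needing a remark is that the relevant index sets $S$ now range over $\mathcal{L}$ rather than all of the power set, but since $\mathcal{L}$ is closed under passing to subsets this changes nothing in the arguments of \Cref{stratum_comb} and \Cref{stratum_absorb}.

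Granting that, \Cref{strata_struct} gives for each $\lambda\in\Lambda^{D}$ a maximal subset $\mathcal{M}\subset\Sigma(1)$ and minimal subsets $A_{1},\dots,A_{N}$ such that
$$S^{\chi_{D}}_{\lambda}=V(\Sigma(1)-\mathcal{M})\cap\Big(\bigcup_{j=1}^{N}D(A_{j})\Big),$$
and this already yields smoothness, irreducibility, connectedness, openness inside $\overline{S^{\chi_{D}}_{\lambda}}=X_{\lambda}=V(\Sigma(1)-\mathcal{M})$. So the substantive new content of \Cref{toric_quotient_strata} is the final sentence: that one may write $S^{\chi_{D}}_{\lambda}=V(\{x_{\rho}\mid\rho\in S\})-\bigcup_{R}V(\{x_{\rho}\mid\rho\in R\})$ where $S:=\Sigma(1)-\mathcal{M}$ \emph{contains a primitive collection}, and where each $R$ is a subset of $\Sigma(1)-S$. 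The containment $R\subset\Sigma(1)-S=\mathcal{M}$ follows because $\bigcup_{j}D(A_{j})$ is the complement of $\bigcup_{j}V(A_{j})$ intersected appropriately — more precisely, taking complements inside $V(\Sigma(1)-\mathcal{M})$, the removed locus is a union of sets $V(\Sigma(1)-\mathcal{M})\cap V(\{x_{\rho}\mid\rho\in A_{j}\})=V(\{x_{\rho}\mid\rho\in (\Sigma(1)-\mathcal{M})\cup A_{j}\})$, and since $A_{j}\subset\mathcal{M}$ by \Cref{stratum_comb}, these are of the form $V(\{x_{\rho}\mid\rho\in R\})$ with $S\subsetneq R$, i.e.\ $R\supset S$ with $R-S\subset\mathcal{M}$; rewriting in the form requested is a cosmetic rearrangement.

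The one genuinely new ingredient is that $S=\Sigma(1)-\mathcal{M}$ must contain a primitive collection. I would argue this as follows: since $\lambda\in\Lambda^{D}$, the linear functional $\langle\chi_{D},-\rangle$ takes a strictly negative value somewhere on $\sigma_{\mathcal{M}}$ (this is what it means for the stratum to be a genuine unstable stratum, by \Cref{Unstable_locus_ample_cone_extension} applied to any $x$ with $S_{x}=\mathcal{M}$, equivalently $S_{x}\in\mathcal{L}$). By \Cref{Unstable_locus_ample_cone_extension} again, a point $x$ with states $S_{x}=\mathcal{M}$ lies in $Z(\Sigma)$, which by \Cref{proposition5.1.6} means $\mathcal{M}\subset\Sigma(1)-C$ for some primitive collection $C$ — equivalently $C\subset\Sigma(1)-\mathcal{M}=S$. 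This is exactly the assertion. I expect this last step — correctly matching up the statement "$\mathcal{M}\in\mathcal{L}$, i.e.\ $\mathcal{M}$ is contained in the complement of some primitive collection" with "$S=\Sigma(1)-\mathcal{M}$ contains a primitive collection" — to be the only place where care is needed, and it is essentially immediate once one unwinds the definition of $\mathcal{L}$ from \Cref{Notations and conventions_Toric_VSIT_}. The rest is bookkeeping: citing \Cref{strata_struct}, checking it transfers to the $\mathbf{R}$-ample setting via \Cref{Extensions_to_the_ample_cone}, and translating notation. The main (mild) obstacle is simply verifying that the proofs in \Cref{The structure of a stratum}, which were written for a character of $G$ acting on affine space, go through unchanged for $D\in\Amp(X_{\Sigma})_{\mathbf{R}}$ — but this was already flagged in the text ("the analysis we did in \Cref{The structure of a stratum} can be extended over the ample cone"), so a short sentence invoking \Cref{Kempf_ext_to_R_ample} and the description of strata as groupings of $\{L(S)\}_{S\in\mathcal{L}}$ suffices.
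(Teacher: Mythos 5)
Your proposal is correct and follows essentially the same route as the paper's proof: both invoke \Cref{strata_struct} for the structural description of the stratum, and both deduce that $S$ contains a primitive collection from the fact that $\overline{S^{\chi_D}_\lambda}=V(\{x_\rho\mid\rho\in S\})$ lies in $Z(\Sigma)=\bigcup_C V(\{x_\rho\mid\rho\in C\})$ — the paper phrases this via irreducibility of the closure, you phrase it via $\mathcal{M}\in\mathcal{L}$, and these are the same observation in different notation.
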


\begin{proof}
By \Cref{strata_struct}, each $\chi_{D}$-stratum is of the form $$S^{\chi_{D}}_{\lambda}=V(\{x_{\rho}|\rho\in S\})\cap\big(\cup_{j}(D(A_{j})\big)$$ where each $A_{j}$ is contained in the complement $\Sigma(1)-S$. Since $\overline{S^{\chi_{D}}_{\lambda}}=V(\{x_{\rho}|\rho\in S\})\subset Z(\Sigma)$ is irreducible, by \Cref{bad_locus_ample}, there is a primitive collection $C$ such that $V(\{x_{\rho}|\rho\in S\})\subset V\{(x_{\rho}|\rho\in C\})$. Equivalently this means $C\subset S$. The theorem is proved.
\end{proof}

\begin{corollary}\label{closure_strata_primitive_collection}
Let $D$ be an $\mathbf{R}$-ample
divisor of the projective toric variety $X_{\Sigma}$. Then for every primitive collection $C\subset\Sigma(1)$ , there is a unique
$\lambda_{C}\in\Lambda^{D}$ such that $\overline{S^{\chi_{D}}_{\lambda_{C}}}=
V(\{x_{\rho}|\rho\in C\}).$
\end{corollary}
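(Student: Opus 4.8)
The plan is to produce the required $\lambda_C$ as $\lambda^D_S$ for one specific element $S\in\mathcal{L}$, namely $S=\Sigma(1)-C$, and then to pin down the closure of its stratum by combining \Cref{toric_quotient_strata} with the minimality built into \Cref{primitive_collection_def}. First I would note that $S:=\Sigma(1)-C$ does lie in $\mathcal{L}$, since one may take the primitive collection in the definition of $\mathcal{L}$ to be $C$ itself; hence $\lambda_C:=\lambda^D_S$ is by definition an element of $\Lambda^D$. As recorded in \Cref{Extensions_to_the_ample_cone}, $\lambda^D_S$ is $\chi_D$-adapted to $L(S)$, so $L(S)\subseteq S^{\chi_D}_{\lambda_C}$.

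Next I would identify the Zariski closure of $L(S)$. Since $L(S)=D(\Sigma(1)-C)\cap V(\{x_\rho\mid\rho\in C\})$ is a nonempty open subset (it contains the point with $x_\rho=0$ exactly for $\rho\in C$) of the irreducible coordinate subspace $V(\{x_\rho\mid\rho\in C\})$, we get $\overline{L(S)}=V(\{x_\rho\mid\rho\in C\})$, and therefore $V(\{x_\rho\mid\rho\in C\})\subseteq\overline{S^{\chi_D}_{\lambda_C}}$.

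For the reverse inclusion I would invoke \Cref{toric_quotient_strata}: it gives $\overline{S^{\chi_D}_{\lambda_C}}=V(\{x_\rho\mid\rho\in S'\})$ for some $S'\subseteq\Sigma(1)$ that contains a primitive collection $C'$. The containment $V(\{x_\rho\mid\rho\in C\})\subseteq V(\{x_\rho\mid\rho\in S'\})$ forces $S'\subseteq C$ (test coordinate points), whence $C'\subseteq S'\subseteq C$. But by \Cref{primitive_collection_def}(2), every proper subset of $C$ is contained in $\sigma(1)$ for some cone $\sigma\in\Sigma$, so no proper subset of $C$ can fail condition (1) of \Cref{primitive_collection_def} — i.e.\ $C$ contains no primitive collection other than itself. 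Hence $C'=C$ and $S'=C$, giving $\overline{S^{\chi_D}_{\lambda_C}}=V(\{x_\rho\mid\rho\in C\})$, which settles existence.

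For uniqueness, suppose $\lambda,\lambda'\in\Lambda^D$ both have $\overline{S^{\chi_D}_{\lambda}}=\overline{S^{\chi_D}_{\lambda'}}=V(\{x_\rho\mid\rho\in C\})$. Each of $S^{\chi_D}_{\lambda}$ and $S^{\chi_D}_{\lambda'}$ is nonempty (as $\lambda,\lambda'\in\Lambda^D$) and, by \Cref{toric_quotient_strata}, open in the irreducible variety $V(\{x_\rho\mid\rho\in C\})$, hence dense in it; two dense open subsets of an irreducible space intersect, contradicting the disjointness of distinct strata. Therefore $\lambda=\lambda'$. I do not expect a genuine obstacle here: the only points requiring care are the bookkeeping that $\Sigma(1)-C\in\mathcal{L}$ and the fact that the $\mathbf{R}$-ample strata still obey the structural description of \Cref{strata_struct} — but this is exactly what \Cref{toric_quotient_strata} provides.
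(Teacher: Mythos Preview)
Your proof is correct but proceeds along a genuinely different route from the paper's. The paper argues abstractly: since $Z(\Sigma)=\bigcup_{\lambda\in\Lambda^D}\overline{S^{\chi_D}_\lambda}$ is a finite union of irreducible closed sets, its irreducible components must be exactly the maximal closures $\overline{S^{\chi_D}_\lambda}$; by \Cref{proposition5.1.6} these irreducible components are precisely the $V(\{x_\rho\mid\rho\in C\})$, giving existence. Uniqueness is deduced from the stratification axiom that $\partial S^{\chi_D}_{\lambda'}$ meets only strictly higher strata (two distinct $\lambda,\lambda'$ with the same closure would force $\lambda>\lambda'>\lambda$).

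Your argument is instead constructive: you write down $\lambda_C=\lambda^D_{\Sigma(1)-C}$ explicitly and then use the structure theorem \Cref{toric_quotient_strata} together with the minimality clause in \Cref{primitive_collection_def} to pin the closure to $V(\{x_\rho\mid\rho\in C\})$. This buys you an explicit formula for $\lambda_C$ that the paper's proof does not supply. Your uniqueness argument, via intersection of dense opens in an irreducible variety, is also cleaner in that it avoids invoking the partial order on $\Lambda^D$. Conversely, the paper's irreducible-component argument is shorter and makes transparent why the correspondence $C\mapsto\lambda_C$ is exhaustive over the maximal strata.
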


\begin{proof}
By \Cref{proposition5.1.6}, each primitive collection $C$
corresponds to the irreducible component $V(\{x_{\rho}|\rho\in C\})$ of
$Z(\Sigma)$. We also have $$Z(\Sigma)=\bigcup_{\lambda\in
  \Lambda^{D}}S^{\chi_{D}}_{\lambda}=\bigcup_{\lambda\in
  \Lambda^{D}}\overline{S^{\chi_{D}}_{\lambda}}.$$ Since each $\overline{S^{\chi_{D}}_{\lambda}}$ is
irreducible, the maximal $\overline{S^{\chi_{D}}_{\lambda}}$'s correspond
to the irreducible components of $Z(\Sigma)$. Namely, for each primitive collection $C$, there is a $\lambda_{C}\in\Lambda^{D}$ such that $$\overline{S^{\chi_{D}}_{\lambda_{C}}}=V(\{x_{\rho}\mid\rho\in C\}).$$ The uniqueness of
$\lambda_{C}$ follows from the fact that the boundary of a stratum
only contains higher strata.
\end{proof}

We can also derive the
\begin{corollary}\label{toric_strata_intersection}
Let $D$ be an $\mathbf{R}$-ample divisor of the projective toric variety $X_{\Sigma}$. For every primitive collection $C\subset\Sigma(1)$, let $\lambda_{C}$ be as in \Cref{closure_strata_primitive_collection}. We have 
$$\mathbf{0}=\bigcap_{\lambda\in\Lambda^{D}}\overline{S^{\chi_{D}}_{\lambda}}=\bigcap_{\text{primitive collection }C}\overline{S^{\chi_{D}}_{\lambda_{C}}}$$
where $\mathbf{0}$ stands for the origin in $\mathbf{C}^{\Sigma(1)}$.
\end{corollary}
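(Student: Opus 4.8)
The plan is to prove the two displayed equalities separately, working from the combinatorial description of the strata closures given by \Cref{toric_quotient_strata} and \Cref{closure_strata_primitive_collection}. First I would establish the second equality,
$$\bigcap_{\lambda\in\Lambda^{D}}\overline{S^{\chi_{D}}_{\lambda}}=\bigcap_{\text{primitive collection }C}\overline{S^{\chi_{D}}_{\lambda_{C}}}.$$
The containment $\subseteq$ is immediate since the right-hand intersection is over a subcollection of the $\lambda$'s. For $\supseteq$, recall from \Cref{toric_quotient_strata} that every $\overline{S^{\chi_{D}}_{\lambda}}$ equals $V(\{x_{\rho}\mid\rho\in S\})$ for some $S\supseteq C$ with $C$ a primitive collection; hence $\overline{S^{\chi_{D}}_{\lambda_{C}}}=V(\{x_{\rho}\mid\rho\in C\})\supseteq V(\{x_{\rho}\mid\rho\in S\})=\overline{S^{\chi_{D}}_{\lambda}}$. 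So each $\overline{S^{\chi_{D}}_{\lambda}}$ contains a factor $\overline{S^{\chi_{D}}_{\lambda_{C}}}$ from the right-hand intersection, which forces the right-hand intersection into the left-hand one.

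Next I would show $\bigcap_{\text{primitive collection }C}V(\{x_{\rho}\mid\rho\in C\})=\mathbf{0}$. Since $V(\{x_{\rho}\mid\rho\in C\})$ is exactly the locus where $x_{\rho}=0$ for all $\rho\in C$, the intersection over all primitive collections $C$ is the locus where $x_{\rho}=0$ for every $\rho$ lying in some primitive collection — that is, $V(\{x_{\rho}\mid \rho\in\bigcup_{C}C\})$. By \Cref{union_primitive_collection}, the union of the primitive collections is all of $\Sigma(1)$, so this is $V(\{x_{\rho}\mid\rho\in\Sigma(1)\})=\mathbf{0}$, the origin of $\mathbf{C}^{\Sigma(1)}$. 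Combining this with \Cref{closure_strata_primitive_collection}, which identifies $\overline{S^{\chi_{D}}_{\lambda_{C}}}=V(\{x_{\rho}\mid\rho\in C\})$, gives $\bigcap_{C}\overline{S^{\chi_{D}}_{\lambda_{C}}}=\mathbf{0}$, and the chain of equalities is complete.

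The only genuine subtlety — and the step I would treat most carefully — is the argument that each $\overline{S^{\chi_{D}}_{\lambda}}$ contains some $\overline{S^{\chi_{D}}_{\lambda_{C}}}$. This rests on \Cref{toric_quotient_strata}'s assertion that the subset $S$ with $\overline{S^{\chi_{D}}_{\lambda}}=V(\{x_{\rho}\mid\rho\in S\})$ always contains a primitive collection, which in turn follows from the irreducibility of $\overline{S^{\chi_{D}}_{\lambda}}$ together with \Cref{bad_locus_ample}: since $\overline{S^{\chi_{D}}_{\lambda}}\subseteq Z(\Sigma)=\bigcup_{C}V(\{x_{\rho}\mid\rho\in C\})$ and the left side is irreducible, it lies in a single component $V(\{x_{\rho}\mid\rho\in C\})$, forcing $C\subseteq S$. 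Everything else is formal set-theoretic manipulation of vanishing loci, so once this containment is in hand the proof is short.

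\begin{proof}
For the second equality, the inclusion
$$\bigcap_{\lambda\in\Lambda^{D}}\overline{S^{\chi_{D}}_{\lambda}}\subseteq\bigcap_{\text{primitive collection }C}\overline{S^{\chi_{D}}_{\lambda_{C}}}$$
is clear because the intersection on the right runs over a subset of $\Lambda^{D}$. For the reverse inclusion, let $\lambda\in\Lambda^{D}$. By \Cref{toric_quotient_strata} there is a subset $S\subseteq\Sigma(1)$ containing a primitive collection $C$ with $\overline{S^{\chi_{D}}_{\lambda}}=V(\{x_{\rho}\mid\rho\in S\})$. Since $C\subseteq S$, we get $\overline{S^{\chi_{D}}_{\lambda}}=V(\{x_{\rho}\mid\rho\in S\})\subseteq V(\{x_{\rho}\mid\rho\in C\})=\overline{S^{\chi_{D}}_{\lambda_{C}}}$ by \Cref{closure_strata_primitive_collection}. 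Therefore any point in $\bigcap_{C}\overline{S^{\chi_{D}}_{\lambda_{C}}}$ lies in $\overline{S^{\chi_{D}}_{\lambda}}$, and as $\lambda\in\Lambda^{D}$ was arbitrary, it lies in $\bigcap_{\lambda\in\Lambda^{D}}\overline{S^{\chi_{D}}_{\lambda}}$. This proves the second equality.

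It remains to show $\bigcap_{\text{primitive collection }C}\overline{S^{\chi_{D}}_{\lambda_{C}}}=\mathbf{0}$. By \Cref{closure_strata_primitive_collection}, this intersection equals $\bigcap_{C}V(\{x_{\rho}\mid\rho\in C\})$, which is precisely the vanishing locus $V(\{x_{\rho}\mid\rho\in\bigcup_{C}C\})$ of all the coordinates $x_{\rho}$ with $\rho$ belonging to some primitive collection. By \Cref{union_primitive_collection}, $\bigcup_{C}C=\Sigma(1)$, so this vanishing locus is $V(\{x_{\rho}\mid\rho\in\Sigma(1)\})=\mathbf{0}$, the origin of $\mathbf{C}^{\Sigma(1)}$. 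This completes the proof.
\end{proof}
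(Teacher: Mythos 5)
Your computation of $\bigcap_{C}\overline{S^{\chi_{D}}_{\lambda_{C}}}=\mathbf{0}$ via \Cref{union_primitive_collection} and \Cref{closure_strata_primitive_collection} is correct, and so is the easy inclusion $\bigcap_{\lambda}\overline{S^{\chi_{D}}_{\lambda}}\subseteq\bigcap_{C}\overline{S^{\chi_{D}}_{\lambda_{C}}}$. But the argument you give for the reverse inclusion is not valid, and you never supply the ingredient the paper uses to close the gap.

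Concretely: from \Cref{toric_quotient_strata} you correctly deduce that for each $\lambda\in\Lambda^{D}$ there is a primitive collection $C$ with
$\overline{S^{\chi_{D}}_{\lambda}}\subseteq\overline{S^{\chi_{D}}_{\lambda_{C}}}$.
This containment points the wrong way for your purposes. Knowing that each $\overline{S^{\chi_{D}}_{\lambda}}$ is contained in one of the $\overline{S^{\chi_{D}}_{\lambda_{C}}}$ tells you nothing about whether a point of $\bigcap_{C}\overline{S^{\chi_{D}}_{\lambda_{C}}}$ lies in $\overline{S^{\chi_{D}}_{\lambda}}$; the inference "any point in $\bigcap_{C}\overline{S^{\chi_{D}}_{\lambda_{C}}}$ lies in $\overline{S^{\chi_{D}}_{\lambda}}$" simply does not follow (for a general system of sets $A_{\lambda}\subseteq B_{C(\lambda)}$, $\bigcap B_{C}\not\subseteq\bigcap A_{\lambda}$). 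What you need, and what the paper supplies, is the separate observation that each $\overline{S^{\chi_{D}}_{\lambda}}$ is a linear subspace (\Cref{strata_struct}, extended to $\mathbf{R}$-ample divisors by \Cref{toric_quotient_strata}), hence contains the origin, so $\mathbf{0}\in\bigcap_{\lambda\in\Lambda^{D}}\overline{S^{\chi_{D}}_{\lambda}}$. Combined with your correct claim $\bigcap_{C}\overline{S^{\chi_{D}}_{\lambda_{C}}}=\mathbf{0}$ and the easy inclusion, this yields the full chain $\mathbf{0}\subseteq\bigcap_{\lambda}\overline{S^{\chi_{D}}_{\lambda}}\subseteq\bigcap_{C}\overline{S^{\chi_{D}}_{\lambda_{C}}}=\mathbf{0}$. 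So the gap is small and easily patched, but the reverse-inclusion step as written is incorrect.
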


\begin{proof}
By \Cref{union_primitive_collection}, the intersection
$\bigcap_{C}V(\{x_{\rho}|\rho\in C\})$ taken over all primitive collections
$C$ of $\Sigma(1)$ is the origin. By 
\Cref{closure_strata_primitive_collection}, we have
$\bigcap_{C}\overline{S^{\chi_{D}}_{\lambda_{C}}}=\bigcap_{C}V(\{x_{\rho}|\rho\in
C\})$. Moreover, each $\chi_{D}$-stratum's closure  
$\overline{S^{\chi_{D}}_{\lambda}}$ is a linear subspace so 
$\mathbf{0}\in\bigcap_{\lambda\in\Lambda^{D}}\overline{S^{\chi_{D}}_{\lambda}}$. In sum, we then have 
$$\mathbf{0}\in\bigcap_{\lambda\in\Lambda^{D}}\overline{S^{\chi_{D}}_{\lambda}}\subset\bigcap_{C}\overline{S^{\chi_{D}}_{\lambda_{C}}}=\bigcap_{C}V(\{x_{\rho}|\rho\in C\})=\mathbf{0}.$$ The corollary is proved.
\end{proof}

\section{The computer program}\label{The computer program}
Given a projective toric variety, the computer program works out the
\begin{enumerate}
    \item ample cone (\Cref{Computing_the_Picard_group}, \Cref{Computing_the_ample_cone}),
    \item potential real one parameter subgroups that index the unstable strata in its GIT quotient construction (\Cref{Potential_one_parameter_subgroups}),
    \item walls in the ample cone (\Cref{Enumerating_walls}),
    \item stratification induced by a particular ample divisor (\Cref{Compute_stratification_with_respect_to_ample_divisors}), and
    \item visualization of the wall and semi-chamber decomposition of the ample cone if the ample cone is less than three dimensional (\Cref{Visualizing_the_ample_cone_decomposition}).
\end{enumerate}

The computer program is solely sageMath where a number of packages for toric varieties are already available, such as constructing fans and enumerating their primitive collections. Although there are computer programs in SageMath and Macaulay2 that compute ample cones, the output is difficult to utilize internally in our program. Hence it became more desirable that we design the ample cone computation by ourselves. The first challenge is to figure out the basis of the Picard group for a toric variety. Again the output from SageMath and Maculay2 is not easy to dispose of within our program. We therefore formulate in \Cref{Computing_the_Picard_group} a particular class of projective toric varieties whose basis of the Picard group is fairly easy to describe and with which the computer program performs correctly and self-containedly. 

However, the ampleness condition and the equations of walls can be computed in terms of divisors of the form $D=\sum_{\rho}a_{\rho}D_{\rho}$ without referencing to any basis of the Picard group (\Cref{numampleness}, \Cref{struct_walls}). Because of this, our program outputs ampleness conditions and walls  as inequalities and equations respectively in terms of $\{a_{\rho}\}_{\rho\in\Sigma(1)}$. An advantage to this is that users have the flexibility to specialize the outputs to any basis they have at their hands.

We will walk through the computer program with the easiest non-trivial example given by the projective toric variety $\mathbf{P}^{1}_{\mathbf{C}}\times\mathbf{P}^{1}_{\mathbf{C}}$. It comes from the fan $\Sigma_{ex}$ in $\mathbf{R}^{2}$ consisting of the four rays 
\begin{enumerate}
    \item $\rho_{0}=\text{Cone }(e_{1})$,
    \item $\rho_{1}=\text{Cone }(-e_{1})$,
    \item $\rho_{2}=\text{Cone }(e_{2})$,
    \item $\rho_{3}=\text{Cone }(-e_{2})$,
\end{enumerate}
and the four maximal cones 
\begin{enumerate}
    \item $\text{Cone }(\rho_{0},\rho_{2})$,
    \item $\text{Cone }(\rho_{1},\rho_{2})$,
    \item $\text{Cone }(\rho_{1},\rho_{3})$, 
    \item $\text{Cone }(\rho_{0},\rho_{3}).$
\end{enumerate}  
 
At the start of the program, the user is asked to enter a fan as a
list of the coordinates of rays, followed by list of ambient indicies for each
maximal cone in the fan. In the case of $\Sigma_{ex}$, the list of rays is $(1,0),(-1,0),(0,1),(0,-1)$ and the list of ambient indicies is $(0,2),(1,2),(1,3),(0,3).$ 

\subsection{Computing the Picard group}\label{Computing_the_Picard_group}
We consider projective toric varieties $X_{\Sigma}$ whose Picard groups satisfy \begin{equation}\label{divisoral_simplicial}\Pic(X_{\Sigma})_{\mathbf{R}}\simeq\bigoplus_{\rho\notin\sigma(1)}\mathbf{R}\cdot D_{\rho}\text{ for some maximal cone }\sigma\in\Sigma.\end{equation} We will see in \Cref{CDiv_basis} that condition (\ref{divisoral_simplicial}) includes all simplicial projective toric varieties. 

However, the computer program does not detect if condition (\ref{divisoral_simplicial}) holds when a fan is given. The output of ample cone may not be correct if condition (\ref{divisoral_simplicial}) fails. Users may use a characterization given as  \Cref{divisorial_simplicial_characterization} and \Cref{numampleness} to determine if their projective toric variety satisfies condition (\ref{divisoral_simplicial}). First, we need
\begin{proposition}\label{Pic_sublattice}
Let $\Sigma$ be a fan with a full dimensional cone $\sigma_{0}$. Then the natural map
$\CDiv_{T_{N}}(X_{\Sigma})\rightarrow\Pic(X_{\Sigma})$ induces an
isomorphism 
$$\varphi:\{D=\sum_{\rho}a_{\rho}D_{\rho}\in\CDiv_{T_{N}}(X_{\Sigma})\mid
a_{\rho}=0\text{ for all }\rho\in\sigma_{0}(1)\}\simeq\Pic(X_{\Sigma}).$$
\end{proposition}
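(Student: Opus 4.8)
The plan is to show that the displayed map $\varphi$ is both injective and surjective, after which the ``induced isomorphism'' claim follows from the fact that $\varphi$ is the restriction of the natural surjection $\CDiv_{T_{N}}(X_{\Sigma})\to\Pic(X_{\Sigma})$ appearing in sequence (\ref{sesCDiv}). Throughout I would fix the full-dimensional cone $\sigma_{0}\in\Sigma$ and write $L=\{D=\sum_{\rho}a_{\rho}D_{\rho}\in\CDiv_{T_{N}}(X_{\Sigma})\mid a_{\rho}=0\text{ for all }\rho\in\sigma_{0}(1)\}$ for the source.

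First I would prove injectivity. Suppose $D=\sum_{\rho}a_{\rho}D_{\rho}\in L$ maps to $0$ in $\Pic(X_{\Sigma})$, i.e.\ $D$ is linearly equivalent to $0$. By the left-exactness of (\ref{sesCDiv}) restricted to the Cartier subgroup — or more concretely, since $D$ is torus-invariant and principal, there is $m\in M$ with $D=\operatorname{div}(\chi^{m})$, so $a_{\rho}=\langle m,u_{\rho}\rangle$ for all $\rho\in\Sigma(1)$ by formula (\ref{DivChar}). But $D\in L$ forces $\langle m,u_{\rho}\rangle=a_{\rho}=0$ for all $\rho\in\sigma_{0}(1)$, and since $\sigma_{0}$ is full-dimensional its ray generators span $N_{\mathbf{R}}$, hence $m=0$ and $D=0$ in $\CDiv_{T_{N}}(X_{\Sigma})$. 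This is essentially the same argument used inside the proof of \Cref{Pic_free}.

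Next I would prove surjectivity: given any Cartier class, I must produce a representative in $L$. Take a torus-invariant Cartier divisor $D'=\sum_{\rho}a'_{\rho}D_{\rho}$. By \Cref{numampleness} applied to the maximal cone $\sigma_{0}$, there exists $m_{\sigma_{0}}\in M$ with $\langle m_{\sigma_{0}},u_{\rho}\rangle=-a'_{\rho}$ for all $\rho\in\sigma_{0}(1)$. Then $D:=D'+\operatorname{div}(\chi^{m_{\sigma_{0}}})$ is linearly equivalent to $D'$, is again torus-invariant and Cartier, and by (\ref{DivChar}) its $D_{\rho}$-coefficient for $\rho\in\sigma_{0}(1)$ is $a'_{\rho}+\langle m_{\sigma_{0}},u_{\rho}\rangle=0$; hence $D\in L$ and $\varphi(D)=[D']$. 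Combining the two parts, $\varphi$ is an isomorphism.

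I do not expect a genuine obstacle here — every ingredient (the exact sequence (\ref{sesCDiv}), formula (\ref{DivChar}), and \Cref{numampleness}) is already available in the excerpt, and the argument is a mild repackaging of the proof of \Cref{Pic_free}. The only point requiring a little care is making sure one works inside $\CDiv_{T_{N}}(X_{\Sigma})$ rather than all of $\mathbf{Z}^{\Sigma(1)}$, so that $m_{\sigma_{0}}\in M$ is guaranteed by \Cref{numampleness} (which needs $D'$ Cartier); this is where the hypothesis that $\Sigma$ has a full-dimensional cone is used twice — once to get uniqueness of $m_{\sigma_{0}}$ (it is unique modulo $\sigma_{0}^{\perp}\cap M=0$) for injectivity, and once for its existence for surjectivity.
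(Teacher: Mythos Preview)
Your proposal is correct and follows essentially the same approach as the paper: prove surjectivity by subtracting off $\operatorname{div}(\chi^{m_{\sigma_{0}}})$ using the Cartier data from \Cref{numampleness}, and prove injectivity by using full-dimensionality of $\sigma_{0}$ to force $m=0$. Your sign in the surjectivity step ($D'+\operatorname{div}(\chi^{m_{\sigma_{0}}})$) is in fact the correct one given the convention $\langle m_{\sigma_{0}},u_{\rho}\rangle=-a'_{\rho}$ and formula (\ref{DivChar}); the paper's $D-\operatorname{div}(\chi^{m_{\sigma_{0}}})$ appears to be a minor sign slip there.
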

\begin{proof}
Let $D=\sum_{\rho}a_{\rho}D_{\rho}\in\Pic(X_{\Sigma})$. Then there exists $m_{\sigma_{0}}\in M$ as in \Cref{numampleness}. Then $D\sim D-\text{div}(\chi^{m_{\sigma_{0}}})$
in $\Pic(X_{\Sigma})$ and the later is in the image of $\varphi$. This
shows $\varphi$ is surjective. To prove that $\varphi$ is injective,
suppose $\varphi(D)=0$ where
$D=\sum_{\rho\notin\sigma_{0}(1)}a_{\rho}D_{\rho}.$ Then by the short
exact sequence (\ref{sesCDiv}), there exists an $m\in M$ such that
$D=\text{div}(\chi^{m})$. In particular, $\langle m,u_{\rho}\rangle=0$
for all $\rho\in\sigma_{0}(1)$. Since $\sigma_{0}$ is full
dimensional, $m=0$, This proves injectivity.
\end{proof}
Here is a characterization about when (\ref{divisoral_simplicial}) holds:
\begin{corollary}\label{divisorial_simplicial_characterization}
Let $X_{\Sigma}$ be a toric variety and let $\sigma_{0}\in\Sigma$ be a full dimensional cone. Then the following statements are equivalent:
\begin{enumerate}
    \item The inclusion $\Pic(X_{\Sigma})\subset\mathbf{Z}^{\Sigma(1)}$ induces $\Pic(X_{\Sigma})_{\mathbf{R}}\simeq\bigoplus_{\rho\notin\sigma_{0}(1)}\mathbf{R}\cdot D_{\rho}$,
    \item the inclusion $\Pic(X_{\Sigma})\subset\mathbf{Z}^{\Sigma(1)}$ induces $\Pic(X_{\Sigma})_{\mathbf{Q}}\simeq\bigoplus_{\rho\notin\sigma_{0}(1)}\mathbf{Q}\cdot D_{\rho}$, and 
    \item $D_{\rho}$ is $\mathbf{Q}$-Cartier for all $\rho\notin\sigma_{0}(1)$.
\end{enumerate}
\end{corollary}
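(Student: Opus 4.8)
The plan is to establish $(1)\Leftrightarrow(2)$ and $(2)\Leftrightarrow(3)$, with \Cref{Pic_sublattice} doing the real work: it realizes $\Pic(X_{\Sigma})$ concretely, via the isomorphism $\varphi^{-1}$, as the sublattice
$$\mathcal{P}:=\CDiv_{T_{N}}(X_{\Sigma})\cap\Bigl(\bigoplus_{\rho\notin\sigma_{0}(1)}\mathbf{Z}\cdot D_{\rho}\Bigr)\subset\mathbf{Z}^{\Sigma(1)},$$
and the inclusion $\Pic(X_{\Sigma})\subset\mathbf{Z}^{\Sigma(1)}$ referred to in the statement is exactly this one. Since $\mathcal{P}$ is a subgroup of the free abelian group $\bigoplus_{\rho\notin\sigma_{0}(1)}\mathbf{Z}\cdot D_{\rho}$ (torsion-freeness of $\Pic(X_{\Sigma})$ also following from \Cref{Pic_free}), its $\mathbf{R}$-span has the full dimension $\#\{\rho\notin\sigma_{0}(1)\}$ precisely when $\mathcal{P}$ is a finite-index subgroup, and in that case its $\mathbf{Q}$-span is full as well; this yields $(1)\Leftrightarrow(2)$ after tensoring with $\mathbf{R}$ and with $\mathbf{Q}$.

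For $(2)\Rightarrow(3)$ I would observe that if $\Pic(X_{\Sigma})_{\mathbf{Q}}\simeq\bigoplus_{\rho\notin\sigma_{0}(1)}\mathbf{Q}\cdot D_{\rho}$, then each basis vector $D_{\rho}$ with $\rho\notin\sigma_{0}(1)$ lies in $\Pic(X_{\Sigma})_{\mathbf{Q}}$, which is precisely the assertion that $D_{\rho}$ is $\mathbf{Q}$-Cartier. For $(3)\Rightarrow(2)$, the key point is that for $\rho\notin\sigma_{0}(1)$ the Weil divisor $D_{\rho}$ already has zero coefficient along every ray of $\sigma_{0}$; hence if $N_{\rho}D_{\rho}$ is Cartier for some $N_{\rho}\in\mathbf{N}$, then $N_{\rho}D_{\rho}$ lands in $\mathcal{P}$ — not merely in $\CDiv_{T_{N}}(X_{\Sigma})$ — so $D_{\rho}\in\mathcal{P}\otimes_{\mathbf{Z}}\mathbf{Q}=\Pic(X_{\Sigma})_{\mathbf{Q}}$. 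As $\rho$ ranges over $\Sigma(1)\setminus\sigma_{0}(1)$ these classes span $\bigoplus_{\rho\notin\sigma_{0}(1)}\mathbf{Q}\cdot D_{\rho}$, and the reverse containment $\Pic(X_{\Sigma})_{\mathbf{Q}}\subseteq\bigoplus_{\rho\notin\sigma_{0}(1)}\mathbf{Q}\cdot D_{\rho}$ is immediate from the description of $\mathcal{P}$, so the two coincide, which is $(2)$.

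I do not anticipate a genuine obstacle: the corollary is essentially a bookkeeping consequence of \Cref{Pic_sublattice}, and the only thing needing care is keeping the concrete sublattice $\mathcal{P}$, the abstract group $\Pic(X_{\Sigma})$, and their extensions over $\mathbf{Q}$ and $\mathbf{R}$ cleanly separated — in particular noting explicitly that a Cartier multiple of $D_{\rho}$ for $\rho\notin\sigma_{0}(1)$ satisfies the support condition defining $\mathcal{P}$.
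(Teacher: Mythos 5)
Your proof is correct and follows the paper's approach: both hinge on \Cref{Pic_sublattice} to realize $\Pic(X_{\Sigma})$ as the concrete sublattice supported off $\sigma_{0}(1)$, and then unwind the definition of $\mathbf{Q}$-Cartier for $(2)\Leftrightarrow(3)$. The only cosmetic difference is in $(1)\Leftrightarrow(2)$, where you invoke the rank/finite-index characterization of a subgroup of $\mathbf{Z}^{m}$ while the paper appeals to faithful flatness of $\mathbf{R}$ over $\mathbf{Q}$; these amount to the same elementary fact.
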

\begin{proof}
There is an inclusion $\Pic(X_{\Sigma})\subset\mathbf{Z}^{\Sigma(1)}$ due to \Cref{Pic_sublattice}. Statement (1) and (2) are equivalent because $\mathbf{R}$ is faithfully flat over $\mathbf{Q}$. The equivalence of (2) and (3) follows from \Cref{Pic_sublattice}.
\end{proof}

To see why the class of toric varieties that satisfies (\ref{divisoral_simplicial}) includes projective simplicial toric varieties, we need the 
\begin{proposition}\label{simplicial_picard_group}
Let $X_{\Sigma}$ be a toric variety.
The following statements are equivalent:
\begin{enumerate}
\item $X_{\Sigma}$ is simplicial.
\item Every $D\in\Cl(X_{\Sigma})$ is $\mathbf{Q}$-Cartier. Namely, for every $D\in\Cl(X_{\Sigma})$, there
  exists an integer $K$ such that $K\cdot D\in\Pic(X_{\Sigma})$.
\item  $\Pic(X_{\Sigma})$ is of finite index in
  $\Cl(X_{\Sigma})$. Namely, there exists an integer $K$ such that $K\cdot
  D\in\Pic(X_{\Sigma})$ for all $D\in\Cl(X_{\Sigma})$.  
\item The inclusion $\Pic(X_{\Sigma})\subset\Cl(X_{\Sigma})$ induces $\Pic(X_{\Sigma})_{\mathbf{Q}}\simeq\Cl(X_{\Sigma})_{\mathbf{Q}}$.
\end{enumerate}
\end{proposition}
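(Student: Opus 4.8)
The plan is to run the cycle of implications $(1)\Rightarrow(2)\Rightarrow(3)\Rightarrow(4)\Rightarrow(1)$. Throughout I would use the Cartier criterion packaged in \Cref{numampleness}: a Weil divisor $D=\sum_{\rho}a_{\rho}D_{\rho}$ is Cartier if and only if for every maximal cone $\sigma$ there is $m_{\sigma}\in M$ with $\langle m_{\sigma},u_{\rho}\rangle=-a_{\rho}$ for all $\rho\in\sigma(1)$. Since Cartierness is a finite list of conditions, clearing denominators gives the companion statement that $D$ is $\mathbf{Q}$-Cartier (i.e.\ $KD\in\Pic(X_{\Sigma})$ for some $K$) exactly when, for every maximal cone $\sigma$, the tuple $(-a_{\rho})_{\rho\in\sigma(1)}$ lies in the image of the linear map $\phi_{\sigma}\colon M_{\mathbf{Q}}\to\mathbf{Q}^{\sigma(1)}$, $m\mapsto(\langle m,u_{\rho}\rangle)_{\rho\in\sigma(1)}$. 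This reformulation is the workhorse for both the simplicial and the non-simplicial directions.

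For $(1)\Rightarrow(2)$: if $\Sigma$ is simplicial then for each maximal cone $\sigma$ the generators $\{u_{\rho}\mid\rho\in\sigma(1)\}$ are $\mathbf{Q}$-linearly independent in $N_{\mathbf{Q}}$, so they extend to a basis and $\phi_{\sigma}$ is surjective; hence every prescribed $(-a_{\rho})_{\rho}$ is attained and every $D$ is $\mathbf{Q}$-Cartier. For $(2)\Rightarrow(3)$: the exact sequence $(\ref{sesWDiv})$ shows $\Cl(X_{\Sigma})$ is finitely generated; if each generator $D$ admits $K_{D}\cdot D\in\Pic(X_{\Sigma})$, a common multiple $K$ gives $K\cdot\Cl(X_{\Sigma})\subset\Pic(X_{\Sigma})$, and $\Cl(X_{\Sigma})/K\Cl(X_{\Sigma})$ is finite, so $\Pic(X_{\Sigma})$ has finite index. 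For $(3)\Rightarrow(4)$: flatness of $\mathbf{Q}$ over $\mathbf{Z}$ keeps $\Pic(X_{\Sigma})_{\mathbf{Q}}\hookrightarrow\Cl(X_{\Sigma})_{\mathbf{Q}}$ injective, and finite index forces it to be onto. (One also gets $(4)\Rightarrow(3)$ for free since $\Cl(X_{\Sigma})/\Pic(X_{\Sigma})$ is finitely generated and torsion, hence finite, but only one direction is needed to close the cycle.)

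For $(4)\Rightarrow(1)$ I would argue by contraposition. If $\Sigma$ is not simplicial, then since faces of simplicial cones are simplicial, some maximal cone $\sigma$ is non-simplicial, so there is a nontrivial relation $\sum_{\rho\in\sigma(1)}c_{\rho}u_{\rho}=0$. Every $m\in M_{\mathbf{Q}}$ then satisfies $\sum_{\rho}c_{\rho}\langle m,u_{\rho}\rangle=0$, so $\operatorname{Im}\phi_{\sigma}$ lies in the proper subspace $\{v\mid\sum_{\rho}c_{\rho}v_{\rho}=0\}$ of $\mathbf{Q}^{\sigma(1)}$. Choosing $a_{\rho}$ for $\rho\in\sigma(1)$ with $\sum_{\rho}c_{\rho}a_{\rho}\neq0$ and $a_{\rho}=0$ otherwise, the divisor $D=\sum_{\rho}a_{\rho}D_{\rho}$ has no associated $m_{\sigma}$ even over $\mathbf{Q}$, so $D$ is not $\mathbf{Q}$-Cartier and $\Pic(X_{\Sigma})_{\mathbf{Q}}\neq\Cl(X_{\Sigma})_{\mathbf{Q}}$. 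The main obstacle is exactly this last step: the reduction to a maximal non-simplicial cone and the explicit construction of the obstructing divisor out of a linear relation among ray generators; the implications among $(2)$, $(3)$, $(4)$ are routine facts about finitely generated abelian groups and localization.
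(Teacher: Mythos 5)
Your proof is correct and complete, but it takes a noticeably different path from the paper for the hard equivalence $(1)\Leftrightarrow(2)$. The paper works one affine chart at a time: it dualizes the exact sequence $M\to\mathbf{Z}^{\sigma(1)}\to\Cl(X_{\sigma})\to 0$ to show that $\sigma$ is simplicial precisely when $\Cl(X_{\sigma})$ is torsion, and then converts ``$K\cdot D$ is Cartier'' into ``$K\cdot D|_{X_{\sigma}}$ is principal'' via the vanishing of $\Pic$ on affine normal toric varieties. You instead work directly from the numerical Cartier criterion (\Cref{numampleness}), reformulating $\mathbf{Q}$-Cartierness on each maximal cone $\sigma$ as ``$(-a_{\rho})_{\rho\in\sigma(1)}\in\operatorname{Im}\phi_{\sigma}$'' and then reading off surjectivity (or failure of surjectivity, via a nontrivial linear relation among the $u_{\rho}$) of the map $\phi_{\sigma}:M_{\mathbf{Q}}\to\mathbf{Q}^{\sigma(1)}$. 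Both are standard, and neither is strictly more elementary — the Cartier criterion you invoke is itself proved using the vanishing of $\Pic(X_{\sigma})$ — but your version has the advantage of producing an explicit non-$\mathbf{Q}$-Cartier divisor in the contrapositive direction, whereas the paper's argument is an abstract finiteness statement. One small point worth saying explicitly if you write this up: the membership $(-a_{\rho})_{\rho}\in\operatorname{Im}\phi_{\sigma}$ is invariant under adding a principal divisor $\mathrm{div}(\chi^{m})$, since that shifts each $m_{\sigma}$ by the same $m$; this is what lets you pass from the particular torus-invariant representative $D=\sum a_{\rho}D_{\rho}$ to its class in $\Cl(X_{\Sigma})$ in the $(4)\Rightarrow(1)$ step.
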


\begin{proof}
Statements (2),(3),(4) are equivalent because $\Cl(X_{\Sigma})$ is
finitely generated and tensoring $\mathbf{Q}$ is flat and kills
torsion. The only non-trivial part is to prove (2) is equivalent to
(1). For this, let $\sigma$ be a cone in $\Sigma$. Viewing $\sigma$
itself as a fan, we have the following short exact sequence 
$$\begin{tikzcd}[sep=small]
M\arrow[r]&\mathbf{Z}^{\sigma(1)}\arrow[r]&\Cl(X_{\sigma})\arrow[r]&0
\end{tikzcd}$$
whose dual is $$\begin{tikzcd}[sep=small]
0\arrow[r]&\Cl(X_{\sigma})^{\vee}\arrow[r]&\mathbf{Z}^{\sigma(1)}\arrow[r,"\psi_{\sigma}"]&N
\end{tikzcd}$$ by (\ref{one_PS_SES}). We see that $X_{\Sigma}$ is
simplicial if and only if $\psi_{\sigma}$ is injective. This is
equivalent to requiring that $\Cl(X_{\sigma})^{\vee}=0$, which in turn
is equivalent to $\Cl(X_{\sigma})$ is torsion. Because there are only finitely many $\sigma$ and
$\{X_{\sigma}\}_{\sigma\in\Sigma}$ covers $X_{\Sigma}$, (1) implies
(2). For the converse, it is enough to show that $\Cl(X_{\Sigma})$ is
torsion for all $\sigma\in \Sigma.$ Since the restriction
$\Cl(X_{\Sigma})\rightarrow\Cl(X_{\sigma})$ is surjective, we may
write a divisor in $\Cl(X_{\sigma})$ as $D|_{X_{\sigma}}$ for some
divisor $D$ on
$X_{\Sigma}$. By assumption, there exists a $K>0$ such that $K\cdot D$
is Cartier. Then $K\cdot D|_{X_{\sigma}}$ is also Cartier. The result
follows from the fact that affine normal toric varieties does not
have non-trivial Picard group. See proposition 4.2.2 in \cite{MR2810322}. 
\end{proof}

\begin{corollary}\label{CDiv_basis}
Let $\Sigma$ be simplicial and $\sigma_{0}\in\Sigma$ be a full dimensional cone. We have that  
$\Pic(X_{\Sigma})_{\mathbf{Q}}\simeq\Cl(X_{\Sigma})_{\mathbf{Q}}$ has
$\mathbf{Q}$-basis $\{D_{\rho}\}_{\rho\notin\sigma_{0}(1)}$. If
$\Sigma$ is smooth, $\Pic(X_{\Sigma})\simeq\Cl(X_{\Sigma})$ is free
with $\mathbf{Z}$-basis $\{D_{\rho}\}_{\rho\notin\sigma_{0}(1)}$.
\end{corollary}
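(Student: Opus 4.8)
The corollary is a formal consequence of \Cref{Pic_sublattice} and \Cref{simplicial_picard_group}, so the plan is simply to assemble these, tensoring with $\mathbf{Q}$ where appropriate; no genuinely new argument is needed.

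First I would reinterpret \Cref{Pic_sublattice}. Let $A$ be the group of torus-invariant Cartier divisors $D=\sum_{\rho}a_{\rho}D_{\rho}$ with $a_{\rho}=0$ for all $\rho\in\sigma_{0}(1)$, a subgroup of the free abelian group $B:=\bigoplus_{\rho\notin\sigma_{0}(1)}\mathbf{Z}\cdot D_{\rho}$. By \Cref{Pic_sublattice} the class map restricts to an isomorphism $A\xrightarrow{\sim}\Pic(X_{\Sigma})$, and composing with $\Pic(X_{\Sigma})\hookrightarrow\Cl(X_{\Sigma})$ it is just $D\mapsto[D]$. If $\Sigma$ is simplicial, \Cref{simplicial_picard_group} gives an integer $K>0$ with $K\cdot E$ Cartier for every $E\in\Cl(X_{\Sigma})$; applied to integer combinations of the $D_{\rho}$ (clearing denominators first), this shows that every element of $\bigoplus_{\rho\notin\sigma_{0}(1)}\mathbf{Q}\cdot D_{\rho}$ is $\mathbf{Q}$-Cartier, hence $A\otimes_{\mathbf{Z}}\mathbf{Q}=B\otimes_{\mathbf{Z}}\mathbf{Q}=\bigoplus_{\rho\notin\sigma_{0}(1)}\mathbf{Q}\cdot D_{\rho}$. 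Tensoring the isomorphism of \Cref{Pic_sublattice} with $\mathbf{Q}$ then yields $\bigoplus_{\rho\notin\sigma_{0}(1)}\mathbf{Q}\cdot D_{\rho}\xrightarrow{\sim}\Pic(X_{\Sigma})_{\mathbf{Q}}$.

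Next I would compose with \Cref{simplicial_picard_group}, which for simplicial $\Sigma$ upgrades the inclusion $\Pic(X_{\Sigma})\subset\Cl(X_{\Sigma})$ to an isomorphism $\Pic(X_{\Sigma})_{\mathbf{Q}}\simeq\Cl(X_{\Sigma})_{\mathbf{Q}}$. The resulting isomorphism $\bigoplus_{\rho\notin\sigma_{0}(1)}\mathbf{Q}\cdot D_{\rho}\xrightarrow{\sim}\Cl(X_{\Sigma})_{\mathbf{Q}}$ sends $D_{\rho}$ to its class, so its injectivity is $\mathbf{Q}$-linear independence of $\{D_{\rho}\}_{\rho\notin\sigma_{0}(1)}$ and its surjectivity is that they span; this is exactly the asserted $\mathbf{Q}$-basis. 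For the smooth case I would invoke the remark following (\ref{sesCDiv}) that when $\Sigma$ is smooth sequences (\ref{sesWDiv}) and (\ref{sesCDiv}) coincide, so $\CDiv_{T_{N}}(X_{\Sigma})=\mathbf{Z}^{\Sigma(1)}$ and $\Pic(X_{\Sigma})=\Cl(X_{\Sigma})$; then in the notation above $A=B=\bigoplus_{\rho\notin\sigma_{0}(1)}\mathbf{Z}\cdot D_{\rho}$ already at the level of \Cref{Pic_sublattice}, which therefore gives $\bigoplus_{\rho\notin\sigma_{0}(1)}\mathbf{Z}\cdot D_{\rho}\xrightarrow{\sim}\Cl(X_{\Sigma})$, i.e. the claimed $\mathbf{Z}$-basis.

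Since each step cites an already-established result, there is no real obstacle; the only point requiring a line of care is that the isomorphism produced by \Cref{Pic_sublattice} is the restriction of the Weil-divisor class map, so that after tensoring with $\mathbf{Q}$ and composing with $\Pic(X_{\Sigma})_{\mathbf{Q}}\hookrightarrow\Cl(X_{\Sigma})_{\mathbf{Q}}$ it literally carries $D_{\rho}$ to $[D_{\rho}]$ — which is what allows "isomorphism of vector spaces" to be read off as "basis".
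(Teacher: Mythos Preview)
Your proof is correct and follows essentially the same approach as the paper's. The only cosmetic difference is that the paper routes the implication ``every $D_{\rho}$ is $\mathbf{Q}$-Cartier $\Rightarrow$ $\{D_{\rho}\}_{\rho\notin\sigma_{0}(1)}$ is a $\mathbf{Q}$-basis'' through the packaged equivalence of \Cref{divisorial_simplicial_characterization}, whereas you unroll that step and appeal to \Cref{Pic_sublattice} directly; since \Cref{divisorial_simplicial_characterization} is itself derived from \Cref{Pic_sublattice}, the underlying argument is the same.
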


\begin{proof}
By \Cref{simplicial_picard_group}, every $D_{\rho}$ is $\mathbf{Q}$-Cartier. Using statement (3) from \Cref{divisorial_simplicial_characterization}, we get the result.
\end{proof}
Hence we get that the class of toric varieties that satisfies (\ref{divisoral_simplicial}) includes simplicial projective toric varieties. For the fan $\Sigma_{\text{ex}}$, condition (\ref{divisoral_simplicial}) is satisfied say for the cone $\text{Cone}(\rho_{1},\rho_{3})$. For brevity, for a ray $\rho_{i}$, instead of writing $D_{\rho_{i}}$ (resp. $a_{\rho_{i}}$), we write $D_{i}$ (resp. $a_{i}$). With these notations,  $\Pic(X_{\Sigma_{\text{ex}}})_{\mathbf{R}}$ has basis $D_{0},D_{2}.$ In fact, $D_{0}=\mathscr{O}(1,0)$ and $D_{2}=\mathscr{O}(0,1)$ on $\mathbf{P}^{1}_{\mathbf{C}}\times\mathbf{P}^{1}_{\mathbf{C}}$ so that $a_{0}D_{0}+a_{2}D_{2}=\mathscr{O}(a_{0},a_{2}).$

\subsection{Computing the ample cone}\label{Computing_the_ample_cone}
Recall that $\Sigma_{\text{max}}$ means the collection of maximal cones in $\Sigma$. 
The logic behind the computation of ample cones is mainly the following:
\begin{proposition}
Let $\Sigma$ be a complete fan and $D=\sum_{\rho}a_{\rho}D_{\rho}$ be
a Cartier divisor. For every $\sigma\in\Sigma_{\text{max}}$ and every
$\rho^{\prime}\notin\sigma(1)$, fix a relation
$\sum_{\rho\in\sigma(1)}b_{\rho}u_{\rho}=u_{\rho^{\prime}}$ with
$b_{\rho}\in\mathbf{Q}$ for all $\rho\in\sigma(1)$. Then $D$ is ample
if and only if $\sum_{\rho\in\sigma(1)}b_{\rho}a_{\rho}<a_{\rho^{\prime}}$ for
all $\sigma\in\Sigma_{\text{max}}$ and $\rho^{\prime}\notin\sigma(1)$.
\end{proposition}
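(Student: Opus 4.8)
The plan is to reduce the statement directly to the ampleness criterion \Cref{numampleness}, translating the linear-algebra conditions $\langle m_\sigma, u_{\rho'}\rangle > -a_{\rho'}$ into the inequalities $\sum_{\rho\in\sigma(1)} b_\rho a_\rho < a_{\rho'}$ using the fixed relations among ray generators. Since $D = \sum_\rho a_\rho D_\rho$ is assumed Cartier, for each maximal cone $\sigma$ there is an $m_\sigma \in M$ with $\langle m_\sigma, u_\rho\rangle = -a_\rho$ for all $\rho \in \sigma(1)$, and by \Cref{numampleness} ampleness is equivalent to $\langle m_\sigma, u_{\rho'}\rangle > -a_{\rho'}$ for all $\sigma \in \Sigma_{\max}$ and all $\rho' \notin \sigma(1)$.

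First I would fix, as in the statement, for each pair $(\sigma, \rho')$ with $\rho' \notin \sigma(1)$, a relation $u_{\rho'} = \sum_{\rho \in \sigma(1)} b_\rho u_\rho$ with $b_\rho \in \mathbf{Q}$; such a relation exists because $\sigma$ is full-dimensional (a maximal cone of a complete fan spans $N_{\mathbf{R}}$), so $\{u_\rho \mid \rho \in \sigma(1)\}$ spans $N_{\mathbf{R}}$. Then I would compute, using linearity of $\langle m_\sigma, -\rangle$ and the defining property $\langle m_\sigma, u_\rho\rangle = -a_\rho$ for $\rho \in \sigma(1)$,
\begin{equation*}
\langle m_\sigma, u_{\rho'}\rangle = \Big\langle m_\sigma, \sum_{\rho \in \sigma(1)} b_\rho u_\rho \Big\rangle = \sum_{\rho \in \sigma(1)} b_\rho \langle m_\sigma, u_\rho\rangle = -\sum_{\rho \in \sigma(1)} b_\rho a_\rho.
\end{equation*}
Substituting this into the ampleness inequality $\langle m_\sigma, u_{\rho'}\rangle > -a_{\rho'}$ gives $-\sum_{\rho\in\sigma(1)} b_\rho a_\rho > -a_{\rho'}$, which is exactly $\sum_{\rho\in\sigma(1)} b_\rho a_\rho < a_{\rho'}$. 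Running this equivalence over all $\sigma$ and all $\rho' \notin \sigma(1)$ yields the claim.

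The one point requiring a word of care — and the only place the argument is not completely mechanical — is the observation that the value $\langle m_\sigma, u_{\rho'}\rangle$ is independent of the chosen relation $\sum b_\rho u_\rho = u_{\rho'}$ even though the coefficients $b_\rho$ need not be unique when $\sigma(1)$ has more than $n$ rays; this is immediate since $m_\sigma$ is a genuine linear functional, so $\langle m_\sigma, u_{\rho'}\rangle$ depends only on $u_{\rho'}$, and any two relations differ by an element of the kernel of $\sum_{\rho\in\sigma(1)} b_\rho u_\rho$, which $\langle m_\sigma, -\rangle$ annihilates by the defining property of $m_\sigma$. (In the simplicial case the $b_\rho$ are unique anyway.) I would also note that $m_\sigma$ itself is only unique modulo $\sigma^\perp \cap M$ by \Cref{numampleness}, but $\langle m_\sigma, u_{\rho'}\rangle$ is well-defined since $\langle -, u_{\rho'}\rangle$ restricted to $\sigma(1)$ determines it on all of $N_{\mathbf{R}}$ via the relation. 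Thus there is essentially no obstacle: the proof is a direct unwinding of \Cref{numampleness}, and I expect it to occupy only a few lines.
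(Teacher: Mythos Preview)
Your proposal is correct and follows essentially the same approach as the paper: both reduce directly to \Cref{numampleness} and compute $\langle m_\sigma, u_{\rho'}\rangle = -\sum_{\rho\in\sigma(1)} b_\rho a_\rho$ via the fixed relation. Your extra remarks on independence of the chosen relation and uniqueness of $m_\sigma$ are fine but unnecessary here, since completeness makes every maximal cone full-dimensional, so $\sigma^\perp = 0$ and $m_\sigma$ is genuinely unique.
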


\begin{proof}
Let $\Sigma$ be a fan in $N_{\mathbf{R}}$ and $M$ be the dual lattice
of $M$. Since $\Sigma$ is complete, all maximal cones in $\Sigma$ are
full dimensional. Hence we can
solve for rational relations for every ray outside of a maximal
cone. Fix a relation (not necessarily unique)
$\sum_{\rho\in\sigma(1)}b_{\rho}u_{\rho}=u_{\rho^{\prime}}$ for each
maximal cone $\sigma$ and each $\rho^{\prime}\notin\sigma(1)$. Since $D$ is Cartier, for every $\sigma\in\Sigma_{\text{max}}$, there
exists an unique $m_{\sigma}\in M$ such that $\langle
m_{\sigma},u_{\rho}\rangle=-a_{\rho}$ for all $\rho\in\sigma(1)$ by 
\Cref{numampleness}. In order for $D$ to be ample, we also need
$\langle m_{\sigma},u_{\rho^{\prime}}\rangle>-a_{\rho^{\prime}}$ for all
$\rho^{\prime}\notin\sigma(1)$. But $$\langle m_{\sigma},
u_{\rho^{\prime}}\rangle=\sum_{\rho\in\sigma(1)}\langle
m_{\sigma},b_{\rho}u_{\rho}\rangle=-\sum_{\rho\in\sigma(1)}a_{\rho}b_{\rho}.$$
The result follows.
\end{proof}
Therefore, computing the ample cone breaks down into the following steps:
\begin{enumerate}
    \item Loop through each maximal cone
        $\sigma$ in the fan and solve relations
        $u_{\rho^{\prime}}=\sum_{\rho\in\sigma(1)}b_{\rho}u_{\rho}$ for each
        $\rho^{\prime}\notin \sigma(1)$.
    \item Produce the list of normal vectors of the inequalities $$\{a_{\rho^{\prime}}-\sum_{\rho\in\sigma(1)}b_{\rho}a_{\rho}>0\bigm|\sigma\in\Sigma_{\text{max}},\rho^{\prime}\notin\sigma(1)\}.$$ Call this list $L_{\text{gen}}$.
\end{enumerate}
Now we know when a Cartier divisor $\sum_{\rho\in\Sigma(1)}a_{\rho}D_{\rho}$ is ample. Therefore the last step is to,
\begin{enumerate}[resume]
    \item Specialize $L_{\text{gen}}$ to $\Pic(X_{\Sigma})_{\mathbf{R}}$. Call this list $L_{\text{red}}$.
\end{enumerate}

The $gen$ in the list produced in step (2) stands for generic as we look at divisors $\sum_{\rho}a_{\rho}D_{\rho}$ without restricting to any basis of $\Pic(X_{\Sigma})_{\mathbf{R}}.$ To go from step (2) to step(3), if condition (\ref{divisoral_simplicial}) is satisfied for a maximal cone $\sigma$, then it is simply setting the $\rho$-th component of vectors in $L_{\text{gen}}$ to be 0 for every $\rho\in\sigma(1)$. In the program we simply remove the $\rho$-th component of the vectors in $L_{\text{gen}}$ for $\rho\in\sigma(1)$. We now have a reduced list of normal vectors. Call it $L_{\text{red}}$. The vectors in $L_{\text{red}}$ can be either thought of as coefficients of strict inequalities for a divisor $\sum_{\rho\notin\sigma(1)}a_{\rho}D_{\rho}$ to be ample, or as the normal vectors of the supporting hyperplanes for the nef cone  $\Nef(X_{\Sigma})$.\\
\indent 
Take $\Sigma_{ex}$ for example, the fan is smooth so all divisors are Cartier. The only relations we have are $$u_{\rho_{0}}+u_{\rho_{1}}=0\text{ and }u_{\rho_{2}}+u_{\rho_{3}}=0.$$ Hence the divisor $\sum_{i=0}^{3}a_{i}D_{i}$ is ample if and only if $$a_{0}+a_{1}>0\text{ and }a_{2}+a_{3}>0.$$ The output of $L_{\text{gen}}$ is the list $(1,1,0,0),(0,0,1,1)$ of normal vectors. We also know that $D_{0}=\mathscr{O}(1,0)$ and $D_{2}=\mathscr{O}(0,1)$ form a basis for $\Pic(X_{\Sigma})$ where (\ref{divisoral_simplicial}) is satisfied for the cone $\sigma =\text{Cone }(\rho_{1},\rho_{3})$. In this case $L_{\text{red}}=(1,0),(0,1)$, corresponding to the fact that $a_{0} D_{0}+a_{2} D_{2}$ is ample if and only if $a_{0},a_{2}>0$. 
\subsection{Potential real one parameter subgroups}\label{Potential_one_parameter_subgroups}
This is the core part of the program as its output would be used internally to compute the stratification induced by an ample divisor, to enumerate walls, and to plot walls in the ample cone. The reader may want to review the notations introduced in \Cref{Notations and conventions_Toric_VSIT_}.

Specifically, this part of the program computes $-\Proj_{W_{Z}}\chi^{\ast}_{D}\in\pmb{\Gamma}(G)_{\mathbf{R}}$ for any subset $Z\in\mathcal{L}$ and for any $D=\sum_{\rho}a_{\rho}D_{\rho}$. The adjective potential comes from the fact we deduced in \Cref{Extensions_to_the_ample_cone} that the real one parameter subgroup in $\pmb{\Gamma}(G)_{\mathbf{R}}$ that is $\chi_{D}$-adapted to $L(S)$ for an $S\in\mathcal{L}$ is of the form $-\Proj_{W_{Z}}\chi^{\ast}_{D}$ for some subset $Z$ of $S$. We note that 

\begin{remark}\label{remark_subspaces_for_worst_one_PS}
The description of the subspace $W_{Z}\subset\pmb{\Gamma}(G)_{\mathbf{R}}$ is rather simple. Viewing $\pmb{\Gamma}(G)_{\mathbf{R}}$ as a subspace of $\mathbf{R}^{\Sigma(1)}$ via the inclusion from (\ref{one_PS_SES}), we have that a point $v\in\mathbf{R}^{\Sigma(1)}$ is in $W_{Z}$ if and only if $v\in\pmb{\Gamma}(G)_{\mathbf{R}}$ and $v_{\rho}=0$ for all $\rho\in Z$.\end{remark} 
Computing potential real one parameter subgroups breaks down roughly into the following steps:
\begin{enumerate}
    \item Enumerate primitive collections of the fan $\Sigma$, and compute the set $\mathcal{L}$.
    \item Compute a $\mathbf{Q}$-basis for $W_{Z}$ for each $Z\in\mathcal{L}$.
    \item For each $Z\in\mathcal{L}$, solve \Cref{potential_one_PS_matrix_eq} to obtain $-\Proj_{W_{Z}}\chi^{\ast}_{D}$.
\end{enumerate}

Each output $-\Proj_{W_{Z}}\chi^{\ast}_{D}$ is a vector in $\mathbf{R}^{\Sigma(1)}$ whose components are $\mathbf{Q}$-combination of the $a_{\rho}$'s. The user may specialize the results to the ample cone. We now explain how each step is carried out. 

For step (1), the primitive collections can be obtained by the .primitive\_collections() method applied to the fan. The set $\mathcal{L}$ being the union of the power set of $\Sigma(1)-C$ over all primitive collections $C$, can be obtained by elementary set operations in SageMath.\\
\indent 
For step (2), note that the $\mathbf{Q}$-extension of sequence (\ref{sesWDiv}) is 
\begin{equation}\label{Q_extension_SES_divisor_class_group}\begin{tikzcd}[sep = small]
0\arrow[r]&\mathbf{Q}^{\dim \Sigma}\arrow[r,"B"]&\mathbf{Q}^{\Sigma(1)}\arrow[r,"B^{\perp}"]&\Cl(X_{\Sigma})_{\mathbf{Q}}\arrow[r]&0
\end{tikzcd}\end{equation} where $B$ is a matrix whose $\rho$-th row is the coordinate of the ray generator $u_{\rho}$ in $\mathbf{Z}^{\Sigma(1)}$. The dual of the above sequence is 
\begin{equation} \label{one_PS_SES_rational}\begin{tikzcd}[sep=small]
0\arrow[r]&\pmb{\Gamma}(G)_{\mathbf{Q}}\arrow[r,"(B^{\perp})^{t}"]&[+15pt]\mathbf{Q}^{\Sigma(1)}\arrow[r,"B^{t}"]&\mathbf{Q}^{\dim\Sigma}\arrow[r] &0
\end{tikzcd}.\end{equation} Hence $\pmb{\Gamma}(G)_{\mathbf{R}}$ is the space of vectors $u\in\mathbf{R}^{\Sigma(1)}$ such that $B^{t}\cdot u=0.$ Let us augment the matrix $B$ by inserting column vectors $e_{\rho}$ for $\rho\in Z$. Call the augmented matrix $B_{Z}$. Then by \Cref{remark_subspaces_for_worst_one_PS}, $W_{Z}$ is the space of vectors $v\in\mathbf{R}^{\Sigma(1)}$ such that $(B_{Z})^{t}\cdot v=0$. The .right\_kernel() method when applied to $(B_{Z})^{t}$, yields a matrix $C$ whose columns consist of a $\mathbf{Q}$-basis of $W_{Z}$, finishing step (2). The matrix $C$ is called the right kernel of $(B_{Z})^{t}$ simply because $C$ consists of vectors $v$ such that $(B_{Z})^{t}\cdot v=0$.
\\
\indent
For step (3), suppose $W_{Z}$ has the $\mathbf{Q}$-basis $\{\lambda_{1},\ldots,\lambda_{q}\}\subset\mathbf{Q}^{\Sigma(1)}$. Write $\Proj_{W_{Z}}\chi^{\ast}_{D}=\sum_{j=1}^{q}b_{j}\lambda_{j}$. It then comes down to solve \Cref{potential_one_PS_matrix_eq} for each $b_{j}$. This is no difficult task for SageMath, finishing step (3). We now explain the data structure that is used to store the list of potential real one parameter subgroups as the list is used extensively in later parts of the program.
\subsubsection{Data structure of potential real one parmaeter subgroups}
The list of potential real one parameter subgroups has the following data structure. Each entry in the list is a list $$[v,l,||v||^2]$$ where $v$ is a column vector in $\mathbf{Q}^{\Sigma(1)}$ and $l$ is a list of sets in $\mathcal{L}$ such that 
$v=-\Proj_{W_{Z}}\chi^{\ast}_{D}$ for all $Z\in l$. The reason why we included the squares $||v||^{2}$ in the data is that they will be used to compute stratifications and to determine the strict partial order between strata (See \Cref{Compute_stratification_with_respect_to_ample_divisors}).\\
\indent 
Now take $\Sigma_{ex}$ to demonstrate the ideas. For $\Sigma_{ex}$, the primitive collections are $[0,1]$ and $[2,3]$. Hence, $$\mathcal{L}= [[], [2], [3], [2, 3], [0], [1], [0, 1]]$$ where the empty list $[]$ corresponds to the empty set $\emptyset$. For brevity, for a subset $Z\in\mathcal{L}$, say $Z=[0,1]$, instead of writing $W_{[0,1]}$ (resp. $B_{[0,1]}$), we write $W_{01}$ (resp. $B_{01}$).

In this case the matrix $B=\begin{bmatrix}
              1 & 0\\
              -1 & 0\\
              0 & 1\\
              0 & -1
\end{bmatrix}$ in sequence (\ref{Q_extension_SES_divisor_class_group}). Taking the subset $Z=[0,1]\in\mathcal{L}$ for example, we have $$B_{01}=\begin{bmatrix}
              1 & 0 & 1 & 0\\
              -1 & 0 & 0 & 1\\
              0 & 1 & 0 & 0\\
              0 & -1 & 0 & 0
\end{bmatrix}.$$ Note that the sum of the first and the fourth column of $M_{01}$ yields the third column. This implies $(B_{01})^{t}, (B_{0})^{t}$, and $(B_{1})^{t}$ have the same right kernel, which is computed as $\begin{bmatrix}
              0 \\ 0 \\ 1 \\ 1
\end{bmatrix}.$ Solving \Cref{potential_one_PS_matrix_eq}, we get $$\Proj_{W_{01}}\chi^{\ast}=\Proj_{W_{0}}\chi^{\ast}_{D}=\Proj_{W_{1}}\chi^{\ast}_{D}=\begin{bmatrix}
              0\\
              0\\
-1/2*a2 - 1/2*a3\\
-1/2*a2 - 1/2*a3\end{bmatrix}.$$ Here is the full output of potential real one parameter subgroups for $X_{\Sigma_{ex}}$.
\[\begin{array}{lll}\begin{array}{r}
\begin{bmatrix}
              0\\
              0\\
-1/2*a2 - 1/2*a3\\
-1/2*a2 - 1/2*a3\end{bmatrix}\\
\text{ }\end{array}, & [[0], [1], [0, 1]],& 1/2*a2^2 + a2*a3 + 1/2*a3^2\\
\begin{array}{r}
\begin{bmatrix}
-1/2*a0 - 1/2*a1\\
-1/2*a0 - 1/2*a1\\
              0\\
              0\end{bmatrix}\\\text{ }\end{array}, & [[2], [3], [2, 3]],& 1/2*a0^2 + a0*a1 + 1/2*a1^2\\
\begin{array}{r}
\begin{bmatrix}
-1/2*a0 - 1/2*a1\\
-1/2*a0 - 1/2*a1\\
-1/2*a2 - 1/2*a3\\
-1/2*a2 - 1/2*a3\end{bmatrix}\end{array}, & [[]], &
\begin{array}{l}1/2*a0^2 + a0*a1 + 1/2*a1^2 \\+ 1/2*a2^2 + a2*a3\\ + 1/2*a3^2\end{array}\end{array}\]
The first entry of the list says that $\Proj_{W_{0}}\chi^{\ast}_{D}=\Proj_{W_{1}}\chi^{\ast}_{D}=\Proj_{W_{01}}\chi^{\ast}_{D}$ for all $D\in\mathbf{R}^{\Sigma(1)}$, as was discussed. 

Let us specialize the results to the ample cone $\Amp(X_{\Sigma_{\text{ex}}})_{\mathbf{R}}$. 
Since condition (\ref{divisoral_simplicial}) is satisfied for the cone $\text{Cone}(\rho_{1},\rho_{3})$, we can first specialize the results to $\Pic(X_{\Sigma_{\text{ex}}})_{\mathbf{R}}$ by setting $a_{1}=a_{3}=0$. Here is the resulting list:  \begin{equation}\label{P1P1_reduced_potential_worst_one_ps}\begin{array}{lll}\begin{array}{r}
\begin{bmatrix}
              0\\
              0\\
-1/2*a2 \\
-1/2*a2 \end{bmatrix}\\
\text{ }\end{array}, & [[0], [1], [0, 1]],& 1/2*a2^2\\
\begin{array}{r}
\begin{bmatrix}
-1/2*a0 \\
-1/2*a0 \\
              0\\
              0\end{bmatrix}\\\text{ }\end{array}, & [[2], [3], [2, 3]],& 1/2*a0^2\\
\begin{array}{r}
\begin{bmatrix}
-1/2*a0 \\
-1/2*a0 \\
-1/2*a2 \\
-1/2*a2 \end{bmatrix}\end{array}, & [[]], &
1/2*a0^2 + 1/2*a2^2.\end{array}\end{equation}

Finally, as was discussed earlier, $D=a_{0}D_{0}+a_{2}D_{2}$ is ample if and only if  $a_{0},a_{2}>0$. This implies $-\Proj_{W_{0}}\chi^{\ast}_{D}$ (and therefore $-\Proj_{W_{1}}\chi^{\ast}_{D},-\Proj_{W_{01}}\chi^{\ast}_{D})$ is on the ray $\mathbf{R}_{>0}\cdot (0,0,-1-1)$ whenever $D$ is ample. Similarly, $-\Proj_{W_{2}}\chi^{\ast}_{D}$ (and therefore $-\Proj_{W_{3}}\chi^{\ast}_{D},-\Proj_{W_{23}}\chi^{\ast}_{D}$) is on the ray $\mathbf{R}_{>0}\cdot(-1,-1,0,0)$ whenever $D$ is ample. Hence we  know concretely what indivisible one parameter subgroup is on $\mathbf{R}_{>0}\cdot(-\Proj_{W_{0}}\chi^{\ast}_{D})$ or $\mathbf{R}_{>0}\cdot(-\Proj_{W_{2}}\chi^{\ast}_{D})$ for all ample $D$. In general, it is not possible to write the indivisible one parameter subgroup on the ray of the projection $-\Proj_{W_{Z}}\chi^{\ast}_{D}$ for any $Z\in\mathcal{L}$ abstractly in terms of $D$.
\subsection{Enumerating walls}\label{Enumerating_walls}
\subsubsection{Type one walls}
Recall a type one wall is a non-empty proper collection of $D\in\Amp(X_{\Sigma})_{\mathbf{R}}$ such that $\Proj_{W_{Z}}\chi^{\ast}_{D}=\Proj_{W_{Z\cup\{\rho\}}}\chi^{\ast}_{D}$ where $Z,\{\rho\}\in\mathcal{L}$ and $W_{Z\cup\{\rho\}}$ is a codimension one subspace of $W_{Z}.$ 
The main reasoning behind type one wall computation is the following elementary observation:
\begin{proposition}\label{type_one_critical_subset_reasoning}
Let $V$ be a finite dimensional real vector space and $V^{\vee}$ be its dual. Let $\sigma\subset V$ be a full dimensional polyhedral cone and $f\in V^{\vee}$. Then the following statements are equivalent:
\begin{enumerate}
    \item The hyperplane $H_{f}=\{v\in V\mid f(v)=0\}$ intersects the interior of $\sigma$,
    \item Neither $f$ nor $-f$ is in the dual cone $\sigma^{\vee}$.
\end{enumerate}
\end{proposition}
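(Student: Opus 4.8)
The plan is to prove the two statements are equivalent by establishing the contrapositives in a symmetric way. The key observation is a standard separation/duality fact: for a full dimensional polyhedral cone $\sigma\subset V$ and a linear functional $f\in V^{\vee}$, the functional $f$ lies in the dual cone $\sigma^{\vee}$ if and only if $f$ is non-negative on all of $\sigma$, i.e. $\sigma\subset H_{f}^{+}$. This is essentially the definition of $\sigma^{\vee}$ together with the fact that a polyhedral cone is the closure of its interior when it is full dimensional. So I would first record this reformulation and then translate everything into the language of signs of $f$ on the interior of $\sigma$.

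First I would prove $(1)\Rightarrow(2)$ by contraposition: suppose $f\in\sigma^{\vee}$ or $-f\in\sigma^{\vee}$; say without loss of generality $f\in\sigma^{\vee}$, so $f(v)\geq 0$ for all $v\in\sigma$. I claim $f$ must vanish on the entire interior of $\sigma$ or nowhere on it. Indeed, if $f(v_0)=0$ for some interior point $v_0$, then since $v_0$ is interior and $f$ is linear, for any $w\in V$ the point $v_0+\epsilon w$ lies in $\sigma$ for small $\epsilon>0$, forcing $f(v_0+\epsilon w)=\epsilon f(w)\geq 0$ for both signs of small $\epsilon$, hence $f(w)=0$ for all $w$, i.e. $f=0$; but then $H_f=V$ is not a hyperplane and the statement is vacuous, or more precisely $f=0$ is excluded if we intend $H_f$ to be a genuine hyperplane. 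Excluding that degenerate case, $f$ is strictly positive on the interior of $\sigma$, so $H_f$ does not meet the interior, establishing $\neg(1)$. The argument for $-f\in\sigma^{\vee}$ is identical with signs reversed.

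Next I would prove $(2)\Rightarrow(1)$, again by contraposition: suppose $H_f$ does not meet the interior of $\sigma$. The interior of $\sigma$ is a connected set on which the continuous function $f$ never vanishes, so $f$ has constant sign there — either $f>0$ on $\mathrm{int}(\sigma)$ or $f<0$ on $\mathrm{int}(\sigma)$. By continuity and the fact that $\sigma=\overline{\mathrm{int}(\sigma)}$ (using full dimensionality), we get $f\geq 0$ on all of $\sigma$ in the first case, so $f\in\sigma^{\vee}$; or $f\leq 0$ on all of $\sigma$ in the second case, so $-f\in\sigma^{\vee}$. Either way $\neg(2)$ holds.

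The only subtlety worth care — and the main obstacle, such as it is — is handling the degenerate situation correctly: one needs $\sigma$ full dimensional so that $\mathrm{int}(\sigma)\neq\emptyset$ and $\sigma$ equals the closure of its interior, and one should be explicit that $f\neq 0$ (so that $H_f$ is genuinely a hyperplane) — otherwise statement (1) is either vacuous or trivially false while (2) could read ambiguously. Once these hypotheses are pinned down, both implications are short connectedness-plus-density arguments and no real computation is involved.
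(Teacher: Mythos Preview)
Your proof is correct. The paper actually states this proposition without proof, calling it an ``elementary observation,'' so there is nothing to compare against; your connectedness-plus-density argument is a perfectly good way to supply the omitted justification, and your handling of the degenerate case $f=0$ and the role of full dimensionality (ensuring $\mathrm{int}(\sigma)\neq\emptyset$ and $\sigma=\overline{\mathrm{int}(\sigma)}$) is appropriate.
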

Enumerating type one wall breaks down into the following steps:
\begin{enumerate}
    \item Find $Z,\{\rho\}$ of sets in $\mathcal{L}$ where $W_{Z\cup\{\rho\}}$ is a codimension one subspace of $W_{Z}$.
    \item Let $\nu_{\rho}:\Pic(X_{\Sigma})_{\mathbf{R}}\rightarrow\mathbf{R}$ be defined by $$D\mapsto\langle\chi_{D_{\rho}},\Proj_{W_{Z}}\chi^{\ast}_{D}\rangle.$$ We then apply \Cref{type_one_critical_subset_reasoning} to the vector space $\Pic(X_{\Sigma})_{\mathbf{R}}$, the cone $\Nef(X_{\Sigma})$, and the function $\nu_{\rho}$ to test if $\ker \nu_{\rho}\cap\Amp(X_{\Sigma})_{\mathbf{R}}=\emptyset$. If not, then $\ker \nu_{\rho}\cap\Amp(X_{\Sigma})_{\mathbf{R}}$ is a type one wall.
\end{enumerate}

For step (1), in the program each $W_{Z}$ is represented as the column space of the right kernel of $B_{Z}^{t}$ (See \Cref{Potential_one_parameter_subgroups}). SageMath can count the dimensions of $W_{Z}$ and $W_{Z\cup\{\rho\}}$ to finish step (1). \\
\indent
For step (2), if $Z,\{\rho\}$ is a pair obtained in step (1), the function $\nu_{\rho}$ is described by the $\rho$-th component of $\Proj_{W_{Z}}\chi^{\ast}_{D}$ stored in the list of potential real one parameter subgroups. Next, the nef cone was already known in the first step of the program outlined in \Cref{Computing_the_ample_cone}. Taking $V$ and $\sigma$ in \Cref{type_one_critical_subset_reasoning} to be $\Pic(X_{\Sigma})_{\mathbf{R}}$ and $\Nef(X_{\Sigma})$ respectively, we apply the .dual() method in SageMath to return the dual of $\Nef(X_{\Sigma})$. The .contains() method can then be used to determine if $\pm \nu_{\rho}$ is in $\Nef(X_{\Sigma})^{\vee}$, finishing step (3).\\
\indent
For the fan $\Sigma_{\text{ex}}$, consider the pair of subsets $([],[0])$. It can be checked that $W_{0}$ is a codimension one subspace of $\pmb{\Gamma}(G)_{\mathbf{R}}$. The condition that $\chi^{\ast}_{D}=\Proj_{W_{0}}\chi^{\ast}_{D}$ amounts to setting the first component of $\chi^{\ast}_{D}$ to $0$. Referring back to list (\ref{P1P1_reduced_potential_worst_one_ps}), this amounts to setting $-\frac{1}{2}a_{0}=0$, which is impossible in the ample cone. In fact, 
there are no type one walls for the fan $\Sigma_{\text{ex}}$.
\subsubsection{Type two walls}
Recall a type two wall with respect to $Z_{1},Z_{2}\in\mathcal{L}$ is of the form 
\begin{equation*}\{D\in\Amp(X_{\Sigma})_{\mathbf{R}}\mid ||\Proj_{W_{Z_{1}}}\chi^{\ast}_{D}||=||\Proj_{W_{Z_{2}}}\chi^{\ast}_{D}||\}\end{equation*} where $W_{Z_{1}}\not\subset W_{Z_{2}}$ and $W_{Z_{2}}\not\subset W_{Z_{1}}$. To detect the containment, we would use the .is\_subspace() method. The equations of type two walls are immediate as we already recorded the norm of each potential real one parameter subgroups. The non-trivial task is to determine if the equation has solutions inside the ample cone. Due to computational difficulties, we currently do not test if a quadratic homogeneous polynomial has a solution in the ample cone or not. Instead, we list equations $||\Proj_{W_{Z_{1}}}\chi^{\ast}_{D}||=||\Proj_{W_{Z_{2}}}\chi^{\ast}_{D}||$ for any pair $W_{Z_{1}},W_{Z_{2}}$ without containment. Doing this does not affect visualization of the decomposition of ample cone. SageMath can plot graphs in a certain region so anything outside the ample cone will not be plotted.

\subsubsection{Data structure of walls}
We store walls as a list where each entry is a list
$$[f,l].$$ We explain what $f$ and $l$ are for each type of wall.

For type one walls, the $l$ in the entry $[f,l]$ is a list of pairs $(Z,\{\rho\})$ defining a type one wall with respect to $Z$ and $\{\rho\}$ with equation $f=0$. For type two walls, the $l$ in the entry $[f,l]$ is a list of pairs $(l_{1},l_{2})$ of lists of sets in $\mathcal{L}$ and $f$ is the homogeneous polynomial such that $f=0$ corresponds to the type two walls with respect to $Z_{1}$ and $Z_{2}$ for all $Z_{1}\in l_{1}$ and for all $Z_{2}\in l_{2}$.

Take $\Sigma_{ex}$ for example, we have already seen that there is no type one wall. As for type two walls, note that the only pair of subspaces without containment is $W_{0}$ and $W_{2}$. The output is  \begin{equation*}\begin{split}&-1/2*a0^2 - a0*a1 - 1/2*a1^2 + 1/2*a2^2 + a2*a3 + 1/2*a3^2,\\
 & ([[0], [1], [0, 1]], [[2], [3], [2, 3]]).\end{split}\end{equation*} Setting $a_{1}=a_{3}=0$, we specialize the result to $\Pic(X_{\Sigma_{\text{ex}}})_{\mathbf{R}}$. The equation becomes $-1/2*a0^2+1/2*a2^2=0$. Since $a_{0},a_{2}>0$ in the ample cone, it is the line $-a_{0}+a_{2}=0$. Moreover, the pair of lists in the end tells us that the equation $-a_{0}+a_{2}=0$ corresponds to the condition $$||\Proj_{W_{Z_{1}}}\chi^{\ast}_{D}||=||\Proj_{W_{Z_{2}}}\chi^{\ast}_{D}||$$ for any $Z_{1}\in[[0], [1], [0, 1]]$ and any $Z_{2}\in[[2], [3], [2, 3]]$ in the ample cone. We will see that this wall swaps the ordering of a pair of strata in \Cref{Compute_stratification_with_respect_to_ample_divisors}. 

\subsection{Computing stratifications with respect to ample divisors}\label{Compute_stratification_with_respect_to_ample_divisors}
Given an ample divisor $D$ on $X_{\Sigma}$, the program 
\begin{enumerate}
    \item computes the stratification of $Z(\Sigma)$ induced by $\chi_{D}$, and 
    \item plots the Hasse diagram of the stratification as a poset.
\end{enumerate}
Moreover, given two ample divisors $D$ and $D^{\prime}$, the program determines if $\chi_{D}$ and $\chi_{D^{\prime}}$ induce equivalent  stratifications (\Cref{strat_def}). 
We explain briefly how each of the three functions works.
\subsubsection{Stratification induced by an ample divisor}
Given $D\in\Amp(X_{\Sigma})$, computing the stratification induced by $\chi_{D}$ boils down to two steps:
\begin{enumerate}
    \item Compute the real one parameter subgroup $\lambda^{D}_{S}$ that is $\chi_{D}$-adapted to $L(S)$ for each $S\in\mathcal{L}$. 
    \item Group $\{L(S)\}_{S\in\mathcal{L}}$ by $\{\lambda^{D}_{S}\}_{S\in\mathcal{L}}$.
\end{enumerate} 

As was discussed in \Cref{Extensions_to_the_ample_cone}, step (1) is equivalent to finding the longest vector in $$\Lambda^{D}_{S}=\{-\Proj_{W_{Z}}\chi^{\ast}_{D}\bigm|-\Proj_{W_{Z}}\chi^{\ast}_{D}\in\sigma_{S},Z\subset S\}.$$ This is done by a combination of basic set operations and sorting on the list of potential real one parameter subgroups obtained earlier. Step (2) is also just a sequence of set operations.

\subsubsection{Stratification as a poset}\label{Stratification as a poset}
We store the stratification as a poset in SageMath where each node is a tuple of sets in $\mathcal{L}$, corresponding to a stratum. For example, if a node is the tuple $(S_{1},\ldots,S_{k})$ of subsets in $\mathcal{L}$, then it corresponds to the stratum $\cup_{i=1}^{k}L(S_{i}).$ The strict partial order is defined by the norms of strata's indexing real one parameter subgroups. 

We also supply the feature to visualize posets. This is simply done by applying the .plot() method for posets in SageMath. We demonstrate the visualization for the fan $\Sigma_{ex}$.

The stratification induced by the ample divisors $(a_{0},a_{2})=(2,1)$, $(1,1)$, $(1,2)$ respectively looks like
$$\includegraphics[height = 55mm, width = 20mm]{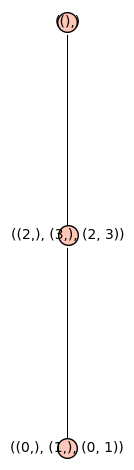}\includegraphics[height = 55mm, width = 40mm]{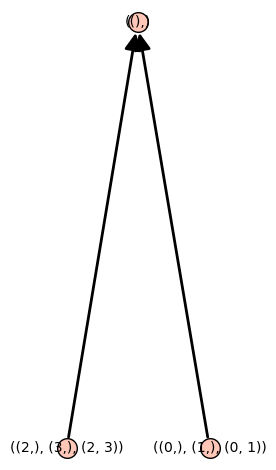}\includegraphics[height = 55mm, width = 20mm]{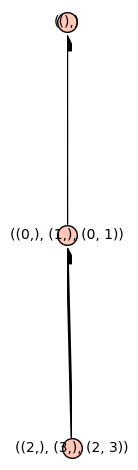}.$$ 

Each node in the diagram is a tuple. The empty tuple $((),)$ corresponds to the stratum $L(\emptyset)$, which is the origin. The tuple $((0,), (1,), (0, 1))$ corresponds to the stratum $L(\{0\})\cup L(\{1\})\cup L(\{0,1\})$ and similarly for others. 

For some reason sageMath is not very consistent in drawing arrows or just an edge. In any case, higher order strata are placed higher in the diagram. In \Cref{Enumerating_walls}, we calculated that $a_{0}=a_{2}$ defines a type two wall. Note that the divisor $(a_{0},a_{2})=(1,1)$ is on the type two wall and $(2,1)$ $(1,2)$ are on different sides of the wall. According to the outputs, crossing this type two wall swaps the orderings of a pair of strata and on the wall the order between these two strata come together, breaking up the poset into two chains.

\subsubsection{Comparing stratifications induced by two ample divisors}
To determine if the stratifications induced by two ample divisors $D,D^{\prime}$ are equivalent, we construct the poset for each ample divisor given. Then there is the .is\_isomorphic() method in SageMath available to compares if the two posets are isomorphic. By \Cref{strat_def}, $\chi_{D}$ and $\chi_{D^{\prime}}$ induce equivalent stratifications if the posets constructed here for $D$ and $D^{\prime}$ are isomorphic. 
\subsection{Visualizing walls and semi-chambers in the ample cone}\label{Visualizing_the_ample_cone_decomposition}
The program can plot the ample cone and its walls when the dimension of the ample cone is less than three. For two dimensional ample cones, the .implicit\_plot() method can be applied directly to plot walls in the ample cone. For three dimensional ample cones the task boils down to the two steps:
\begin{enumerate}
    \item Produce a two dimensional affine hyperplane $H\subset \Pic(X_{\Sigma})_{\mathbf{R}}$ such that $\text{Cone }(H\cap\Amp(X_{\Sigma})_{\mathbf{R}})=\Amp(X_{\Sigma})_{\mathbf{R}}.$
    \item Use the implcit\_plot() function in SageMath to plot walls in the region $H\cap\Amp(X_{\Sigma})_{\mathbf{R}}.$
\end{enumerate}
The result is a two dimensional slice of the original picture.

For step(1), \Cref{nef_and_ample_cone_thm} tells us that the ample cone is the interior of the nef cone for projective toric varieties. Hence it is sufficient to find an affine hyperplane $H$ that meets all the rays of the nef cone $\Nef(X_{\Sigma})$. This can be achieved with the help of 
\begin{proposition}\label{cross_section_poly_cone}
Let $\sigma$ be a full dimensional, strongly convex polyhedral cone and $\sigma^{\vee}$ be its dual. Then 
\begin{enumerate}
    \item The sum of ray generators of $\sigma$ is in the interior of $\sigma$.
    \item Let $m\in\sigma^{\vee}$. Then $H_{m}\cap\sigma=\{0\}$ if and only if $m$ is in the interior of $\sigma^{\vee}$.
\end{enumerate}
\end{proposition}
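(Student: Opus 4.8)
\textbf{Proof proposal for \Cref{cross_section_poly_cone}.}

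The plan is to prove the two statements separately, both of which are classical facts about strongly convex polyhedral cones, so the arguments are short. For statement (1), I would write $\sigma = \mathrm{Cone}(v_1,\dots,v_k)$ where $v_1,\dots,v_k$ are the ray generators (there are finitely many since $\sigma$ is polyhedral and strongly convex, hence pointed). Set $v = \sum_i v_i$; I must show $v$ lies in the interior of $\sigma$, equivalently that $v$ is not on any supporting hyperplane $H_f$ of $\sigma$. Suppose for contradiction $f \in \sigma^\vee$ with $f(v) = 0$. Then $f(v_i) \geq 0$ for all $i$ and $\sum_i f(v_i) = 0$ forces $f(v_i) = 0$ for every $i$, so $f$ vanishes on all of $\sigma$; since $\sigma$ is full dimensional this forces $f = 0$. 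Hence no nonzero $f \in \sigma^\vee$ vanishes at $v$, which by the facial decomposition recalled in \Cref{Some linear algebra} means $v \in \Relint(\sigma)$, and because $\sigma$ is full dimensional $\Relint(\sigma)$ is the topological interior.

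For statement (2), fix $m \in \sigma^\vee$, so $H_m = \{x \mid \langle m,x\rangle = 0\}$ and $\sigma \subset H_m^+$. The condition $H_m \cap \sigma = \{0\}$ says that $\langle m, x\rangle = 0$ for $x \in \sigma$ only when $x = 0$, i.e.\ $m$ is strictly positive on $\sigma \setminus \{0\}$. For the forward direction, assume $H_m \cap \sigma = \{0\}$; I would show $m$ lies in the interior of $\sigma^\vee$ by checking $m$ is on no supporting hyperplane of $\sigma^\vee$. A supporting hyperplane of $\sigma^\vee$ is cut out by some $x$ in $(\sigma^\vee)^\vee = \sigma$ (using that $\sigma$ is closed and convex), and $m$ being on it means $\langle m, x\rangle = 0$ for some $x \in \sigma \setminus \{0\}$, contradicting $H_m \cap \sigma = \{0\}$; hence $m \in \Relint(\sigma^\vee)$, which is the interior since $\sigma^\vee$ is full dimensional (the dual of a strongly convex full-dimensional cone is again full dimensional and strongly convex). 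For the reverse direction, assume $m \in \mathrm{int}(\sigma^\vee)$ but there is $0 \neq x \in H_m \cap \sigma$. Then $\langle m, x\rangle = 0$, and since $x \in \sigma = (\sigma^\vee)^\vee$, the linear functional $\langle -, x\rangle$ is nonnegative on $\sigma^\vee$ and vanishes at the interior point $m$; this forces $\langle -, x\rangle$ to vanish identically on $\sigma^\vee$, hence $x = 0$ because $\sigma^\vee$ is full dimensional, a contradiction.

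I do not expect a serious obstacle here; the only points requiring a little care are the appeals to duality — that $(\sigma^\vee)^\vee = \sigma$ for a closed convex cone, and that the dual of a full-dimensional strongly convex cone is full-dimensional and strongly convex — and the identification of relative interior with topological interior in the full-dimensional case, all of which are standard and already implicit in the setup of \Cref{Some linear algebra} and \Cref{nef_and_ample_cone_thm}. If one prefers a more self-contained argument avoiding the biduality statement, statement (2) can instead be derived directly from statement (1): taking $m$ to be the sum of the ray generators of $\sigma^\vee$ gives an interior point of $\sigma^\vee$, and the separation argument above shows $H_m \cap \sigma = \{0\}$; combined with the elementary observation that the set of $m \in \sigma^\vee$ with $H_m \cap \sigma = \{0\}$ is exactly the complement in $\sigma^\vee$ of the union of its proper faces, this yields the claim. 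I would present the duality-based proof as the main line since it is cleanest.
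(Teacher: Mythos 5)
The paper states \Cref{cross_section_poly_cone} without proof, treating it as a standard fact from convex geometry, so there is no author argument to compare against. Your proof is correct: for (1), any $f\in\sigma^\vee$ vanishing at the sum of ray generators must vanish on each generator and hence on all of $\sigma$, forcing $f=0$ by full-dimensionality, so the sum avoids every proper face; for (2), both directions are handled cleanly via biduality $(\sigma^\vee)^\vee=\sigma$ together with the observation that a linear functional which is nonnegative on a full-dimensional cone and vanishes at an interior point must vanish identically. You correctly use strong convexity only where needed (to ensure $\sigma^\vee$ is full dimensional, which is what makes $\Relint(\sigma^\vee)$ the topological interior and makes ``vanishes at an interior point $\Rightarrow$ vanishes identically'' applicable in the reverse direction of (2)).
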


We now explain how \Cref{cross_section_poly_cone} can be applied. We already had the nef cone $\Nef(X_{\Sigma})$ in earlier stages of the program described in \Cref{Computing_the_ample_cone}. Since SageMath can compute the dual and the list of ray generators of a polyhedral cone, we have the list of ray generators for the dual cone $\Nef(X_{\Sigma})^{\vee}$. Let $f$ be the sum of ray generators of $\Nef(X_{\Sigma})^{\vee}$.

Since the nef cone $\Nef(X_{\Sigma})$ is full dimensional and strongly convex in $\Pic(X_{\Sigma})_{\mathbf{R}}$ by \Cref{nef_and_ample_cone_thm}, so is $\Nef(X_{\Sigma})^{\vee}.$  We see from \Cref{cross_section_poly_cone} that $f$ is in the interior of $\Nef(X_{\Sigma})^{\vee}$ and $H_{f}\cap\Nef(X_{\Sigma})=\{0\}.$ In particular, $f$ does not vanish on the ray generators of $\Nef(X_{\Sigma})$. 

Let $u_{\rho}$ be any ray generator of $\Nef(X_{\Sigma})$. If $u_{\rho^{\prime}}$ is any other ray generator, we then have $$f(\frac{f(u_{\rho})}{f(u_{\rho^{\prime}})}u_{\rho^{\prime}})-f(u_{\rho})=0.$$ This means the affine plane $H\subset\Pic(X_{\Sigma})_{\mathbf{R}}$ defined by the vanishing locus of the function $$v\mapsto f(v)-f(u_{\rho})$$ contains all rays of $\Nef(X_{\Sigma})$, finishing step (1).

For step (2), the linear equation for $H$ allows us to solve one of the variable in terms of the other two. Hence we can rewrite the equations of walls and the inequalities that define the ample cone in two variables. With these we then use the .implicit\_plot() function to plot walls within the region $H\cap\Amp(X_{\Sigma})_{\mathbf{R}}$, finishing step (2).

The result is a two dimensional slice of the ample cone and its decomposition by walls and semi-chambers. Interested readers may look at \Cref{blow_up_2_pts} for a two dimensional slice of the wall and semi-chamber decomposition of a three dimensional ample cone.
\section{Examples and counter examples}\label{Examples_and_counter_examples}
The highlights of this section are examples where 
\begin{enumerate}
\item there are walls that are both type one and type two (\Cref{A wall can be of type one and type two}),
\item two semi-chambers can be contained in a single SIT-equivalence class (\Cref{Wall and semi-chamber decomposition is finer than SIT-equivalence classes}),
\item for each primitive collection $C$, the one parameter subgroup $\lambda_{C}$ in
  \Cref{closure_strata_primitive_collection} does not relate to the primitive relation we cooked up in \Cref{non_simplicial_primitive_relation} (\Cref{Counter example to primitive relations}), and 
\item a semi-chamber does not have to be convex (\Cref{blow_up_2_pts}).
\end{enumerate}

For brevity, in the discussion that follows, whenever $S\in\mathcal{L}$ is a concrete set like $\{1,3\}$, instead of writing $L(S)$ as $L(\{1,3\})$, we will simply write $L_{13}$ without commas separating the elements in $S$. Likewise for $\sigma_{S}$ and $W_{S}$ $\lambda^{D}_{S}$ and $\Lambda^{D}_{S}$, we will not include commas. However, we will keep the notation $L(S)$ when we talk about an abstract $S$.
\subsection{Blow-up of the Hirzebruch surface at a point}\label{Blow-up of the Hirzebruch surface at a point}
This is a rich example where phenomena (1),(2),(3) listed in the beginning of \Cref{Examples_and_counter_examples} can be found. 
\subsubsection{The set up}
Let $\Sigma\subset\mathbf{R}^{2}$ be the smooth complete fan whose rays are given by \begin{equation*}
\begin{split}
\rho_{0}=&\text{Cone }(e_{1})\\
\rho_{1}=&\text{Cone }(e_{2})\\
\rho_{2}=&\text{Cone }(-e_{2})\\
\rho_{3}=&\text{Cone }(-e_{1}+e_{2})\\
\rho_{4}=&\text{Cone }(-e_{1}+2e_{2}).
\end{split}
\end{equation*}
The primitive collection consists of
\begin{enumerate}
\item $\{\rho_{0},\rho_{3}\}$
\item $\{\rho_{0},\rho_{4}\}$
\item $\{\rho_{1},\rho_{2}\}$
\item $\{\rho_{1},\rho_{3}\}$
\item $\{\rho_{2},\rho_{4}\}$.
\end{enumerate}
Moreover, $$\begin{array}{ll}\mathcal{L}=&[[],
 [1],
 [2],
 [1, 2],
 [4],
 [1, 4],
 [2, 4],
 [1, 2, 4],
 [3],
 [1, 3],
 [2, 3],
 [1, 2, 3],
 [0],
 [0, 3],\\
 &
 [0, 4],
 [3, 4],
 [0, 3, 4],
 [0, 2],
 [0, 2, 4],
 [0, 1],
 [0, 1, 3]]
\end{array}$$
By \Cref{CDiv_basis} applied to the maximal cone $\text{Cone}(\rho_{0},\rho_{1})$, $\Pic(X_{\Sigma})$ is free with basis
$D_{2},D_{3},D_{4}$. The computer program computes that a divisor
$\sum_{i=2}^{4}a_{i}D_{i}$ is ample if and only if 
$$a_{2}+a_{4}>a_{3}>a_{4}>0.$$
\subsubsection{The potential real one parameter subgroups}\label{The potential one parameter subgroups}
For a point $D=\sum_{i=2}^{4}a_{i}D_{i}\in\Amp(X_{\Sigma})_{\mathbf{R}}$, the
potential real one parameter subgroups in $\pmb{\Gamma}(G)_{\mathbf{R}}\subset\mathbf{R}^{\Sigma(1)}$ are \begin{enumerate}
\item
  $-\chi_{D}^{\ast}=\begin{bmatrix}\begin{array}{r}
-1/4*a2 - 1/3*a3 - 1/12*a4\\
-1/4*a2 + 1/4*a4\\
-3/4*a2 - 1/4*a4\\
-2/3*a3 + 1/3*a4\\
-1/4*a2 + 1/3*a3 - 5/12*a4\end{array}\end{bmatrix},$
\item
  $-\Proj_{W_{0}}\chi_{D}^{\ast}=\begin{bmatrix}
\begin{array}{r}
0\\
-2/5*a2 - 1/5*a3 + 1/5*a4\\
-3/5*a2 + 1/5*a3 - 1/5*a4\\
1/5*a2 - 2/5*a3 + 2/5*a4\\
-1/5*a2 + 2/5*a3 - 2/5*a4\end{array}\end{bmatrix},$
\item
  $-\Proj_{W_{1}}\chi_{D}^{\ast}=\begin{bmatrix}\begin{array}{r}
-1/3*a2 - 1/3*a3\\
0\\
-2/3*a2 - 1/3*a4\\
-2/3*a3 + 1/3*a4\\
-1/3*a2 + 1/3*a3 - 1/3*a4\end{array}\end{bmatrix},$
\item
  $-\Proj_{W_{2}}\chi_{D}^{\ast}=\begin{bmatrix}\begin{array}{r}
-1/3*a3\\
1/3*a4\\
0\\
-2/3*a3 + 1/3*a4\\
1/3*a3 - 1/3*a4\end{array}\end{bmatrix},$
\item
  $-\Proj_{W_{3}}\chi_{D}^{\ast}=\begin{bmatrix}\begin{array}{r}
-1/4*a2 - 1/4*a4\\
-1/4*a2 + 1/4*a4\\
-3/4*a2 - 1/4*a4\\
0\\
-1/4*a2 - 1/4*a4\end{array}\end{bmatrix},$
\item $-\Proj_{W_{4}}\chi_{D}^{\ast}=\begin{bmatrix}\begin{array}{r}
-1/5*a2 - 2/5*a3\\
-2/5*a2 + 1/5*a3\\
-3/5*a2 - 1/5*a3\\
-1/5*a2 - 2/5*a3\\
0\end{array}\end{bmatrix},$
\item $-\Proj_{W_{14}}\chi_{D}^{\ast}=\begin{bmatrix}\begin{array}{r}
-1/3*a2 - 1/3*a3\\
0\\
-1/3*a2 - 1/3*a3\\
-1/3*a2 - 1/3*a3\\
0
\end{array}
\end{bmatrix}$,
\item $-\Proj_{W_{01}}\chi^{\ast}_{D}=\begin{bmatrix}\begin{array}{r}
0\\
0\\
-1/3*a2 + 1/3*a3 - 1/3*a4\\
1/3*a2 - 1/3*a3 + 1/3*a4\\
-1/3*a2 + 1/3*a3 - 1/3*a4
\end{array}
\end{bmatrix}$,
\item $-\Proj_{W_{12}}\chi^{\ast}_{D}=\begin{bmatrix}\begin{array}{r}
-1/3*a3 + 1/6*a4\\
0\\
0\\
-2/3*a3 + 1/3*a4\\
1/3*a3 - 1/6*a4
\end{array}
\end{bmatrix}$,
\item $-\Proj_{W_{13}}\chi^{\ast}_{D}=\begin{bmatrix}\begin{array}{r}
-1/3*a2 - 1/6*a4\\
0\\
-2/3*a2 - 1/3*a4\\
0\\
-1/3*a2 - 1/6*a4
\end{array}
\end{bmatrix},$
\item $-\Proj_{W_{23}}\chi^{\ast}_{D}=\begin{bmatrix}\begin{array}{r}
-1/6*a4\\
1/3*a\\
0\\
0\\
-1/6*a4
\end{array}
\end{bmatrix}$,
\item
  $-\Proj_{W_{03}}\chi^{\ast}_{D}=-\Proj_{W_{04}}\chi^{\ast}_{D}=-\Proj_{W_{34}}=-\Proj_{W_{034}}\chi^{\ast}_{D}=\begin{bmatrix}\begin{array}{r}
  0\\
  -1/2*a2\\
  -1/2*a2\\
  0\\
  0
  \end{array}\end{bmatrix}$,
\item $-\Proj_{W_{24}}\chi^{\ast}_{D}=\begin{bmatrix}\begin{array}{r}
-1/3*a3\\
 1/3*a3\\
 0\\
-1/3*a3\\
  0
\end{array}\end{bmatrix}$,
\item $-\Proj_{W_{02}}\chi^{\ast}_{D}=\begin{bmatrix}\begin{array}{r}
0\\
-1/3*a3 + 1/3*a4\\
0\\
-1/3*a3 + 1/3*a4\\
 1/3*a3 - 1/3*a4
\end{array}\end{bmatrix}$.
\end{enumerate}
Although there are 21 subsets in $\mathcal{L}$, we listed the seventeen $-\Proj_{W_{Z}}\chi^{\ast}_{D}$ where $W_{Z}\neq 0$.
\subsubsection{Stratification induced by ample divisors}
For each $S\in\mathcal{L}$, we may describe the vectors $\lambda^{D}_{S}$ that is $\chi_{D}$-adapted to $L(S)$ as a
piecewise function of $D\in\Amp(X_{\Sigma})_{\mathbf{R}}$ like we did in \Cref{Variation of stratification-an_elementary_example}. To get the stratification induced by $\chi_{D}$ one simply groups $\{L(S)\}_{S\in\mathcal{L}}$ by $\{\lambda^{D}_{S}\}_{S\in\mathcal{L}}$. 

Take $S=[0]\in\mathcal{L}$ for example. By the list of potential real one parameter subgroups provided in \Cref{The potential one parameter subgroups}, for $-\chi_{D}^{\ast}$ to be inside $\sigma_{0}$, we must have
$-1/4*a2 - 1/3*a3 - 1/12*a4\geq 0$ . This is clearly impossible inside the
ample cone. We conclude that $\lambda^{D}_{0}=-\Proj_{W_{0}}\chi^{\ast}_{D}$. Let us look at one more
example where $S=[1,4]$. 

It can be checked that $-\chi^{\ast}_{D}\notin\sigma_{14}$ and
$-\Proj_{W_{1}}\chi^{\ast}_{D}\notin\sigma_{14}$. However, 
$-\Proj_{W_{4}}\chi^{\ast}_{D}\in\sigma_{14}$ if $−2/5*a2+1/5*a3\geq
0$, which is is possible in the ample cone. We then conclude that $$\lambda^{D}_{14}=\begin{cases}-\Proj_{W_{4}}\chi^{\ast}_{D} \text{ if } −2/5*a2+1/5*a3\geq
0, \text{ or}\\ -\Proj_{W_{14}}\chi^{\ast}_{D} \text{ otherwise}.\end{cases}$$ Below is a
complete list of piecewise descriptions of the real one parameter subgroup $\lambda^{D}_{S}$ that is $\chi_{D}$-adapted to $L(S)$ for all $S\in\mathcal{L}$ and for all $D\in\Amp(X_{\Sigma})_{\mathbf{R}}$. 

\begin{enumerate}
\item $S=\emptyset$,\\
  $\lambda^{D}_{\emptyset}=-\chi_{D}^{\ast}$.
\item $S=[0],$\\
 $\lambda^{D}_{0}=-\Proj_{W_{0}}\chi^{\ast}_{D}$.
\item $S=[1]$,\\
$\lambda^{D}_{1}=\begin{cases}-\chi^{\ast}_{D}\text{ if }-1/4*a2 +
1/4*a4\geq 0,\text{ or}\\ -\Proj_{W_{1}}\chi^{\ast}_{D}\text{ otherwise}.\end{cases}$
\item $S=[1,2,4], [2,4],[1,2]$, or $[2],$ \\
$\lambda^{D}_{S}=-\Proj_{W_{2}}\chi^{\ast}_{D}$
\item $S=[3]$,\\
 $\lambda^{D}_{3}=-\Proj_{W_{3}}\chi^{\ast}_{D}$.
\item $S=[4]$,\\
$\lambda^{D}_{4}=\begin{cases}-\chi^{\ast}_{D}\text{ if }-1/4*a2 +
1/3*a3 - 5/12*a4\geq 0,\text{ or }\\-\Proj_{W_{4}}\chi^{\ast}_{D}\text{ otherwise.}\end{cases}$
\item $S=[0,3,4]$ or $[3,4]$,\\
$\lambda^{D}_{S}=-\Proj_{W_{34}}\chi^{\ast}_{D}$.
\item $S=[1,2,3]$ or $[2,3]$,\\
$\lambda^{D}_{S}=-\Proj_{W_{23}}\chi^{\ast}_{D}$. 
\item $S=[0,2,4]$ or $[0,2]$,\\
$\lambda^{D}_{S}=-\Proj_{W_{02}}\chi^{\ast}_{D}.$
\item $S=[0,1,3]$,\\
$\lambda^{D}_{013}=-\Proj_{W_{01}}\chi^{\ast}_{D}.$ 
\item $S=[1,4]$,\\
$\lambda^{D}_{14}=\begin{cases}-\Proj_{W_{4}}\chi^{\ast}_{D}\text{ if }-2/5*a2 +
1/5*a3\geq 0,\text{ or}\\
-\Proj_{_{14}}\chi^{\ast}_{D}\text{ otherwise.}\end{cases}$ 
\item $S=[0,3]$,\\
$\lambda^{D}_{03}=\begin{cases}-\Proj_{W_{0}}\chi^{\ast}_{D}\text{ if }1/5*a2 - 2/5*a3 + 2/5*a4\geq 0,\text{ or}\\ 
-\Proj_{W_{03}}\text{ otherwise.}\end{cases}$ 
\item $S=[0,4]$,\\ 
$\lambda^{D}_{04}=\begin{cases}-\Proj_{W_{0}}\chi^{\ast}_{D}\text{ if }-1/5*a2 + 2/5*a3 - 2/5*a4\geq
0,\text{ or}\\-\Proj_{W_{04}}\chi^{\ast}_{D}\text{ otherwise.}\end{cases}$ 
\item $S=[1,3]$,\\
$\lambda^{D}_{13}=\begin{cases}-\Proj_{W_{3}}\chi^{\ast}_{D}\text{
if }-1/4*a2 + 1/4*a4\geq 0,\text{ or}\\ 
-\Proj_{W_{13}}\chi^{\ast}_{D}\text{ otherwise.} \end{cases}$
\item $S=[0,1]$, \\
$\lambda^{D}_{01}=-\Proj_{W_{01}}\chi^{\ast}_{D}$. 
\end{enumerate}

\subsubsection{The list of type one Walls}
The program returns the following list of type one walls. We refer the reader back to \Cref{Enumerating_walls} for relevant data structures.
\begin{equation}\label{type_one_wall_list_blow-up_Hirz}
\begin{array}{l}
2/5*a2 - 1/5*a3, [\big([4], [1]\big)]\\
-1/5*a2 + 2/5*a3 - 2/5*a4,
  [\big([0], [3]\big), \big([0], [4]\big)]\\
1/4*a2 - 1/3*a3 + 5/12*a4, [\big([], [4]\big)]\\
1/4*a2 - 1/4*a4, [\big([3], [1]\big), \big([], [1]\big)].
\end{array}
\end{equation}
We do not include type two walls here as it is rather long and complicated. Instead, we only cite type two walls that are used as we go along.
\subsubsection{A wall can be of type one and type two}\label{A wall can be of type one and type two}
We examine the type
one wall $$W_{0,03}:=\{D\in\Amp(X_{\Sigma})_{\mathbf{R}}\mid\Proj_{W_{0}}\chi^{\ast}_{D}=\Proj_{W_{03}}\chi^{\ast}_{D}\}.$$
According to the type one wall list (\ref{type_one_wall_list_blow-up_Hirz}), this corresponds to the hyperplane $$-1/5*a2 + 2/5*a3 - 2/5*a4=0.$$ Let 
$$W^{+}_{0,03}=\{(a2,a3,a4)\in\Amp(X_{\Sigma})_{\mathbf{R}}\mid-1/5*a2 + 2/5*a3 - 2/5*a4>0 \},$$
$$W_{0,03}^{-}=\{(a2,a3,a4)\in\Amp(X_{\Sigma})_{\mathbf{R}}\mid-1/5*a2 + 2/5*a3 - 2/5*a4<0\}.$$
With the piecewise information provided in \Cref{The potential one parameter subgroups}, we deduced the following:
\begin{enumerate}
\item When $D\in W^{-}_{0,03}$, $L_{03}$ is in the stratum indexed by $-\Proj_{W_{0}}\chi^{\ast}_{D}$, and $L_{04}\cup L_{034}\cup L_{34}$ is in the different stratum indexed $-\Proj_{W_{03}}\chi^{\ast}_{D}$.
\item When $D\in W_{0,03}$, $\Proj_{W_{0}}\chi^{\ast}_{D}=\Proj_{W_{03}}\chi^{\ast}_{D}$. Hence $L_{03}\cup L_{04}\cup L_{034}\cup L_{34}$ all come together in the same stratum.
\item When $D\in W^{+}_{0,03}$, $L_{04}$ is in the stratum indexed $-\Proj_{W_{0}}\chi^{\ast}_{D}$, separated from $L_{03}\cup L_{034}\cup L_{34}$, which is in the stratum indexed by $-\Proj_{W_{03}}\chi^{\ast}_{D}$\end{enumerate} For concreteness we consider the three ample divisors 
\begin{enumerate}
    \item $D^{-}=45D_{2}+65D_{3}+55D_{4}\in W^{-}_{0,03}$\label{some_div},
    \item $D^{0}=450D_{2}+775D_{3}+550D_{4}\in W_{0,03}$,
    \item $D^{+}=45D_{2}+80D_{3}+55D_{4}\in W^{+}_{0,03}$.
\end{enumerate}
Here are the stratifications induced by $D^{-},D^{0}$ and $D^{+}$ respectively.
$$\includegraphics[width = 4.1cm, height = 8cm]{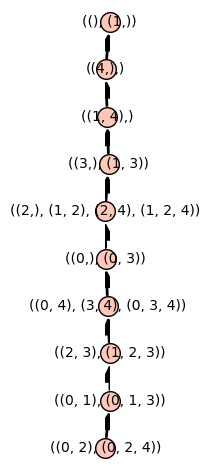}\includegraphics[width = 4.1cm, height = 8cm]{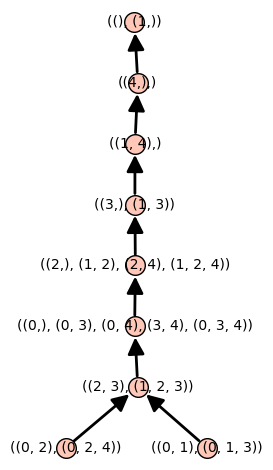}\includegraphics[width = 4.1cm, height = 8cm]{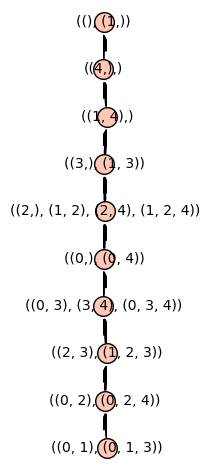}.$$ 

We refer the reader back to \Cref{Stratification as a poset} for how to understand the graphic representation of a stratification. Notice how $L_{03}$ and $L_{04}$ move in the diagrams. To understand why the chain breaks up into two at $D^{0}\in W_{0,03}$ , we investigate the type two wall
$$Z_{01,02}:=\{D\in\Amp(X_{\Sigma})_{\mathbf{R}}\mid ||\Proj_{W_{01}}\chi^{\ast}_{D}||=||\Proj_{W_{02}}\chi^{\ast}_{D}||\}.$$ Using the computer program, we see that $Z_{01,02}$ corresponds to the condition $$-1/3*a2^2 + 2/3*a2*a3 - 2/3*a2*a4=0.$$ Since $a_{2}>0$ in the ample cone, this condition is equivalent to 
$$-a_{2}+2a_{3}-a_{4}=0,$$ which is exactly the defining equation for $W_{0,03}$. Notice how $L_{01},L_{02}$ move in the diagrams. Crossing this wall results in two types of variations. This is an example where a wall can be of both type.

\subsubsection{Two semi-chambers can be contained in an SIT-equivalence class}\label{Wall and semi-chamber decomposition is finer than SIT-equivalence classes}
In this example we are also able to see that the stratifications induced by ample divisors in two different semi-chambers can be the same. 
To establish such a pair of semi-chambers, it is sufficient to identify a wall $H$ together with two ample divisors $D,D^{\prime}$ such that the following conditions hold:
\begin{enumerate}
    \item Both $D$ and $D^{\prime}$ are not on any walls. Namely, they are in some semi-chambers. \label{3conditions}
    \item $D$ and $D^{\prime}$ are on different sides of $H.$ Namely, if $F=0$ is the defining  equation for $H$, we require that $F(D)\cdot F(D^{\prime})<0$.
    \item $D$ and $D^{\prime}$ induce equivalent stratifications. 
\end{enumerate}
Indeed, by the definition of a semi-chamber (\Cref{semi-chamber_def}), any pair $(D,D^{\prime})$ of ample divisors that satisfy the first two  conditions are in different semi-chambers. 

We provide two walls here where the above three conditions are met. The first one is given by the type two wall $$Z_{3,24}=\{D\in\Amp(X_{\Sigma})_{\mathbf{R}}\mid||\Proj_{W_{3}}\chi^{\ast}_{D}||=||\Proj_{W_{24}}\chi^{\ast}_{D}||\}.$$ The equation for  $Z_{3,24}$ is
$$3/4*a2^2 - 1/3*a3^2 + 1/2*a2*a4 + 1/4*a4^2=0.$$

We chose this wall because of the two reasons. First, by the piecewise information provided earlier, there are no subsets $S\in\mathcal{L}$ with $\lambda^{D}_{S}=-\Proj_{W_{24}}\chi^{\ast}_{D}$ throughout the ample cone. Second, it is checked that there are no other walls whose equation is the same as the equation for $Z_{3,24}$. We therefore expect $Z_{3,24}$ never swaps the ordering of any pair of strata. Indeed, it can be checked by the computer that the pair of ample divisors $D=30D_{2}+92D_{3}+70D_{4}$ and $D^{\prime}=30D_{2}+99D_{3}+70D_{4}$ and the wall $Z_{3,24}$  satisfy the three conditions required in the beginning.

Here is a visualization of $Z_{3,24}$, drawn as a blue curve in a 2D slice of the ample cone.
$$\includegraphics[height = 50mm]{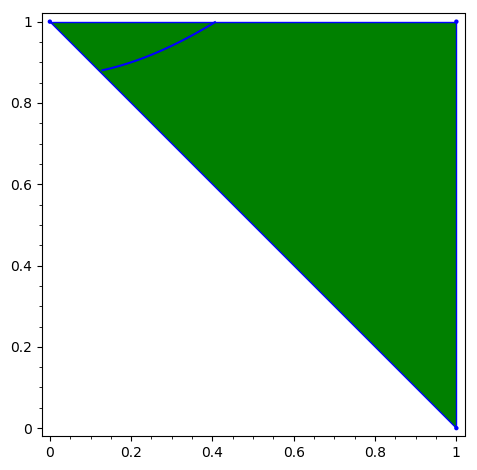}.$$

The slice is the intersection of the ample cone with the hyperplane $a2+a4=1$. With $a4$ replaced by $1-a2$ the green region on the $a2,a3$-plane  defined by $1>a3>1-a2$ is the 2D slice of the ample cone. Also note that $\dim W_{3}=2$, $\dim W_{24}=1$ and $W_{3}\oplus W_{24}=\pmb{\Gamma}(G)_{\mathbf{R}}$. By \Cref{equi-distant_simplified}, $Z_{3,24}$ looks like a quadric cone of the form $x_{1}^{2}=y_{1}^{2}+y_{2}^{2}$, giving the slice of $Z_{3,24}$ the curviness we see in the figure.

The second wall is more interesting. It has two connected components where producing a pair of divisors that meet the three conditions is only possible around one component. The wall is given by 
$$Z_{1,4}:=\{D\in\Amp(X_{\Sigma})_{\mathbf{R}}\mid||\Proj_{W_{1}}\chi^{\ast}_{D}||=||\Proj_{W_{4}}\chi^{\ast}_{D}||\}.$$ This corresponds to the hypersurface $$-1/15*a2^2 + 2/5*a2*a3 - 4/15*a3^2 - 2/3*a2*a4 + 2/3*a3*a4 - 1/3*a4^2 = 0.$$ Note that since $\dim W_{1}=\dim W_{4}=2$ and $W_{1}+W_{2}=\pmb{\Gamma}(G)_{\mathbf{R}}$, \Cref{equi-distant_simplified} implies that $Z_{1,4}$ is a union of two hyperplanes. The line where these two hyperplanes meet is contained in the supporting hyperplane of the nef cone. The wall splits up into two components in the ample cone (See figure (\ref{parts_type_II_wall_redundant})).

To fully describe wall crossing behavior for $Z_{1,4}$, we will need other two type one walls:
\begin{enumerate}
    \item $-a2+a4=0$ that corresponds to the condition $\chi_{D}^{\ast}=\Proj_{W_{1}}\chi^{\ast}_{D}$, and 
    \item $-3*a2+4*a3-5*a4=0$ that corresponds to the condition $\chi^{\ast}_{D}=\Proj_{W_{4}}\chi^{\ast}_{D}$.
\end{enumerate}
The visualization of these three walls in the ample cone is given by \begin{equation}\label{parts_type_II_wall_redundant}
\includegraphics[height = 50mm]{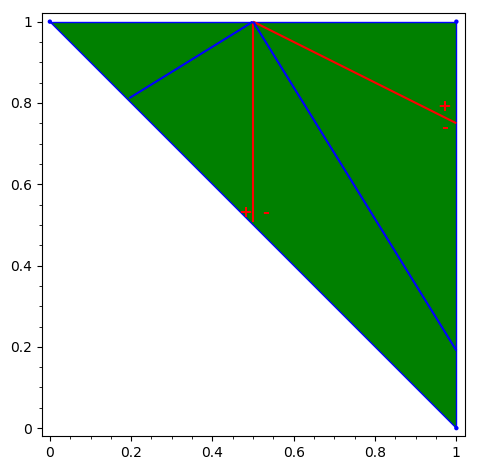}\end{equation}

The union of blue lines is $Z_{1,4}$. The vertical red line is defined by $1-2*a2=0$, which corresponds to the type one wall $-a2+a4=0$. The other red line is defined by $2*a2+4*a3-5=0$, which corresponds to the type one wall $-3*a2+4*a3-5*a4=0$. The blue lines and red lines all meet at the point $(a2,a3)=(1/2,1)$, which is on the boundary of the ample cone. The signs are inserted into regions to indicate the signs the defining polynomials of the type one walls assume there. 

Let us call the region bounded by the positive side of the vertical red line region one. The region bounded by the negative sides of the two red lines will be called region two. We will prove the 
\begin{claim}\label{claim}
It is only possible to produce a pair of ample divisors around the component of $Z_{1,4}$ bounded in region one to meet the three conditions mentioned in the beginning of \Cref{Wall and semi-chamber decomposition is finer than SIT-equivalence classes}.\end{claim}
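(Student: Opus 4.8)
The plan is to prove the two halves of \Cref{claim} separately: around the component of $Z_{1,4}$ that lies in region one there \emph{is} a pair of ample divisors meeting the three conditions listed at the start of \Cref{Wall and semi-chamber decomposition is finer than SIT-equivalence classes}, whereas around the other component — the one in region two — no such pair exists. In both halves the only substantive requirement is condition (3): once a point $D_{0}$ is fixed in the interior of the chosen component of $Z_{1,4}$ that avoids all other walls, conditions (1) and (2) hold for any small generic perturbation of $D_{0}$ to the two sides, because the walls are finitely many proper hypersurfaces (that a point of the region-one component avoiding all other walls exists uses the same kind of finite check made by hand for $Z_{3,24}$, namely that no other wall carries the relevant linear factor of $Z_{1,4}$ as its equation; I fold this into the obstacle below).

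First I would handle the region-one component. The key observation, read off from the piecewise formulas for $\lambda^{D}_{S}$ derived from the list in \Cref{The potential one parameter subgroups}, is that $-\Proj_{W_{1}}\chi^{\ast}_{D}$ is $\chi_{D}$-adapted to $L(S)$ for no $S\in\mathcal{L}$ when $D$ lies in region one: the only $S$ for which $-\Proj_{W_{1}}\chi^{\ast}_{D}$ could occur is $S=[1]$, and there $\lambda^{D}_{[1]}=-\chi^{\ast}_{D}$ because $a_{4}>a_{2}$ throughout region one. Hence the equality $\|\Proj_{W_{1}}\chi^{\ast}_{D}\|=\|\Proj_{W_{4}}\chi^{\ast}_{D}\|$ cutting out $Z_{1,4}$ never compares the norms of two genuine stratum indices there, so by \Cref{lemma_local_constant_WPS} and the proof of \Cref{Toric_VSIT_decomposition_ample_cone} the grouping of $\{L(S)\}_{S\in\mathcal{L}}$ into strata, together with its strict partial order, is the same in the two semi-chambers separated by the region-one component of $Z_{1,4}$. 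Any pair $D,D^{\prime}$ chosen generically in those two semi-chambers with opposite signs on the defining equation of $Z_{1,4}$ then satisfies all three conditions, which is the ``possible'' half.

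Next I would show that straddling the region-two component forces a variation. From the same piecewise formulas: in region two one has $a_{2}>a_{4}$, so $\lambda^{D}_{[1]}=-\Proj_{W_{1}}\chi^{\ast}_{D}$, and $-3a_{2}+4a_{3}-5a_{4}<0$, so $\lambda^{D}_{[4]}=-\Proj_{W_{4}}\chi^{\ast}_{D}$. Thus near the region-two component $L_{1}$ lies in the stratum indexed by $-\Proj_{W_{1}}\chi^{\ast}_{D}$ and $L_{4}$ in the one indexed by $-\Proj_{W_{4}}\chi^{\ast}_{D}$; off the walls these are distinct strata, since $\Proj_{W_{1}}\chi^{\ast}_{D}=\Proj_{W_{4}}\chi^{\ast}_{D}$ would force, by \Cref{lem_for_indexing_projections}, $\Proj_{W_{1}}\chi^{\ast}_{D}=\Proj_{W_{14}}\chi^{\ast}_{D}$, i.e.\ $D$ lying in an intersection of type one walls. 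Crossing $Z_{1,4}$ swaps $\|\Proj_{W_{1}}\chi^{\ast}_{D}\|$ and $\|\Proj_{W_{4}}\chi^{\ast}_{D}\|$, hence swaps the order of the $L_{1}$-stratum relative to the $L_{4}$-stratum. Because each of those two strata contains exactly one of $L_{1},L_{4}$, any strata-preserving bijection between the two stratifications must send the $L_{1}$-stratum to the $L_{1}$-stratum and likewise for $L_{4}$, so it cannot preserve the order; and if no strata-preserving bijection exists at all, the variation is already of type one. Either way condition (3) fails, so no admissible pair straddles the region-two component.

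The step I expect to be the main obstacle is making the region-two argument airtight, i.e.\ ruling out that the order swap produced by $Z_{1,4}$ there is ``masked'' by a simultaneous crossing of another wall carrying the same equation on that locus — the exact phenomenon the authors exclude by hand for $Z_{3,24}$ — and checking that the grouping of the remaining $L(S)$'s near the chosen point of the region-two component (in particular whether $L_{14}$ is absorbed into the $L_{4}$-stratum, which is governed by the type one wall $2a_{2}-a_{3}=0$) is locally constant there. This reduces to a finite inspection of the wall list against the piecewise data, together with the elementary computation that the quadratic defining $Z_{1,4}$ factors into two real linear forms whose common zero lies on the supporting hyperplane of the nef cone — which is what makes $Z_{1,4}\cap\Amp(X_{\Sigma})_{\mathbf{R}}$ split into the two arcs drawn in figure (\ref{parts_type_II_wall_redundant}) and pins down which arc sits in region one and which in region two.
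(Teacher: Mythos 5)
Your region-two argument matches the paper's case (1) almost verbatim: in region two the piecewise formulas force $\lambda^{D}_{1}=-\Proj_{W_{1}}\chi^{\ast}_{D}$ and $\lambda^{D}_{4}=-\Proj_{W_{4}}\chi^{\ast}_{D}$, these index distinct strata away from type one walls, and crossing $Z_{1,4}$ swaps their relative norms, so the unique strata-preserving bijection (or its non-existence) kills condition (3). Your region-one argument is a genuine alternative to what the paper does: you observe that $-\Proj_{W_{1}}\chi^{\ast}_{D}$ never appears as $\lambda^{D}_{S}$ for any $S\in\mathcal{L}$ once $a_{4}>a_{2}$, so the equality $\|\Proj_{W_{1}}\chi^{\ast}_{D}\|=\|\Proj_{W_{4}}\chi^{\ast}_{D}\|$ becomes inert and the grouping and order of strata are constant across that arc. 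The paper instead exhibits the explicit pair $D=430D_{2}+960D_{3}+570D_{4}$, $D^{\prime}=430D_{2}+955D_{3}+570D_{4}$ and lets the computer certify conditions (1)--(3), only afterwards giving the conceptual explanation you lead with. Your route buys a uniform argument in place of a numerical spot-check, at the cost of the finite verification you flag (that $Z_{1,4}$ does not share a linear factor with any other wall on the region-one arc) --- a check the paper effectively discharges by the computer verification of condition (1) for the explicit pair.

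The genuine gap is that the paper's proof handles a second impossibility case that you omit: $D$ in region one and $D^{\prime}$ in region two. Condition (2) only demands $F(D)\cdot F(D^{\prime})<0$ for the quadratic $F$ cutting out $Z_{1,4}$; it does not confine $D$ and $D^{\prime}$ to a small neighborhood of one arc, and since the two arcs lie in different regions separated by the type one wall $-a_{2}+a_{4}=0$, this mixed configuration can occur and must be excluded. The paper rules it out by noting that $L_{\emptyset}$ and $L_{1}$ coalesce into the $-\chi^{\ast}_{D}$-stratum in region one (since $\lambda^{D}_{1}=-\chi^{\ast}_{D}$ when $a_{4}\geq a_{2}$) but are separated in region two (where $\lambda^{D^{\prime}}_{1}=-\Proj_{W_{1}}\chi^{\ast}_{D^{\prime}}\neq -\chi^{\ast}_{D^{\prime}}$), which is a type one variation and so condition (3) fails. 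You actually have both of these observations scattered across your two halves, so the fix is short: note explicitly that the partition of $\{L(S)\}_{S}$ into strata already differs between region one and region two, so no pair straddling the type one wall $-a_{2}+a_{4}=0$ can satisfy condition (3), independent of where they sit relative to $Z_{1,4}$. Without that sentence the claim as stated --- that \emph{every} admissible pair lies around the region-one arc --- is not fully proved.
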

Let $D$ and $D^{\prime}$ be two ample divisors on different sides of $Z_{1,4}$. We have to establish the impossibility for $D,D^{\prime}$ to satisfy the three conditions in the following two cases:
\begin{enumerate}
    \item Both $D$ and $D^{\prime}$ are in region two.
    \item $D$ is in region one and $D^{\prime}$ is in region two.
\end{enumerate}
For reader's convenience, we clip the piecewise information specifically for $S=[1],[4]$:
\begin{itemize}
    \item $S=[1]$,\\
$\lambda^{D}_{1}=\begin{cases}-\chi^{\ast}_{D}\text{ if }-1/4*a2 +
1/4*a4\geq 0,\text{ or}\\
-\Proj_{W_{1}}\chi^{\ast}_{D}\text{ otherwise.}\end{cases}$
\item $S=[4]$,\\
$\lambda^{D}_{S}=\begin{cases}-\chi^{\ast}_{D}\text{ if }-1/4*a2 +
1/3*a3 - 5/12*a4\geq 0,\text{ or}\\
-\Proj_{W_{4}}\chi^{\ast}_{D}\text{ otherwise.}\end{cases}$ 
\end{itemize}

Hence in region two, $L_{1}$ is contained in the strata indexed by $-\Proj_{W_{1}}\chi^{\ast}_{D}$ while $L_{4}$ is contained in the stratum indexed by $-\Proj_{W_{4}}\chi^{\ast}_{D}$. Moreover, it can be checked from the list of type one walls (\ref{type_one_wall_list_blow-up_Hirz}) that $\Proj_{W_{1}}\chi^{\ast}_{D}\neq\Proj_{W_{4}}\chi^{\ast}_{D}$ for any $D\in\Amp(X_{\Sigma})_{\mathbf{R}}$. Therefore, the following two facts are immediate:
\begin{itemize}
    \item $L_{1}$ and $L_{4}$ are in different strata around the component of $Z_{1,4}$ in region two, and 
    \item crossing $Z_{1,4}$ in region two swaps the ordering between the stratum that contains $L_{1}$ and the stratum that contains $L_{4}$.
\end{itemize} 
Therefore, if $D,D^{\prime}$ are divisors on different sides of $Z_{1,4}$ in region two, the stratifications induced by $D$ and $D^{\prime}$ are  never equivalent. The impossibility for case (1) is established.

For case (2), note that the origin $L_{\emptyset}$ is always in the stratum indexed by $-\chi^{\ast}_{D}$. Hence on the negative side of the vertical red line, $L_{\emptyset}$ and $L_{1}$ are in different stratum while on the positive side, they come together in a stratum. Therefore $D$ and $D^{\prime}$ in case (2) never induce equivalent stratifications. \Cref{claim} is proved.

We now produce a pair of ample divisors $D,D^{\prime}$ in region one that satisfy the three conditions mentioned in the beginning of \Cref{Wall and semi-chamber decomposition is finer than SIT-equivalence classes}. 
We take the two ample divisors $D=430D_{2}+960D_{3}+570D_{4}$ and $D^{\prime}=430D_{2}+955D_{3}+570D_{4}$. It can be verified by the computer that 
\begin{itemize}
    \item Both $D$ and $D^{\prime}$ are not on any walls. 
    \item Both $D$ and $D^{\prime}$ are in region one. Namely, they both satisfy $-a2+a4>0$.
    \item $D$ and $D^{\prime}$ are on different sides of $Z_{1,4}$, but 
    \item $D$ and $D^{\prime}$ induces the same stratification.
\end{itemize}  

The reason why it is possible to produce such a pair in region one is that in region one, $L_{4}$ is in the stratum indexed by  $-\Proj_{W_{4}}\chi^{\ast}_{D}$ while $L_{1}$ is in the stratum indexed by $-\chi^{\ast}_{D}$. Therefore, the stratum that contains $L_{1}$ is always higher than the stratum that contains $L_{4}$. Namely, $Z_{1,4}$ no longer swaps the ordering between the stratum that contains $L_{1}$ and the stratum that contains $L_{4}$.

\subsubsection{A counter example about primitive relations}\label{Counter example to primitive relations}
Recall in \Cref{closure_strata_primitive_collection} we proved that for every $\mathbf{R}$-ample divisor $D\in \Amp(X_{\Sigma})_{\mathbf{R}}$ and for every primitive collection $C\subset\Sigma(1)$, there exists a unique vector $\lambda_{C}\in\Lambda^{D}$ such that $$\overline{S^{\chi_{D}}_{\lambda_{C}}}=V(\{x_{\rho}|\rho \in C\}).$$ We will show that $-\lambda_{C}$ need not be a positive multiple of the primitive relation (\Cref{primitive_relation}) for $C$, even when $\Sigma$ is smooth. In fact, we will show by this example that such $\lambda_{C}$ depends on $D$  while the primitive relation of $C$ does not. 

For this, take the primitive collection $C=\{0,3\}$. Since 
$$u_{\rho_{0}}+u_{\rho_{3}}=u_{\rho_{1}},$$ the primitive relation of $C$ is $$(1,-1,0,1,0)\in\pmb{\Gamma}(G)_{\mathbf{R}}.$$ According to the piecewise information, the locally closed subvariety $$L_{2}\cup L_{1 2}\cup L_{2 4}\cup L_{1 2 4}=V(x_{0},x_{3})-V(x_{2})$$ is in the stratum indexed by $-\Proj_{W_{2}}\chi^{\ast}_{D}$. We concluded that $\lambda_{C}=-\Proj_{W_{2}}\chi^{\ast}_{D}$ where we calculated earlier that  $$-\Proj_{W_{2}}\chi_{D}^{\ast}=\begin{bmatrix}\begin{array}{r}
-1/3*a3\\
1/3*a4\\
0\\
-2/3*a3 + 1/3*a4\\
1/3*a3 - 1/3*a4\end{array}\end{bmatrix}.$$ Hence $\lambda_{C}$ depends on $D.$ In fact $-\lambda_{C}$ is never equal to a positive multiple of the primitive relation $(1,-1,0,1,0)$ as $a3>a4$ in the ample cone. 

\subsection{Blow-up of \texorpdfstring{$\mathbf{P}^{3}$}{P3} at two points}\label{blow_up_2_pts}
We end this section with a picture of a 2D slice of the wall and semi-chamber decomposition of the ample cone for the toric variety obtained by blowing up $\mathbf{P}^{3}$ at two torus invariant points.
$$\includegraphics[height = 50mm, width = 50mm]{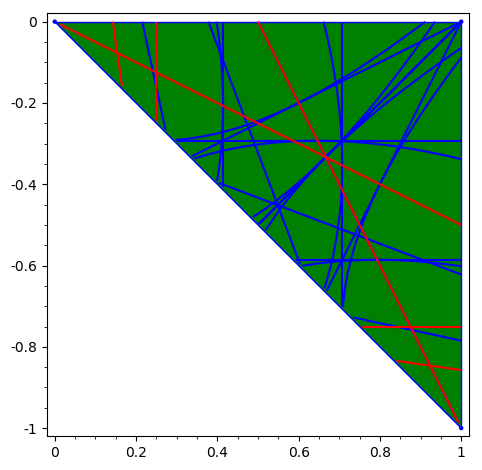}$$
The red lines correspond to type one walls and the blue ones to type two walls. It is also clear from the picture that some semi-chambers are not convex and some have closures that are not polyhedral cones.

\bibliography{mybibitem}
\bibliographystyle{alpha}
\end{document}